\title[Approximating rough functions with deep neural networks]{On the approximation of rough functions with deep neural networks}
\author{T.~De Ryck}
\email{tim.deryck@sam.math.ethz.ch}
\author{S.~Mishra}
\address[T. De Ryck and S. Mishra]{Seminar for Applied Mathematics, ETH Z\"urich, R\"amistrasse 101, 8092 Z\"urich, Switzerland}
\email{siddhartha.mishra@sam.math.ethz.ch}
\author{D.~Ray}
\address[D. Ray]{University of Southern California, Los Angeles, USA}
\email{deepray@usc.edu}
\newtheorem{theorem}{Theorem}[section]
\newtheorem{remark}[theorem]{Remark}
\newtheorem{lemma}[theorem]{Lemma}
\newtheorem{proposition}[theorem]{Proposition}
\numberwithin{equation}{section}
\numberwithin{theorem}{section}
\numberwithin{equation}{section} \allowdisplaybreaks
\newcommand{\f}{ f}
\newcommand{\db}{d}
\newcommand{\Ro}{\mathbb{R}}
\newcommand{\pdeg}{p}
\newcommand{\Tau}{\mathcal{T}}
\newcommand{\func}{\mathcal{L}}
\newcommand{\funcnn}{\mathcal{L^\theta}}
\newcommand{\dset}{\mathbb{S}}
\newcommand{\act}{\mathcal{A}}
\newcommand{\W}{W}
\newcommand{\bb}{b}
\newcommand{\ofunc}{\mathcal{S}}
\renewcommand{\leq}{\leqslant}
\renewcommand{\geq}{\geqslant}
\newcommand{\ud}{\text{d}}
\newcommand{\norm}[1]{\left\lVert#1\right\rVert}
\newcommand{\abs}[1]{\lvert#1\rvert}
\begin{document}

\begin{abstract}
The essentially non-oscillatory (ENO) procedure and its variant, the ENO-SR procedure, are very efficient algorithms for interpolating (reconstructing) rough functions. We prove that the ENO (and ENO-SR) procedure are equivalent to deep ReLU neural networks. This demonstrates the ability of deep ReLU neural networks to approximate rough functions to high-order of accuracy. Numerical tests for the resulting trained neural networks show excellent performance for interpolating functions, approximating solutions of nonlinear conservation laws and at data compression.
\end{abstract}

\maketitle
\setcounter{tocdepth}{1}
%\tableofcontents

\section{Introduction}\label{sec:intro}
Rough functions i.e, functions which are at most Lipschitz continuous and could even be discontinuous, arise in a wide variety of problems in physics and engineering. Prominent examples include (weak) solutions of nonlinear partial differential equations. For instance, solutions of nonlinear hyperbolic systems of conservation laws such as the compressible Euler equations of gas dynamics, contain shock waves and are in general discontinuous, \cite{DAF10}. Similarly, solutions to the incompressible Euler equations would well be only H\"older continuous in the turbulent regime, \cite{ES1}. Moreover, solutions of fully non-linear PDEs such as Hamilton-Jacobi equations are in general Lipschitz continuous, \cite{Evans}. Images constitute another class of rough or rather piecewise smooth functions as they are often assumed to be no more regular than functions of bounded variation on account of their sharp edges, \cite{IPBook}. 

Given this context, the efficient and robust numerical approximation of rough functions is of great importance. However, classical approximation theory has severe drawbacks when it comes to the interpolation (or approximation) of such rough functions. In particular, it is well known that standard linear interpolation procedures degrade to at best first-order of accuracy (in terms of the interpolation mesh width) as soon as the derivative of the underlying function has a singularity, \cite{ACDD2005} and references therein. This order of accuracy degrades further if the underlying function is itself discontinuous. Moreover approximating rough functions with polynomials can lead to spurious oscillations at points of singularity. Hence, the approximation of rough functions poses a formidable challenge.

Artificial neural networks, formed by concatenating affine transformations with pointwise application of nonlinearities, have been shown to possess universal approximation properties, \cite{HORNIK89,BARRON93,CYB88} and references therein. This implies that for any continuous (even for merely measurable) function, there exists a neural network that approximates it accurately. However, the precise architecture of this network is not specified in these universality results. Recently in \cite{YAROTSKY17}, Yarotsky was able to construct deep neural networks with ReLU activation functions and very explicit estimates on the size and parameters of the network, that can approximate Lipschitz functions to second-order accuracy. Even more surprisingly, in a very recent paper \cite{YZ1}, the authors were able to construct deep neural networks with alternating ReLU and Sine activation functions that can approximate Lipschitz (or H\"older continuous) functions to exponential accuracy.

The afore-mentioned results of Yarotsky clearly illustrate the power of deep neural networks in approximating rough functions. However, there is a practical issue in the use of these deep neural networks as they are mappings from the space coordinate $x \in D \subset \mathbb{R}^d$ to the output $f^{\ast}(x) \in \mathbb{R}$, with the neural network $f^{\ast}$ approximating the underlying function $f:D \to \mathbb{R}$. Hence, for every given function $f$, the neural network $f^{\ast}$ has to be \emph{trained} i.e, its weights and biases determined by minimizing a suitable loss function with respect to some underlying samples of $f$, \cite{DLBOOK}. Although it makes sense to train neural networks to approximate individual functions $f$ in high dimensions, for instance in the context of uncertainty quantification of PDEs, \cite{LMR1} and references therein, doing so for every low-dimensional function is unrealistic. Moreover, in a large number of contexts, the goal of approximating a function is to produce an interpolant $\tilde{f}$, given the vector $\{f(x_i)\}$ at sampling points $x_i \in D$ as input. Hence, one would like to construct neural networks that map the full input vector into an output interpolant (or its evaluation at certain sampling points). It is unclear if the function approximation results for neural networks are informative in this particular context. 

On the other hand, data-dependent interpolation procedures have been developed in the last decades to deal with the interpolation of rough functions. A notable example of these data dependent algorithms is provided by the essentially non-oscillatory (ENO) procedure. First developed in the context of reconstruction of non-oscillatory polynomials from cell averages in \cite{HEOC1987}, ENO was also adapted for interpolating rough functions in \cite{SHU89b} and references therein. Once augmented with a sub-cell resolution (SR) procedure of \cite{harten_1989}, it was proved in \cite{ACDD2005} that the ENO-SR interpolant also approximated (univariate) Lipschitz functions to second-order accuracy. Moreover, ENO was shown to satisfy a subtle non-linear stability property, the so-called \emph{sign property}, \cite{FMT}. Given these desirable properties, it is not surprising that the ENO procedure has been very successfully employed in a variety of contexts, ranging from the numerical approximation of hyperbolic systems of conservation laws \cite{HEOC1987} and Hamilton-Jacobi equations \cite{SO91} to data compression in image processing, \cite{HARTEN97,ACDD2005,ACDDM} and references therein. 

Given the ability of neural networks as well as ENO algorithms to approximate rough functions accurately, it is natural to investigate connections between them. This is the central premise of the current paper, where we aim to reinterpret ENO (and ENO-SR) algorithms in terms of deep neural networks. We prove the following results,
\begin{itemize}
    \item We prove that for any order, the ENO interpolation (and the ENO reconstruction) procedure can be cast as a suitable deep ReLU neural network.
    \item We prove that a variant of the piecewise linear ENO-SR (sub-cell resolution) procedure of \cite{harten_1989} can also be cast as a deep ReLU neural network. Thus, we prove that there exists a deep ReLU neural network that approximates piecewise smooth (say Lipschitz) functions to second-order accuracy. 
    \item The above theorems provide the requisite architecture for the resulting deep neural networks and we train them to obtain what we term as DeLENO (deep learning ENO) approximation procedures for rough functions. We test this procedure in the context of numerical methods for conservation laws and for data and image compression. 
\end{itemize}
Thus, our results reinforce the enormous abilities of deep ReLU neural networks to approximate functions, in particular rough functions and add a different perspective to many existing results on approximation with ReLU networks.

\section{Deep neural networks}\label{sec:DNN}

In statistics, machine learning, numerical mathematics and many other scientific disciplines, the goal of a certain task can often be reduced to the following. We consider a (usually unknown) function $\func:D \subset \mathbb{R}^m \to \mathbb{R}^n$ and we assume access to a (finite) set of labelled data $\dset \subset \{ (X,\func(X)):X \in D \}$, using which we wish to select an approximation $\hat{\func}$ from a parametrized function class $\{\funcnn:\theta\in\Theta\}$ that predicts the outputs of $\func$ on $D$ with a high degree of accuracy.

One possible function class is that of deep neural networks (DNNs). In particular, we consider multilayer perceptrons (MLPs) in which the basic computing units (neurons) are stacked in multiple layers to form a feedforward network. The input is fed into the \textit{source layer} and flows through a number of \textit{hidden layers} to the \textit{output layer}.  An example of an MLP with two hidden layers is shown in Figure \ref{fig:MLP}. 

In our terminology, an MLP of depth $L$ consists of an input layer, $L-1$ hidden layers and an output layer. We denote the vector fed into the input layer by $X=Z^0$. 
The $l$-th layer (with $n_l$ neurons) receives an input vector $Z^{l-1} \in \Ro^{n_{l-1}}$ and transforms it into the vector $Z^{l} \in \Ro^{n_{l}}$ by first applying an affine linear transformation, followed by a component-wise (non-linear) activation function $\mathcal{A}^l$,
\begin{equation}\label{eqn:1HL}
Z^{l} = \act^l(W^l Z^{l-1} + b^l), \quad W^l \in \Ro^{n_{l} \times n_{l-1}}, \ b^l \in \Ro^{n_{l}}, \quad 1 \leq l \leq L,
\end{equation}
with $Z^l$ serving as the input for the $(l+1)$-th layer. For consistency, we set $n_0 = m$ and $n_L = n$. In \eqref{eqn:1HL}, $W^l$ and $b^l$ are respectively known as the weights and biases associated with the $l$-th layer. The parameter space $\Theta$ then consists of all possible weights and biases. A neural network is said to be deep if $L\geq3$ and such a deep neural network (DNN) is denoted as a ReLU DNN if the activation functions are defined by the very popular rectified linear (ReLU) function,
\begin{equation}\label{eqn:relu}
\mathcal{A}^l(Z) = (Z)_+ = \max(0,Z) \quad\textrm{for}\quad 1\leq l\leq L-1\quad\textrm{and}\quad \act^L(Z)=Z.
\end{equation}
\begin{figure}[!h]
\begin{center}
\includegraphics[width=0.8\textwidth]{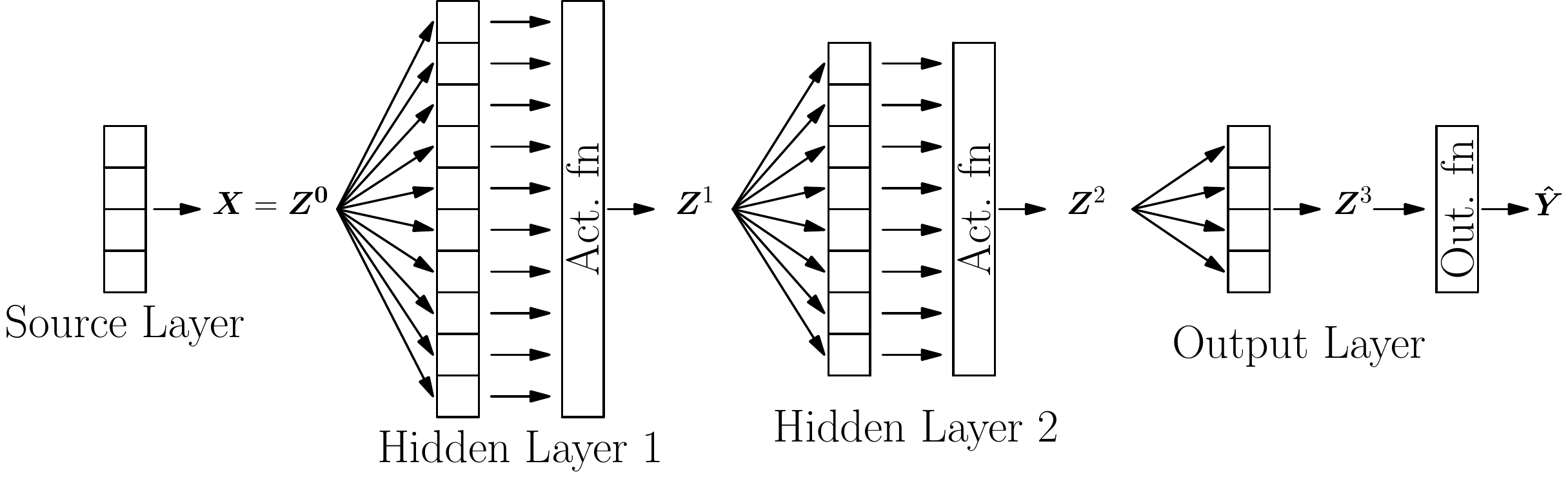} 
\caption{An MLP with 2 hidden layers. The source layer transmits the signal $X$ to the first hidden layer. The final output of the network is $\hat{Y}$.}
\label{fig:MLP}
\end{center}
\end{figure}
Depending on the nature of the problem, the output of the ANN may have to pass through an output function $\ofunc$ to convert the signal into a meaningful form. In classification problems, a suitable choice for such an output function would be the \textit{softmax} function
\begin{equation}
    \ofunc(x):\mathbb{R}^n\to \mathbb{R}^n:x\mapsto \left(  \frac{e^{x_1}}{\sum_{j=1}^n e^{x_j}},\ldots,\frac{e^{x_n}}{\sum_{j=1}^n e^{x_j}}\right).
\end{equation}
This choice ensures that the final output vector $\hat{Y}=\ofunc(Z^L)$ satisfies $\sum_{j=1}^n \hat{Y}_j = 1$ and $0 \leq \hat{Y}_j \leq 1$ for all $1\leq j \leq n$, which allows 
% \begin{equation}\label{eqn:classifier}
% 0 \leq \hat{Y}_j \leq 1, \quad \sum_{j=1}^n \hat{Y}_j = 1, \quad \text{where } \yh = (\hat{Y}_1...,\hat{Y}_n)^\top.\end{equation}
$\hat{Y}_j$ to be viewed as the probability that the input $Z^0$ belongs to the $j$-th class. Note that the class predicted by the network is $\text{arg} \max_j \hat{Y}_j$. For regression problems, no additional output function is needed. 

\begin{remark}\label{rem:argmax}
It is possible that multiple classes have the largest probability. In this case, the predicted class can be uniquely defined as $\min \mathrm{arg} \max _j\{\hat{Y}_j\}$, following the usual coding conventions. Also note that the softmax function only contributes towards the interpretability of the network output and has no effect on the predicted class, that is,
\[\min \mathrm{arg} \max _j\{\hat{Y}_j\} = \min \mathrm{arg} \max _j\{Z^L_j\}.\]
This observation will be used at a later stage. 
\end{remark}

The expressive power of ReLU neural networks, in particular their capability of approximating rough functions, has already been demonstrated in literature \cite{petersen2018optimal, YZ1}. In practice however, there is a major issue in this approach when used to approximate an unknown function $f:D\subseteq \mathbb{R} \to \mathbb{R}$ based on a finite set $\mathbb{S}\subset\{(x,f(x)):x\in D\}$, as for each individual function $f$ a new neural network has to be found. This network (or an approximation) is found by the process of \textit{training} the network. The computational cost is significantly higher than that of other classical regression methods in low dimensions, which makes this approach rather impractical, at least for functions in low dimensions. This motivates us to investigate how one can obtain a neural network that takes input in $D$ and produces an output interpolant, or rather its evaluation at certain sample points. Such a network primarily depends only on the training data $\mathbb{S}$ and can be reused for each individual function, thereby drastically reducing the computational cost. Instead of creating an entirely novel data-dependent interpolation procedure, we base ourselves in this paper on the essentially non-oscillatory (ENO) interpolation framework of \cite{HEOC1987, harten_1989}, which we introduce in the next section.  

%In what follows, we introduce the ENO interpolation framework and we suggest new approaches that use ReLU neural networks to reproduce or approximate the results of the ENO interpolation procedure. 

\section{ENO framework for interpolating rough functions}

In this section, we explore the \textit{essentially non-oscillatory} (ENO) interpolation framework \cite{HEOC1987, harten_1989}, on which we will base our theoretical results of later sections. Although the ENO procedure has its origins in the context of the numerical approximation of solutions of hyperbolic conservation laws, this data-dependent scheme has also proven its use for the interpolation of rough functions \cite{SHU89b}. 

\subsection{ENO interpolation}\label{sec:ENO}
We first focus on the original ENO procedure, as introduced in \cite{HEOC1987}. This procedure can attain any order of accuracy for smooth functions, but reduces to first-order accuracy for functions that are merely Lipschitz continuous. In particular, the ENO-$p$ interpolant is $p$-th order accurate in smooth regions and suppresses the appearance of spurious oscillations in the vicinity of points of discontinuity. In the following, we describe the main idea behind this algorithm. 

Let $f$ be a function on $\Omega = [c,d]\subset \mathbb{R}$ that is at least $p$ times continuously differentiable. 
% To describe the multi-resolution representation of functions, we use notations and operators similar to those introduced in \cite{ARAND2000}. 
We define a sequence of nested uniform grids $\{\Tau^k\}_{k=0}^K$ on $\Omega$, where
\begin{equation}\label{eqn:interp_partitions}
\Tau^k = \{x_i^k\}_{i=0}^{N_k}, \: I_i^k = [x^k_{i-1}, x^k_i], \: x_i^k = c + i h_k, \: h_k = \frac{(d-c)}{N_k}, \: N_k = 2^k N_0,
\end{equation}
for $0\leq i \leq N_k$, $0\leq k\leq K$ and some positive integer $N_0$. 
% We call $\{x_i^l\}_{i=0}^{N_l}$ the nodes of the mesh $\Tau^l$. Clearly, 
% \[
% x_i^{l-1} = x_{2i}^l, \quad 0 \leq i \leq N_{l-1} = \frac{N_l}{2}.
% \]
Furthermore we define $f^k=\{f(x):x\in \Tau^k\}$, $f^k_i=f(x_i^k)$ and we let $f^k_{-\pdeg+2}, ...,f^k_{-1}$ and $f^k_{N_k+1},...f^k_{N_k + \pdeg-2}$ be suitably prescribed ghost values. We are interested in finding an interpolation operator $\mathcal{I}^{h_k}$ such that
% $\mathcal{I}^{h_k}f(x)=f(x)$ for $x\in\Tau^k$ and $\norm{\mathcal{I}^{h_k}f-f}_{\infty}=O(h_k^p)$ for $k\to \infty$. 
\begin{equation*}
    \mathcal{I}^{h_k}f(x)=f(x) \textrm{ for } x\in\Tau^k \quad \textrm{and} \quad \norm{\mathcal{I}^{h_k}f-f}_{\infty}=O(h_k^p) \textrm{ for } k\to \infty.
\end{equation*}
In standard approximation theory, this is achieved by defining $\mathcal{I}^{h_k}f$ on $I_i^k$ as the unique polynomial $p^k_i$ of degree $p-1$ that agrees with $f$ on a chosen set of $p$ points, including $x_{i-1}^k$ and $x_i^k$. The linear interpolant ($\pdeg=2$) can be uniquely obtained using the stencil $\{ x^k_{i-1},x^k_i \}$. However, there are several candidate stencils to choose from when $\pdeg > 2$. The ENO interpolation procedure considers the stencil sets 
\begin{equation*}
    \mathcal{S}^r_i = \{x^k_{i-1-r+j}\}_{j=0}^{p-1}, \quad 0\leq r \leq p-2,
\end{equation*}
where $r$ is called the (left) stencil shift. The smoothest stencil is then selected based on the local smoothness of $f$ using Newton's undivided differences. These are inductively defined in the following way. Let $\Delta_j^0 = f^k_{i+j}$ for $-p+1\leq j \leq p-2$ and $0\leq i \leq N_k$. We can then define 
\begin{equation*}
    \Delta^s_{j} = \begin{cases}
    \Delta^{s-1}_{j}-\Delta^{s-1}_{j-1} & \text{for $s$ odd}\\
    \Delta^{s-1}_{j+1}-\Delta^{s-1}_{j} & \text{for $s$ even.}
    \end{cases}
\end{equation*}
Algorithm \ref{alg:eno_int_select} describes how the stencil shift $r$ can be obtained using these undivided differences. Note that $r$ uniquely defines the polynomial $p_i^k$. We can then write the final interpolant as 
\begin{equation*}
     \mathcal{I}^{h_k}f(x)=\sum_{i=1}^{N_k}p^k_i(x)\mathbbm{1}_{[x_{i-1}^k,x_i^k)}(x).%%+f(d)\delta(x-d). 
\end{equation*}
This interpolant can be proven to be \textit{total-variation bounded} (TVB), which guarantees the disappearance of spurious oscillations (e.g. near discontinuities) when the grid is refined. This property motivates the use of the ENO framework over standard techniques for the interpolation of rough functions. 

In many applications, one is only interested in predicting the values of $f^{k+1}$ given $f^k$. In this case, there is no need to calculate $\mathcal{I}^{h_{k}}f$ and evaluate it on $\Tau^{k+1}$. Instead, one can use Lagrangian interpolation theory to see that there exist fixed coefficients $C_{r_,j}^p$ such that
\begin{equation}\label{eqn:eno_int}
\begin{split}
    \mathcal{I}^{h_{k}}f(x^{k+1}_{2i-1}) &= \sum_{j=0}^{p-1} C_{r_i^k,j}^p f^k_{i-r_i^k+j} \: \textrm{ for }\:  1\leq i \leq N_k \quad \textrm{and}\quad\\  \mathcal{I}^{h_{k}}f(x^{k+1}_{2i})&=f^{k+1}_{2i}=f^k_i \: \textrm{ for }\:  0\leq i \leq N_k,
\end{split}
\end{equation}
where $r_i^k$ is the stencil shift corresponding to the smoothest stencil for interval $I_i^k$. The coefficients $C_{r,j}^p$ are listed in Table \ref{tab:coef_int} in Appendix \ref{app:ENO-coeff}. 

\begin{remark}
ENO was initially introduced by \cite{HEOC1987} for high-order accurate piecewise polynomial reconstruction, given cell averages of a function. This allows the development of high-order accurate numerical methods for hyperbolic conservation laws, the so-called ENO schemes. ENO reconstruction can be loosely interpreted as ENO interpolation applied to the primitive function and  is discussed in Appendix \ref{sec:ENO-reconstruction}. 
\end{remark}

\begin{remark}
The prediction of $f^{k+1}$ from $f^k$ can be framed in the context of multi-resolution representations of functions, which are useful for data compression \cite{HARTEN97}. As we will use ENO interpolation for data compression in Section \ref{sec:num-results}, we refer to Appendix \ref{app:multi-res} for details on multi-resolution representations. 
\end{remark}

\begin{algorithm}
\caption{ENO interpolation stencil selection}
\KwIn{ENO order $\pdeg$, input array $\bf{\Delta^0}$ $= \{f^k_{i+j}\}_{j=-p+1}^{\pdeg-2}$, for any $0 \leq i \leq N_k$.} %$\Delta^0 = \{ f^k_{i+j}\}_{j=-p+1}^{\pdeg-2}$.} $\Delta^0 = (f^k_{i-p+1},\ldots, f^k_{i+p-2})$
\KwOut{Stencil shift $r$.}
\textit{Evaluate Newton undivided differences:} %\linebreak

\For{$j=1$ \KwTo $\pdeg-1$}{
${\bf{\Delta^j}} = {\bf{\Delta^{j-1}}}[2:\textrm{end}] - {\bf{\Delta^{j-1}}}[1:\textrm{end}-1]$
}
\textit{Find shift: } %\linebreak

$r=0$%\linebreak

\For{$j=2$ \KwTo $\pdeg-1$}{
\If{$|{\bf{\Delta^j}}[\pdeg-2-r] | < |{\bf{\Delta^j}}[\pdeg-1-r] |$}{ $r = r+1$}
}
\Return $r$
\label{alg:eno_int_select}
\end{algorithm}

\subsection{An adapted second-order ENO-SR algorithm}\label{sec:enosr-alg}

Even though ENO is able to interpolate rough functions without undesirable side effects (e.g. oscillations near discontinuities), there is still room for improvement. By itself, the ENO interpolation procedure degrades to first-order accuracy for piecewise smooth functions i.e, functions with a singularity in the second derivative. However, following \cite{harten_1989}, one can use \textit{sub-cell resolution} (SR), together with ENO interpolation, to obtain a second-order accurate approximation of such functions. We propose a simplified variant of the ENO-SR procedure from \cite{ACDD2005} and prove that it is still second-order accurate. In the following, we assume $f$ to be a continuous function that is two times differentiable except at a single point $z$ where the first derivative has a jump of size $[f'] = f^{\prime}(z+) - f^{\prime}(z-) $. We use the notation introduced in Section \ref{sec:ENO}.

The first step of the adapted second-order ENO-SR algorithm is to label intervals that might contain the singular point $z$ as \textit{bad} ($B$), other intervals get the label \textit{good} ($G$). We use second-order differences
\begin{equation}\label{eqn:second-diff}
    \Delta^2_hf(x):=f(x-h)-2f(x)+f(x+h)
\end{equation}
as smoothness indicators. The rules of the ENO-SR detection mechanism are the following: 
\begin{enumerate}
    \item The intervals $I_{i-1}^k$ and $I_i^k$ are labelled $B$ if
    \begin{equation*}
        \abs{\Delta_{h_k}^2f(x^k_{i-1})} > \max_{n=1,2,3}\abs{\Delta_{h_k}^2f(x^k_{i-1\pm n})}.
    \end{equation*}
    \item Interval $I_i^k$ is labelled $B$ if
    \begin{equation*}
        \abs{\Delta_{h_k}^2f(x^k_{i})} > \max_{n=1,2}\abs{\Delta_{h_k}^2f(x^k_{i+n})} \quad \textrm{and} \quad \abs{\Delta_{h_k}^2f(x^k_{i-1})} > \max_{n=1,2}\abs{\Delta_{h_k}^2f(x^k_{i-1-n})}.
    \end{equation*}
    \item All other intervals are labelled $G$. 
\end{enumerate}

\begin{figure}[!htbp]
  \centering
  \subfigure{\includegraphics[width=0.49\textwidth]{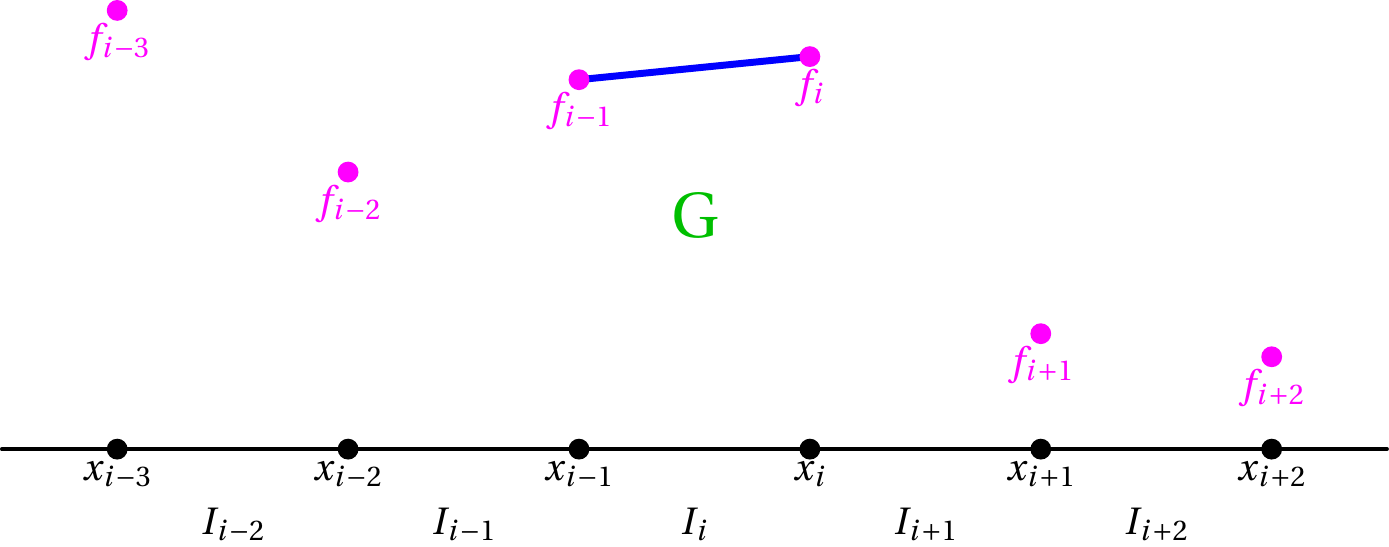}}
  \subfigure{\includegraphics[width=0.49\textwidth]{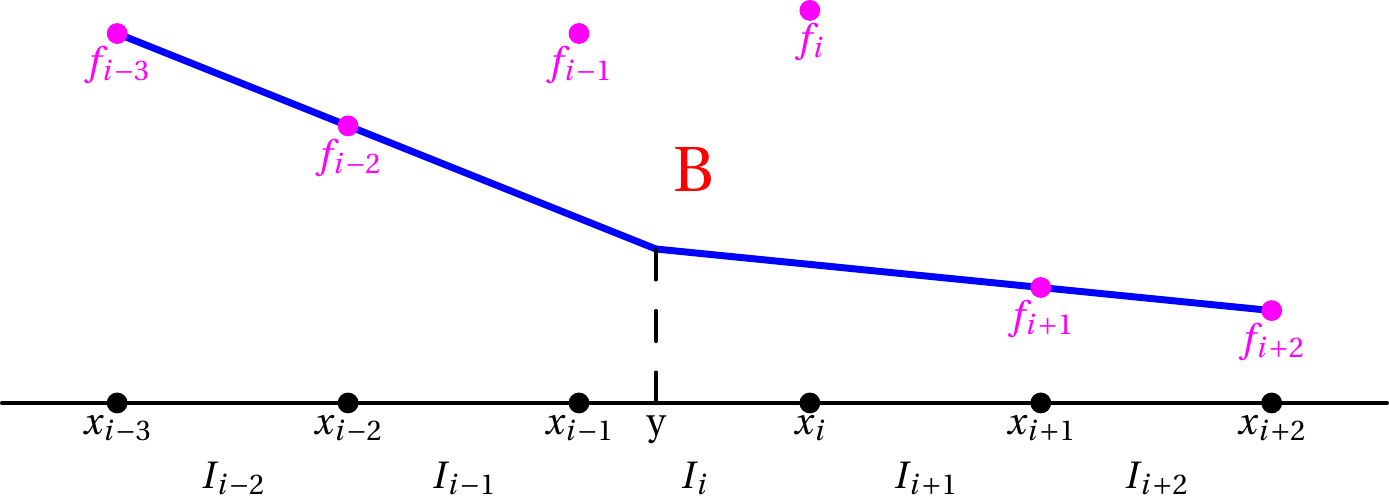}}
  \caption{Visualization of the second-order ENO-SR algorithm in the case where an interval is labelled as good (left) and bad (right). The superscript $k$ was omitted for clarity.}\label{fig:enosr}
\end{figure}

Note that neither detection rule implies the other and that an interval can be labelled $B$ by both rules at the same time. In the following, we will denote by ${p_i^k:[c,d]\rightarrow \mathbb{R}}$ the linear interpolation of the endpoints of $I_i^k$.
The rules of the interpolation procedure are stated below, a visualization of the algorithm can be found in Figure \ref{fig:enosr}. 
\begin{enumerate}
    \item If $I_i^k$ was labelled as $G$, then we take the linear interpolation on this interval as approximation for $f$, 
    \begin{equation*}
        \mathcal{I}_i^{h_k}f(x) = p_i^k(x).
    \end{equation*}
    \item If $I_i^k$ was labelled as $B$, we use $p^k_{i-2}$ and $p^k_{i+2}$ to predict the location of the singularity. If both lines intersect at a single point $y$, then we define 
    \begin{equation*}
        \mathcal{I}_i^{h_k}f(x) = p^k_{i-2}(x)\mathbbm{1}_{[c,\max\{y,c\})}(x)+p^k_{i+2}(x)\mathbbm{1}_{[\min\{y,d\},d]}(x).
        % \begin{cases}p^k_{i-2}(x) &x\leq \abs{y}, \\ p^k_{i+2}(x) &x>\abs{y},\end{cases}
    \end{equation*}
    The relation between this intersection point $y$ and the singularity $z$ is quantified by Lemma \ref{lem:L3-cohen}.
    If the two lines do not intersect, we treat $I_i^k$ as a good interval and let $\mathcal{I}_i^{h_k}f(x) = p_i^k(x)$. 
\end{enumerate}
The theorem below states that our adaptation of ENO-SR is indeed second-order accurate.
\begin{theorem}\label{thm:ENOSR2}
Let $f$ be a globally continuous function with a bounded second derivative on $\mathbb{R}\backslash \{z\}$ and a discontinuity in the first derivative at a point $z$. The adapted ENO-SR interpolant $\mathcal{I}^hf$ satisfies
\begin{equation*}
    \norm{f-\mathcal{I}^hf}_{\infty} \leq Ch^2 \sup_{\mathbb{R}\backslash \{z\}}\abs{f''}
\end{equation*}
for all $h>0$, with $C>0$ independent of $f$. 
\end{theorem}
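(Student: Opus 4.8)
The plan is to split the error estimate according to the label assigned to each interval by the detection mechanism, handling the smooth (Good) intervals by the classical linear-interpolation bound and concentrating all the work on the one or two Bad intervals surrounding the kink $z$. The first step is to establish that the detection mechanism does the right thing once $h$ is small. Away from $z$, on any stencil lying entirely in a region where $f\in C^2$, a Taylor expansion gives $\Delta^2_{h}f(x^k_j)=h^2 f''(\xi_j)$ for some nearby $\xi_j$, so all the second differences entering the rules are $O(h^2\sup\abs{f''})$ and comparable; the strict inequalities defining label $B$ then fail and such intervals are labelled $G$. Conversely, on the (at most two) grid points whose symmetric stencil straddles $z$, the jump $[f']$ contributes a term of order $h\,\abs{[f']}$ to $\Delta^2_h f$, which dominates the $O(h^2)$ contributions at neighbouring points once $h$ is small relative to $\abs{[f']}/\sup\abs{f''}$; this makes the straddling point a strict local maximum of $\abs{\Delta^2_h f}$ and triggers label $B$. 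The outcome I want to record is: $z$ lies in a Bad interval, and every Good interval lies in a region where $f\in C^2$.

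On a Good interval $I_i^k$ the interpolant is the linear interpolant $p_i^k$ of a $C^2$ function, so the textbook estimate gives $\norm{f-p_i^k}_{L^\infty(I_i^k)}\leq \tfrac{1}{8}h^2\sup\abs{f''}$, already of the claimed form. For a Bad interval $I_i^k$ I would first argue that the stencil intervals $I_{i-2}^k$ and $I_{i+2}^k$ used to build $p_{i-2}^k$ and $p_{i+2}^k$ lie strictly to the left and right of $z$ respectively, which follows from the localisation of the Bad label established above, since these intervals are separated from the kink interval by a full cell. Consequently $p_{i-2}^k$ interpolates the left smooth branch $f_L$ and $p_{i+2}^k$ the right branch $f_R$. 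The one point needing care is that near $z$ these lines are used as \emph{extrapolants}, a distance $O(h)$ beyond their fitting intervals; but a mean-value argument shows the slope of $p_{i-2}^k$ equals $f_L'$ at some interior point, so the extrapolation error over a distance $O(h)$ is still $O(h^2\sup\abs{f''})$, and likewise for $p_{i+2}^k$. Thus $\abs{p_{i-2}^k-f_L}$ and $\abs{p_{i+2}^k-f_R}$ are $O(h^2\sup\abs{f''})$ throughout $I_i^k$.

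It then remains to control the error in the transition zone between the predicted singularity $y$ and the true singularity $z$. For $x$ outside the interval with endpoints $y$ and $z$ the interpolant uses the correct branch and the error is $O(h^2\sup\abs{f''})$ by the previous paragraph; for $x$ between $y$ and $z$ it uses the wrong branch, incurring the extra term $\abs{f_R(x)-f_L(x)}$, which by continuity of $f$ at $z$ (so $f_L(z)=f_R(z)$) is bounded by $\abs{[f']}\,\abs{x-z}+O(h^2)\leq \abs{[f']}\,\abs{y-z}+O(h^2)$. Here I would invoke Lemma \ref{lem:L3-cohen}, whose role is precisely to bound $\abs{y-z}$ by $C h^2 \sup\abs{f''}/\abs{[f']}$, so that the product $\abs{[f']}\,\abs{y-z}$ is again $O(h^2\sup\abs{f''})$. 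Combining the three regimes and taking the supremum over all intervals yields the stated bound with a constant $C$ independent of $f$.

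The main obstacle I anticipate is the detection-correctness step: making rigorous, for every position of $z$ within its cell, that the two detection rules flag exactly the interval(s) around $z$ (and no spurious smooth interval), and that the resulting Bad interval is placed so that $I_{i-2}^k$ and $I_{i+2}^k$ are genuinely one-sided. This is a finite but delicate case analysis driven by where $z$ falls relative to the grid, and everything downstream, in particular the purity of the stencils for $p_{i\pm2}^k$, rests on it. I would also dispose of the degenerate case where the two lines fail to intersect (equal slopes): this corresponds to $f_L'\approx f_R'$, i.e. a vanishing jump, in which case the fallback to linear interpolation on $I_i^k$ is itself second-order accurate.
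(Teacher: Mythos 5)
Your outline reproduces the paper's argument for the \emph{resolved} regime, i.e.\ when $h$ is small compared with the critical scale $h_c=\abs{[f']}/(4\sup\abs{f''})$: detection correctness, the classical bound on good intervals, one-sidedness of the flanking stencils $I_{i\pm2}^k$, and the transition-zone estimate $\abs{f_R(x)-f_L(x)}\leq \abs{[f']}\,\abs{y-z}$ combined with Lemma \ref{lem:L3-cohen} are exactly the ingredients of the paper's proof for $h<Kh_c$. But there is a genuine gap: the theorem asserts the bound for \emph{all} $h>0$ with $C$ independent of $f$, and your detection argument explicitly requires ``$h$ small relative to $\abs{[f']}/\sup\abs{f''}$''. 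You never treat the complementary regime $h\geq Kh_c$, in which the jump is too small relative to the mesh for the detector to work reliably: the kink interval may be labelled $G$ (so the interpolant is a straight line across the kink), or it may be labelled $B$ with the flanking stencils not one-sided, or the two lines may fail to intersect. A na\"ive estimate for linear interpolation across a kink gives an error of order $h\,\abs{[f']}$, which is \emph{not} of the form $Ch^2\sup\abs{f''}$ unless one exploits the defining inequality of this regime, $\abs{[f']}=4h_c\sup\abs{f''}\leq C_0\,h\sup\abs{f''}$. The paper devotes roughly half of its proof (the cases following equation \eqref{eqn:above-hc} in Appendix \ref{app:proof-ENOSR2}) to precisely this, introducing the corrected function $f_2(x)=f(x)-[f'](x-z)_+$ and auxiliary affine functions to convert the $O(h\,\abs{[f']})$ loss into $O(h^2\sup\abs{f''})$. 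Your closing remark on the non-intersecting-lines case gestures at this (``vanishing jump, so the fallback is second-order accurate'') but does not carry out the estimate, and in any case that is only one sub-case of the missing regime.

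A smaller point: your claim that every stencil lying in a $C^2$ region is labelled $G$ is not quite right --- a strict local maximum of $\abs{f''}$ can trigger a spurious $B$ label far from $z$. This does not break the argument (both flanking interpolants then fit the same smooth branch, and second-order accuracy follows as on good intervals, which is how the paper handles the case $z\notin I_{i-1}^k\cup I_i^k\cup I_{i+1}^k$ for a bad $I_i^k$), but your case decomposition should account for bad intervals away from the singularity rather than assuming all bad intervals surround $z$.
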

\begin{proof}
The proof is an adaptation of the proof of Theorem 1 in \cite{ACDD2005} and can be found in Appendix \ref{app:proof-ENOSR2}. 
\end{proof}

\section{ENO as a ReLU DNN}\label{sec:ENO-DNN}
As mentioned in the introduction, we aim to recast the ENO interpolation algorithm from Section \ref{sec:ENO} as a ReLU DNN. Our first approach to this end begins by noticing that the crucial step of the ENO procedure is determining the correct stencil shift. Given the stencil shift, the retrieval of the ENO interpolant is straightforward. ENO-$p$ can therefore be interpreted as a classification problem, with the goal of mapping an input vector (the evaluation of a certain function on a number of points) to one of the $p-1$ classes (the stencil shifts). 
We now present one of the main results of this paper. The following theorem states that the stencil selection of $p$-th order ENO interpolation can be exactly obtained by a ReLU DNN for every order $p$. The stencil shift can be obtained from the network output by using the default output function for classification problems (cf. Remark \ref{rem:argmax}).

\begin{theorem}\label{thm:ENO-DNN}
There exists a ReLU neural network consisting of $p+ \left\lceil \log_2 \binom{p-2}{\lfloor \frac{p-2}{2}\rfloor} \right\rceil $ hidden layers,
%and a suitable output function
that takes input $\bf{\Delta^0}$ $= \{f^k_{i+j}\}_{j=-p+1}^{\pdeg-2}$ and leads to exactly the same stencil shift as the one obtained by Algorithm \ref{alg:eno_int_select}. 
\end{theorem}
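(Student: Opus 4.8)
The plan is to implement Algorithm 3.2 step by step as a ReLU network, following its two phases: first computing the Newton undivided differences $\{\bm{\Delta}^j\}$ from the input $\bm{\Delta}^0$, and then executing the shift-selection loop that compares consecutive magnitudes $|\bm{\Delta}^j[\cdot]|$. The first phase is easy: each difference $\bm{\Delta}^j$ is an affine (in fact linear) transformation of $\bm{\Delta}^{j-1}$, so the entire map $\bm{\Delta}^0 \mapsto (\bm{\Delta}^1,\dots,\bm{\Delta}^{p-1})$ is a single affine map and can be folded into one weight matrix. The only subtlety is that I must carry along every quantity needed later, so I would keep the raw differences and their negatives available, using the identity $z = (z)_+ - (-z)_+$ to reconstruct signed quantities across ReLU layers, and use $|z| = (z)_+ + (-z)_+$ to form the absolute values that the comparisons in Algorithm 3.2 require.

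The heart of the construction is emulating the loop ``\texttt{if} $|\bm{\Delta}^j[p-2-r]| < |\bm{\Delta}^j[p-1-r]|$ \texttt{then} $r=r+1$''. I would first reformulate the sequential loop as a single classification decision. For each candidate shift $r \in \{0,1,\dots,p-2\}$, the loop outputs $r$ precisely when a specific conjunction of $p-2$ strict inequalities among the computed differences holds (the increments happen up to step $r$ and then stop). Each individual inequality $|a| < |b|$ is equivalent to $\mathrm{sign}(|b|-|a|)$, and the indicator of a single comparison can be realized exactly by a small ReLU gadget: a scaled, clipped ReLU of the form $x \mapsto (\,(Mx)_+ - (Mx-1)_+\,)$ approximates a step, but to get an \emph{exact} Boolean I would instead exploit that the comparisons only ever need to be combined by logical AND and that, via Remark 3.1, only the $\arg\max$ of the final output layer matters, not its precise values. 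So it suffices to assign to each class $r$ a score that is strictly maximal exactly on the input region where Algorithm 3.2 returns $r$.

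To build those scores I would encode each comparison as a $\{0,1\}$-valued feature and then take AND-combinations of these features through successive hidden layers. A logical AND of $k$ bits $b_1,\dots,b_k\in\{0,1\}$ is computed exactly by $\big(\sum_i b_i - (k-1)\big)_+$, which is $1$ if all bits are $1$ and $0$ otherwise; this is the layer-efficient primitive I would lean on. Arranging these ANDs in a balanced binary tree over the relevant comparisons is what produces the depth term $\lceil \log_2 \binom{p-2}{\lfloor (p-2)/2\rfloor}\rceil$: the number of distinct conjunctive patterns one must resolve is controlled by the central binomial coefficient $\binom{p-2}{\lfloor (p-2)/2\rfloor}$, and a tree of ANDs over that many terms has logarithmic depth. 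Together with the $p-1$ comparison levels implicit in computing and combining the differences, this accounts for the total $p + \lceil \log_2 \binom{p-2}{\lfloor (p-2)/2\rfloor}\rceil$ hidden layers.

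The main obstacle will be twofold. First, the comparisons in Algorithm 3.2 are \emph{strict} inequalities, and ReLU gadgets naturally produce non-strict (closed) decision regions; I must check that ties are broken consistently with the algorithm's convention (e.g.\ using the $\min\arg\max$ rule from Remark 3.1, so that boundary cases map to the same shift the algorithm would select). Second, I must verify that the loop's sequential, order-dependent update of $r$ genuinely coincides with the conjunctive/parallel description I substitute for it—that is, that ``increment while the test passes, then halt'' is logically equivalent to the family of AND-conditions I assign to each class—and that the balanced AND-tree faithfully realizes these conditions while respecting the claimed depth budget. Once these equivalences are pinned down, the network reproduces the stencil shift exactly, and applying $\arg\max$ (with $\min$ tie-breaking) to the output layer yields the same $r$ as Algorithm 3.2.
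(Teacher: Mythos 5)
Your overall skeleton --- compute the undivided differences affinely, form the comparison quantities with $|z|=(z)_+ + (-z)_+$, replace the sequential loop by per-class decision regions, and let $\min\mathrm{argmax}$ do the final selection --- is the same as the paper's constructive proof. But two of your concrete steps do not work as stated, and they are precisely where the paper has to do real work. First, the decision region for class $r$ is \emph{not} ``a specific conjunction of $p-2$ strict inequalities'' with ``the increments happening up to step $r$ and then stopping'': because the indices compared at step $j$ depend on the current value of $r$, the increments can occur at \emph{any} $r$ of the $p-2$ steps, so class $r$ is a union of $\binom{p-2}{r}$ conjunctive cells. The network must detect whether \emph{any} of these cells is active, and it is this OR --- realized in the paper as a binary tree of pairwise maxima over the up to $\binom{p-2}{\lfloor (p-2)/2\rfloor}$ cells of the largest class --- that costs the $\left\lceil\log_2\binom{p-2}{\lfloor(p-2)/2\rfloor}\right\rceil$ layers. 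It does not come from a ``tree of ANDs'': as you note yourself, the AND of $k$ exact bits is a single layer $\bigl(\sum_i b_i - (k-1)\bigr)_+$, so a tree of ANDs buys no depth and explains none.

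Second, and more fundamentally, your construction presupposes exact $\{0,1\}$-valued indicators of strict inequalities, which a ReLU network (a continuous function) cannot produce; the clipped step $x\mapsto (Mx)_+-(Mx-1)_+$ is wrong on an interval of inputs no matter how large $M$ is. You correctly identify this as ``the main obstacle'' and correctly observe that only the argmax of the scores matters, but you do not say how to build scores that are \emph{exactly} maximal on the right region out of \emph{inexact} bits --- feeding approximate bits into $\bigl(\sum_i b_i-(k-1)\bigr)_+$ gives approximate conjunctions, and one must then rule out that a wrong class attains the top score near a decision boundary. The paper's resolution is the pair of one-sided Heaviside surrogates $H_1, H_2$ (each agreeing with the Heaviside function on one closed half-line), arranged so that for \emph{every} input the path actually taken through the comparison DAG attains the exact extremal score $2p-4$ while every non-realized path variant scores strictly less, with ties at $X^2_j=0$ deliberately sent to ``suitable'' shifts that $\min\mathrm{argmax}$ then resolves to the algorithm's choice. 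Without an ingredient of this kind the equivalence you defer to ``once these equivalences are pinned down'' is the theorem's actual content, and the proposal has a genuine gap.
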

\begin{proof}
We first sketch an intuitive argument why there exists a ReLU DNN that, after applying $\min\argmax$, leads to the ENO-$p$ stencil shift. Algorithm \ref{alg:eno_int_select} maps every input stencil $\bf{\Delta^0}$ $\in [c,d]^{2p-2}$ to a certain stencil shift $r$. A more careful look at the algorithm reveals that the input space $[c,d]^{2p-2}$ can be partioned into polytopes such that the interior of every polytope is mapped to one of the $p-1$ possible stencil shifts. Given that every ReLU DNN is a continuous, piecewise affine function (e.g. \cite{arora2016understanding}), one can construct for every $i\in\{0, \ldots, p-2\}$ a ReLU DNN $\phi_i: [c,d]^{2p-2}\to \mathbb{R}$ that is equal to 0 on the interior of every polytope corresponding to stencil shift $i$ and that is strictly smaller than 0 on the interior of every polytope not corresponding to stencil shift $i$. It is then clear that 
\begin{equation}\label{eq:eno-short-proof}
    \min\argmax\{\phi_0, \ldots, \phi_{p-2}\}-1
\end{equation}
corresponds to the ENO stencil shift on the interiors of all polytopes. Thanks to the minimum in \eqref{eq:eno-short-proof}, it also corresponds to the unique stencil shift from Algorithm \ref{sec:enosr-alg} on the faces of the polytopes, where multiple $\phi_i$ are equal to zero. The claim then follows from the fact that the mapping ${\bf{\Delta^0}}$ $\mapsto (\phi_0({\bf{\Delta^0}}), \ldots, \phi_{p-2}({\bf{\Delta^0}}))$ can be written as a ReLU DNN. 

In what follows, we present a more constructive proof that sheds light on the architecture that is needed to represent ENO and the sparsity of the corresponding weights. In addition, a technique to replace the Heaviside function (as in Algorithm \ref{alg:eno_int_select}) is used. Recall that we look for the ENO stencil shift $r:=r_i^k$ corresponding to the interval $I_i^k$. Let $k\in\mathbb{N}$ and define $\Delta_j^0 = f^k_{i+j}$ for $-p+1\leq j \leq p-2$ and $0\leq i \leq N_k$, where $f^k_{-p+1},\ldots, f^k_{-1}$ and $f^k_{N_k+1},\ldots, f^k_{N_k+p-2}$ are suitably defined ghost values. Following Section \ref{sec:ENO}, we define $\Delta^s_{j}=\Delta^{s-1}_{j}-\Delta^{s-1}_{j-1}$ for $s$ odd and $\Delta^s_{j}=\Delta^{s-1}_{j+1}-\Delta^{s-1}_{j}$ for $s$ even, and with $\bf{\Delta^s}$ we denote the vector consisting of all $\Delta^s_j$ for all applicable $j$. In what follows, we use $Y^l$ and $Z^l$ to denote the values of the $l$-th layer of the neural network before and after activation, respectively. We use the notation $X^l$ for an auxiliary vector needed to calculate $Y^l$. 

\textbf{Step 1. } Take the input to the network to be 
\begin{equation*}
    Z^0 = [\Delta_{-p+1}^0,\ldots,\Delta_{p-2}^0]\in\mathbb{R}^{2(p-1)}.
\end{equation*}
These are all the candidate function values considered in Algorithm \ref{alg:eno_int_select}. 

\textbf{Step 2. } We want to obtain all quantities $\Delta_{j}^s$ that are compared in Algorithm \ref{alg:eno_int_select}, as shown in Figure \ref{fig:ndd}. We therefore choose the first layer (before activation) to be 
\begin{equation*}
    Y^1 = \begin{bmatrix} Y_{\Delta} \\ -Y_{\Delta} \end{bmatrix} \in \mathbb{R}^{2M} \quad \textrm{where} \quad Y_{\Delta} = \begin{bmatrix} \Delta_{0}^2 \\\Delta_{-1}^2  \\ \vdots \end{bmatrix} \in \mathbb{R}^{M}
\end{equation*}
is the vector of all the terms compared in Algorithm \ref{alg:eno_int_select} and $M=\frac{p(p-1)}{2}-1$. Note that every undivided difference is a linear combination of the network input. Therefore one can obtain $Y^1$ from $Z^0$ by taking a null bias vector and weight matrix $W^1\in\mathbb{R}^{2M\times (2p-2)}$. After applying the ReLU activation function, we obtain
\begin{equation*}
    Z^1 = \begin{bmatrix} (Y_{\Delta})_+ \\ (-Y_{\Delta})_+ \end{bmatrix}.
\end{equation*}

\textbf{Step 3. } We next construct a vector $X^2\in\mathbb{R}^{L}$, where $L=\frac{(p-2)(p-1)}{2}$, that contains all the quantities of the if-statement in Algorithm \ref{alg:eno_int_select}. This is ensured by setting,
\begin{equation*}
    X^2 = \begin{bmatrix} \abs{\Delta_{-1}^2}-\abs{\Delta_{0}^2}\\ \abs{\Delta_{0}^3}-\abs{\Delta_{1}^3} \\ \abs{\Delta_{-1}^3}-\abs{\Delta_{0}^3} \\\vdots \end{bmatrix}. 
\end{equation*}
Keeping in mind that $\abs{a}=(a)_++(-a)_+$ for $a\in\mathbb{R}$ we see that there is a matrix $\widetilde{W}^2\in\mathbb{R}^{L\times 2M}$ such that $X^2=\widetilde{W}^2 Z^1$. 
% Ideally, we would now use the Heaviside function $H$ as activation function to get that $H(X^2_j)=1$ when $X^2_j\geq0$ and $H(X^2_j)=-1$ otherwise. As a ReLU neural network is a continuous function, it is impossible to reproduce the output of a Heaviside function using a ReLU neural network. We will however construct a ReLU neural network that contains the same information. For this goal, define the functions
We wish to quantify for each component of $X^2$ whether it is strictly negative or not (cf. the if-statement of Algorithm \ref{alg:eno_int_select}). For this reason, we define the functions
$H_1:\mathbb{R}\rightarrow \mathbb{R}$ and $H_2:\mathbb{R}\rightarrow\mathbb{R}$ by
\begin{equation*}
    H_1(x) = \begin{cases} 0 &x\leq -1 \\ x+1 &-1 < x < 0 \\ 1 &x \geq 0 \end{cases} \quad \textrm{and} \quad H_2(x) = \begin{cases} -1 &x\leq 0 \\ x-1 &0 < x < 1 \\ 0 &x \geq 1 \end{cases}. 
\end{equation*}
The key property of these functions is that $H_1$ and $H_2$ agree with the Heaviside function on $x>0$ and $x<0$, respectively. When $x=0$ the output is respectively $+1$ or $-1$. Now note that $H_1(x) = (x+1)_{+} - (x)_{+}$ and $H_2(x) =  (x)_{+} - (x-1)_{+}-1$. 
This motivates us to define
\begin{equation*}
    Y^2 = \begin{bmatrix} X^2+1 \\ X^2 \\ X^2-1 \end{bmatrix}\in\mathbb{R}^{3L},
\end{equation*}
which can be obtained from $Z^1$ by taking weight matrix $W^2\in\mathbb{R}^{3L\times 2M}$ and bias vector $b^2\in\mathbb{R}^{3L}$, 
\begin{equation*}
    W^2=\left(\begin{bmatrix} 1\\1\\1 \end{bmatrix}\otimes \mathbb{I}_{L}\right)\cdot\widetilde{W}^2 \quad \textrm{and} \quad b^2_j=\begin{cases} 1 &1 \leq j \leq L \\ 0 &L+1 \leq j \leq 2L \\ -1 &2L+1 \leq j \leq 3L \end{cases}
\end{equation*}
where $\mathbb{I}_{L}$ denotes the $L\times L$ unit matrix. 
After activation we obtain $Z^2=(Y^2)_+=(W^2Z^1+b^2)_+$.

\textbf{Step 4. } We first define $X^3 \in \mathbb{R}^{2L}$ by
\begin{equation*}
    X^3_j = \begin{cases} H_1(X^2_j) = Z^2_j-Z^2_{L+j}& 1\leq j \leq L \\ H_2(X^2_{j-L}) = Z^2_j-Z^2_{L+j}-1 & L + 1 \leq j \leq 2L. \end{cases}
\end{equation*}
This is clearly for every $j$ an affine transformation of the entries of $Z^2$. For this reason there exist a matrix $\widetilde{W}^3\in\mathbb{R}^{2L\times 3L}$ and a bias vector $\Tilde{b}^3\in\mathbb{R}^{2L}$ such that $X^3 = \widetilde{W}^3 Z^2+\Tilde{b}^3$. 
\begin{figure}
\centering
\begin{minipage}{.48\textwidth}
  \centering
  \includegraphics[width=\linewidth]{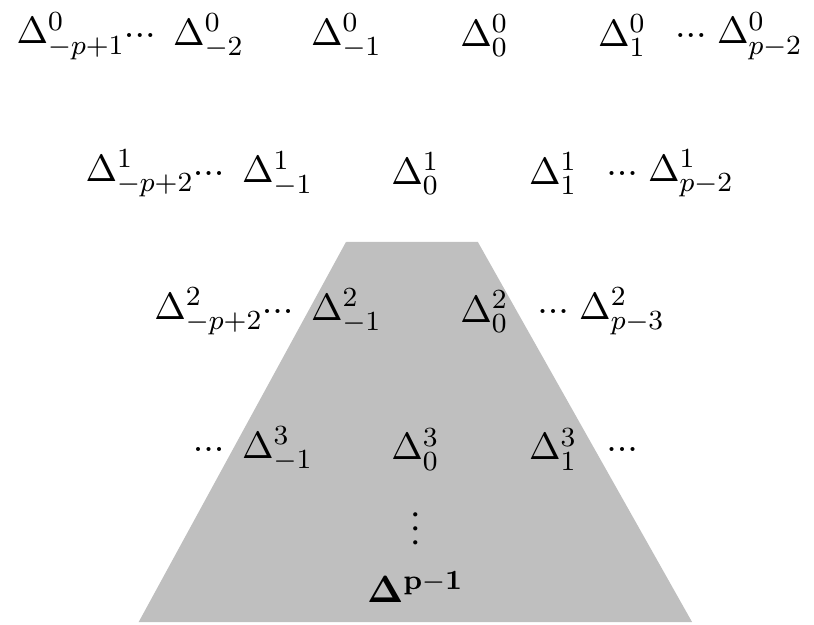}
  \caption{Only undivided differences in the shaded region are compared in Algorithm \ref{alg:eno_int_select}.}
  \label{fig:ndd}
\end{minipage}%
\quad
\begin{minipage}{.48\textwidth}
  \centering
  \includegraphics[width=\linewidth]{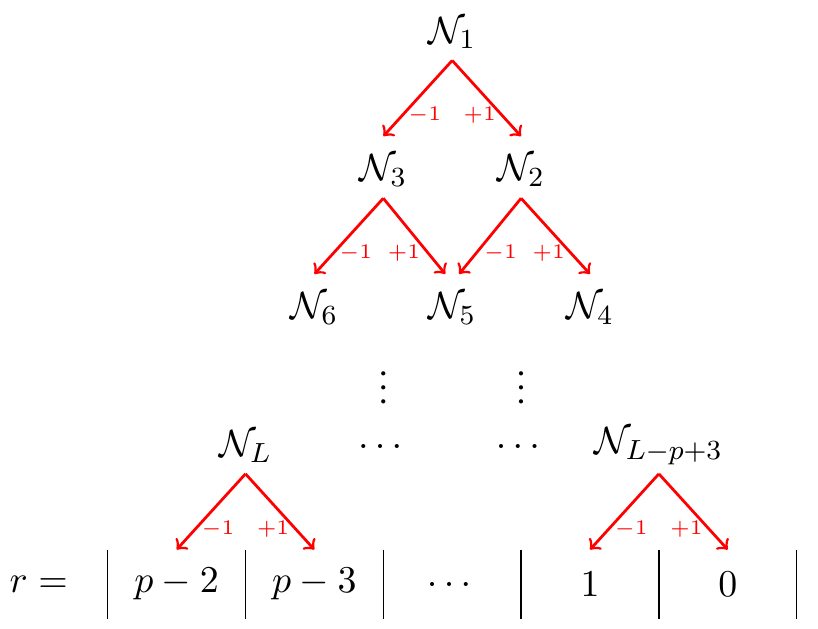}
  \caption{Arrangement of $N_1,\ldots,N_L$ into directed acyclic graph.}
  \label{fig:ndd_pascal}
\end{minipage}
\end{figure}
In order to visualize the next steps, we arrange the elements of $X^3$ in a triangular directed acyclic graph, shown in Figure \ref{fig:ndd_pascal}, where every node $\mathcal{N}_j$ corresponds to the tuple $(X^3_j,X^3_{j+L}) = ( H_1(X^2_j),  H_2(X^2_j))$. 
We note that this tuple is either of the form $( +1,  H_2(X^2_j))$ or $( H_1(X^2_j),  -1)$. Algorithm \ref{alg:eno_int_select} is equivalent to finding a path from the top node to one of the bins on the bottom.  Starting from $\mathcal{N}_1$, we move to the closest element to the right in the row below (i.e. $\mathcal{N}_2$) if $\mathcal{N}_1$ is of the form $( +1,  H_2(X^2_j))$. If $\mathcal{N}_1$ is of the form $( H_1(X^2_j),  -1)$, we move to the closest element to the left in the row below (i.e. $\mathcal{N}_3$). If $\mathcal{N}_1$ is of the form $(+1,-1)$, then it is not important in which direction we move. Both paths lead to a suitable ENO stencil shift. Repeating the same procedure at each row, one ends up in one of the $p-1$ bins at the bottom representing the stencil shift $r$. 

There are $2^{p-2}$ paths from the top to one of the bins at the bottom. In order to represent the path using a $(p-2)$-tuple of entries of $X^3$, one needs to choose between $H_1(X^2_j)$ and $H_2(X^2_j)$ at every node of the path, leading to $2^{p-2}$ variants of each path. At least one of these variants only takes the values $+1$ and $-1$ on the nodes and is identical to the path described above; this is the variant we wish to select. Counting all variants, the total number of paths is $2^{2p-4}$. 

Consider a path $\mathcal{P}=(X^3_{j_1}, \ldots , X^3_{j_{p-2}})$ that leads to bin $r$. We define for this path a weight vector $W \in \{-1,0,1\}^{2L}$ whose elements are set as
\begin{equation*}
    W_j = \begin{cases} +1 &\textrm{if $X^3_j=+1$ and $j=j_s$ for some $1\leq s \leq p-2$} \\ -1 &\textrm{if $X^3_j=-1$ and $j=j_s$ for some $1\leq s \leq p-2$} \\ 0 &\textrm{otherwise.} \end{cases}
\end{equation*}
For this particular weight vector and for any possible $X^3 \in \mathbb{R}^{2L}$ we have ${W \cdot X^3}\leq p-2$, with equality achieved if and only if the entries of $X^3$ appearing in $\mathcal{P}$ are assigned the precise values used to construct $W$. 
One can construct such a weight vector for each of the $2^{2p-4}$ paths. We next construct the weight matrix $\widehat{W}^3\in \mathbb{R}^{2^{2p-4}\times 2L}$ in such a way that the first $2^{p-2}\cdot \binom{p-2}{0}$ rows correspond to the weight vectors for paths reaching $r=0$, the next $2^{p-2}\cdot \binom{p-2}{1}$ for paths reaching $r=1$ et cetera. We also construct the bias vector $\hat{b}^3\in \mathbb{R}^{2^{2p-4}}$ by setting each element to $p-2$ and we define $\hat{X}^3 = \widehat{W}^3 X^3 + \hat{b}^3=\widehat{W}^3 (\widetilde{W}^3 Z^2+\Tilde{b}^3) + \hat{b}^3$. By construction, $\hat{X}^3_j = 2p-4$ if and only if path $j$ corresponds to a suitable ENO stencil shift, otherwise $0\leq \hat{X}^3_j<2p-4$. 

\textbf{Step 5.} Finally we define the final output vector by taking the maximum of all components of $\hat{X}^3$ that correspond to the same bin, 
\begin{equation*}
    \hat{Y}_j = \max\left\{\hat{X}^3\left(1+2^{p-2}\cdot\sum_{k=0}^{j-2} \binom{p-2}{k}\right), \ldots, \hat{X}^3\left(2^{p-2}\cdot\sum_{k=0}^{j-1}  \binom{p-2}{k}\right)\right\},
\end{equation*}
for $j=1,\ldots, p-1$ and where $\hat{X}^3(j):=\hat{X}^3_j$. Note that $\hat{Y}_j$ is the maximum of $2^{p-2}\cdot \binom{p-2}{j-1}$ real positive numbers. Using the observation that $\max\{a,b\}=(a)_++(b-a)_+$ for $a,b\geq 0$, one finds that the calculation of $\hat{Y}$ requires $
% \left\lceil \log_2 n\left(\lfloor \frac{p-2}{2}\rfloor+1\right) \right\rceil = 
p-2+ \left\lceil \log_2 \binom{p-2}{\lfloor \frac{p-2}{2}\rfloor} \right\rceil $ additional hidden layers. 
By construction, it is true that $\hat{Y}_j=2p-4$ if and only if the $(j-1)$-th bin is reached. Furthermore, $\hat{Y}_j<2p-4$ if the $(j-1)$-th bin is not reached. 
The set of all suitable stencil shifts $R$ and the  unique stencil shift $r$ from Algorithm \ref{alg:eno_int_select} are then respectively given by
\begin{equation}\label{eqn:ENO-shift}
    R = \mathrm{argmax}_j \hat{Y}_j - 1 \quad \textrm{and} \quad r = \min R = \min \mathrm{argmax}_j \hat{Y}_j - 1,
\end{equation}
where for classification problems, $\min \mathrm{argmax}$ is the default output function to obtain the class from the network output (see Remark \ref{rem:argmax}).
\end{proof}

% \begin{remark}
% In Appendix \ref{app:ENO-DNN} we give alternative proofs of Theorem  \ref{thm:ENO-DNN} for the cases $p=3,4$. The networks constructed in these proofs are smaller than the ones obtained from Theorem \ref{thm:ENO-DNN} and therefore lead to a sharper upper bound for the number of hidden layers needed to exactly represent the ENO interpolation stencil shift algorithm. 
% \end{remark}
\begin{remark}
The neural network constructed in the above theorem is local in the sense that for each cell, it provides a stencil shift. These local neural networks can be concatenated to form a single neural network that takes as its input, the vector $f^k$ of sampled values and returns the vector of interpolated values that approximates $f^{k+1}$. The global neural network combines the output stencil shift of each local neural network with a simple linear mapping \eqref{eqn:eno_int}. 
\end{remark}

Although the previous theorem provides a network architecture for every order $p$, the obtained networks are excessively large for small $p$. We therefore present alternative constructions for ENO interpolation of orders $p=3,4$. 

Algorithm \ref{alg:eno_int_select} for $p=3$ can be exactly represented by the following ReLU network with a single hidden layer, whose input is given by $X = (\Delta_{-2}^0,\Delta^0_{-1}, \Delta^0_0, \Delta^0_1)^\top$. 
The first hidden layer is identical to the one described in the original proof of Theorem \ref{thm:ENO-DNN} for $p=4$, with a null bias vector and $\W^1 \in \Ro^{4 \times 4}$,
\begin{equation}\label{eqn:DeLENO3int-L1}
    \W^1 = \begin{pmatrix}
0 & 1 & -2 & 1 \\
1 & -2& 1 & 0 \\
0 & -1 & 2 & -1 \\
-1 & 2& -1 & 0 \\
\end{pmatrix}, \quad \bb^1 = \begin{pmatrix} 0\\0\\0\\0\end{pmatrix}.
\end{equation}
The weights and biases of the output layer are
\begin{equation}\label{eqn:DeLENO3int-L2}
    W^2 = \begin{pmatrix}
-1 & 1 & -1 & 1\\
1 & -1 & 1 & -1\\
\end{pmatrix}, \quad \bb^2 = \begin{pmatrix} 0\\0\end{pmatrix}.
\end{equation}
The resulting network output is
\begin{equation*}
    \hat{Y} = \begin{pmatrix}\abs{\Delta^2_{-1}}-\abs{\Delta^2_0} \\ \abs{\Delta^2_{0}}-\abs{\Delta^2_{-1}}\end{pmatrix},
\end{equation*}
from which the ENO stencil shift can then be determined using \eqref{eqn:ENO-shift}.

For $p=4$, Algorithm \ref{alg:eno_int_select} can be represented by following ReLU network with 3 hidden layers, whose input is given by $X = (\Delta_{-3}^0,\Delta_{-2}^0,\Delta^0_{-1}, \Delta^0_0, \Delta^0_1,\Delta_{2}^0)^\top$.
The first hidden layer is identical to the one described in the  original proof of Theorem \ref{thm:ENO-DNN} for $p=4$, with a null bias vector and $\W^1 \in \Ro^{10 \times 6}$, 
\begin{equation}\label{eqn:DeLENO4int-L1}
    \W^1 = \begin{pmatrix} \widetilde{W}^1 \\ -\widetilde{W}^1 \end{pmatrix} \in \Ro^{10 \times 6} \quad \textrm{where}\quad \widetilde{W}^1 = \begin{pmatrix} 0 & 0 & 1 & -2 & 1 & 0 \\ 0 & 1 & -2 & 1 & 0 & 0 \\ 0 & 0 & -1 & 3 & -3 & 1 \\ 0 & -1 & 3 & -3 & 1 & 0 \\  -1 & 3 & -3 & 1 & 0 & 0\end{pmatrix}.
\end{equation}
The second hidden layer has a null bias vector and the weight matrix
\begin{equation}\label{eqn:DeLENO4int-L2}
    \W^2 = \begin{pmatrix} \widetilde{W}^2 \\ -\widetilde{W}^2 \end{pmatrix} \in \Ro^{6 \times 10} \quad \textrm{where}\quad \widetilde{W}^2 = \begin{pmatrix} 1&1 \end{pmatrix}\otimes \begin{pmatrix} -1&1&0&0&0\\ 0&0&-1&1&0\\ 0&0&0&-1&1 \end{pmatrix}.
\end{equation}
Note that $\widetilde{W}^2 \in \Ro^{3 \times 10}$ is as in the original proof of Theorem \ref{thm:ENO-DNN} for $p=4$. 
The third hidden layer and the output layer both have a null bias vector and their weights are respectively given by,
\begin{equation}\label{eqn:DeLENO4int-L3}
    \W^3 = \begin{pmatrix}
1 & 1 & 1 & 0 & 0 & 1\\
1 & 0 & 1 & 0 & 1 & 1\\
-1 & 1 & 0 & 1 & 0 & -1\\
0 & 1  & 0 & 1 & 1 & 1
\end{pmatrix} \quad \text{and} \quad  \W^4 = \begin{pmatrix}
1 & 0 & 0 & 0\\
0 & 1 & 1 & 0\\
0 & 0 & 0 & 1
\end{pmatrix}.
\end{equation}
After an elementary, yet tedious case study, one can show that the shift can again be determined using \eqref{eqn:ENO-shift}.

\begin{remark}
Similarly, one can show that the stencil selection algorithm for ENO reconstruction (Algorithm \ref{alg:eno_fd_select} in Appendix \ref{sec:ENO-reconstruction}) for $p=2$ can be exactly represented by a ReLU DNN with one hidden layer of width 4. The input and output dimension are 3 and 2, respectively. For $p=3$, Algorithm \ref{alg:eno_fd_select} can be shown to correspond to a ReLU DNN with three hidden layers of dimensions $(10,6,4)$. Input and output dimension are 5 and 4, respectively. 
\end{remark}

\begin{remark}\label{rem:ENO-regression}
After having successfully recast the ENO stencil selection as a ReLU neural network, it is natural to investigate whether there exists a pure ReLU neural network (i.e. without additional output function) input $f^k$ and output $(\mathcal{I}^{h_k}f)^{k+1}$, as in the setting of \eqref{eqn:eno_int} in Section \ref{sec:ENO}. Since ENO is a discontinuous procedure and a pure ReLU neural network is a continuous function, a network with such an output does not exist. It remains however interesting to investigate to which extent we can approximate ENO using ReLU neural networks. This is the topic of Section 4.4 of the thesis \cite{TDR_THESIS}, where it is shown that there exists a pure ReLU DNN that mimics ENO to some extent such that some of the desirable properties of ENO are preserved. 
\end{remark}

\section{ENO-SR as a ReLU DNN}\label{sec:ENO-SR-DNN}
The goal of this section is to recast the second-order ENO-SR procedure from Section \ref{sec:enosr-alg} as a ReLU DNN, similar to what we did for ENO in Section \ref{sec:ENO-DNN}. Just like ENO, the crucial step of ENO-SR is the stencil selection, allowing us to interpret ENO-SR as a classification problem. In this context, we prove the equivalent of Theorem \ref{thm:ENO-DNN} for ENO-SR-2. Afterwards, we interpret ENO-SR as a regression problem (cfr. Remark \ref{rem:ENO-regression}) and investigate whether we can cast ENO-SR-2 as a pure ReLU DNN, i.e. without additional output function. In the following, we assume $f$ to be a continuous function that is two times differentiable except at a single point $z$ where the first derivative has a jump of size $[f'] = f^{\prime}(z+) - f^{\prime}(z-) $. 

\subsection{ENO-SR-2 stencil selection as ReLU DNN}

We will now prove that a second-order accurate prediction of $f^{k+1}$ can be obtained given $f^k$ using a ReLU DNN, where we use notation as in Section \ref{sec:ENO}. Equation \eqref{eqn:eno_int} shows that we only need to calculate $\mathcal{I}^{h_k}_{i}f(x^{k+1}_{2i-1})$ for every $1\leq i\leq N_k$. The proof we present can be directly generalized to interpolation at points other than the midpoints of the cells, e.g. retrieving cell boundary values for reconstruction purposes. 
From the ENO-SR interpolation procedure it is clear that for every $i$ there exists $r_i^k\in\{-2,0,2\}$ such that $\mathcal{I}^{h_k}_{i}f(x^{k+1}_{2i-1})=p^k_{i+r_i^k}(x^{k+1}_{2i-1})$. 
Analogously to what was described in Section \ref{sec:ENO-DNN}, this gives rise to a classification problem. Instead of considering the stencil shifts as the output classes of the network, one can also treat the different cases that are implicitly described in the ENO-SR interpolation procedure in Section \ref{sec:enosr-alg} as classes. 
This enables us to construct a ReLU neural network such that the stencil shift $r^k_i$ can be obtained from the network output by using the default output function for classification problems (cf. Section \ref{sec:ENO} and Remark \ref{rem:argmax}). 

\begin{theorem}\label{thm:eno-sr-ann}
%There exists a ReLU neural network with a discontinuous output function, input $f^k$ and output $(r_1^k, \ldots, r_{N_k}^k)$ as defined above. 
There exists a ReLU neural network with input $f^k$ that leads to output $(r_1^k, \ldots, r_{N_k}^k)$ as defined above. 
\end{theorem}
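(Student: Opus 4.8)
The plan is to mirror the classification strategy used for Theorem~\ref{thm:ENO-DNN}: build, for each cell $I_i^k$, a local ReLU network whose scores, after the $\min\argmax$ output function, return the shift $r_i^k\in\{-2,0,2\}$, and then concatenate these local networks---each acting on a fixed-width window of $f^k$ together with the prescribed ghost values---into a single global network with output $(r_1^k,\ldots,r_{N_k}^k)$. First I would isolate exactly what $r_i^k$ depends on. By the interpolation rules of Section~\ref{sec:enosr-alg}, $r_i^k=0$ precisely when $I_i^k$ is labelled $G$ (or is labelled $B$ but the two candidate lines are parallel and do not intersect), while in the genuine $B$ case the shift is $-2$ or $+2$ according to whether the midpoint $x^{k+1}_{2i-1}$ lies to the left or to the right of the intersection point $y$ of $p^k_{i-2}$ and $p^k_{i+2}$; the left line is assigned shift $-2$ and the right line shift $+2$.

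The decisive observation is that every relevant quantity is an affine function of the input $f^k$: the second differences $\Delta^2_{h_k}f(x^k_j)$ entering the detection rules, the two candidate values $p^k_{i\pm2}(x^{k+1}_{2i-1})$, and the slope difference $\delta_i:=h_k^{-1}\big[(f^k_{i-2}-f^k_{i-3})-(f^k_{i+2}-f^k_{i+1})\big]$ are all linear in $f^k$. Consequently only finitely many $f^k$-coordinates in a fixed window around index $i$ are needed, which is what makes each local network well defined and stackable. The detection step then reproduces the machinery of Theorem~\ref{thm:ENO-DNN}. I would first form the absolute values $\abs{\Delta^2_{h_k}f(x^k_j)}$ via $\abs{a}=(a)_++(-a)_+$, build the required maxima over competitors through $\max\{a,b\}=(a)_++(b-a)_+$ (valid since these quantities are nonnegative), and realize each comparison $\abs{A}>\max_n\abs{B_n}$ of rules~1--2 by the Heaviside-type gadgets $H_1,H_2$ from that proof. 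Since $I_i^k$ is labelled $B$ exactly when rule~1 fires at index $i$, or rule~1 fires at index $i+1$, or rule~2 fires at index $i$, the resulting ``is $B$'' indicator is an OR of finitely many such tests, each of which is piecewise affine and hence a ReLU subnetwork.

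The genuinely new ingredient---and the step I expect to be the main obstacle---is the side-of-singularity test. Writing $D_i:=p^k_{i-2}(x^{k+1}_{2i-1})-p^k_{i+2}(x^{k+1}_{2i-1})$, a one-line computation gives $D_i=\delta_i\,(x^{k+1}_{2i-1}-y)$ when the lines meet at $y$, so that for $\delta_i\neq0$
\[
x^{k+1}_{2i-1}<y \iff D_i\,\delta_i<0,\qquad x^{k+1}_{2i-1}\geq y \iff D_i\,\delta_i\geq 0,
\]
while $\delta_i=0$ is precisely the parallel, non-intersecting case treated as good. Thus the shift is governed by the \emph{sign of the product} $D_i\delta_i$ of two affine functions of the input, an XOR-type condition whose decision region is non-convex. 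I would handle this by splitting on the four sign combinations of $(D_i,\delta_i)$: each combination is a convex polytope in $f^k$-space on whose interior $r_i^k$ is constant, so that $f^k\mapsto r_i^k$ is piecewise constant with polytopal pieces.

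One can then construct continuous piecewise-linear score functions $\phi_{-2},\phi_0,\phi_2$ that vanish on the closed region of their own class and are strictly negative on the others; by the representation of continuous piecewise-linear maps as ReLU networks, $(\phi_{-2},\phi_0,\phi_2)$ is a ReLU DNN, and $\min\argmax$ recovers the correct shift. The $\min$ resolves the algorithm's deterministic tie-breaking on the boundary sets $\{\delta_i=0\}$ (mapped to $r_i^k=0$) and $\{D_i=0\}$, where $x^{k+1}_{2i-1}=y$ is assigned to the right line, so $r_i^k=+2$. The careful bookkeeping needed to make the boundary behaviour of these $\phi$'s agree exactly with the deterministic choices of the algorithm, together with the correct OR-combination of the three $B$-detection conditions, is the part requiring the most care; everything else is an affine read-off of $f^k$ followed by the ReLU gadgets already developed for Theorem~\ref{thm:ENO-DNN}.
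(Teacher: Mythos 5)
Your proposal is correct in substance and follows the same overall strategy as the paper: treat the shift selection as a classification problem, observe that every quantity entering the decision is affine in a fixed window of $f^k$, realize the detection rules with ReLU gadgets, and read off the class with a $\min\mathrm{argmax}$-type output function. The realizations differ in two respects worth comparing. First, the paper works with four classes rather than three (good; bad with parallel lines; bad with $x^{k+1}_{2i-1}$ to the right of $y$; bad with it to the left) and constructs four explicit \emph{nonnegative} per-interval quantities --- a bad-interval indicator $X^2_i$ assembled from terms such as $(\abs{X^1_i}-M_i)_+$, the slope gap $\abs{a_{i-2}-a_{i+2}}$, and the positive and negative parts of $\abs{b_{i-2}-b_{i+2}}-x^{k+1}_{2i-1}\abs{a_{i-2}-a_{i+2}}$ --- and then applies $\min\mathrm{argmin}$, so that the first vanishing entry encodes the case and the algorithm's tie-breaking is matched by the ordering of the four entries. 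Your side-of-singularity test via the sign of the product $D_i\delta_i$ is mathematically equivalent, but because it is an XOR of two affine signs it forces you into the four-quadrant split and the generic ``CPWL score functions on a polyhedral partition'' existence argument, which the paper uses only for the intuitive sketch of Theorem~\ref{thm:ENO-DNN} and deliberately avoids here in favour of an explicit construction (it needs the concrete architecture of Figure~\ref{fig:flowchart_adapted} for training). Your route is shorter; the paper's is constructive and yields explicit widths and depths.

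One concrete slip to repair: with the scores listed as $(\phi_{-2},\phi_0,\phi_2)$ and each $\phi_r$ vanishing on the \emph{closure} of its class region, the set $\{x^{k+1}_{2i-1}=y\}$ lies in the closures of both the $-2$ and the $+2$ regions, so $\min\mathrm{argmax}$ returns $-2$ there, whereas the algorithm uses $p^k_{i+2}$, i.e.\ $+2$; reordering the scores (e.g.\ $(\phi_0,\phi_2,\phi_{-2})$) or forcing $\phi_{-2}<0$ on $\{D_i=0\}$ fixes this, and the analogous check is needed on the boundaries of the detection inequalities, where the strict ``$>$'' in the rules means the interval is good and $r_i^k=0$ must win the tie. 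Also, the four sign quadrants of $(D_i,\delta_i)$ are not by themselves cells on which $r_i^k$ is constant --- the $B$/$G$ labelling, built from absolute values and maxima, subdivides them further --- though the intended polyhedral refinement is clear. These are fixable details of the kind you already flag, not a gap in the argument.
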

\begin{proof}
Instead of explicitly constructing a ReLU DNN, we will prove that we can write the output vector as a composition of functions that can be written as pure ReLU DNNs with linear output functions. Such functions include the rectifier function, absolute value, maximum and the identity function. The network architecture of a possible realisation of the network of this proof can be found after the proof. 
Furthermore we will assume that the discontinuity is not located in the first four or last four intervals. This can be achieved by taking $k$ large enough, or by introducing suitably prescribed ghost values. We also assume without loss of generality that $x_i^k=i$ for $0\leq i\leq N_k$.

The input of the DNN will be the vector $X^0\in \mathbb{R}^{N_k+1}$ with $X^0_{i+1}=f(x^k_i)$ for all $0\leq i \leq N_k$. Using a simple affine transformation, we can obtain $X^1\in\mathbb{R}^{N_k-1}$ such that $X^1_{i}=\Delta^2_{h_k} f(x^k_{i})$ for all $1\leq i \leq N_k-1$. 
We now define the following quantities, 
\begin{equation}\label{eqn:M_i-bis}
        M_i = \max_{n=1,2,3}\abs{\Delta_{h_k}^2f(x^k_{i\pm n})}=\max_{n=1,2,3}\abs{X^1_{i\pm n}},\quad        N_i^{\pm} = \max_{n=1,2}\abs{\Delta_{h_k}^2f(x^k_{i\pm n})}=\max_{n=1,2}\abs{X^1_{i\pm n}},
\end{equation}
where $4\leq i\leq N_k-4$. Next, we construct a vector $X^2\in\mathbb{R}^{N_k}$ such that every entry corresponds to an interval. For $1\leq i\leq N_k$, we want $X^2_i>0$ if and only if the interval $I_{i}^k$ is labelled as $B$ by the adapted ENO-SR detection mechanism. We can achieve this by defining
\begin{equation}\label{eqn:X^2-bis}
\begin{split}
        X^2_{i} = (\min\{\abs{X^1_{i}}- N_i^+,\abs{X^1_{i-1}} - N_{i-1}^-\})_+ +(\abs{X^1_i}-M_i)_+ + (\abs{X^1_{i-1}}-M_{i-1})_+
\end{split}
\end{equation}
for $5\leq i\leq N_k-4$. Furthermore we set $X^2_1 = X^2_2=X^2_3 =X^2_4 = X^2_{N_k-3}= X^2_{N_k-2}= X^2_{N_k-1}= X^2_{N_k}=0$. Note that the first term of the sum will be strictly positive if $I_i^k$ is labelled bad by the second rule of the detection mechanism and one of the other terms will be strictly positive if $I_i^k$ is labelled bad by the first rule. Good intervals $I_i^k$ have $X^2_i=0$. 
% So far we only considered the first and second rule of the ENO-SR detection mechanism. The third rule states that a bad interval can be relabelled as good during the interpolation procedure. Using the notation of the proof of Theorem \ref{thm:eno-sr-ann}, the interval $I_i^k$ is only relabelled as good if $a_{i-2}=a_{i+2}$. Note that $\abs{a_{i+2} - a_{i-2}}$ is a quantity that is zero when $a_{i-2} = a_{i+2}$ and strictly larger otherwise.

Now define $n_{i,l} = l + 4(i-1)$ for $1\leq i \leq N_k$ and $1\leq l \leq 4$. Using this notation, $i$ refers to the interval $I_i^k$. We denote by ${p_i^k:[c,d]\rightarrow \mathbb{R}:x\mapsto a_ix+b_i}$ the linear interpolation of the endpoints of $I_i^k$, where we write $a_i$ and $b_i$ instead of $a_i^k$ and $b_i^k$ to simplify notation. Define $X^3\in \mathbb{R}^{4N_k}$ in the following manner: 
\begin{equation}\label{eqn:X^3-bis}
\begin{split}
        X^3_{n_{i,1}} &= X^2_i, \qquad\qquad\qquad \mathrm{  }X^3_{n_{i,3}} = \left(\abs{b_{i-2}-b_{i+2}}-x^{k+1}_{2i-1}\abs{a_{i-2}-a_{i+2}}\right)_+,\\
        X^3_{n_{i,2}} &= \abs{a_{i-2} - a_{i+2}}, \qquad X^3_{n_{i,4}} = \left(-\abs{b_{i-2}-b_{i+2}}+x^{k+1}_{2i-1}\abs{a_{i-2}-a_{i+2}}\right)_+,
\end{split}
\end{equation}
for $5\leq i \leq N_k-4$. We set $X^3_{n_{i,l}}=0$ for $1 \leq l \leq 4$ and $1\leq i \leq 4$ or $N_k-3\leq i \leq N_k$. 
We can now define the output $\hat{Y}\in\mathbb{R}^{N_k}$ of the ReLU neural network by
\begin{equation}\label{eqn:Y_i-bis}
    \hat{Y}_i = \min \mathrm{argmin}_{1\leq l \leq 4} X^3_{n_{i,l}}. 
\end{equation}
where we used the notation $\hat{Y}_i$ for the predicted class instead of the network output to simplify notation. It remains to prove that $r_i^k$ can be obtained from $\hat{Y}_i$. Note that $\hat{Y}_i=1$ if and only if $I_i^k$ was labelled $G$. Therefore $\hat{Y}_i=1$ corresponds to $r_i^k=0$. If $\hat{Y}_i=2$, then $I_i^k$ was labelled $B$ and the interpolants $p_{i-2}^k$ and $p_{i+2}^k$ do not intersect, leading to $r_i^k=0$ according to the interpolation procedure. Next, $\hat{Y}_i=3,4$ corresponds to the case where $I_i^k$ was labelled $B$ and the interpolants $p_{i-2}^k$ and $p_{i+2}^k$ do intersect. This intersection point is seen to be $y=\frac{b_{i+2}-b_{i-2}}{a_{i-2}-a_{i+2}}$. If $\hat{Y}_i=3$, then $x^{k+1}_{2i-1}$ is right of $y$ and therefore $r_i^k=2$. Analogously, $\hat{Y}_i=4$ corresponds to $r^k_i=-2$, which concludes the proof. 

% \begin{table}[h!]
% \centering
% \begin{tabular}{|c| c |c |c|c|c|} 
% \hline
% $Y_i $ & label & initially bad pair? & interpolants & single intersection point? & $r_i$  \\
% \hline
% 1 & $G$ & / & / & / &  $0$ \\ 
% 2 & $G^*$ & no  & $p_{i-2},p_{i+2}$ & no, parallel lines &  $0$ \\ 
% 3 & $B$ & no & $p_{i-2},p_{i+2}$ & yes, left of $x^{k+1}_{2i-1}$ &  $2$ \\ 
% 4 & $B$ & no & $p_{i-1},p_{i+2}$ & yes, right of $x^{k+1}_{2i-1}$ &  $-2$ \\ 
% \hline
% \end{tabular}
%  \caption{Overview of all possible values of $Y_i$ and how they relate to $r_i$. Further explanation about the columns can be found in the main text. }\label{tab:eno-sr-adapted-cases}
% \end{table}

% Table \ref{tab:eno-sr-adapted-cases} shows how one can directly obtain $r_i$ from $Y_i$. A detailed explanation can be found in the proof of Theorem \ref{thm:eno-sr-ann}. It is clear that all possible situations are listed in the table and the correspondence between the description and the corresponding $r_i$ is straightforward. This concludes the proof. 
\end{proof}

Now that we have established that our adaptation of the second-order ENO-SR algorithm can be written as a ReLU DNN augmented with a discontinuous output function, we can present a possible architecture of a DNN that calculates the output $\hat{Y}_i$ from $f^k$. The network we present has five hidden layers, of which the widths vary from 6 to 20, and an output layer of 4 neurons. The network is visualized in Figure \ref{fig:flowchart_adapted}. 
\begin{figure}
    \centering
    \includegraphics[height=0.95\textheight]{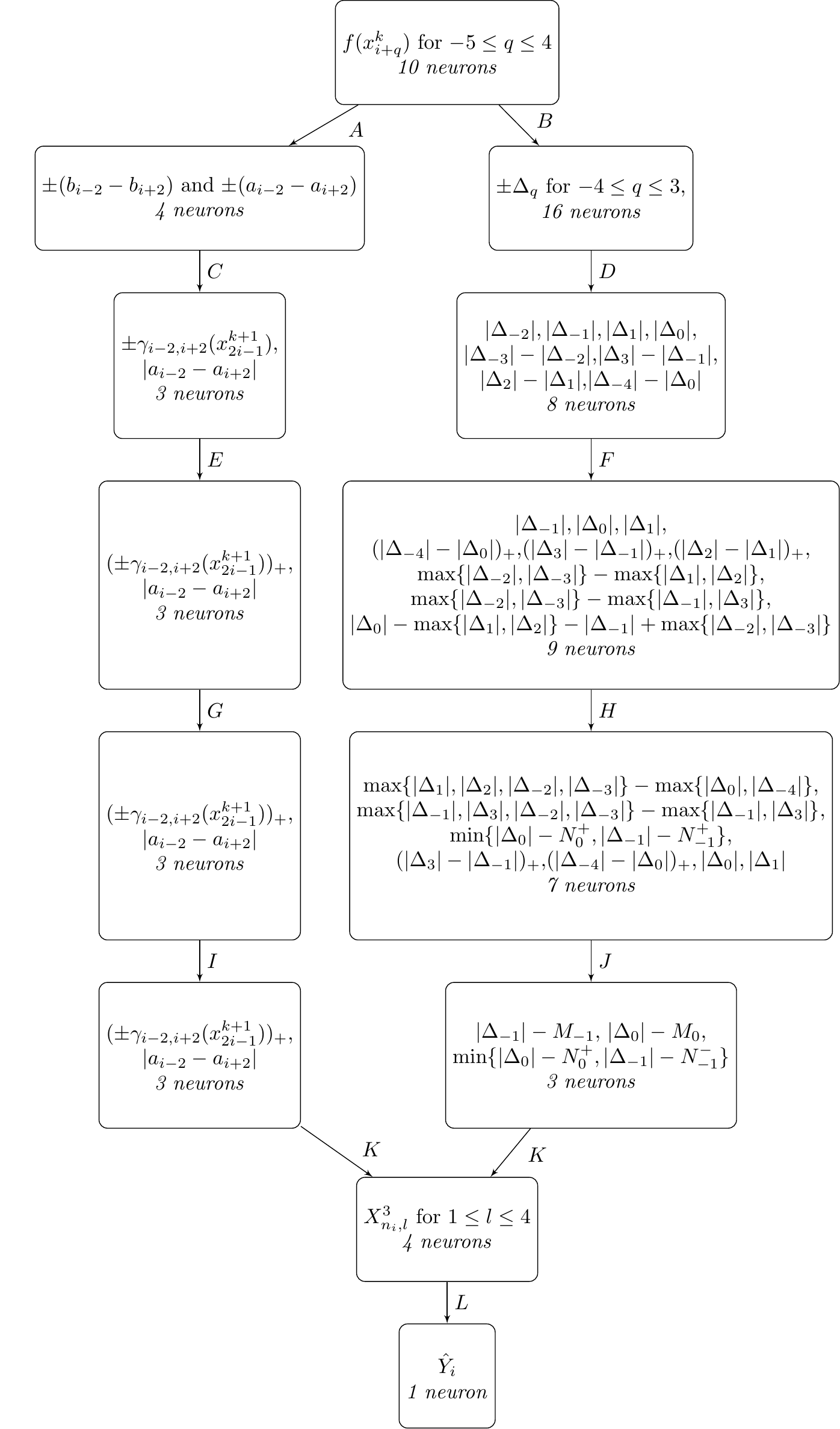}
    \caption{Flowchart of a ReLU DNN to calculate $\hat{Y}_i$ from $f^k$.}
    \label{fig:flowchart_adapted}
\end{figure}
We now give some more explanation about how each layer in Figure \ref{fig:flowchart_adapted} can be calculated from the previous layer, where we use the same notation as in the proof of Theorem \ref{thm:ENO-DNN}. In addition, we define and note that
\begin{subequations}
\begin{align}
    \gamma_{i-2,i+2}(z)&=\abs{b_{i-2}-b_{i+2}}-z\abs{a_{i-2}-a_{i+2}}\label{eqn:gammamn},\\
    \max\{x,y\} &= x + (y-x)_+\label{eqn:max2}.
\end{align}
\end{subequations}
\textbf{\textit{A.B.}} It is easy to see that all quantities of the first layer are linear combinations of the input neurons. 
\textbf{\textit{C.}}  Application of $\abs{x}=(x)_++(-x)_+$ and definition (\ref{eqn:gammamn}) on $\pm(b_{i-2}-b_{i+2})$ and $\pm(a_{i-2}-a_{i+2})$. 
\textbf{\textit{D.}} Straightforward application of the identity $\abs{x}=(x)_++(-x)_+$ on $\pm\Delta_q$, followed by taking linear combinations. 
\textbf{\textit{E.G.I.}} Passing by values. 
\textbf{\textit{F.}} The first six quantities were passed by from the previous layer. The other ones are applications of equation (\ref{eqn:max2}), where the order of the arguments of the maximums is carefully chosen. 
\textbf{\textit{H.}} Equation (\ref{eqn:max2}) was used, where we use that $\min\{x,y\}=-\max\{-x,-y\}$. 
\textbf{\textit{J.}} Application of equation (\ref{eqn:M_i-bis}). 
\textbf{\textit{K.}} The result follows from combining definitions (\ref{eqn:X^2-bis}) and (\ref{eqn:X^3-bis}).
\textbf{\textit{L.}} As can be seen in definition (\ref{eqn:Y_i-bis}), $\hat{Y}_i$ is obtained by applying the output function $\min\mathrm{argmin}$ on the output layer. 

\begin{remark}
The second-order ENO-SR method as proposed in \cite{ACDD2005} can also be written as a ReLU DNN, but it leads to a neural network that is considerably larger than the one presented above. 
\end{remark}

\subsection{ENO-SR-2 regression as ReLU DNN}

After having successfully recast the ENO-SR stencil selection as a ReLU neural network, it is natural to investigate whether there exists a ReLU neural network with output $(\mathcal{I}^{h_k}f)^{k+1}$, as in the setting of \eqref{eqn:eno_int} in Section \ref{sec:ENO}. Since ENO-SR interpolation is a discontinuous procedure and a ReLU neural network is a continuous function, a network with such an output does not exist. It is however interesting to investigate to which extent we can approximate ENO-SR using ReLU neural networks. In what follows, we design an approximate ENO-SR method, based on the adapted ENO-SR-2 method of Section \ref{sec:enosr-alg}, and investigate its accuracy. 

We first introduce for $\epsilon\geq0$ the function $H_{\epsilon}:\mathbb{R}\to\mathbb{R}$, defined by
\begin{equation}
    H_{\epsilon}(x) = \begin{cases} 0 &x\leq 0\\ x/\epsilon &0<x\leq\epsilon \\ 1 &x > \epsilon.\end{cases}
\end{equation}
Note that $H_0$ is nothing more than the Heaviside function. Using this function and the notation of the proof of Theorem \ref{thm:eno-sr-ann}, we can write down a single formula for $\mathcal{I}^{h_k}_{i}f(x^{k+1}_{2i-1})$, 
\begin{equation}\label{eqn:ENOSRa-formula}
\begin{split}
    \mathcal{I}^{h_k}_{i}f(x^{k+1}_{2i-1}) = (1-\alpha)p^k_i(x^{k+1}_{2i-1}) + \alpha\left((1-\beta)p^k_{i+2}(x^{k+1}_{2i-1})+\beta p^k_{i-2}(x^{k+1}_{2i-1})\right), \\
    \textrm{ where } \alpha=H_{0}(\min\{X^3_{n_{i,1}},X^3_{n_{i,2}}\}), \quad \beta = H_{0}(X^3_{n_{i,3}}),
\end{split}
\end{equation}
for $1\leq i \leq N_k$. Observe that this formula cannot be calculated using a pure ReLU DNN. Nevertheless, we will base ourselves on this formula to introduce an approximate ENO-SR algorithm that can be exactly written as a ReLU DNN.  

The first step is to replace $H_0$ by $H_{\epsilon}$ with $\epsilon > 0$, since 
\begin{equation}
    H_{\epsilon}(x) =  \frac{1}{\epsilon}(x)_+-\frac{1}{\epsilon}(x-\epsilon)_+,
\end{equation}
which clearly can be calculated using a ReLU neural network.
The now remaining issue is that the multiplication of two numbers cannot be exactly represented using a ReLU neural network. Moreover, as we aim for a network architecture that is independent of the accuracy of the final network that approximates ENO-SR-2, we cannot use the approximate multiplication networks in the sense of \cite{YAROTSKY17}. We therefore introduce an operation that resembles the multiplication of bounded numbers in another way.  
For $\lambda>0$, we denote by $\star$ the operation on $[0,1]\times[-\lambda,\lambda]$ defined by 
\begin{equation}\label{def:star}
    %\star: [0,1]\times[-\lambda,\lambda]\to [-\lambda,\lambda]: (x,y)\mapsto 
    x \star y := (y+\lambda x-\lambda)_+-(-y+\lambda x-\lambda)_+.
\end{equation}
Like $H_{\epsilon}$, this operation can be cast as a simple ReLU DNN.  
We compare $x\star y$ with $x\cdot y$ for fixed $x\in[0,1]$ and $\lambda>0$ in Figure \ref{fig:star}. Next, we list some properties of $\star$ that are of great importance for the construction of our approximation. 

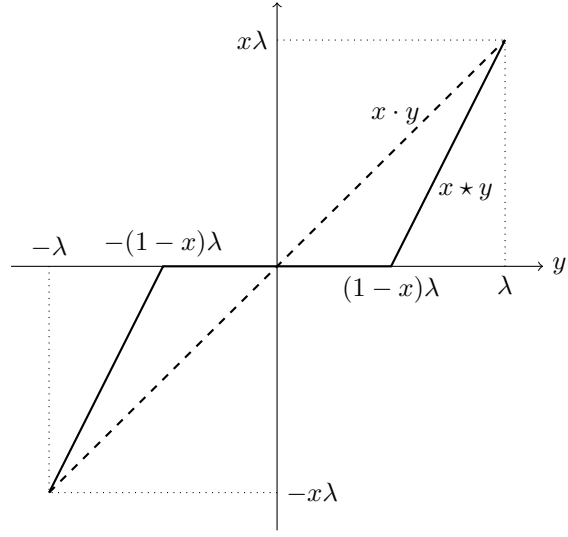
\begin{figure}
    \centering
    \begin{tikzpicture}

\draw[->] (-3.5,0) -- (3.5,0) node[right] {$y$};
\draw[->] (0,-3.5) -- (0,3.5) node[right] {};
\draw[dashed,thick] (-3,-3)--(3,3);
\draw[thick] (-3,-3)--(-1.5,0)--(1.5,0)--(3,3);
\node[left] at (2,2) {$x\cdot y$};
\node[right] at (2,1) {$x\star y$};
\draw[dotted] (0,3)--(3,3)--(3,0);
\draw[dotted] (0,-3)--(-3,-3)--(-3,0);
\node[below] at (1.5,0) {$(1-x)\lambda$};
\node[below] at (3,0) {$\lambda$};
\node[above] at (-1.5,0) {$-(1-x)\lambda$};
\node[above] at (-3,0) {$-\lambda$};
\node[left] at (0,3) {$x\lambda$};
\node[right] at (0,-3) {$-x\lambda$};
\end{tikzpicture}
    \caption{Plot of $x\star y$ and $x\cdot y$ for fixed $x\in[0,1]$ and $\lambda>0$.}
    \label{fig:star}
\end{figure}

\begin{lemma}\label{lem:star}
For $\lambda>0$, the operation $\star$ satisfies the following properties: 
\begin{enumerate}
    \item For all $x\in\{0,1\}$ and $y\in[-\lambda,\lambda]$ it holds true that $x \star y = xy$.
    \item For all $x\in[0,1]$ and $y\in[0,\lambda]$ we have $0\leq x \star y \leq xy$.
    \item For all $x\in[0,1]$ and $y\in[-\lambda,0]$ we have $xy\leq x \star y \leq 0$.
    \item There exist $x\in[0,1]$ and $y_1,y_2\in[-\lambda,\lambda]$ such that \[\min\{y_1,y_2\}\leq (1-x)\star y_1 + x \star y_2\leq \max\{y_1,y_2\}\] does not hold. 
    \item For all $x\in[0,1]$ and $y_1,y_2\in[-\lambda,\lambda]$ it holds true that \[\min\{y_1,y_2\}\leq y_1 + x \star (y_2-y_1)\leq \max\{y_1,y_2\}.\]
\end{enumerate}
\end{lemma}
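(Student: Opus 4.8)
The plan is to begin by deriving an explicit closed form for the operation $\star$, from which all five statements follow with little effort. Writing out the two rectifier terms in \eqref{def:star}, I would observe that $y+\lambda x-\lambda>0$ exactly when $y>\lambda(1-x)$, that $-y+\lambda x-\lambda>0$ exactly when $y<-\lambda(1-x)$, and that these two conditions are mutually exclusive because $\lambda(1-x)\geq0$. Splitting into the three resulting regions yields the soft-thresholding representation
\begin{equation*}
    x\star y=\begin{cases} y-\lambda(1-x) & y>\lambda(1-x)\\ 0 & \abs{y}\leq\lambda(1-x)\\ y+\lambda(1-x) & y<-\lambda(1-x),\end{cases}
\end{equation*}
i.e. $x\star y=\operatorname{sign}(y)\max\{0,\abs{y}-\lambda(1-x)\}$, valid for every real $y$. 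A useful immediate consequence, which I would record separately, is that $\star$ is odd in its second argument: $x\star(-y)=-(x\star y)$. This halves the work, since the "negative" statement (3) and the $d<0$ part of (5) then follow from their "positive" counterparts by replacing $y$ with $-y$.

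Properties (1)--(3) are then direct verifications using the closed form. For (1), $x=1$ gives threshold $\lambda(1-x)=0$ and hence $x\star y=y=xy$, while $x=0$ gives threshold $\lambda$, so $\abs{y}\leq\lambda$ forces $x\star y=0=xy$. For (2), with $y\in[0,\lambda]$ the value $x\star y=\max\{0,y-\lambda(1-x)\}$ is nonnegative, and it is bounded above by $xy$ because $y-\lambda(1-x)\leq xy$ rearranges to $y(1-x)\leq\lambda(1-x)$, which holds since $1-x\geq0$ and $y\leq\lambda$; the subcase $y\leq\lambda(1-x)$ is trivial. Property (3) then follows from (2) and the oddness of $\star$.

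For the counterexample in (4) I would use the closed form to search for a violation of this convexity-type bound, and the simplest choice is $x=\tfrac12$ and $y_1=y_2=\tfrac34\lambda$. Here $(1-x)\star y_1$ and $x\star y_2$ both have threshold $\tfrac12\lambda$, so each equals $\tfrac34\lambda-\tfrac12\lambda=\tfrac14\lambda$, giving $(1-x)\star y_1+x\star y_2=\tfrac12\lambda$, which is strictly below $\min\{y_1,y_2\}=\tfrac34\lambda$; thus the claimed inequality fails. Finally, for (5) I would subtract $y_1$ and set $d:=y_2-y_1\in[-2\lambda,2\lambda]$, reducing the claim to $\min\{0,d\}\leq x\star d\leq\max\{0,d\}$. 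For $d\geq0$ the closed form gives $x\star d=\max\{0,d-\lambda(1-x)\}$, which lies in $[0,d]$ since $\lambda(1-x)\geq0$; the case $d\leq0$ follows by oddness. Note that this argument uses only $d\geq0$ (not $d\leq\lambda$), so it covers the full range $[-2\lambda,2\lambda]$ attained by $y_2-y_1$.

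I expect the only genuinely non-mechanical step to be extracting the soft-thresholding closed form in the first step, since once that representation and the oddness property are in hand every item reduces to a one-line inequality or an explicit evaluation; the mild cleverness needed for the counterexample in (4) is likewise immediate once the closed form is available.
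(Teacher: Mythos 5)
Your proof is correct and follows essentially the same route as the paper, which simply verifies properties 1--3 directly from the definition, exhibits the counterexample $(1/2)\star(\lambda/2)+(1/2)\star(\lambda/2)=0$ for property 4, and derives property 5 from properties 2 and 3; your soft-thresholding closed form, oddness observation, and alternative counterexample $x=\tfrac12$, $y_1=y_2=\tfrac34\lambda$ are just a more systematic packaging of the same computations. One small point in your favour: you explicitly address the fact that $y_2-y_1$ may lie outside $[-\lambda,\lambda]$ in property 5, which the paper's one-line reduction to properties 2 and 3 glosses over.
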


\begin{proof}
Properties 1,2 and 3 follow immediately from the definition and can also be verified on Figure \ref{fig:star}. For property 4, note that $(1/2)\star (\lambda/2)+(1/2)\star (\lambda/2)=0$. Property 5 is an application of properties 2 and 3. 
\end{proof}

For the moment, we assume that there exists $\lambda>0$ such that all quantities that we will need to multiply, lie in the interval $[-\lambda,\lambda]$. In view of the third property in Lemma \ref{lem:star}, directly replacing all multiplications in \eqref{eqn:ENOSRa-formula} by the operation $\star$ will lead to a quantity that is no longer a convex combination of $p^k_{i-2}(x^{k+1}_{2i-1}), p^k_{i}(x^{k+1}_{2i-1})$ and $p^k_{i+2}(x^{k+1}_{2i-1})$. 
% A first essential property of $\star$ is that for $x\in\{0,1\}$ and $y\in[0,\lambda]$ it holds true that $x \star y = xy$. Second, for all $x\in[0,1]$ and $y\in[0,\lambda]$ we have $0\leq x \star y \leq xy$, but we have not that $\min\{y_1,y_2\}\leq (1-x)\star y_1 + x \star y_2\leq \max\{y_1,y_2\}$. We do have that $\min\{y_1,y_2\}\leq y_1 + x \star (y_2-y_1)\leq \max\{y_1,y_2\}$.
We therefore introduce the \textit{approximate ENO-SR} prediction $\hat{f}_{i,\epsilon}^{k+1}$ of $f(x^{k+1}_i)$ by setting $\hat{f}_{2i,\epsilon}^{k+1}=f_{2i}^{k+1}$ for $0\leq i \leq N_k$ and
\begin{equation}\label{eqn:def-approx-enosr}
\begin{split}
%    \hat{f}_{2i-1,\epsilon}^{k+1} = (1-\alpha)p^k_i(x^{k+1}_{2i-1}) + &\alpha\left((1-\beta)p^k_{i+2}(x^{k+1}_{2i-1})+\beta p^k_{i-2}(x^{k+1}_{2i-1})\right) \quad \textrm{for } 1\leq i \leq N_k \\
    \hat{f}_{2i-1,\epsilon}^{k+1} = p^k_i(x^{k+1}_{2i-1}) + &\alpha\star \left(p^k_{i+2}(x^{k+1}_{2i-1})- p^k_i(x^{k+1}_{2i-1}) +\beta\star \left( p^k_{i-2}(x^{k+1}_{2i-1})-p^k_{i+2}(x^{k+1}_{2i-1})\right)\right), %\quad \textrm{for } 1\leq i \leq N_k 
    \\
    % \hat{f}^{k+1}_{2i-1} = (1-H_{\epsilon}(\min\{X^3_1,X^3_2\}))f_0 + H_{\epsilon}(\min\{X^3_1,X^3_2\})\left((1-H_{\epsilon}(X^3_3))f^k_2+H_{\epsilon}(X^3_3) f_{-2}\right), \quad 
    \textrm{ where } &\alpha=H_{\epsilon}(\min\{X^3_{n_{i,1}},X^3_{n_{i,2}}\}), \quad \beta = H_{\epsilon}(X^3_{n_{i,3}}),
\end{split}
\end{equation}
for $1\leq i \leq N_k$. The fourth property of Lemma \ref{lem:star} ensures that the two convex combinations in \eqref{eqn:ENOSRa-formula} are replaced by convex combinations (with possibly different weights). 
The theorem below quantifies the accuracy of the approximate ENO-SR predictions for $\epsilon>0$. 

\begin{theorem}\label{thm:approx-enosr-bound}
Let $f:[c,d]\to[-1,1]$ be a globally continuous function with a bounded second derivative on $\mathbb{R}\backslash \{z\}$ and a discontinuity in the first derivative at a point $z$. For every $k$, the approximate ENO-SR predictions $\hat{f}^{k+1}_{i,\epsilon}$ satisfy for every $0\leq i\leq N_{k+1}$ and $\epsilon\geq 0$ that
\begin{equation}\label{eqn:approx-enosra}
    \abs{\mathcal{I}^{h_k}_{i}f(x^{k+1}_{2i-1})-\hat{f}^{k+1}_{i,\epsilon}} \leq Ch_k^2 \sup_{[c,d]\backslash \{z\}}\abs{f''} + \frac{3}{2}\epsilon,
\end{equation}
where $\mathcal{I}^{h_k}_{i}f(x^{k+1}_{2i-1})$ is the ENO-SR-2 prediction.
\end{theorem}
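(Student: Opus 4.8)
The plan is to work directly from the two closed-form expressions \eqref{eqn:ENOSRa-formula} and \eqref{eqn:def-approx-enosr} and to control their difference factor by factor. For the even nodes $x^{k+1}_{2i}$ both quantities equal $f^k_i$, so the estimate is trivial; I therefore fix an odd node $x^{k+1}_{2i-1}$ and abbreviate $A=p^k_{i+2}(x^{k+1}_{2i-1})-p^k_i(x^{k+1}_{2i-1})$ and $B=p^k_{i-2}(x^{k+1}_{2i-1})-p^k_{i+2}(x^{k+1}_{2i-1})$, so that the exact prediction reads $p^k_i+\alpha_0(A+\beta_0 B)$ and the approximate one $p^k_i+\alpha\star(A+\beta\star B)$, where $\alpha_0,\beta_0\in\{0,1\}$ are the Heaviside values and $\alpha,\beta\in[0,1]$ the values of $H_{\epsilon}$. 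Throughout I use that all multiplied quantities stay in the fixed interval $[-\lambda,\lambda]$ (as already assumed before \eqref{def:star}), so that Lemma~\ref{lem:star} applies. The first reduction is that $\alpha$ can differ from $\alpha_0$ (resp. $\beta$ from $\beta_0$) only when the corresponding argument of $H_{\epsilon}$ lies in the ramp $(0,\epsilon)$, and then necessarily $\alpha_0=1$ (resp. $\beta_0=1$). Outside these transition regimes every factor lies in $\{0,1\}$, so property~1 of Lemma~\ref{lem:star} gives $\alpha\star y=\alpha y$ and $\beta\star y=\beta y$, the two predictions coincide, and the left-hand side of \eqref{eqn:approx-enosra} vanishes. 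It thus suffices to bound the error when $\alpha$ and/or $\beta$ is in its ramp.

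Next I would split the error by the triangle inequality into a $\beta$-contribution $\alpha_0\abs{\beta_0 B-\beta\star B}$ and an $\alpha$-contribution $\abs{\alpha_0\widehat E-\alpha\star\widehat E}$ with $\widehat E=A+\beta\star B$. For the $\beta$-part, a $\beta$-ramp means $X^3_{n_{i,3}}\in(0,\epsilon)$; in this regime the evaluation point lies just left of the intersection $y$ of $p^k_{i-2}$ and $p^k_{i+2}$, so $\abs{B}=\abs{a_{i-2}-a_{i+2}}\,\abs{x^{k+1}_{2i-1}-y}=X^3_{n_{i,3}}\le\epsilon$, and properties~2--3 of Lemma~\ref{lem:star} (which pin $\beta\star B$ between $0$ and $B$) bound this part by $\abs{B}\le\epsilon$. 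The residual mismatch between $\star$ and ordinary multiplication in the $\alpha$-factor and the $H_{\epsilon}$-versus-$H_0$ gap are likewise absorbed via properties~2,3,5, which guarantee that $p^k_i+\alpha\star\widehat E$ remains a genuine convex combination of $p^k_{i-2},p^k_i,p^k_{i+2}$; it is the careful bookkeeping of these pieces that produces the constant $\tfrac32$ in front of $\epsilon$.

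The crux is the $\alpha$-contribution, which is dominated by $\abs{\widehat E}$, and when no $\beta$-ramp is active $\widehat E$ equals the gap $p^k_{\mathrm{outer}}(x^{k+1}_{2i-1})-p^k_i(x^{k+1}_{2i-1})$ between the selected outer line and the central line. I must therefore show this gap is itself $O(\epsilon)+O(h_k^2\sup\abs{f''})$. Here the geometry is essential: an $\alpha$-ramp means $\min\{X^2_i,\abs{a_{i-2}-a_{i+2}}\}\in(0,\epsilon)$, so either the outer slopes are nearly equal or the badness indicator $X^2_i$ of \eqref{eqn:X^2-bis} is barely positive. Using the bounded-second-derivative estimates underlying the proof of Theorem~\ref{thm:ENOSR2}, I would argue that each alternative forces the derivative jump $[f']$ to be small: on cells avoiding $z$ the slopes $a_{i\pm2}$ equal $f'$ at interior points up to $O(h_k\sup\abs{f''})$, while the straddling second difference entering $X^2_i$ measures $h_k[f']$ up to the same order, whence $\abs{[f']}=O(\epsilon/h_k)+O(h_k\sup\abs{f''})$ and consequently $\abs{\widehat E}=O(\epsilon)+O(h_k^2\sup\abs{f''})$. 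This quantitative transfer from \emph{``the detector sits near its threshold''} to \emph{``the kink is weak, so even the central line $p^k_i$ is second-order accurate''} is the main obstacle, and it is precisely the step where Theorem~\ref{thm:ENOSR2} (not merely the ENO-SR construction) is indispensable.

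Finally I would run the entire estimate in the normalized coordinates $x_i^k=i$ adopted in Theorem~\ref{thm:eno-sr-ann}, where $h_k=1$ and the cross terms $h_k\epsilon$ collapse to $\epsilon$, and then undo the affine rescaling $x\mapsto(x-c)/h_k$, under which $\sup\abs{f''}$ acquires the factor $h_k^2$. Collecting the $\beta$-part, the $\star$/$H_{\epsilon}$ residual, and the geometric $\alpha$-estimate then yields $Ch_k^2\sup_{[c,d]\setminus\{z\}}\abs{f''}+\tfrac32\epsilon$ with $C$ independent of $f$, $k$ and $\epsilon$, which is \eqref{eqn:approx-enosra}.
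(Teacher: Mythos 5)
Your overall architecture coincides with the paper's: outside the ramps of $H_\epsilon$ the two predictions agree exactly (property 1 of Lemma \ref{lem:star}); in the $\beta$-ramp the scale invariance of $X^3_{n_{i,3}}$ gives $\abs{p^k_{i-2}(x^*)-p^k_{i+2}(x^*)}=\abs{a_{i-2}-a_{i+2}}\,\abs{y-x^*}=X^3_{n_{i,3}}\le\epsilon$ for $x^*=x^{k+1}_{2i-1}$; and the regime $h_k\ge Kh_c$ is absorbed because the $\star$-prediction stays a convex combination of the three candidate lines. These pieces are correct and reproduce the paper's Cases 1--3, 5 and 7.

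The gap is in your treatment of the $\alpha$-ramp (the paper's Cases 4 and 6, which is where all the work is). You claim that $\min\{X^3_{n_{i,1}},X^3_{n_{i,2}}\}\in(0,\epsilon)$ forces $\abs{[f']}=O(\epsilon/h_k)+O(h_k\sup\abs{f''})$ and hence $\abs{\widehat E}=O(\epsilon)+O(h_k^2\sup\abs{f''})$. This is false when the kink $z$ lies in a neighbouring cell $I_{i\pm1}$ near that cell's midpoint $\overline{x}$: the computation recorded in Table \ref{tab:x-hat} shows that, for the piecewise-linear model function, the badness indicator of $I_i$ is then $2[f'](\overline{x}-z)_+$ up to $O(h_k^2\sup\abs{f''})$, which can be arbitrarily small for arbitrarily large $[f']$. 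In that configuration $\widehat E=A+\beta\star B$ contains $A=p^k_{i+2}(x^*)-p^k_i(x^*)$, which is of size $\abs{[f']}\,h_k$ rather than $O(\epsilon)+O(h_k^2\sup\abs{f''})$, so the step ``the $\alpha$-contribution is dominated by $\abs{\widehat E}$, which is small'' breaks down. The correct rescue is not that $[f']$ is small but that the intersection point $y$ of the two outer lines then lands in $I_{i\pm1}$, on the far side of $x^*$ (a consequence of Lemma \ref{lem:L3-cohen}); hence either $\beta=1$ and $A+B=p^k_{i-2}(x^*)-p^k_i(x^*)=O(h_k^2\sup\abs{f''})$, or $X^3_{n_{i,3}}<\epsilon$ and one falls back on the $\beta$-ramp estimate. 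This is precisely the sub-case split the paper performs in Case 6 (and, with the auxiliary lines $p_0^{\pm}$, in Case 4). Without that additional geometric argument, and without the explicit Table \ref{tab:x-hat} computation that upgrades ``the detector $X^2_i$ is barely positive'' to a genuine lower bound $X^2_i\gtrsim [f']h_k/2 - Ch_k^2\sup\abs{f''}$ when $z\in I_i$ (your appeal to ``the straddling second difference'' ignores the maxima $M_i$, $N_i^{\pm}$ entering \eqref{eqn:X^2-bis}), your proof does not close.
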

\begin{proof}
The proof can be found in Appendix \ref{app:acc-approx-enosr}.
\end{proof}

We see that our approximation is second-order accurate up to an additional constant error, which can be made arbitrarily small. Finally, the following theorem states that the constructed approximation can indeed be represented by a ReLU DNN, i.e. there exists a pure ReLU DNN that satisfies bound \eqref{eqn:approx-enosra}. 

\begin{theorem}\label{thm:approx-enosr-dnn}
Let $f:[c,d]\to[-1,1]$ be a globally continuous function with a bounded second derivative on $\mathbb{R}\backslash \{z\}$ and a discontinuity in the first derivative at a point $z$. For every $\epsilon>0$, there exists a pure ReLU neural network with input $f^k\in[-1,1]^{N_k+1}$ and output $\hat{f}^{k+1}_{2i-1,\epsilon}$ for every $1\leq i \leq N_k$, $0\leq k \leq K$.
\end{theorem}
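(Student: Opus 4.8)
The plan is to show that the closed-form expression \eqref{eqn:def-approx-enosr} defining $\hat{f}^{k+1}_{2i-1,\epsilon}$ is assembled entirely from operations each of which is realizable by a pure ReLU DNN, and then to invoke the standard fact that finite compositions and parallel concatenations of ReLU DNNs are again ReLU DNNs (values can be passed forward through hidden layers using $x=(x)_+-(-x)_+$). Because the construction replaces the discontinuous Heaviside $H_0$ by the continuous $H_\epsilon$ for $\epsilon>0$, every ingredient is continuous, so no discontinuous output function is required and the resulting network is genuinely pure.

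First I would settle the boundedness issue underlying the operation $\star$. Since $f:[c,d]\to[-1,1]$, every sampled value $f^k_j$ lies in $[-1,1]$, so each linear interpolant $p^k_j$ is bounded by $1$ on its own interval. Evaluating $p^k_{i\pm 2}$ at the midpoint $x^{k+1}_{2i-1}$ of $I^k_i$ only extrapolates over a distance of order $h_k$, while the slope of $p^k_{i\pm 2}$ is at most $2/h_k$; the factor $h_k$ therefore cancels and $\abs{p^k_{i\pm 2}(x^{k+1}_{2i-1})}$ is bounded by an absolute constant (in fact by $4$), uniformly in $k$. Hence there is a fixed $\lambda>0$ (e.g.\ $\lambda=8$) such that every second argument of the $\star$ operations in \eqref{eqn:def-approx-enosr} lies in $[-\lambda,\lambda]$. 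Property 5 of Lemma \ref{lem:star} then guarantees that the nested quantity $p^k_{i+2}(x^{k+1}_{2i-1})-p^k_i(x^{k+1}_{2i-1})+\beta\star(\,\cdots)$ stays in the convex hull of the three interpolant values, so the inner $\star$ never pushes the argument of the outer $\star$ out of $[-\lambda,\lambda]$, and the same fixed $\lambda$ serves at both levels.

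Next I would verify each ingredient individually. The quantities $X^3_{n_{i,1}},X^3_{n_{i,2}},X^3_{n_{i,3}}$ are precisely the intermediate vectors built from $f^k$ in the proof of Theorem \ref{thm:eno-sr-ann} using affine maps, $(\cdot)_+$, $\abs{\cdot}=(\cdot)_++(-\cdot)_+$, and $\max$/$\min$, all of which are ReLU DNNs; the interpolant evaluations $p^k_i(x^{k+1}_{2i-1})$ and $p^k_{i\pm 2}(x^{k+1}_{2i-1})$ are affine in $f^k$; the map $H_\epsilon$ equals the ReLU DNN $\tfrac1\epsilon(\cdot)_+-\tfrac1\epsilon((\cdot)-\epsilon)_+$ for $\epsilon>0$; the minimum defining $\alpha$ is $\min\{x,y\}=x-(x-y)_+$; and $\star$ is the ReLU DNN given by \eqref{def:star}. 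Assembling these exactly as in \eqref{eqn:def-approx-enosr}—forming $\alpha$ and $\beta$, performing the inner $\star$, adding the affine interpolant terms, performing the outer $\star$, and finally adding $p^k_i(x^{k+1}_{2i-1})$—is a finite composition of ReLU DNNs and hence a ReLU DNN. Carrying this out in parallel over all $1\le i\le N_k$ produces, for each fixed $k$, a single network with input $f^k$ and outputs $\hat f^{k+1}_{2i-1,\epsilon}$; since only finitely many levels $0\le k\le K$ occur, the argument applies to each of them.

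The one genuinely delicate point, which I would treat most carefully, is the uniform boundedness step: confirming that a single $\lambda$ controls all multiplicands simultaneously and, crucially, that the output of the inner $\star$ keeps the argument of the outer $\star$ inside $[-\lambda,\lambda]$. This is exactly where property 5 of Lemma \ref{lem:star} is indispensable, since a naive direct replacement of products by $\star$ would destroy the convex-combination structure (property 4); the specific nesting chosen in \eqref{eqn:def-approx-enosr} is what preserves the range so that one fixed architecture works for every $\epsilon>0$. Everything remaining is routine bookkeeping of the widths and depths of the constituent sub-networks.
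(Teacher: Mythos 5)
Your proposal is correct and takes essentially the same route as the paper's own proof: all ingredients ($X^3_{n_{i,1}},X^3_{n_{i,2}},X^3_{n_{i,3}}$, the interpolant evaluations, $H_\epsilon$, $\min$ and $\star$) were already shown to be ReLU-representable in Theorem \ref{thm:eno-sr-ann} and the surrounding discussion, and the only substantive remaining point is a uniform bound on the second arguments of $\star$, which both you and the paper obtain from $\abs{p^k_{i\pm2}(x^{k+1}_{2i-1})}\le 4$. The one difference is that you invoke property 5 of Lemma \ref{lem:star} to take $\lambda=8$ (note the nested quantity lies in the convex hull of $\{p^k_{i-2}(x^{k+1}_{2i-1}),p^k_{i+2}(x^{k+1}_{2i-1})\}$ shifted by $-p^k_i(x^{k+1}_{2i-1})$, not of all three values, but the bound $[-8,8]$ still follows), whereas the paper uses properties 2--3 with the triangle inequality and settles for $\lambda=16$; both choices are valid.
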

\begin{proof}
Most of the work was already done in Theorem \ref{thm:eno-sr-ann} and the discussion preceding Theorem \ref{thm:approx-enosr-bound}. Indeed, we have already established that $p^k_{i-2}(x^{k+1}_{2i-1}), p^k_{i}(x^{k+1}_{2i-1})$, $p^k_{i+2}(x^{k+1}_{2i-1}),X^3_1,X^3_2$ and $X^3_3$, as well as the operation $\star$ can be represented using pure ReLU networks. It only remains to find a bound for all second arguments of the operation $\star$ in  \eqref{eqn:def-approx-enosr}. Since the codomain of $f$ is $[-1,1]$, one can calculate that $p^k_{i-2}(x^{k+1}_{2i-1}), p^k_{i}(x^{k+1}_{2i-1})$ and $p^k_{i+2}(x^{k+1}_{2i-1})$ lie in $[-4,4]$. Using Lemma \ref{lem:star}, we then find that 
\[p^k_{i+2}(x^{k+1}_{2i-1})- p^k_i(x^{k+1}_{2i-1}) +\beta\star \left( p^k_{i-2}(x^{k+1}_{2i-1})-p^k_{i+2}(x^{k+1}_{2i-1})\right)\in[-16,16]\]
for all $\beta\in[0,1]$. 
We can thus use the operation $\star$ with $\lambda = 16$ in  \eqref{def:star}. 
\end{proof}

We now present the network architecture of a ReLU neural network that computes the approximate ENO-SR prediction \eqref{eqn:def-approx-enosr}. The network we propose is visualized in Figure \ref{fig:flowchart_total} and consists of eight hidden layers with widths 23, 13, 14, 12, 8, 7, 6 and 3. 
\begin{figure}
    \centering
    \includegraphics[scale=0.9]{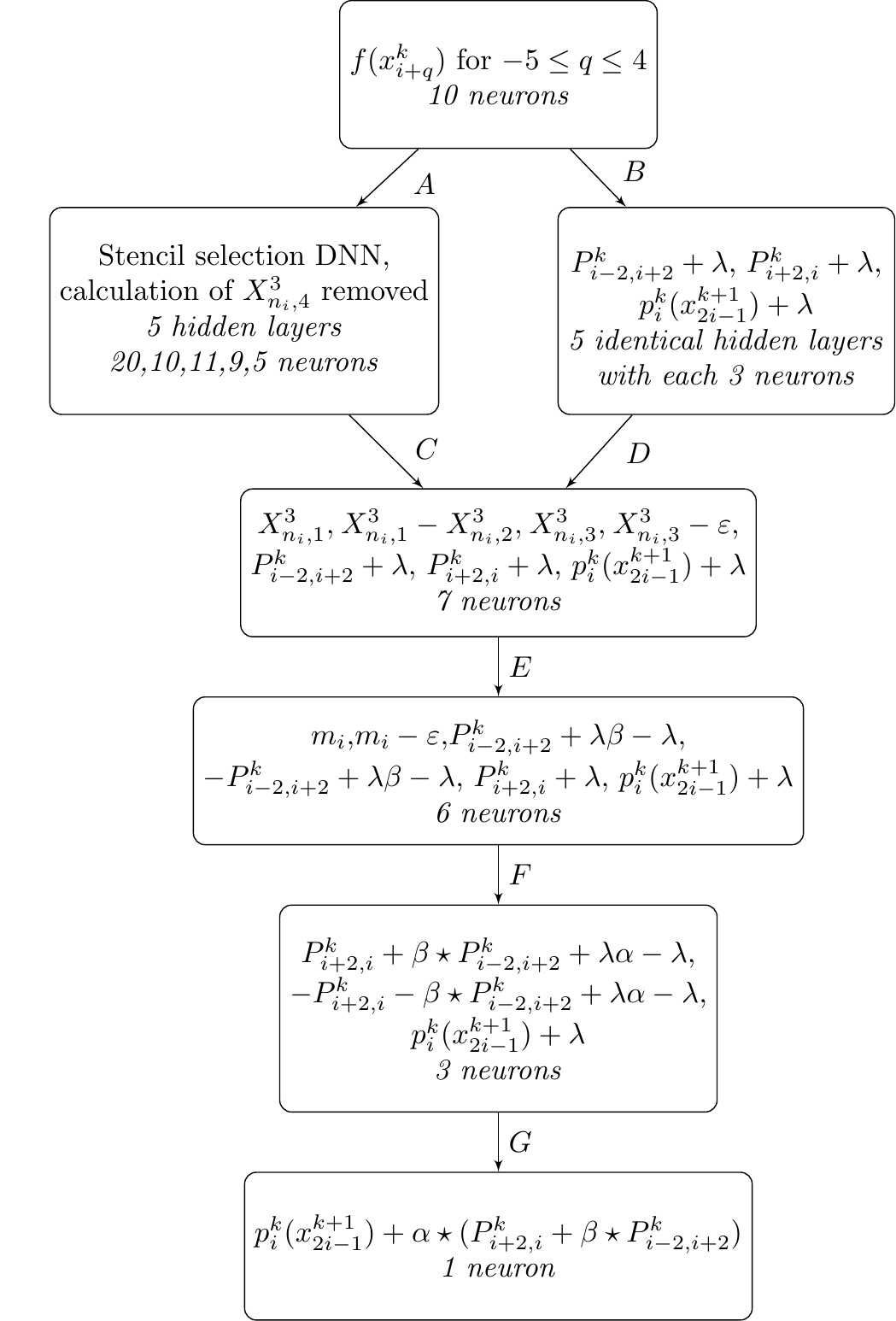}
    \caption{Flowchart of a ReLU DNN to calculate $\hat{f}^{k+1}_{2i-1,\epsilon}$ from $f^k$. }
    \label{fig:flowchart_total}
\end{figure}
In the figure, the following notation was used, 
\begin{equation}\label{eqn:flowchart_total}
\begin{split}
    m_i &= \min\{X^3_{n_i,1},X^3_{n_i,2}\},\\
    P^k_{m,n} &= p^k_{m}(x^{k+1}_{2i-1})- p^k_n(x^{k+1}_{2i-1}),
\end{split}
\end{equation}
for $1\leq i,m,n \leq N_k$. We now give some more explanation about how all the layers can be calculated from the previous layer in Figure \ref{fig:flowchart_total}. \textbf{\textit{A.B.}} All quantities of the first layer are linear combinations of the input neurons, where we also refer to Figure \ref{fig:flowchart_adapted}. From the proof of Theorem \ref{thm:approx-enosr-dnn}, it follows that we can take $\lambda=16$. \textbf{\textit{C.D.}} Linear combinations. \textbf{\textit{E.}} We refer to \eqref{eqn:def-approx-enosr} and \eqref{eqn:flowchart_total} for the definitions of $\beta$ and $m_i$, respectively. \textbf{\textit{F.}} We refer to \eqref{def:star} and \eqref{eqn:def-approx-enosr} for the definitions of $\star$ and $\alpha$, respectively. \textbf{\textit{G.}} From \eqref{eqn:def-approx-enosr} it follows that the value of the output layer is indeed equal to the approximate second-order ENO-SR prediction $\hat{f}^{k+1}_{2i-1,\epsilon}$. 

\section{Numerical results}\label{sec:num-results}
From sections \ref{sec:ENO-DNN} and \ref{sec:ENO-SR-DNN}, we know that there exist deep ReLU neural networks, of a specific architecture, that will mimic the ENO-$p$ and the second-order ENO-SR-2 algorithms for interpolating rough functions. In this section, we investigate whether we can \textit{train} such networks in practice and we also investigate their performance on a variety of tasks for which the ENO procedure is heavily used. We will refer to these trained networks as \textit{DeLENO} (Deep Learning ENO) and \textit{DeLENO-SR} networks. More details on the training procedure can be found in Section \ref{sec:training}, the performance is discussed in Section \ref{sec:performance} and illustrated by various applications at the end of this section. 

\subsection{Training}\label{sec:training}
The \textit{training} of these networks involves finding a parameter vector $\theta$ (the weights and biases of the network) that approximately minimizes a certain \textit{loss function}  $\mathcal{J}$ which measures the error in the network's predictions. To achieve this, we have access to a finite data set $\dset = \{ (X ^i,\func(X^i))\}_i \subset D \times \func(D)$, where $\func:D\subset\mathbb{R}^m\to\mathbb{R}^n$ is the unknown function we try to approximate using a neural network $\func^\theta$. 

For classification problems, each $Y^i=\func(X^i)$ is an $n$-tuple that indicates to which of the $n$ classes $X^i$ belongs. The output of the network $\hat{Y}^i=\func^\theta(X^i)$ is an approximation of $Y^i$ in the sense that $\hat{Y}^i_j$ can be interpreted as the probability that $X^i$ belongs to class $j$. A suitable loss function in this setting is the \textit{cross-entropy function} with regularization term
\begin{equation}\label{eqn:reg_cross_entropy}
\mathcal{J}(\theta ; \dset,\lambda) = - \frac{1}{\# \dset}\sum \limits_{(X^i,Y^i) \in \dset} \ \sum \limits_{j=1}^{n} Y^i_j \log(\hat{Y}^i_j)+ \lambda \mathcal{R}(\theta). 
\end{equation}
The cross-entropy term measures the discrepancy between the probability distributions of the true outputs and the predictions. It is common to add a regularization term $\lambda \mathcal{R}(\theta)$ to prevent overfitting of the data and thus improve the generalization capabilities of the network \cite{DLBOOK}. The network hyperparameter $\lambda > 0$ controls the extent of regularization. Popular choices of $\mathcal{R}(\theta)$ include the sum of some norm of all the weights of the network. To monitor the generalization capability of the network, it is useful to split $\dset$ into a training set $\mathbb{T}$ and a validation set $\Vset$ and minimize $\mathcal{J}(\theta ; \mathbb{T},\lambda)$ instead of $\mathcal{J}(\theta ; \dset,\lambda)$. The validation set $\Vset$ is used to evaluate the generalization error. 
The accuracy of network $\funcnn$ on $\Tset$ is measured as 
\begin{equation}\label{eqn:accuracy}
\Tset_{acc} = % \frac{\#\left\{(X,Y) \in \Tset \ | \ \hat{Y} = \funcnn(X), \ \ \text{arg} \max \limits_{1\leq j\leq n} \hat{Y}_j= \text{arg} \max \limits_{1\leq j\leq n} Y_j \right\}}{\# \Tset} 
\#\left\{(X,Y) \in \Tset \ | \ \hat{Y} = \funcnn(X), \ \ \text{arg} \max \limits_{1\leq j\leq n} \hat{Y}_j= \text{arg} \max \limits_{1\leq j\leq n} Y_j \right\}/ \# \Tset,
\end{equation}
with a similar expression for $\Vset_{acc}$. 

For regression problems, $\hat{Y}^i$ is a direct approximation of $Y^i$, making the \textit{mean squared error} with regularization term
\begin{equation}\label{eqn:mse-def}
    \mathcal{J}(\theta ; \dset,\lambda) =  \frac{1}{\# \dset}\sum \limits_{(X^i,Y^i) \in \dset} \norm{Y^i-\hat{Y}^i}^2+ \lambda \mathcal{R}(\theta), 
\end{equation}
an appropriate loss function. As before, the data set $\dset$ can be split into a training set $\Tset$ and a validation set $\Vset$, in order to minimize $ \mathcal{J}(\theta ; \Tset,\lambda)$ and estimate the MSE of the trained network by $ \mathcal{J}(\theta ; \Vset,\lambda)$. % Details on the training of the DeLENO(-SR) regression networks can be found in subsections \ref{sec:train-deleno} and \ref{sec:train-delenosr}.

The loss functions are minimized either with a mini-batch version of the stochastic gradient descent algorithm with an adpative learning rate or with the popular ADAM optimizer \cite{adam}. We use batch sizes of $1024$, unless otherwise specified. 

 \subsubsection{Training DeLENO-p}\label{sec:train-deleno}

We want to construct a suitable training data set $\mathbb{S}$ to train DeLENO-$p$ for interpolation purposes. Thanks to the results of Section \ref{sec:ENO-DNN}, we are guaranteed that for certain architectures it is theoretically possible to achieve an accuracy of 100\%. For any order, this architecture is given by Theorem \ref{thm:ENO-DNN} and its proof. For small orders $\pdeg=3,4$ we use the alternative network architectures described at the end of Section \ref{sec:ENO-DNN}, as they are of smaller size. The network will take an input from $\mathbb{R}^m$, $m=2\pdeg-2$, and predicts the stencil shift $r$. We generate a data set $\dset$ of size 460,200-200$m$ using Algorithm \ref{alg:eno_int_select} with inputs given by,
\begin{itemize}
\item A total of 400,000 samples $X \in \Ro^m$, with each component $X_j$ randomly drawn from the uniform distribution on the interval $[-1,1]$. 
\item The set
\[
\{ (u_l, ..., u_{l+m})^\top | \ 0\leq l \leq N-m, \ \ 0\leq q \leq 39, \ \ N = \{100,200,300,400,500\} \}
\]
where $u_l$ is defined as
\[
u_l:=\sin\left((q+1) \pi \frac{l}{N} \right), \quad 0\leq l \leq N.
\]
\end{itemize}

The input data needs to be appropriately scaled before being fed into the network, to ensure faster convergence during training. We use the following scaling for each input $X$,
\begin{equation}\label{eqn:scale}
\text{Scale}(X) = \begin{cases} \frac{2X - (b + a)}{b-a}& \quad \text{if } X \neq 0 \\
                 				   (1,...,1)^\top \in \Ro^m & \quad \text{otherwise}
                                    \end{cases}, \quad a = \min_j (X_j) , \:  b = \max_j (X_j),
\end{equation}
which scales the input to lie in the box $[-1,1]^m$.
\begin{remark}
When the input data is scaled using formula \eqref{eqn:scale}, then Newton's undivided differences are scaled by a factor $2(b-a)^{-1}$ as well. Therefore scaling does not alter the stencil shift obtained using Algorithm \ref{alg:eno_int_select} or \ref{alg:eno_fd_select}.
\end{remark}

The loss function $\mathcal{J}$ is chosen as \eqref{eqn:reg_cross_entropy}, with an $L_2$ penalization of the network weights and $\lambda = 7.8\cdot10^{-6}$. 
The network is retrained using 5 times, with the weights and biases initialized using a random normal distribution for each of the retrainings. The last $20 \%$ of $\dset$ is set aside to be used as the validation set $\Vset$. For each $p$, we denote by DeLENO-$p$ the network with the highest accuracy $\Vset_{acc}$ at the end of the training. 
The training of the DeLENO reconstruction networks was performed entirely analogously, with the only difference that now we set $m=2p-1$.

\subsubsection{Training DeLENO-SR}\label{sec:train-delenosr}

Next, we construct a suitable training data set $\mathbb{S}$ to train second-order DeLENO-SR for use as an interpolation algorithm. Recall that ENO-SR is designed to interpolate continuous functions $f$ that are two times differentiable, except at isolated points, $z\in\mathbb{R}$ where the first derivative has a jump of size $[f']$. Locally, these functions can be viewed as piecewise linear functions. Based on this observation, we create a data set using functions of the form 
\begin{equation}
    f(x)=a(x-z)_-+b(x-z)_+,
\end{equation}
where $a,b,z\in\mathbb{R}$. For notational simplicity we assume that the $x$-values of the stencil that serves as input for the ENO-SR algorithm (Section \ref{sec:enosr-alg}) are $0,1,\ldots,9$. The interval of interest is then $[4,5]$ and the goal of ENO-SR is to find an approximation of $f$ at $x=4.5$. We generate 100,000 samples, where we choose $a,b,z$ in the following manner, 
\begin{itemize}
    \item The parameters $a$ and $b$ are drawn from the uniform distribution on the interval $[-1,1]$. Note that any interval that is symmetric around 0 could have been used, since the data will be scaled afterwards. 
    \item For 25,000 samples, $z$ is drawn from the uniform distribution on the interval $[4,5]$. This simulates the case where the discontinuity is inside the interval of interest. 
    \item For 75,000 samples, $z$ is drawn from the uniform distribution on the interval $[-9,9]$, which also includes the case in which $f$ is smooth on the stencil. 
\end{itemize}

The network architecture is described in Section \ref{sec:ENO-SR-DNN}. The network will take an input from $\mathbb{R}^{10}$ and predicts the stencil shift $r$. The training of DeLENO-SR was performed in a very similar fashion to the training of DeLENO-$p$ (Section \ref{sec:train-deleno}), only this time we retrained the DeLENO-SR network 5 times for 5000 epochs each. Furthermore we used 8-fold cross-validation on a data set of 20,000 samples to select the optimal regularization parameter, resulting in the choice $\lambda=1\cdot10^{-8}$. 

\begin{remark}
Note that the detection mechanism of the ENO-SR interpolation method (Section \ref{sec:enosr-alg}) labels an interval as bad when $\alpha-\beta>0$ for some numbers $\alpha,\beta\in\mathbb{R}$. This approach causes poor approximations in practice due to numerical errors. When for example $\alpha=\beta$, rounding can have as a consequence that $\mathrm{round}(\alpha-\beta)>0$, leading to an incorrect label. This deteriorates the accuracy of the method and is very problematic for the training. Therefore we used in our code the alternative detection criterion $\alpha-\beta>\epsilon$, where for example $\epsilon=10^{-10}$. 
\end{remark}

\subsection{Performance}\label{sec:performance}

In the previous sections, we have proven the existence of ReLU neural networks that approximate ENO(-SR) well, or can even exactly reproduce its output. However, it might be challenging to obtain  these networks by training on a finite set of samples.  Fortunately, Table \ref{tab:eno_param} demonstrates that this is not the case for the DeLENO(-SR) stencil selection networks. For both interpolation and reconstruction, the classification accuracy \eqref{eqn:accuracy} is nearly 100$\%$. A comparison between the trained weights and biases in Appendix \ref{sec:trained-weights} and their theoretical counterparts of (\ref{eqn:DeLENO3int-L1}-\ref{eqn:DeLENO4int-L3}) reveals that there are multiple DNNs that can represent ENO. Moreover, this indicates that the weights of two DNNs (i.e. the theoretical and trained DNNs) can be very different even though the output is approximately the same (Table \ref{tab:eno_param}). This is in agreement with the result from \cite{petersen2018topological} that the function that maps a family of weights to the function computed by the associated network is not inverse stable. 

\begin{table}[!htpb]
\centering
\subtable[DeLENO interpolation]{
\begin{tabular}{|c|c|c|c|}
\hline
$\pdeg$ & \multicolumn{1}{c|}{hidden layer sizes} &  \multicolumn{1}{c|}{$\Tset_{acc}$} & \multicolumn{1}{c|}{$\Vset_{acc}$} \\ \hline
3   & 4                                             &     $99.36 \%$       &      $99.32 \%$                    \\ \hline
4   & 10,6,4                                   &     $99.22 \%$       &      $99.14 \%$                     \\ \hline
%5   & 5 layers, each of width 20           & 458,600                         &     $99.32 \%$       &      $99.24 \%$                     \\ \hline
SR & 20,11,12,10,6 & $99.74\%$ & $99.81\%$ \\\hline
\end{tabular}
% \caption{Shape of MLP, size of $\dset$ and accuracies of the best networks trained for the interpolation problem.}
\label{tab:eno_int_param}
}
\subtable[DeLENO reconstruction]{
\begin{tabular}{|c|c|c|c|}
\hline
$\pdeg$ & \multicolumn{1}{c|}{hidden layer sizes}  & \multicolumn{1}{c|}{$\Tset_{acc}$} & \multicolumn{1}{c|}{$\Vset_{acc}$} \\ \hline
2   & 4                                  &     $99.96 \%$       &      $99.97 \%$                    \\ \hline
3   & 10, 6, 4                                   &     $99.65 \%$       &      $99.65 \%$                     \\ \hline
% 4   & 18,6,8        & 458,800                         &     $99.31 \%$       &      $99.33 \%$                     \\ \hline
\end{tabular}
% \caption{Shape of DeLENO-$\pdeg$ networks,size of $\dset$ and accuracies of the best networks trained for the reconstruction problem.}
\label{tab:eno_rec_param}
}
\caption{Shape of DeLENO-$\pdeg$ and DeLENO-SR networks with their accuracies for the interpolation and reconstruction problem.}\label{tab:eno_param}
\end{table}

Next, we investigate the order of accuracy of the DeLENO-SR regression network, for the functions
\begin{align}\label{eq:function_ordercomparison}
    f_1(x) &= -2\left(x-\frac{\pi}{6}\right)\mathbbm{1}_{[0,\frac{\pi}{6})}(x)+\left(x-\frac{\pi}{6}\right)^2\mathbbm{1}_{[\frac{\pi}{6},1]}(x),\\
    f_2(x) &= \sin(x). 
\end{align}
Note that the first derivative of $f_1$ has a jump at $\frac{\pi}{6}$. In Figure \ref{fig:error-enosr-reg}, the order of accuracy of second-order ENO-SR-2 and DeLENO-SR-2 is compared with those of ENO-3 and DeLENO-3 for both the piecewise smooth function $f_1$ and the smooth function $f_2$. %As shown in Theorem \ref{thm:ENOSR2}, ENO-SR-2 and DeLENO-SR-2 are indeed second-order accurate, whereas ENO and DeLENO reduce to first-order accuracy. Furthermore a minor disagreement can be seen between DeLENO-SR-2 and ENO-SR-2 on coarse grids. 
% \begin{figure}[h!]
%     \includegraphics[width=0.5\textwidth]{}
%     \caption{Comparison of orders of (DeL)ENO-SR-2 and (DeL)ENO-3 for function \eqref{eq:function_ordercomparison}.}
%     \label{fig:enosr_vs_eno3}
% \end{figure}
\begin{figure}[!htbp]
  \centering
  \subfigure[Error for $f_1$]{\includegraphics[width=0.49\textwidth]{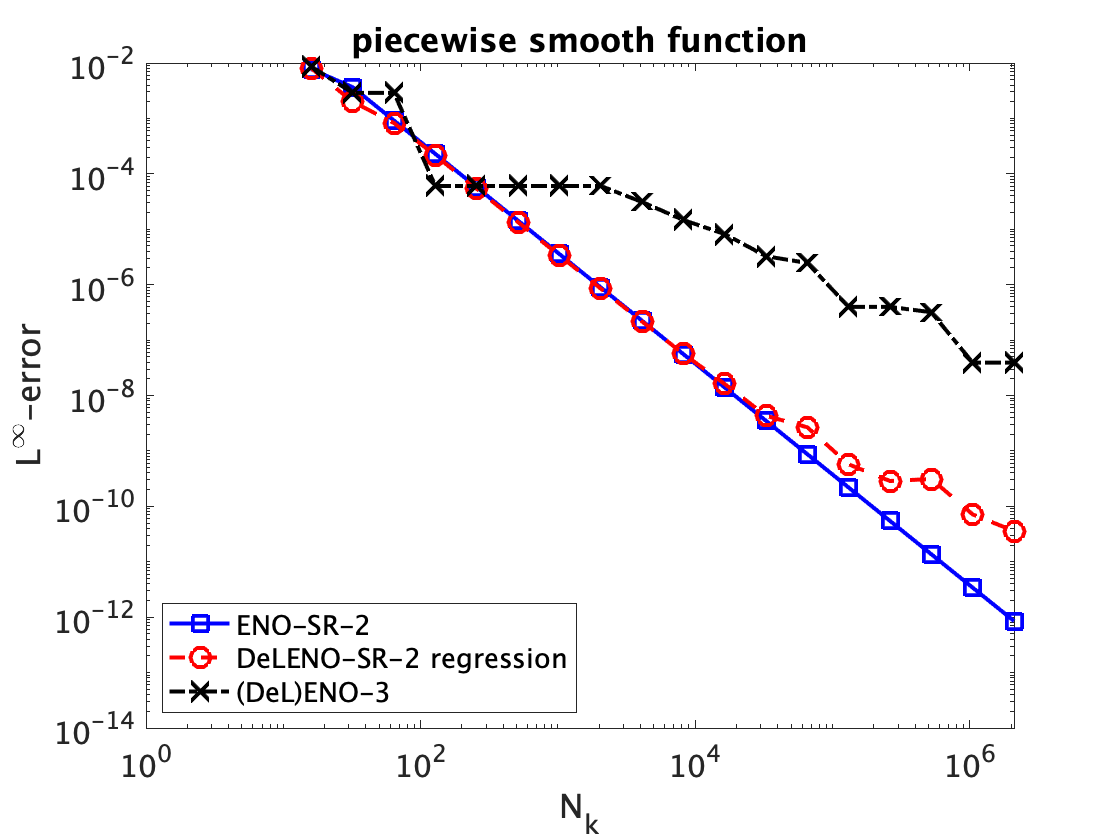}}
  \subfigure[Error for $f_2$]{\includegraphics[width=0.49\textwidth]{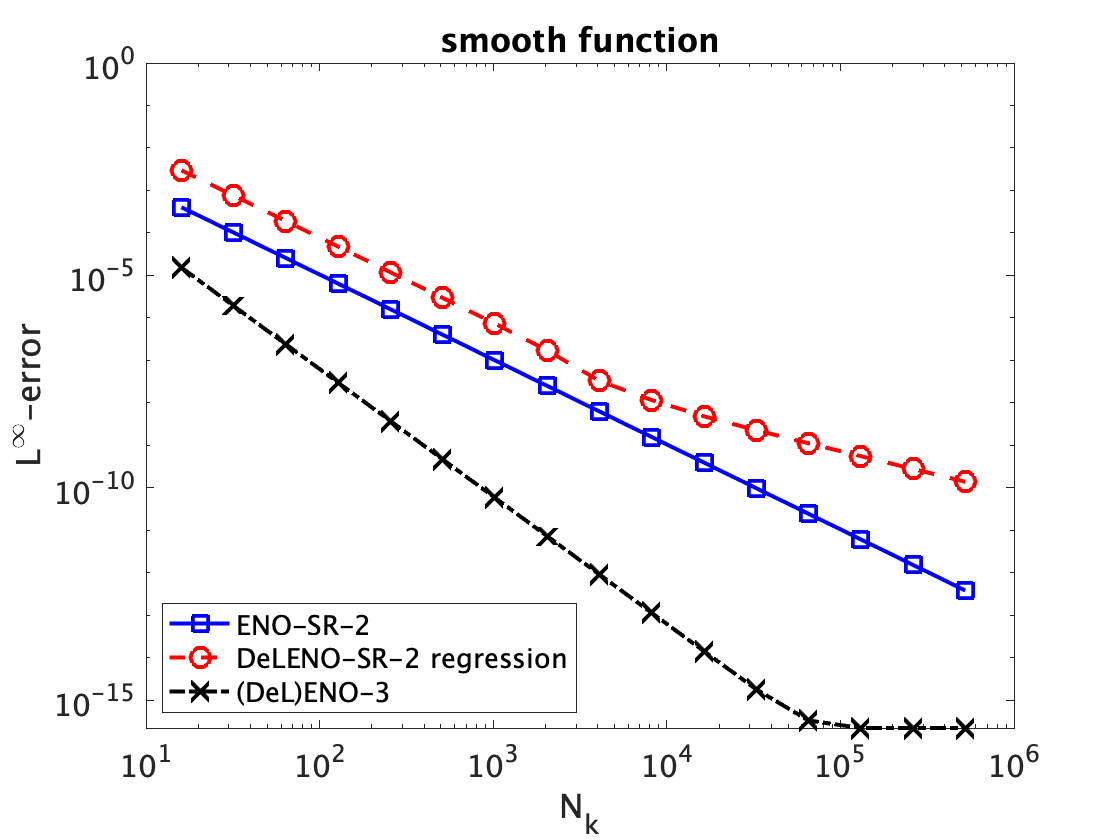}}
  \caption{Plots of the approximation error of the DeLENO-SR-2 regression network for the piecewise smooth function $f_1$ and the smooth sine function $f_2$. The approximation errors of ENO-SR-2 and (DeL)ENO-3 are shown for comparison. }\label{fig:error-enosr-reg}
\end{figure}
In both cases, ENO-3 and DeLENO-3 completely agree, which is not surprising given the high classification accuracies listed in Table \ref{tab:eno_param}. (DeL)ENO-3 is third-order accurate for the smooth function and only first-order accurate for a more rough function, in agreement with expectations. 
For both $f_1$ and $f_2$, DeLENO-SR-2 is second-order accurate on coarse grids, but a deterioration to first-order accuracy is seen on very fine grids. This deterioration is an unavoidable consequence of the error that the trained network makes and the linear rescaling of the input stencils \eqref{eqn:scale}. A more detailed discussion of this issue can be found in Section 6.2 of \cite{TDR_THESIS}. 
Furthermore, although the DeLENO-SR regression network is initially second-order accurate for the smooth function, the approximation error does not agree with that of ENO-SR. This is in line with Theorem \ref{thm:approx-enosr-bound}, where we proved that there exists a network that is a second-order accurate approximation of ENO-SR-2 except for an error term that can be arbitrarily small, yet is fixed. A second factor that might contribute is the fact that DeLENO-SR-2 was trained on piecewise linear functions, which can be thought of as a second-order accurate approximation of a smooth function, therefore leading to a higher error. 

\subsection{Applications}
Next, we apply the DeLENO algorithms in the following examples.
\subsubsection{Function approximation}
We first demonstrate the approximating ability of the DeLENO interpolation method using the function
\begin{equation}\label{eqn:f_int}
q(x) = \begin{cases}
          -x & \quad \text{if } x < 0.5,\\
          3\sin(10 \pi x) & \quad \text{if } 0.5 < x < 1.5,\\
          - 20(x-2)^2 & \quad \text{if } 1.5 < x < 2.5,\\
          3 & \quad \text{if } 2.5<x,\\
	\end{cases}
\end{equation}
% \begin{equation}\label{eqn:f_int}
%     q(x) = -x\cdot\mathbbm{1}_{[0,0.5)}(x)+3\sin(10 \pi x)\cdot\mathbbm{1}_{[0.5,1.5)}(x)- 20(x-2)^2\cdot\mathbbm{1}_{[1.5,2.5)}(x)+3\cdot\mathbbm{1}_{[2.5,3]}(x),
% \end{equation}
which consists of jump discontinuities and smooth high-frequency oscillations. We discretize the domain $[0,3]$ and generate a sequence of nested grids of the form \eqref{eqn:interp_partitions} by setting $N_0 = 16$ and $K=4$. We use the data on the grid $\Tau^k$, and interpolate it onto the grid $\Tau^{k+1}$ for $0\leq k < K$. As shown in Figure \ref{fig:fint_p34}, the interpolation with ENO-$4$ and DeLENO-$4$ is identical on all grids, for this particular function.
\begin{figure}[!htbp]
\begin{center}
\subfigure[$\Tau^0$ to $\Tau^1$, $\pdeg=3$]{\includegraphics[width=0.33\textwidth]{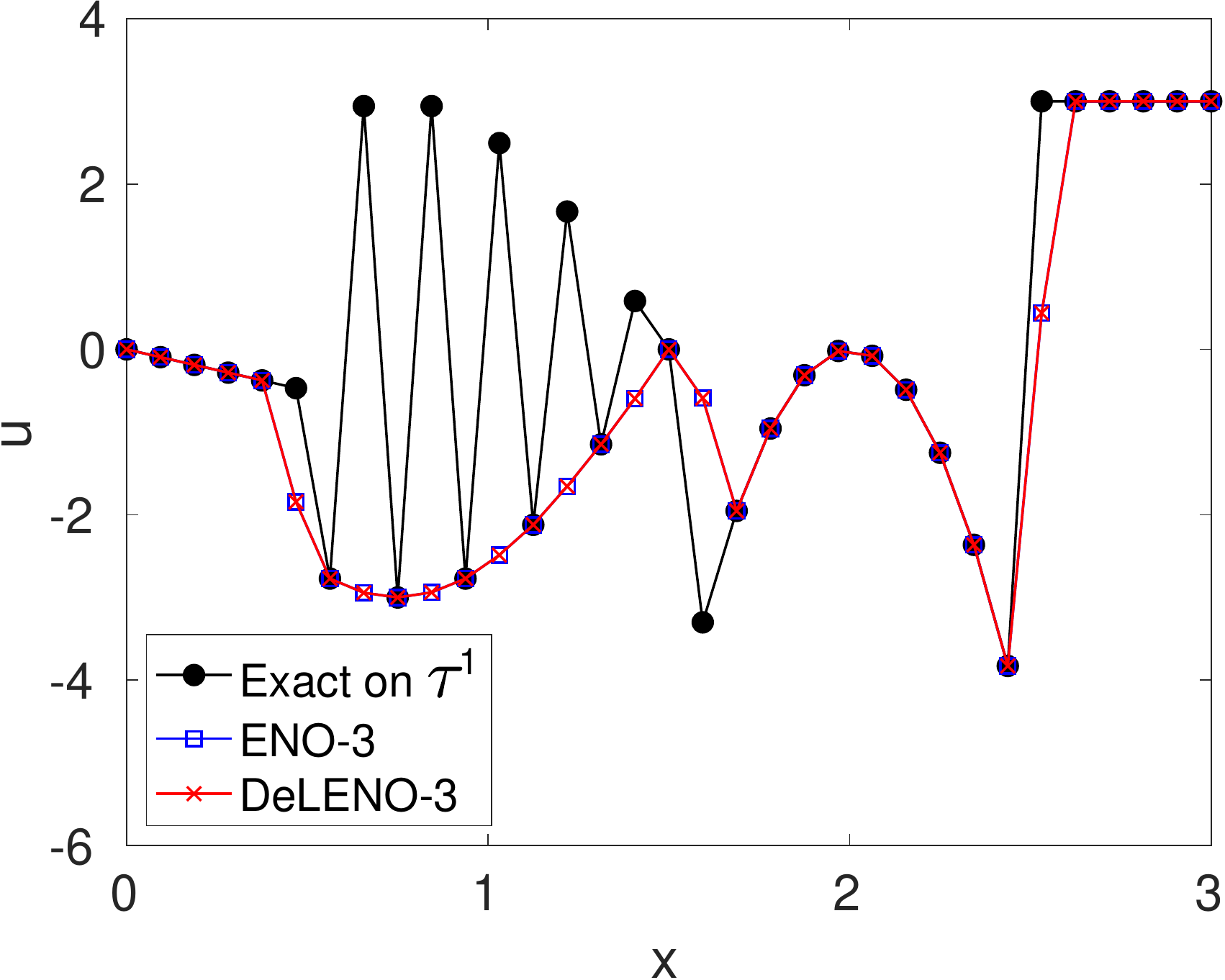}}
\subfigure[$\Tau^0$ to $\Tau^1$, $\pdeg=4$]{\includegraphics[width=0.33\textwidth]{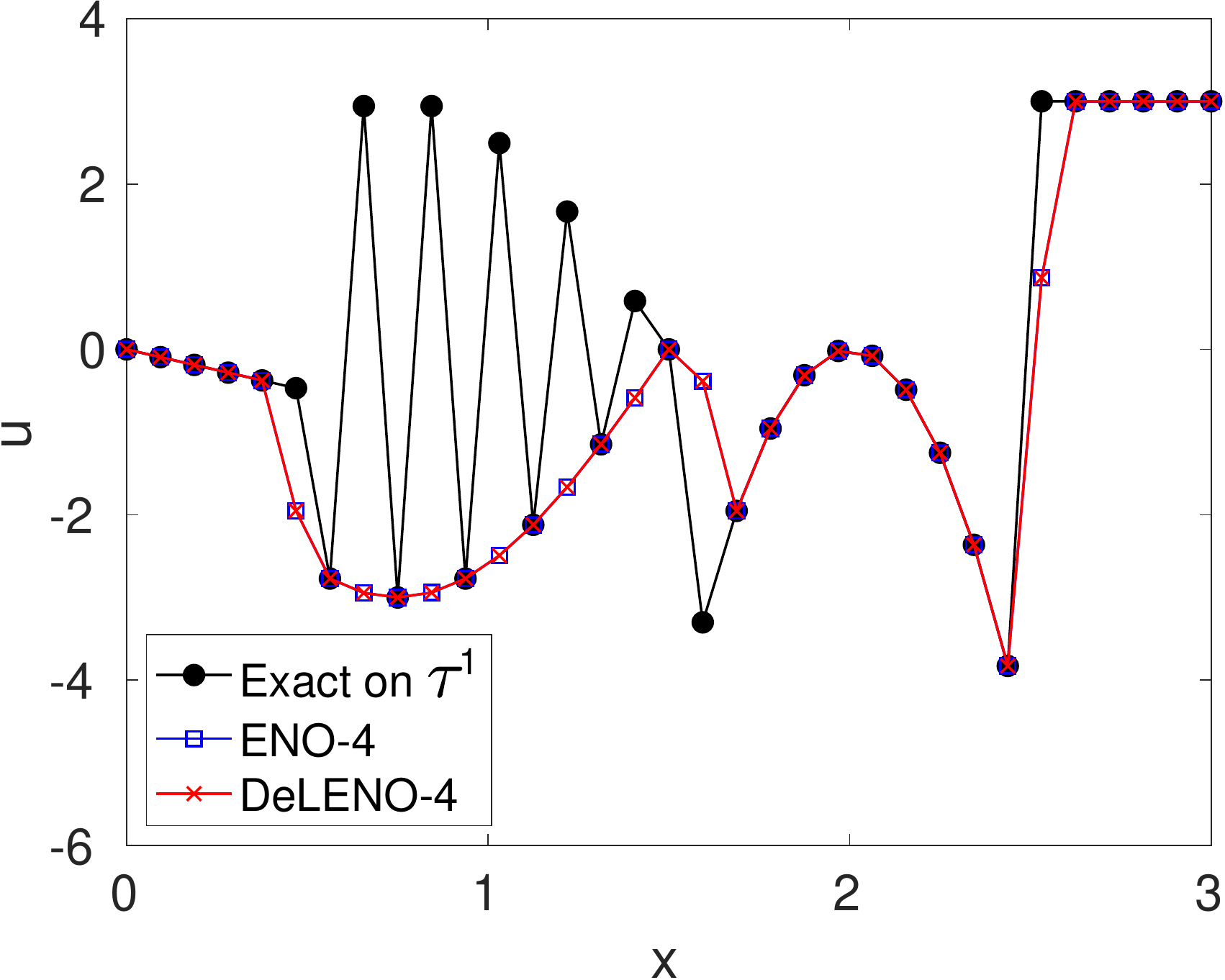}}\\
\subfigure[$\Tau^1$ to $\Tau^2$, $\pdeg=3$]{\includegraphics[width=0.33\textwidth]{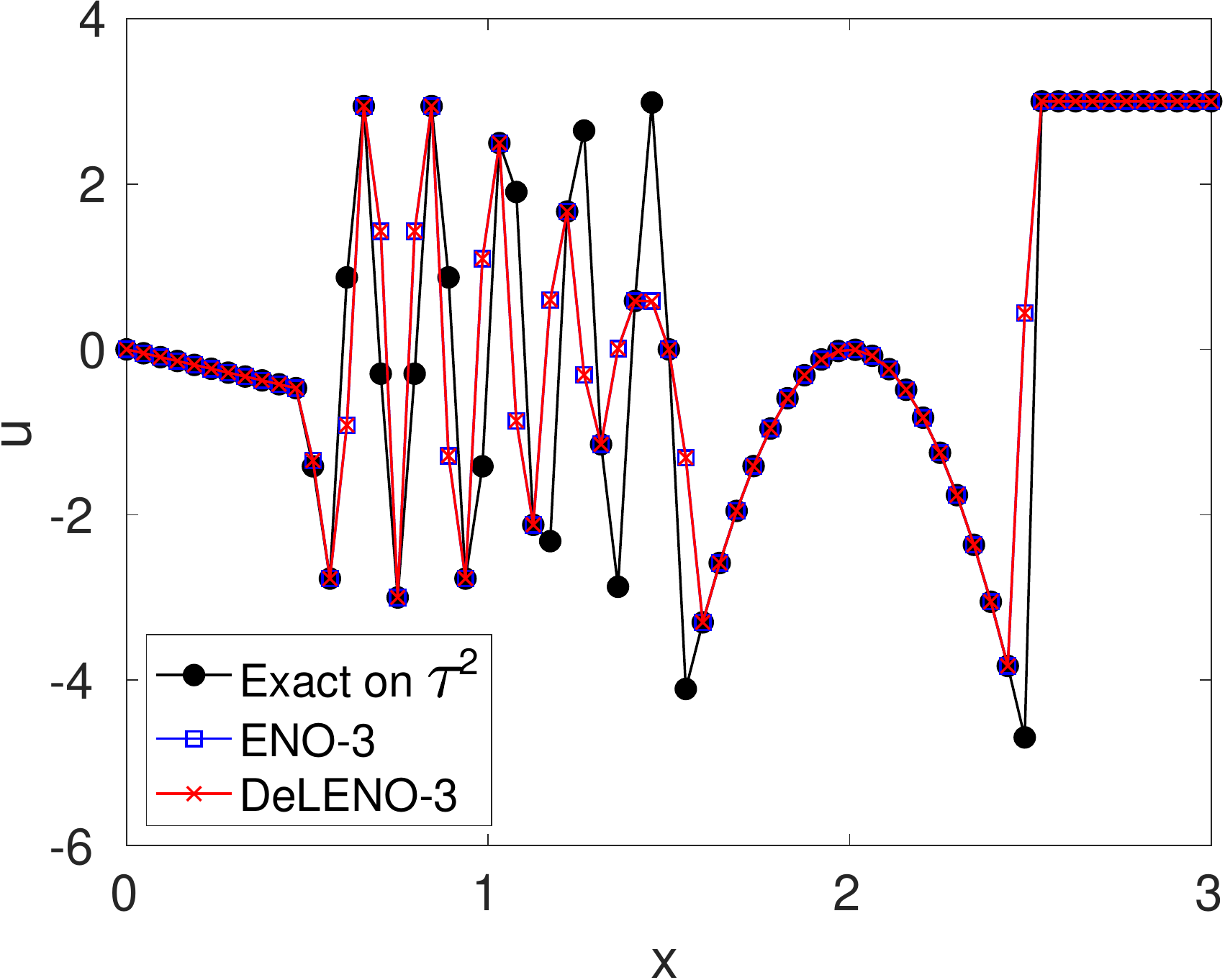}}
\subfigure[$\Tau^1$ to $\Tau^2$, $\pdeg=4$]{\includegraphics[width=0.33\textwidth]{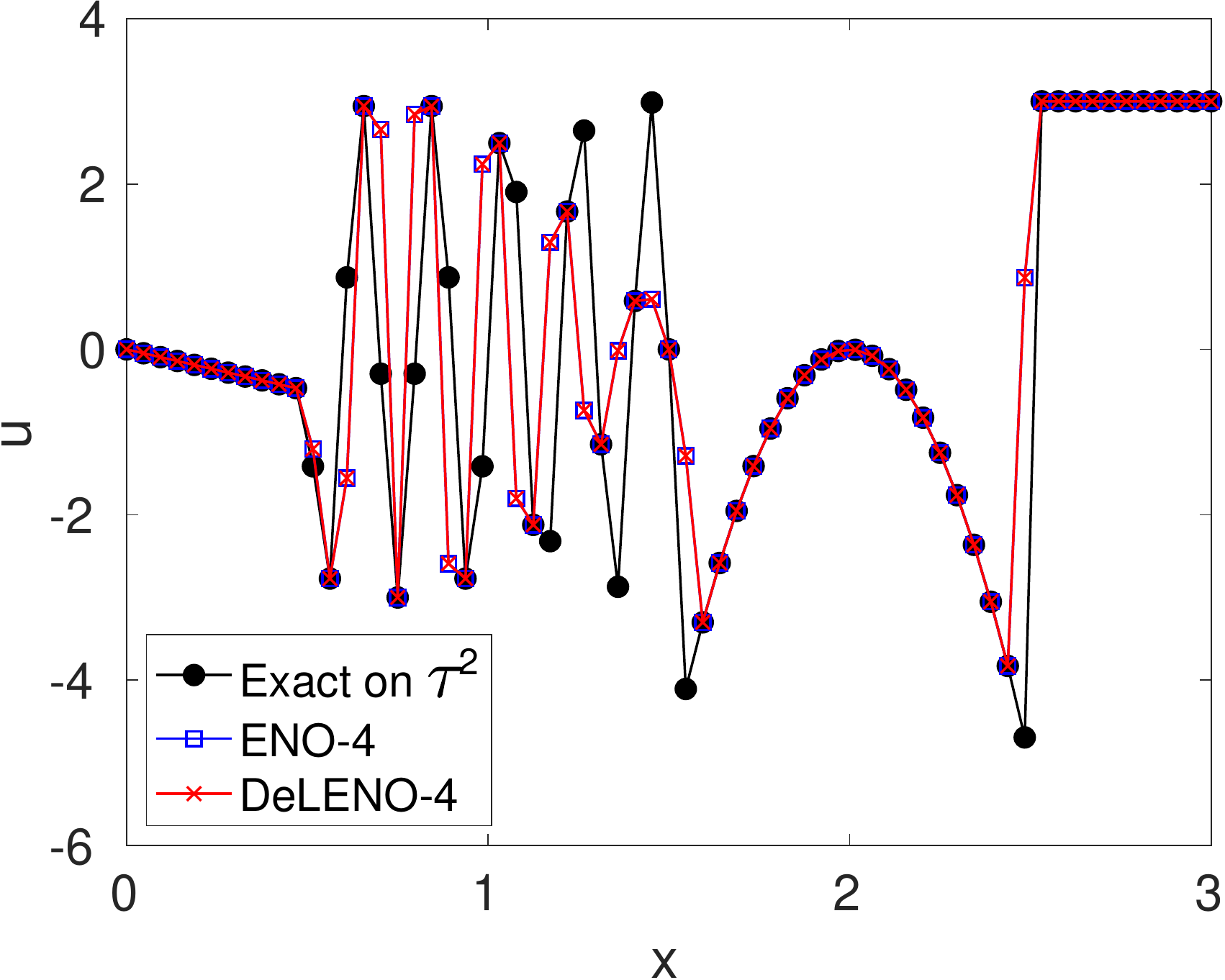}}\\
\subfigure[$\Tau^2$ to $\Tau^3$, $\pdeg=3$]{\includegraphics[width=0.33\textwidth]{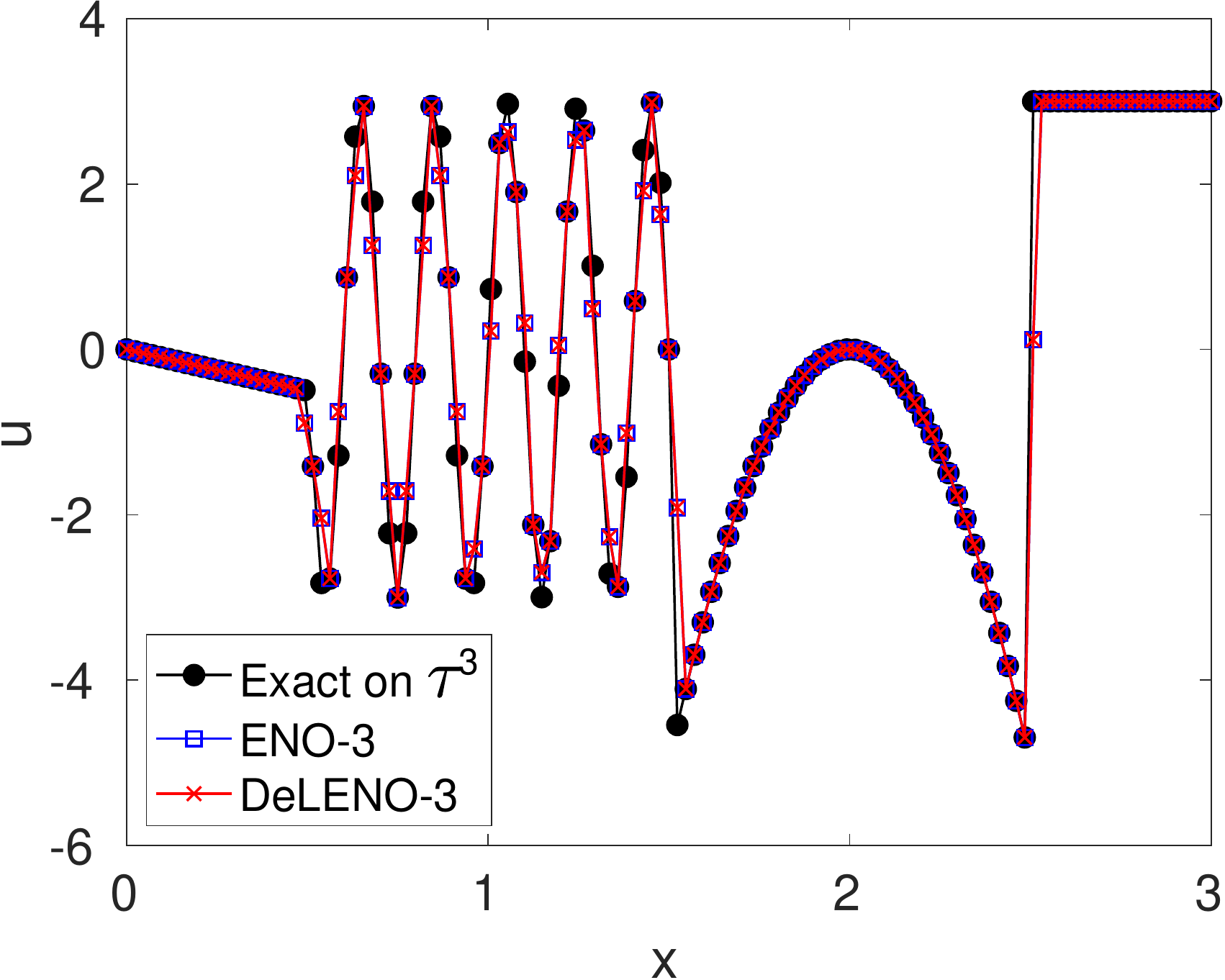}}
\subfigure[$\Tau^2$ to $\Tau^3$, $\pdeg=4$]{\includegraphics[width=0.33\textwidth]{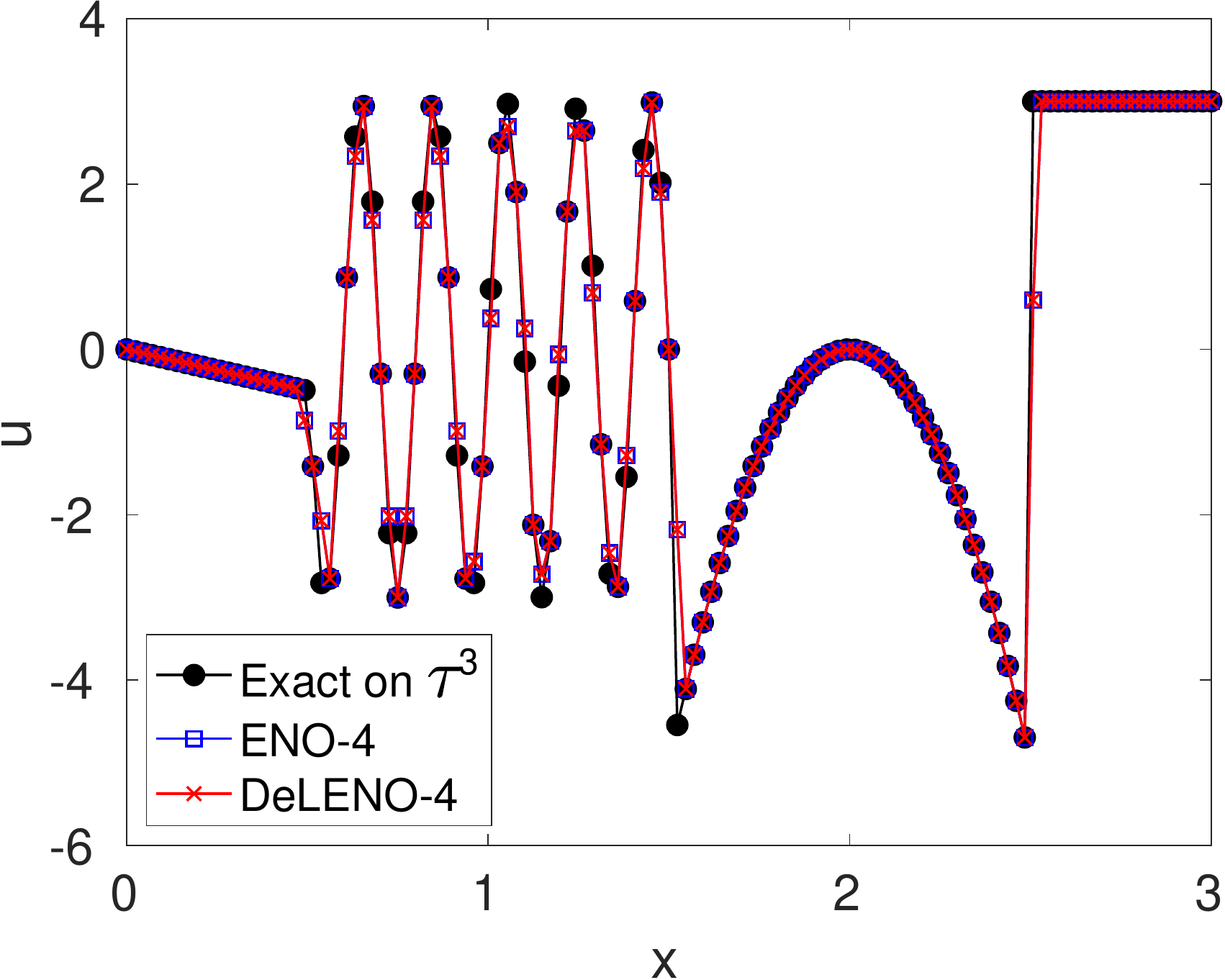}}\\
\subfigure[$\Tau^3$ to $\Tau^4$, $\pdeg=3$]{\includegraphics[width=0.33\textwidth]{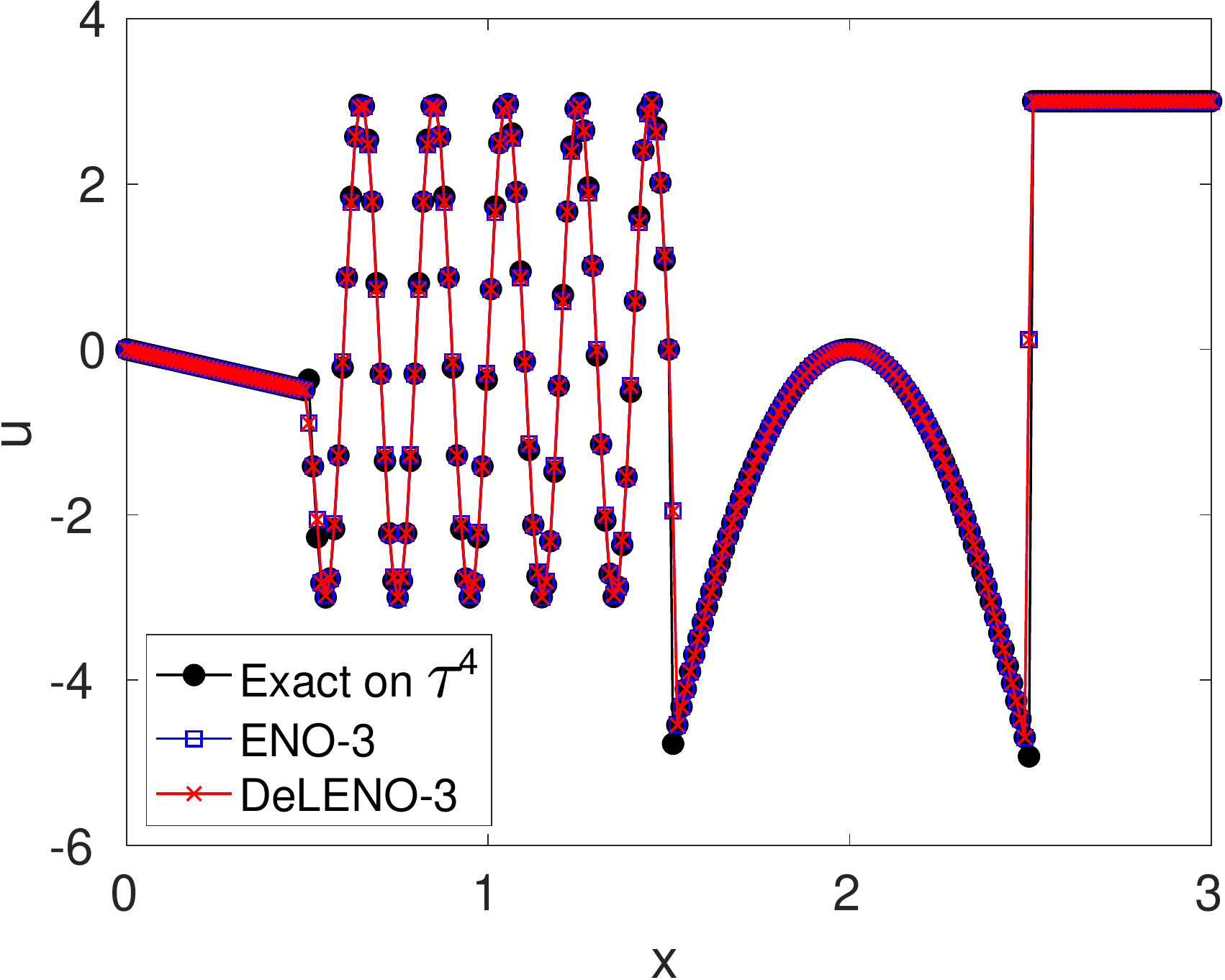}}
\subfigure[$\Tau^3$ to $\Tau^4$, $\pdeg=4$]{\includegraphics[width=0.33\textwidth]{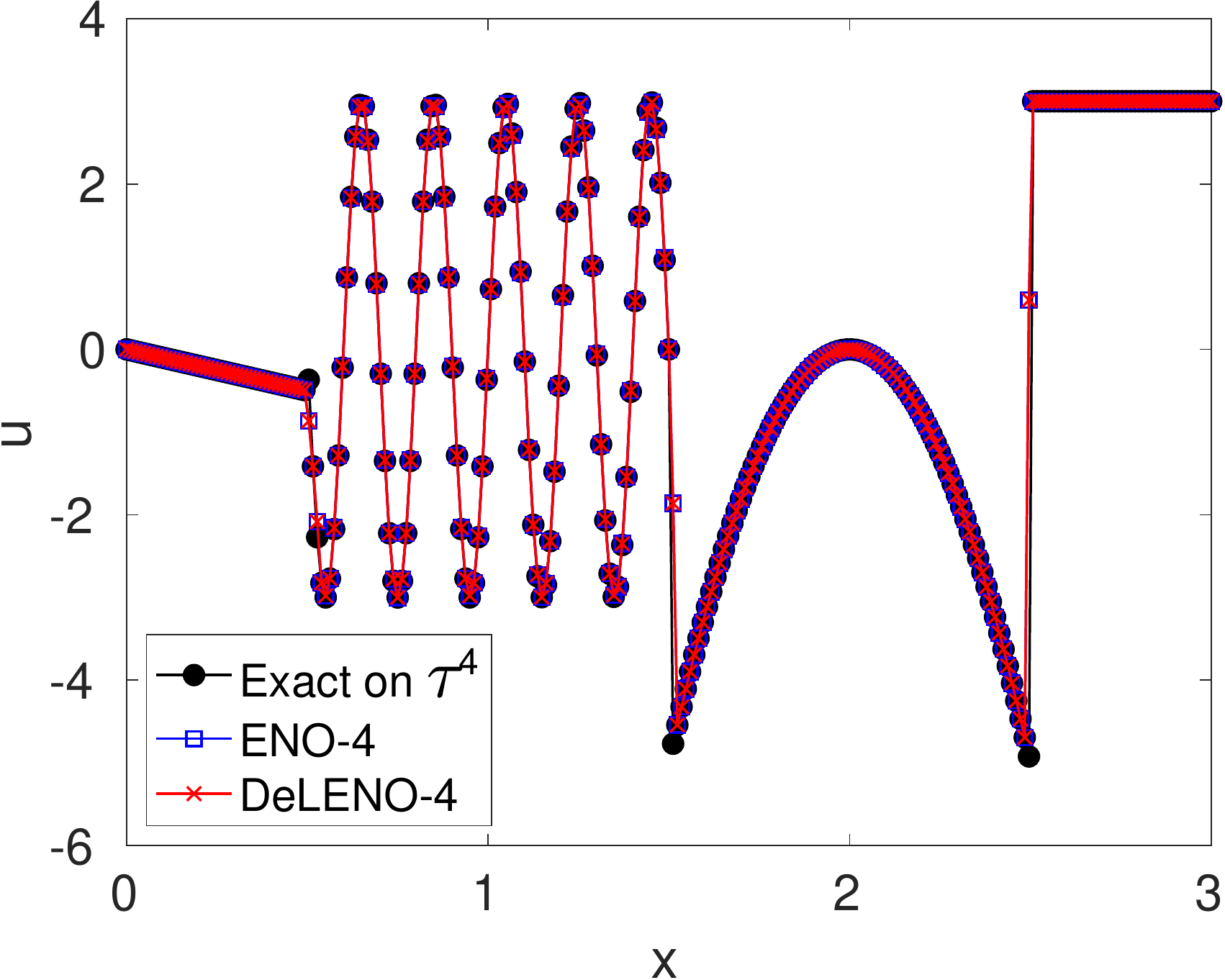}}
\caption{Interpolating the function \eqref{eqn:f_int} using ENO and DeLENO. }
\label{fig:fint_p34}
\end{center}
\end{figure}

\subsubsection{Data compression}\label{sec:datacompression}

We now apply the multi-resolution representation framework  of Appendix \ref{app:multi-res} to use DeLENO to compress the function \eqref{eqn:f_int}. We construct a nested sequence of meshes on $[0,3]$ by choosing $N_0 = 9$ and $K=5$ in \eqref{eqn:partitions}. We use Algorithm \ref{alg:encode} to obtain the multi-resolution representation of the form \eqref{eqn:cmr_rep} and decode the solution using Algorithm \ref{alg:decode} to obtain the approximation $\widehat{\q}^K$. The compression thresholds needed for the encoding procedure are set using \eqref{eqn:threshold}. 

Figure \ref{fig:1d_comp_soln_dk} provides a comparison of the results obtained using different values for the threshold parameters $\epsilon$, and shows the non-zero coefficients $\widehat{\db}^k$ for each mesh level $k$. A higher value of $\epsilon$ can truncate a larger number of $\widehat{\db}^k$ components, as is evident for $\pdeg=3$. However, there is no qualitative difference between $\widehat{\q}^K$ obtained for the two $\epsilon$ values considered. Thus, it is beneficial to use the larger $\epsilon$, as it leads to a sparser multi-resolution representation without deteriorating the overall features. The solutions obtained with ENO and DeLENO are indistinguishable. We refer to Table \ref{tab:1D_comp_err} for the errors of the two methods. 

\begin{figure}[!htbp]
\begin{center}
\subfigure[$\epsilon=0.5, p=3$]{\includegraphics[width=0.32\textwidth]{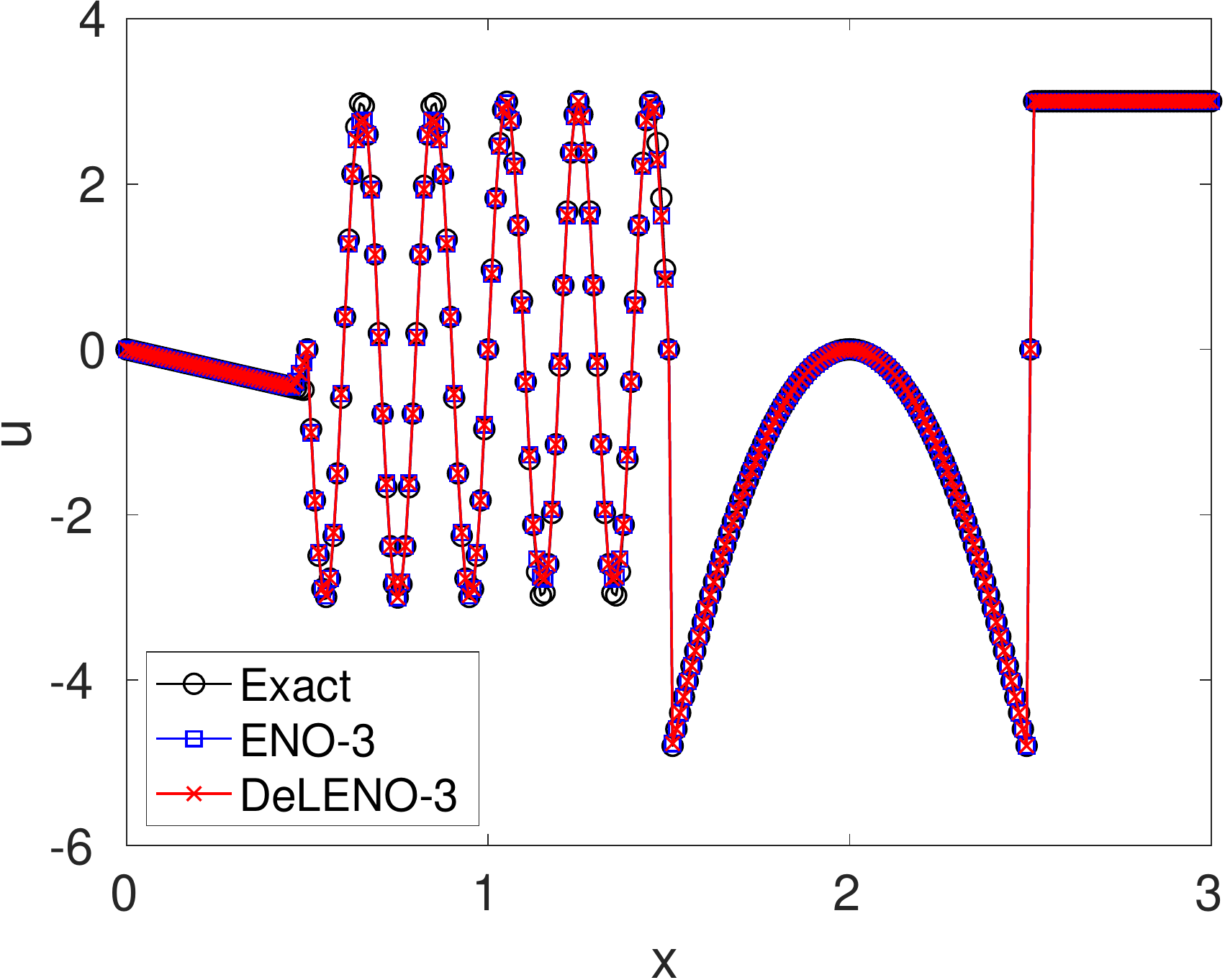}}
\subfigure[$\epsilon=0.5, p=3$]{\includegraphics[width=0.32\textwidth]{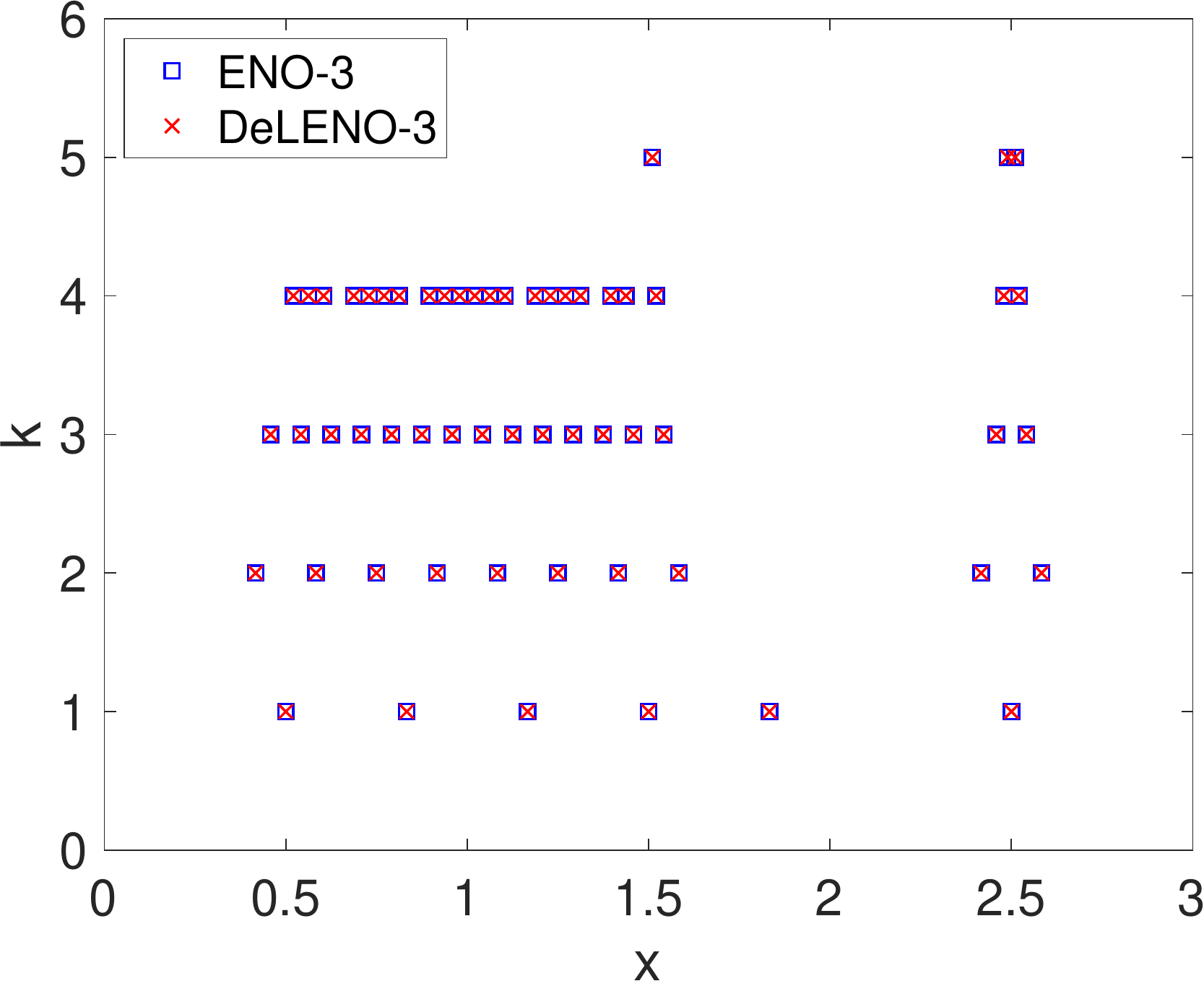}}\\
\subfigure[$\epsilon=0.5, p=4$]{\includegraphics[width=0.32\textwidth]{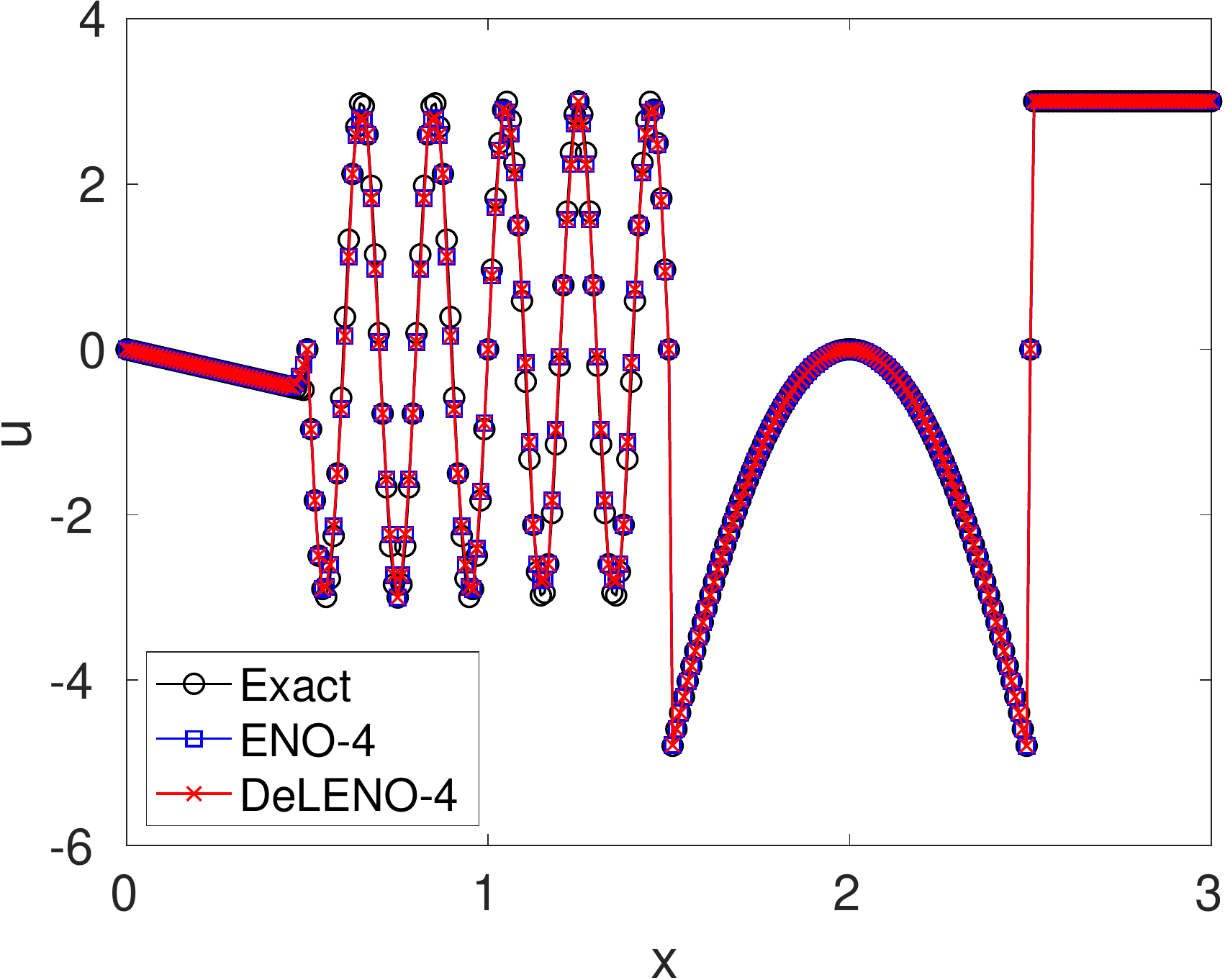}}
\subfigure[$\epsilon=0.5, p=4$]{\includegraphics[width=0.32\textwidth]{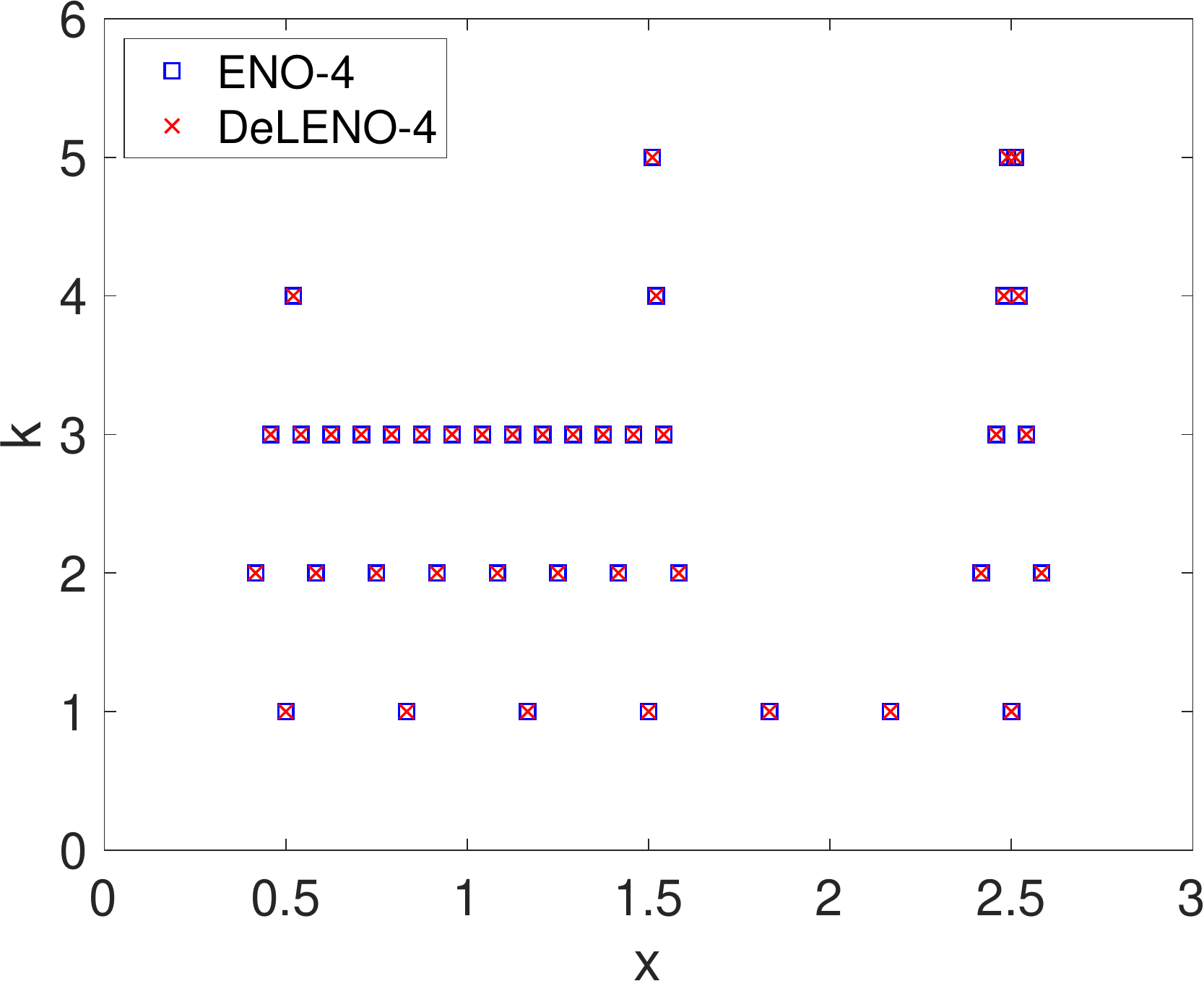}}\\
\subfigure[$\epsilon=1, p=3$]{\includegraphics[width=0.32\textwidth]{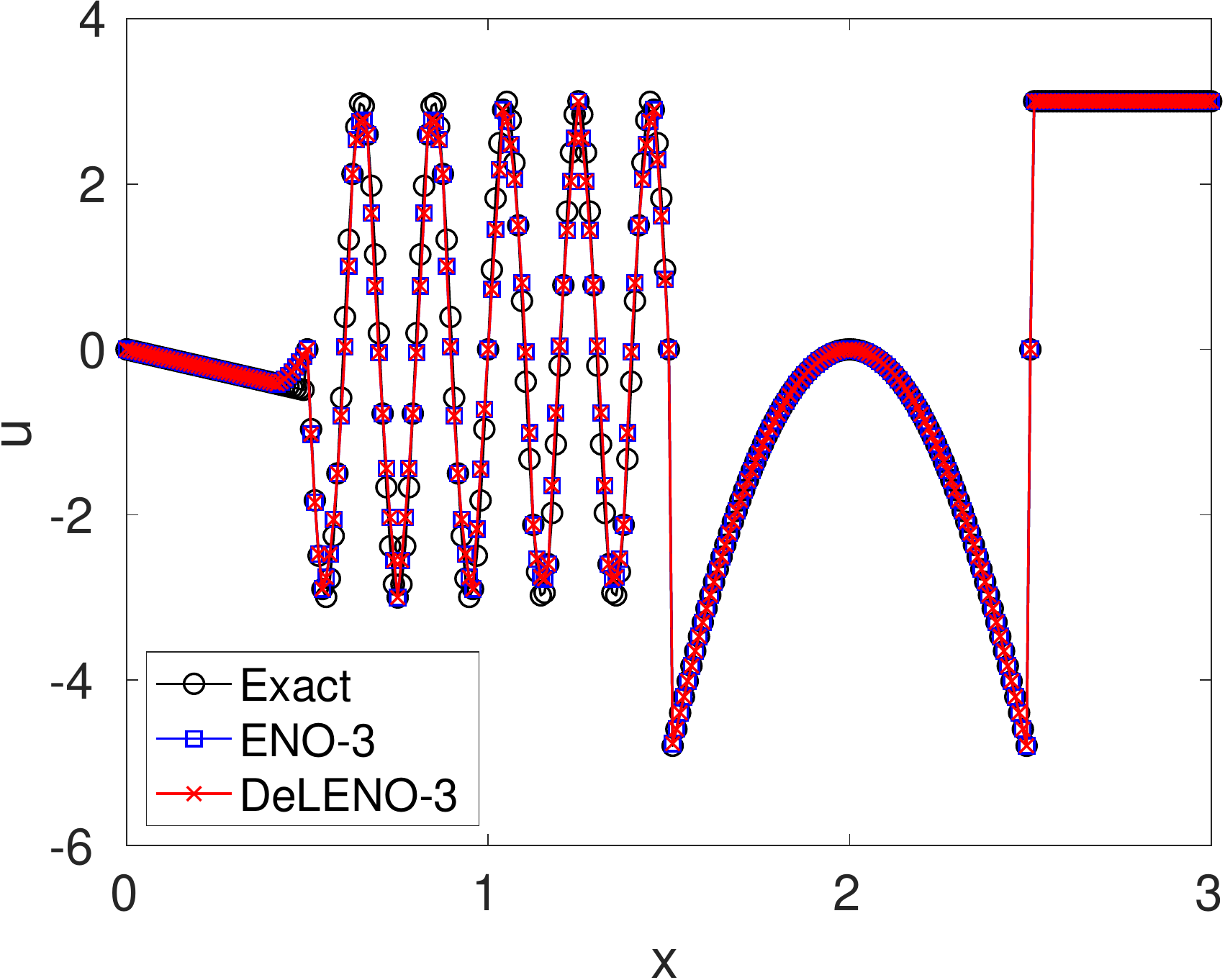}}
\subfigure[$\epsilon=1, p=3$]{\includegraphics[width=0.32\textwidth]{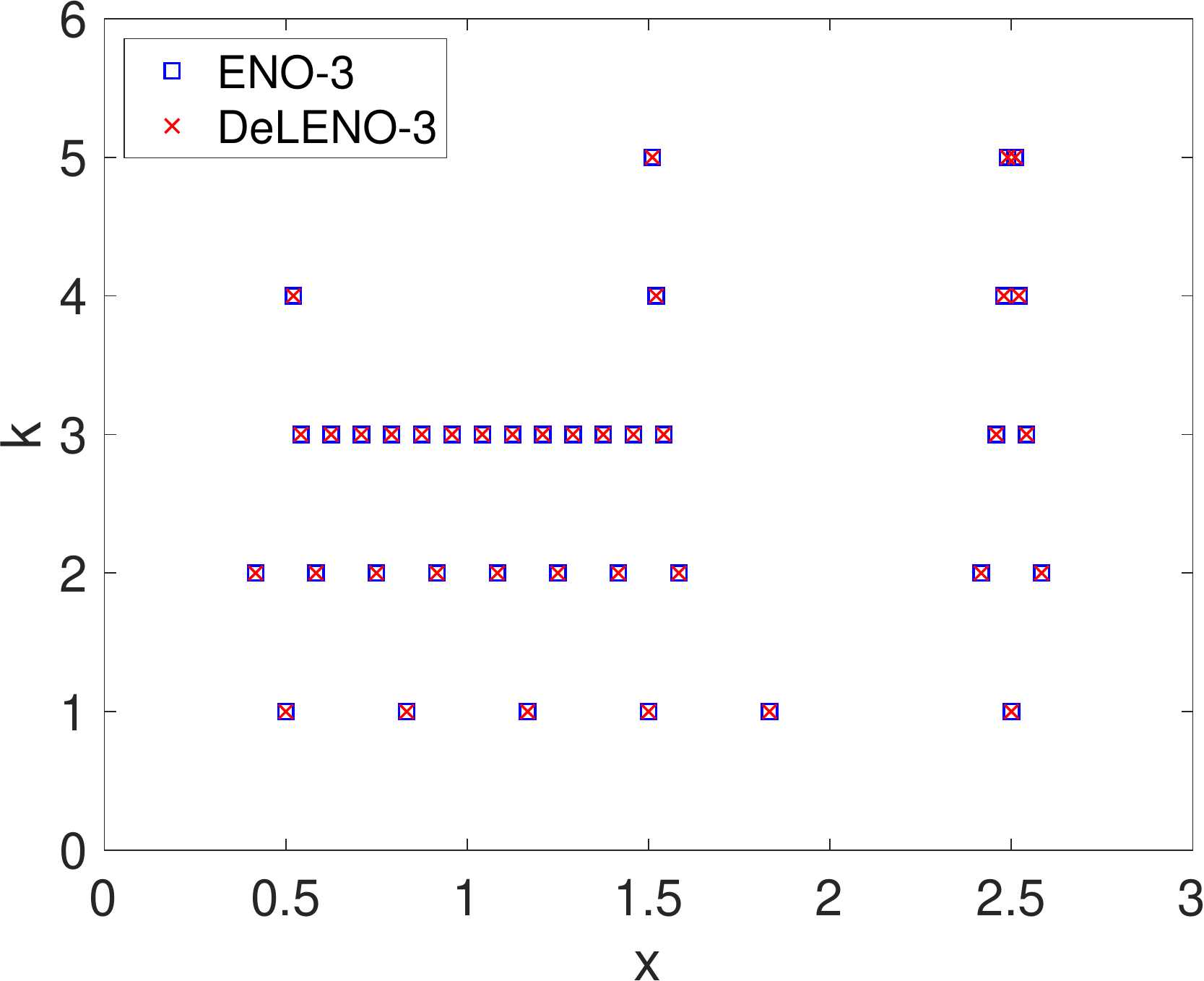}}\\
\subfigure[$\epsilon=1, p=4$]{\includegraphics[width=0.32\textwidth]{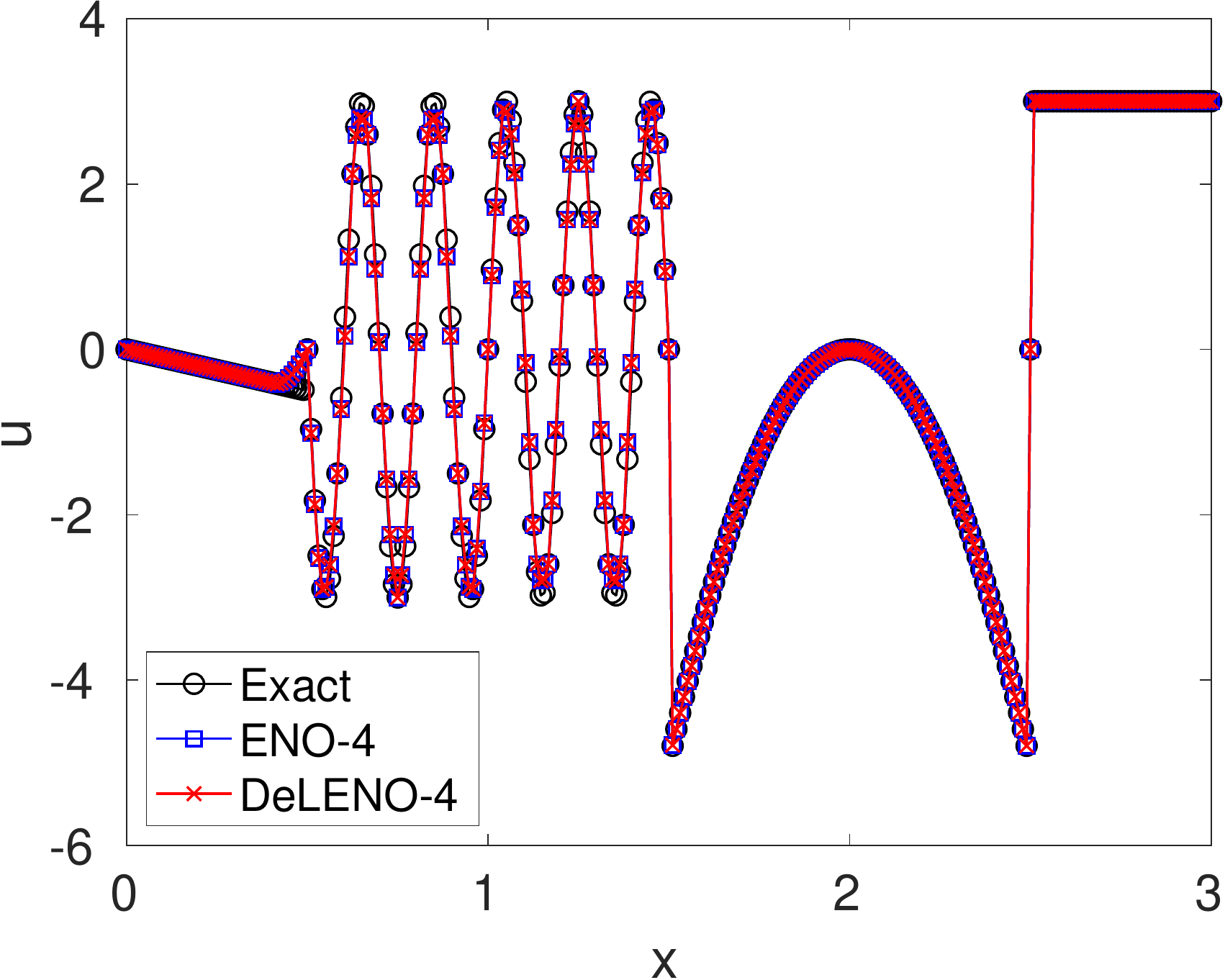}}
\subfigure[$\epsilon=1, p=4$]{\includegraphics[width=0.32\textwidth]{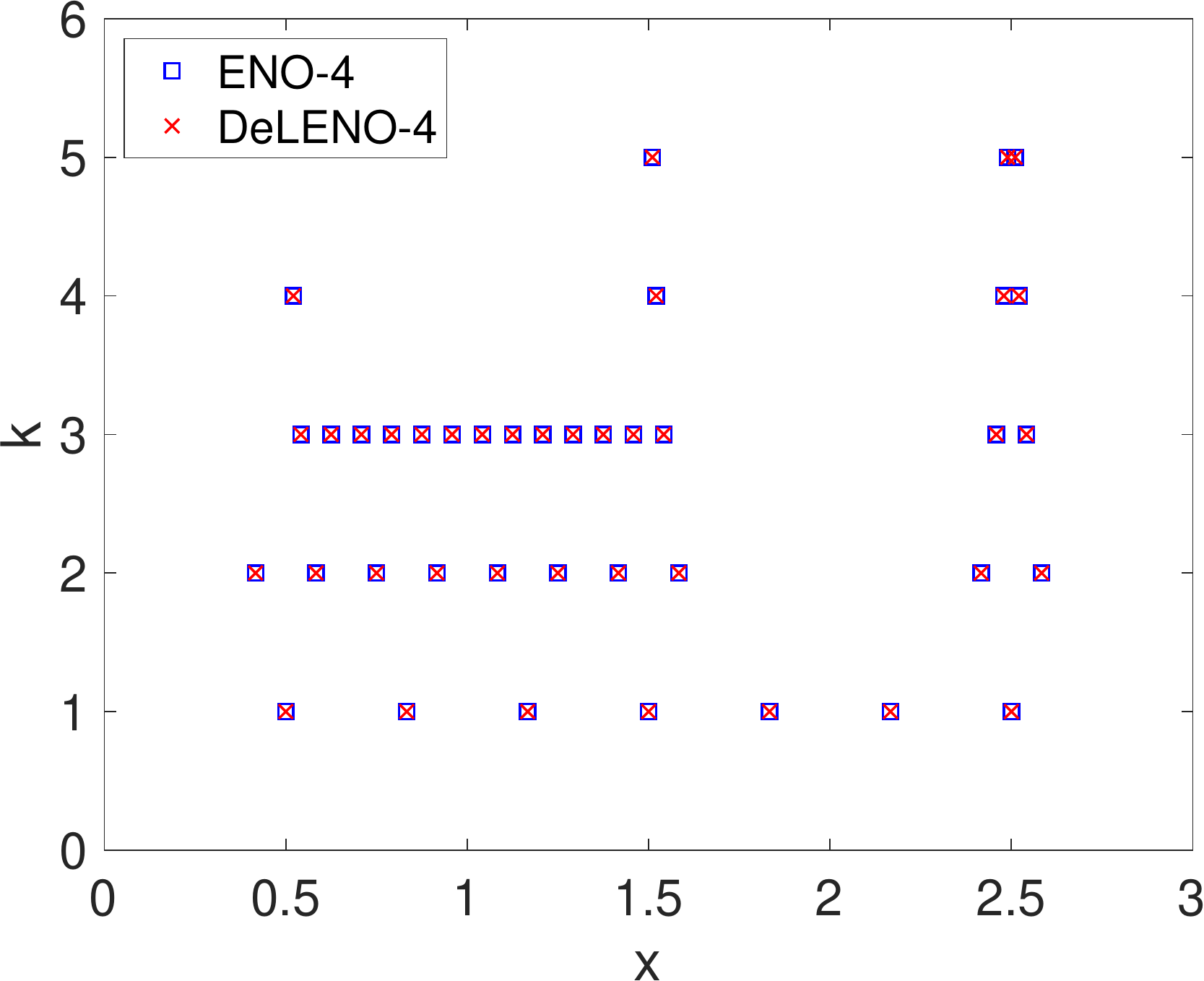}}\\
\caption{Data compression of \eqref{eqn:f_int} using ENO and DeLENO with $N_0$, $L=5$ and $t=0.5$. Comparison of thresholded decompressed data with the actual data on the finest level (left); Non-zero coefficients $\widehat{d}^k$ at each level (right).}
\label{fig:1d_comp_soln_dk}
\end{center}
\end{figure}

\begin{table}[!htbp]
\centering
\begin{tabular}{|c|c|c|c|c|c|c|c|}
\hline
\multirow{2}{*}{$p$} & \multirow{2}{*}{$\epsilon$} & \multicolumn{2}{c|}{$\|q^K - \widehat{q}^K\|_1$} & \multicolumn{2}{c|}{$\|q^K - \widehat{q}^K\|_2$} & \multicolumn{2}{c|}{$\|q^K - \widehat{q}^K\|_\infty$} \\ \cline{3-8} 
                     &                             & ENO                     & DeLENO                 & ENO                     & DeLENO                 & ENO                       & DeLENO                    \\ \hline
\multirow{2}{*}{3}   & 0.5         & 5.125e-2              & 5.125e-2                & 8.701e-2               & 8.701e-2                & 3.281e-1                & 3.281e-1                  \\ \cline{2-8} 
                                  & 1.0         & 2.072e-1              & 2.072e-1                & 2.421e-1               & 2.421e-1                & 4.102e-1                & 4.102e-1                  \\ \hline
\multirow{2}{*}{4}   & 0.5         & 1.032e-1              & 1.038e-1                & 1.268e-1               & 1.274e-1                & 3.027e-1                & 3.027e-1                  \\ \cline{2-8} 
                                  & 1.0         & 1.122e-1              & 1.122e-1                & 1.356e-1               & 1.356e-1                & 3.947e-1                & 3.947e-1                  \\ \hline                                  
\end{tabular}
\caption{1D compression errors for \eqref{eqn:f_int}.}
\label{tab:1D_comp_err}
\end{table}

The compression ideas used for one-dimensional problems can be easily extended to handle functions defined on two-dimensional tensorized grids. We consider a sequence of grids $\Tau^k$ with $(N^x_k+1) \times (N^y_k + 1)$ nodes, where 
$N^x_k = 2^k N^x_0$ and $N^y_k = 2^k N^x_0$, for $0\leq k \leq K$.
Let $\q^k$ be the data on grid $\Tau^k$ and denote by $\widehat{\q}^{k+1}$ the compressed interpolation on grid $\Tau^{k+1}$. 
To obtain $\widehat{\q}^{k+1}$, we first interpolate along the $x$-coordinate direction to obtain an intermediate approximation $\widetilde{\q}^{k+1}$ of size $(N^x_{k+1}+1) \times (N^y_k + 1)$. Then we use $\widetilde{\q}^{k+1}$ to interpolate along the $y$-coordinate direction to obtain the final approximation $\widehat{\q}^{k+1}$. 

Next, we use ENO and DeLENO to compress an image with $705 \times 929$ pixels, shown in Figure \ref{fig:image_original}. We set $K=5$, $\epsilon = 1$, $t=0.2$ in equation \eqref{eqn:threshold}. Once again, ENO and DeLENO give similar results, as can be seen from the decompressed images in Figure \ref{fig:image} and the relative errors in Table \ref{tab:image_err}. In this table we additionally listed the compression rate
\begin{equation}\label{eqn:compression_rate}
    c_r = 1 - \frac{\#\left\{ d^k_{i,j}  | d^k_{i,j} > \epsilon^k, \ 1\leq k \leq K \right\}}{(N^x_L + 1)(N^y_L + 1) - (N^x_0 + 1)(N^y_0 + 1)},
\end{equation}
which represents the fraction of coefficients set to null. 

\begin{table}[!htbp]
\centering
\begin{tabular}{|c|l|l|l|l|c|}
\hline
$p$                & Scheme & Rel. $L^1$ & Rel. $L^2$ & Rel. $L^\infty$ & $c_r$ \\ \hline
\multirow{2}{*}{3} & ENO    & 5.346e-2   & 8.368e-2   & 5.194e-1        & 0.996 \\ \cline{2-6} 
                   & DeLENO          & 5.343e-3   & 8.365e-2   & 5.194e-1        & 0.996 \\ \hline
\multirow{2}{*}{4} & ENO    & 5.422e-2   & 8.485e-2   & 5.581e-1        & 0.996 \\ \cline{2-6} 
                   & DeLENO          & 5.422e-2   & 8.492e-2   & 5.581e-1        & 0.996 \\ \hline
\end{tabular}
\caption{Image compression errors. }
\label{tab:image_err}
\end{table}
\begin{figure}[!htbp]
\begin{center}
\subfigure[Original]{\includegraphics[width=0.19\textwidth]{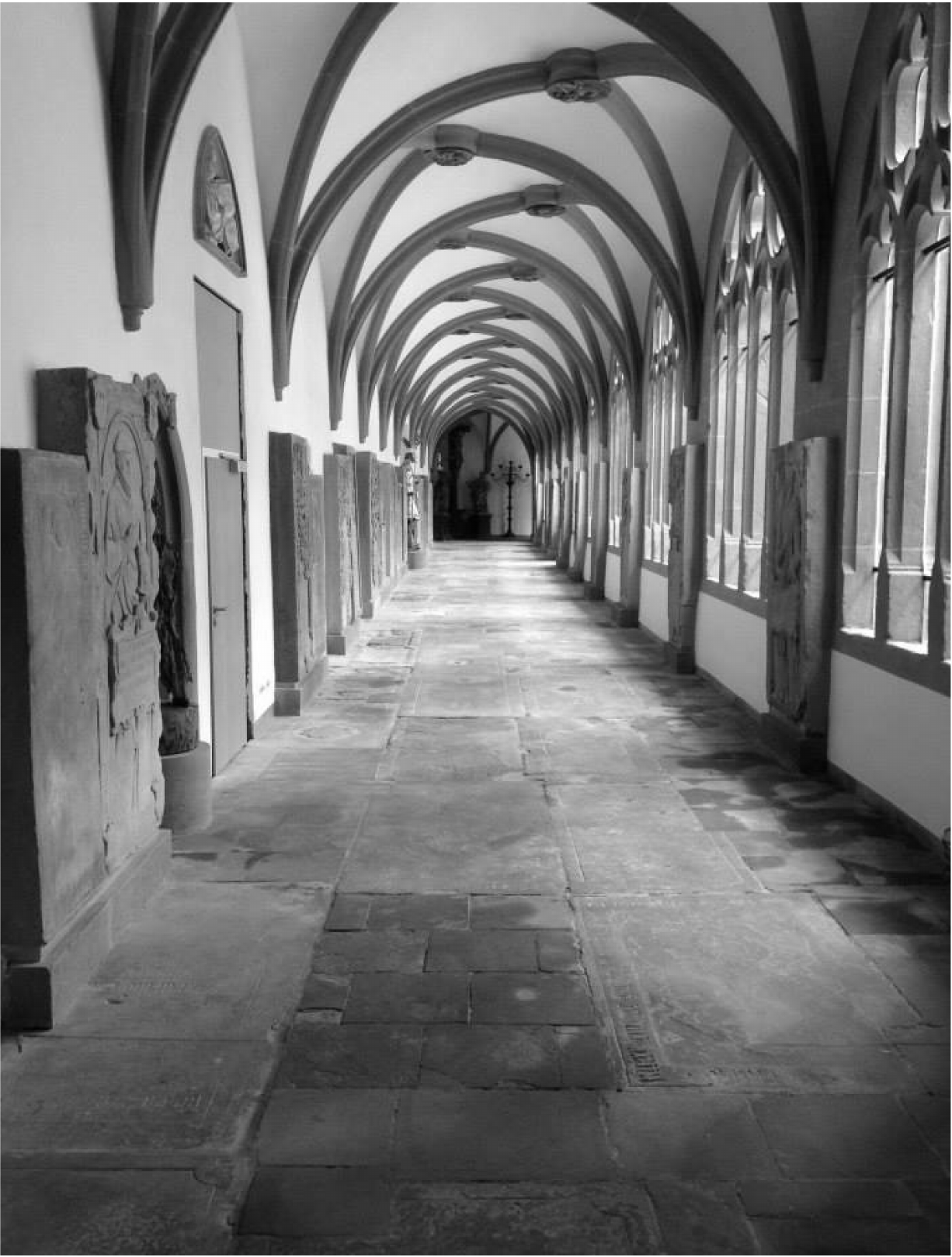}\label{fig:image_original}}
\subfigure[ENO-3]{\includegraphics[width=0.19\textwidth]{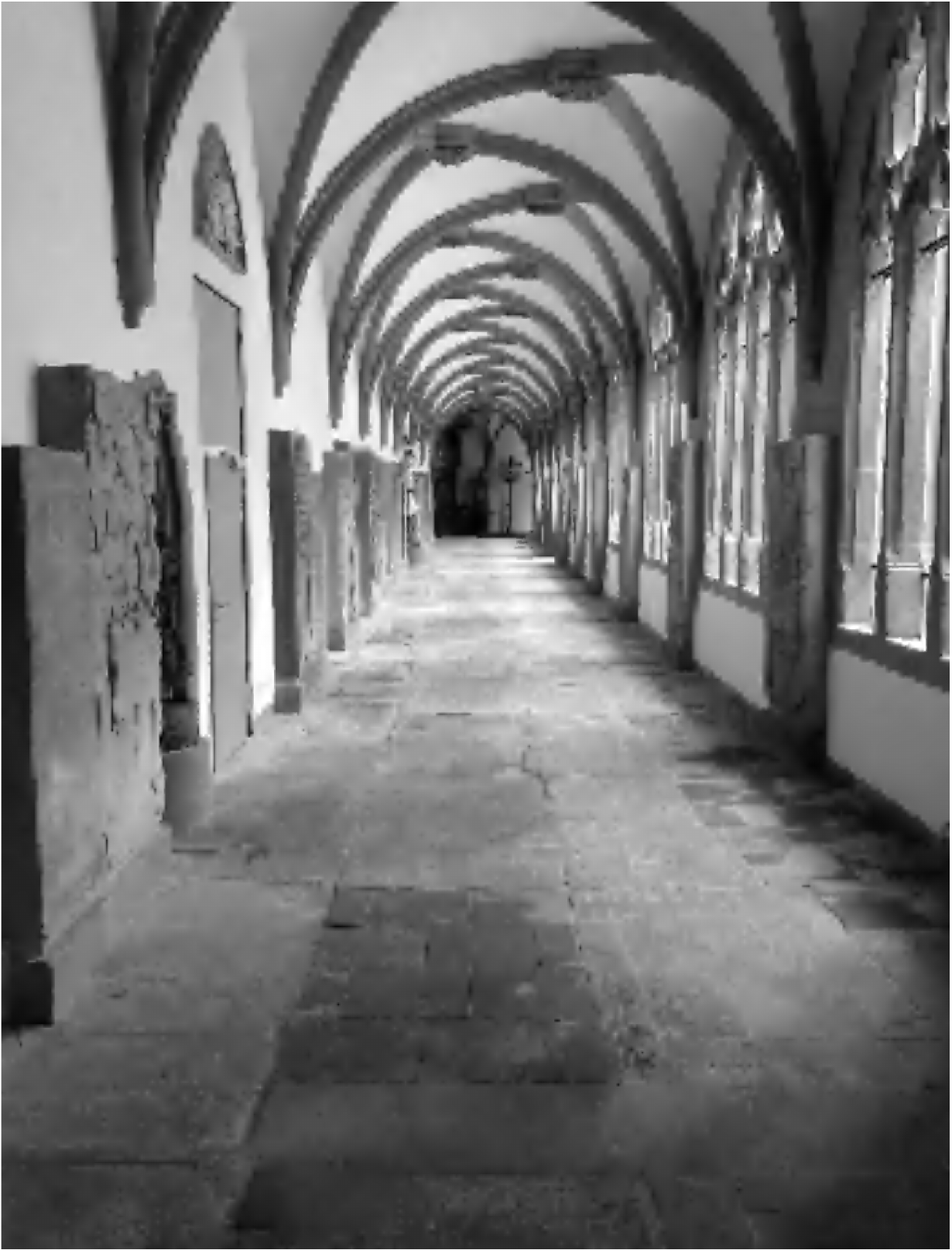}}
\subfigure[DeLENO-3]{\includegraphics[width=0.19\textwidth]{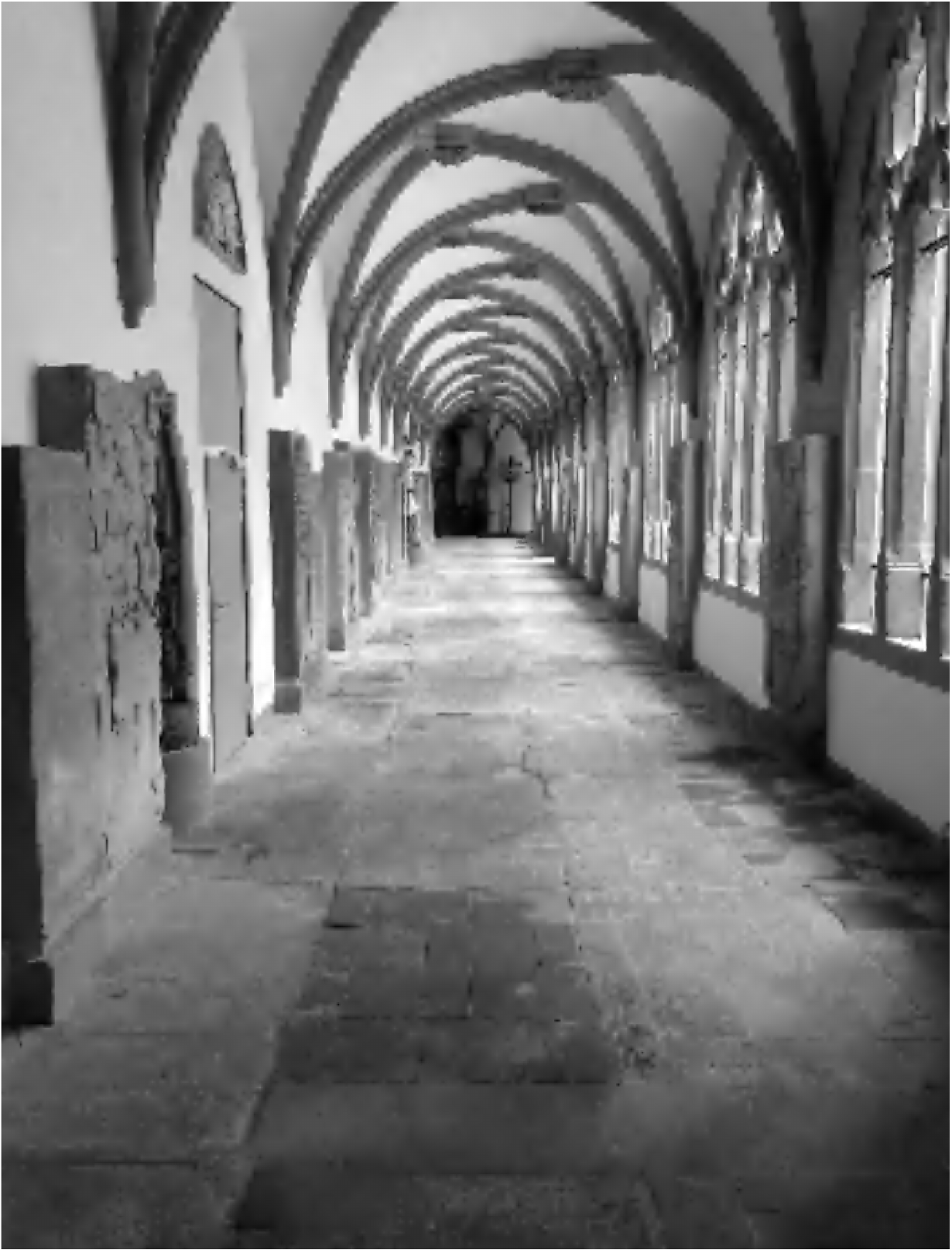}}
\subfigure[ENO-4]{\includegraphics[width=0.19\textwidth]{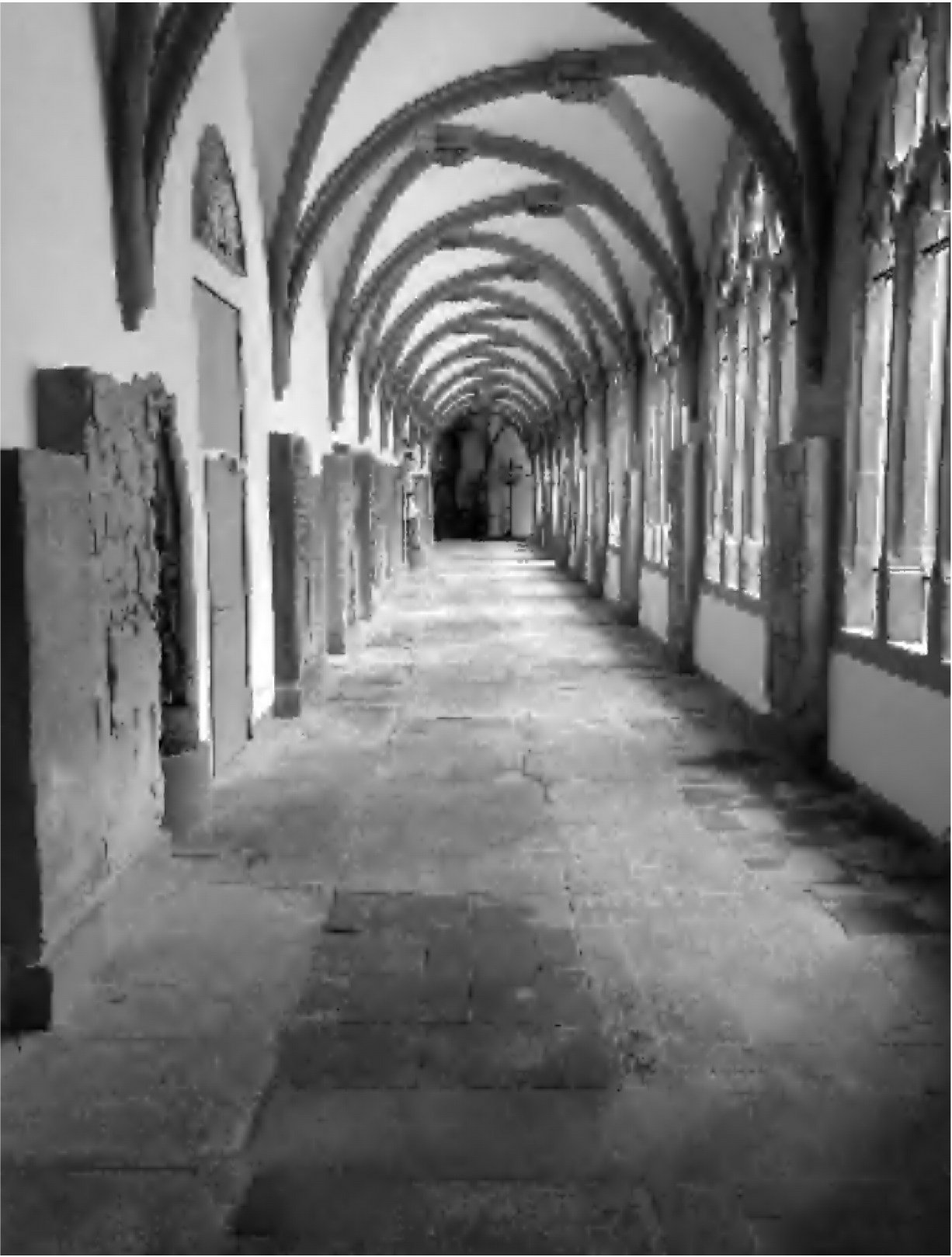}}
\subfigure[DeLENO-4]{\includegraphics[width=0.19\textwidth]{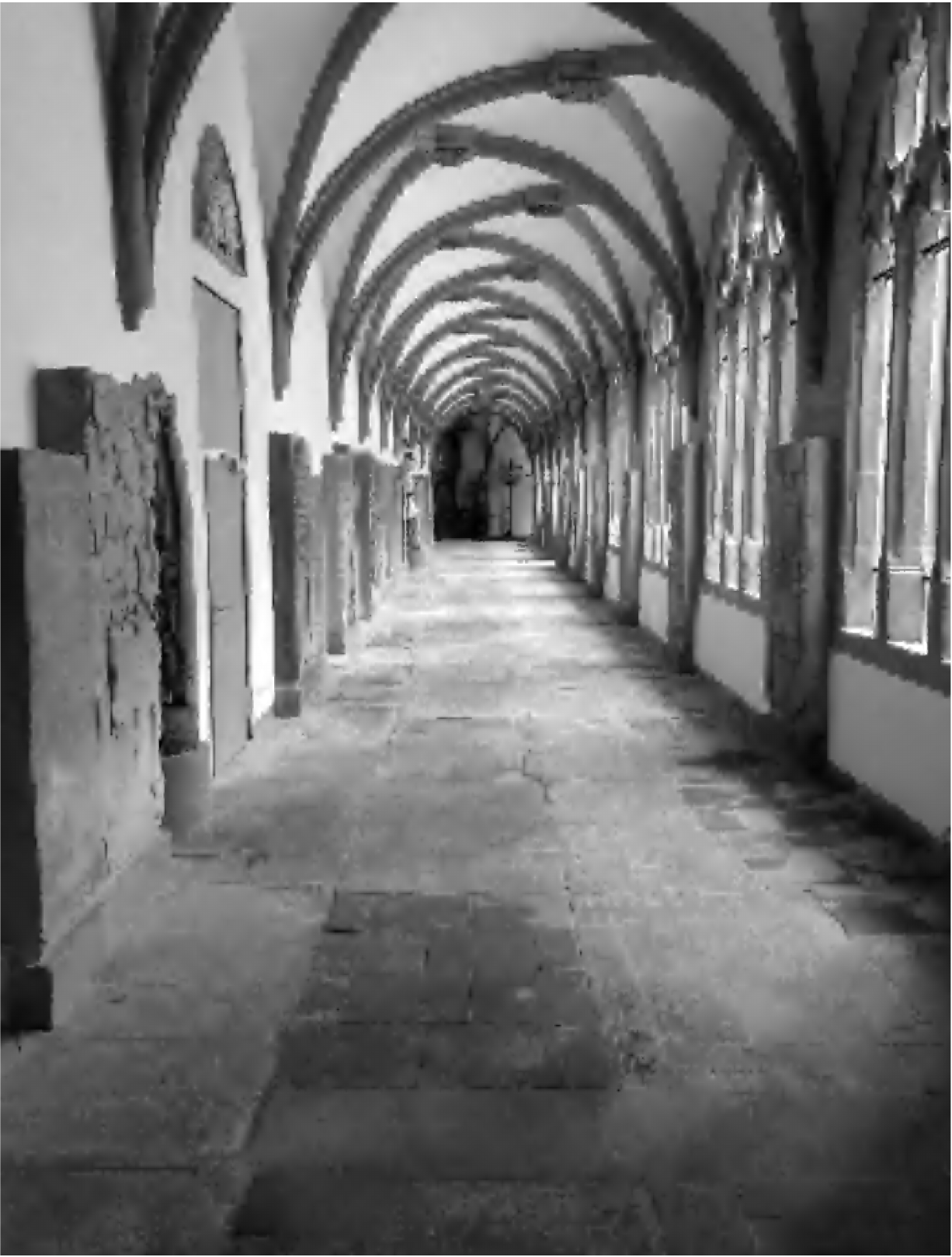}}
\caption{Image compression. }
\label{fig:image}
\end{center}
\end{figure}

As an additional example of two-dimensional data compression, we consider the function
\begin{equation}\label{eqn:2d_func}
q(x,y) = \begin{cases}
           -10 & \quad \text{if } (x - 0.5)^2 + (y-0.5)^2 < 0.0225\\
            30 & \quad \text{if } |x-0.5|>0.8 \text{ or } |y-0.5|>0.8\\
           40 & \quad \text{otherwise }\\
	\end{cases},
\end{equation}
where $(x,y) \in [0,1] \times [0,1]$, and generate a sequence of meshes by setting $K=4$, $N_0^x = 16$ and $N_0^y = 16$. The threshold for data compression is chosen according to \eqref{eqn:threshold}, with $\epsilon=10$ and $t=0.5$. The non-zero $\widehat{\db}^k$ coefficients are plotted in Figure \ref{fig:2d_comp_dk}, while the errors and compression rate \eqref{eqn:compression_rate} are listed in Table \ref{tab:2D_comp_err}. Overall, ENO and DeLENO perform equally well, with DeLENO giving marginally smaller errors.

\begin{table}[!htbp]
\centering
\begin{tabular}{|c|l|l|l|l|c|}
\hline
$p$                & Scheme & Rel. $L^1$ & Rel. $L^2$ & Rel. $L^\infty$ & $c_r$ \\ \hline
\multirow{2}{*}{3} & ENO    & 3.341e-3   & 2.442e-2   & 4.302e-1        & 0.989 \\ \cline{2-6} 
                   & DeLENO          & 3.246e-3   & 2.367e-2   & 4.302e-1        & 0.989 \\ \hline
\multirow{2}{*}{4} & ENO    & 3.816e-3   & 3.237e-2   & 5.876e-1        & 0.989 \\ \cline{2-6} 
                   & DeLENO          & 3.681e-3   & 3.130e-2   & 5.876e-1        & 0.989 \\ \hline
\end{tabular}
\caption{2D compression errors for \eqref{eqn:2d_func}. }
\label{tab:2D_comp_err}
\end{table}

\begin{figure}[!htbp]
\begin{center}
\subfigure[ENO-3, $k=1$]{\includegraphics[width=0.24\textwidth]{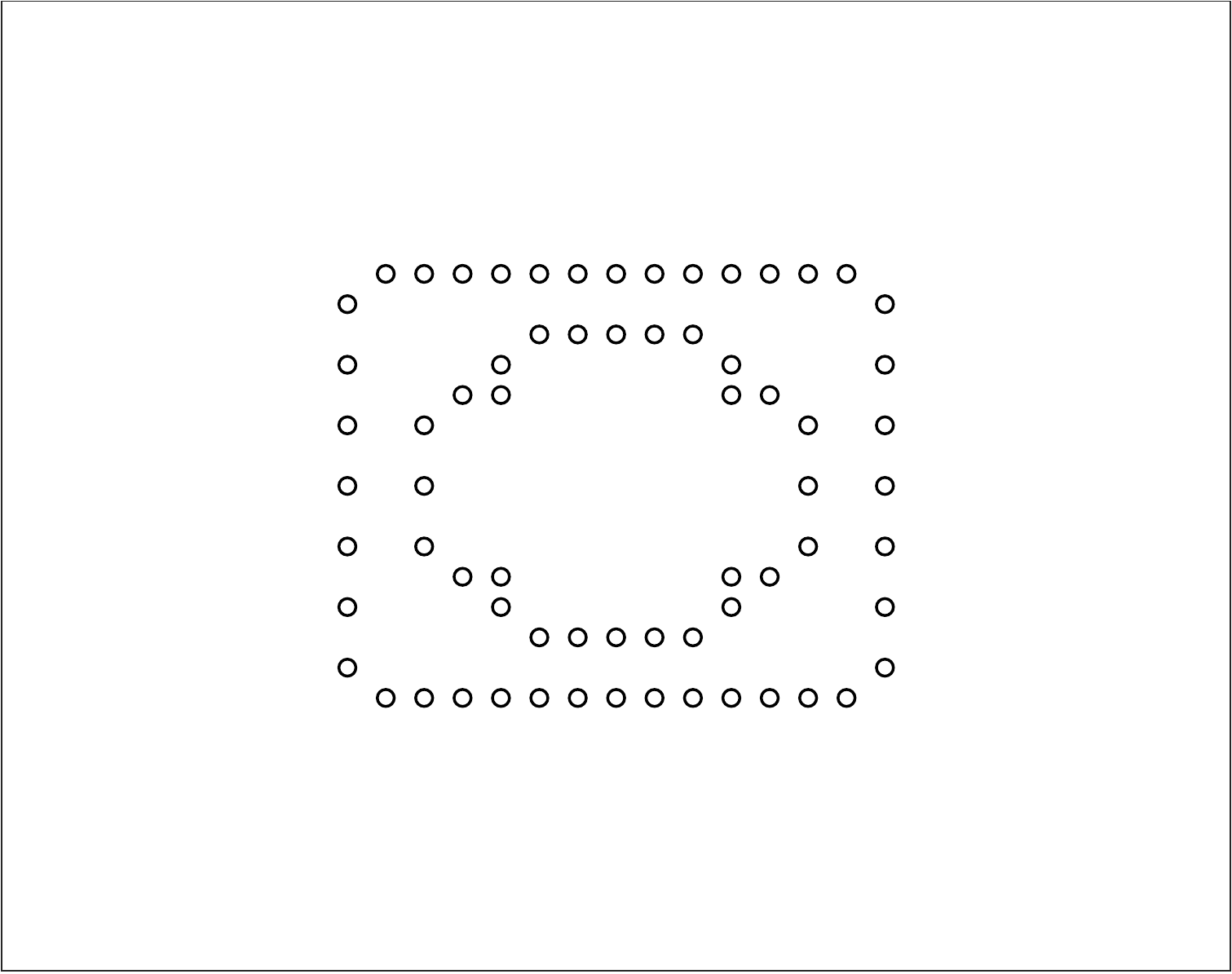}}
\subfigure[ENO-3, $k=2$]{\includegraphics[width=0.24\textwidth]{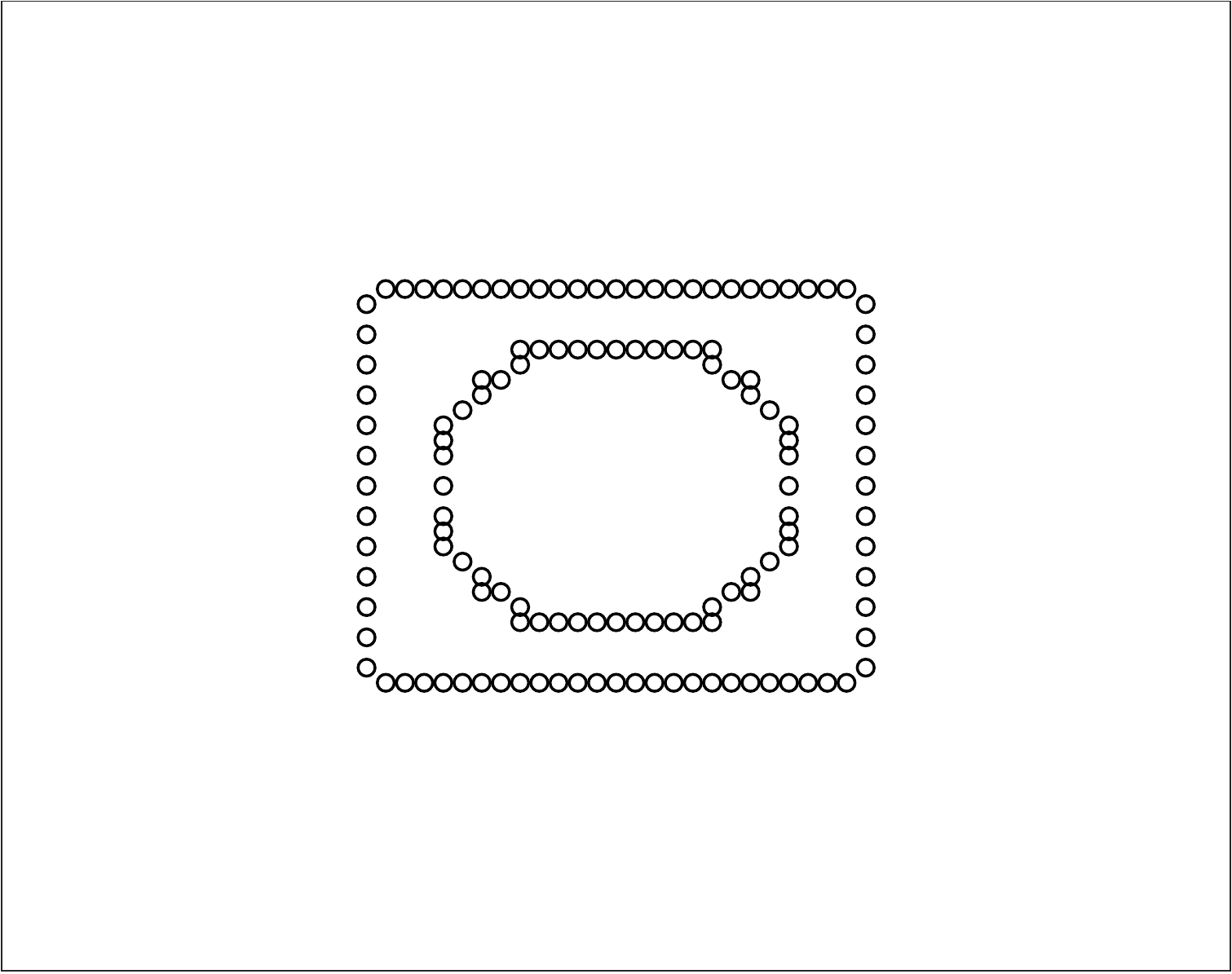}}
\subfigure[ENO-3, $k=3$]{\includegraphics[width=0.24\textwidth]{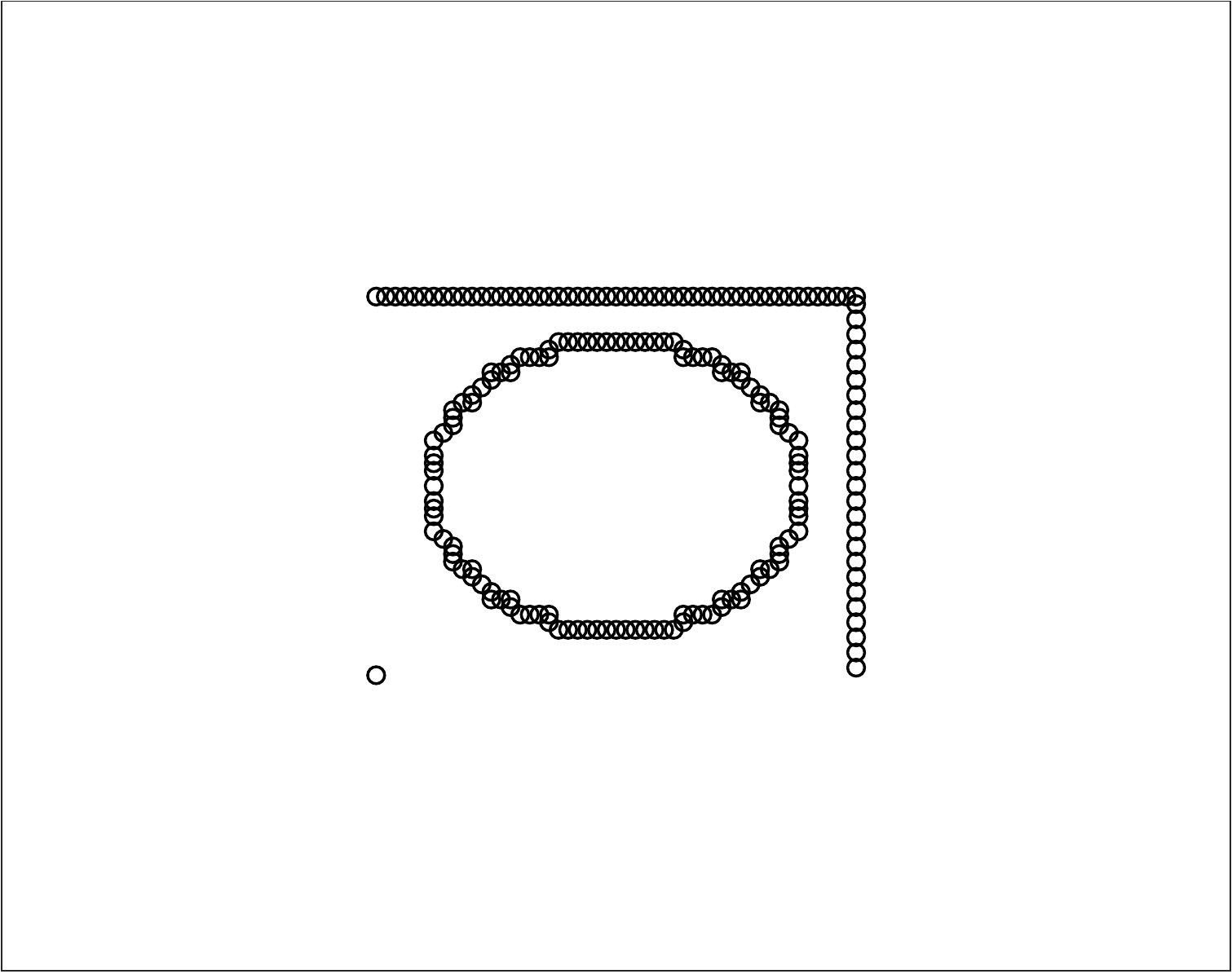}}
\subfigure[ENO-3, $k=4$]{\includegraphics[width=0.24\textwidth]{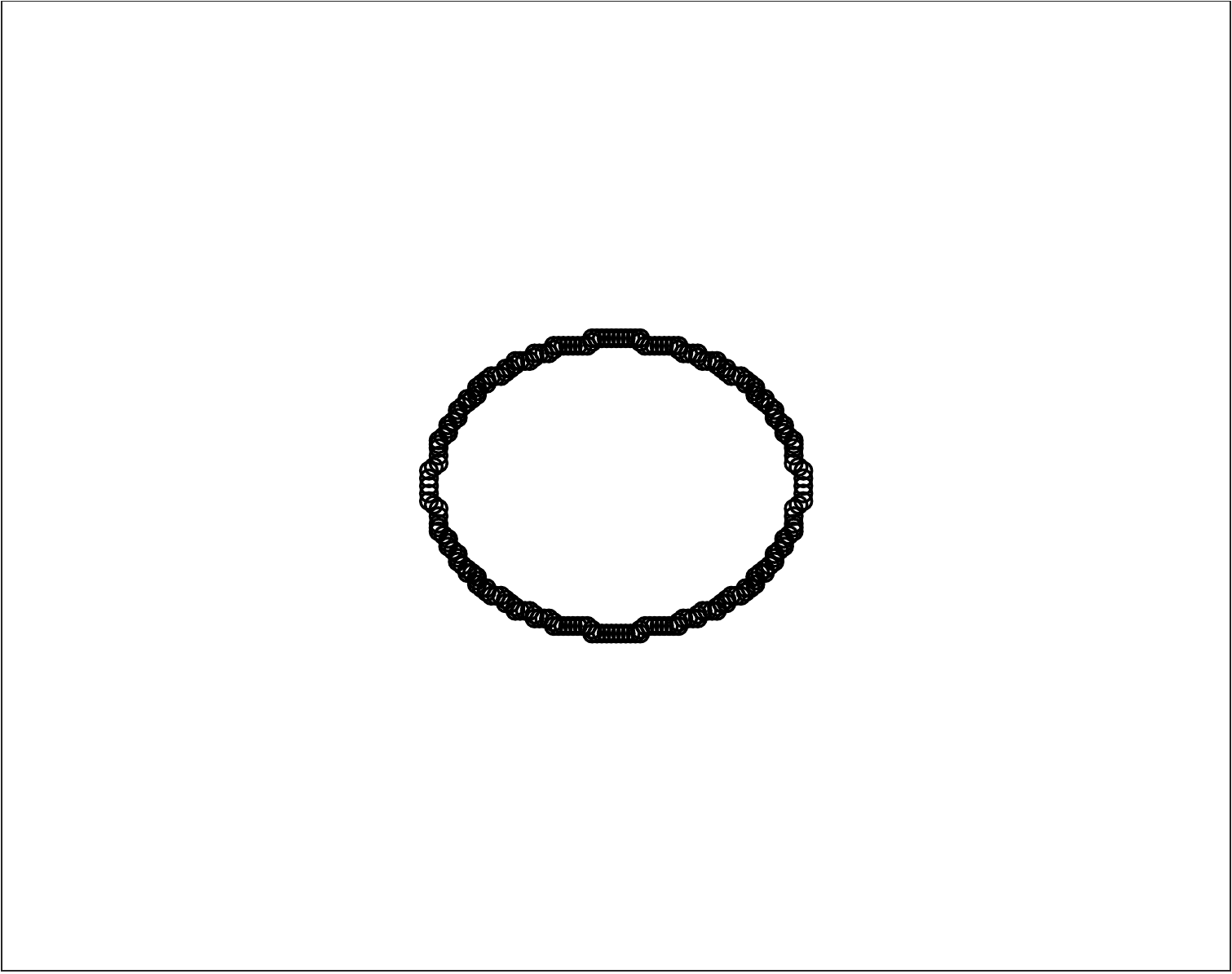}}\\
\subfigure[DeLENO-3, $k=1$]{\includegraphics[width=0.24\textwidth]{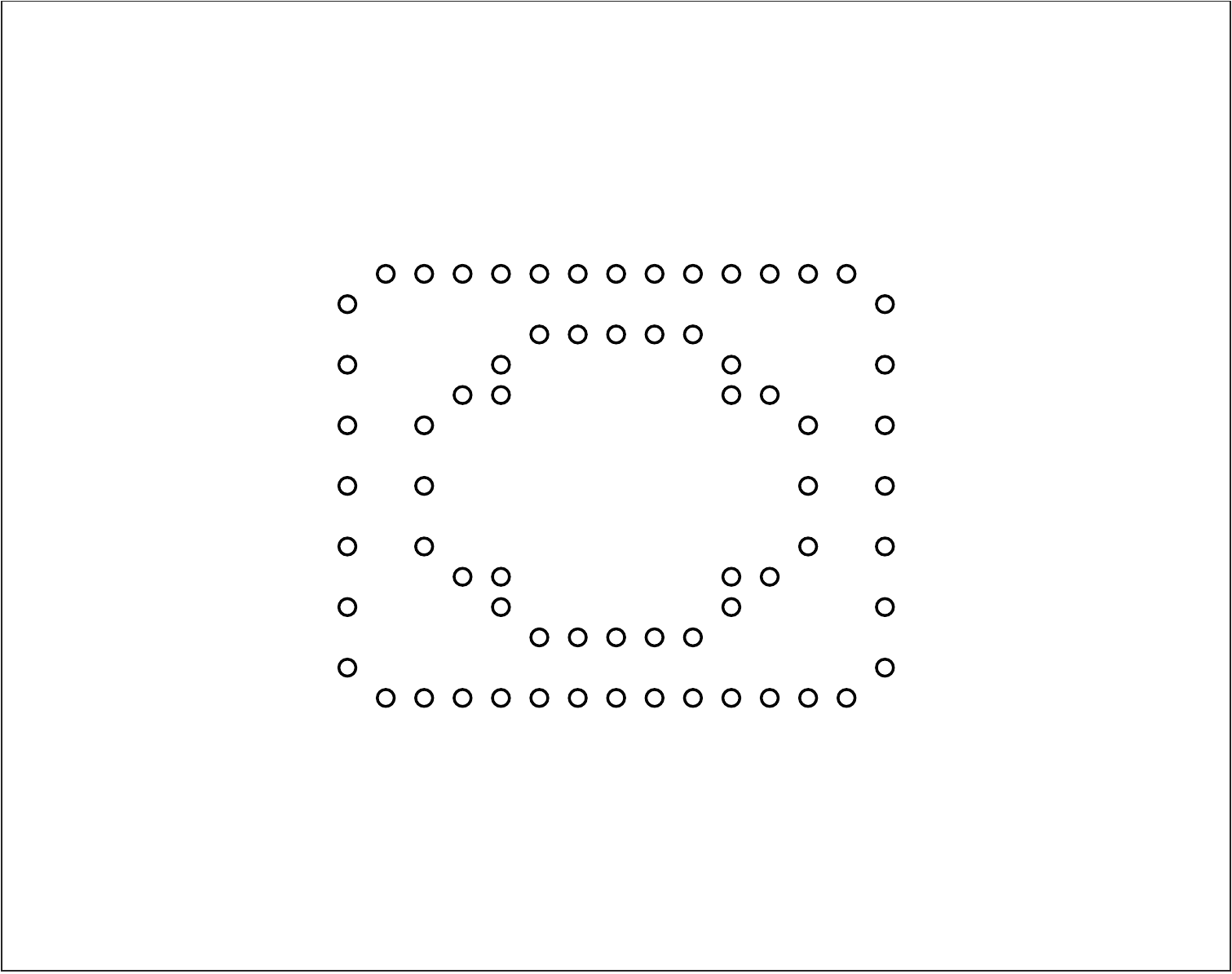}}
\subfigure[DeLENO-3, $k=2$]{\includegraphics[width=0.24\textwidth]{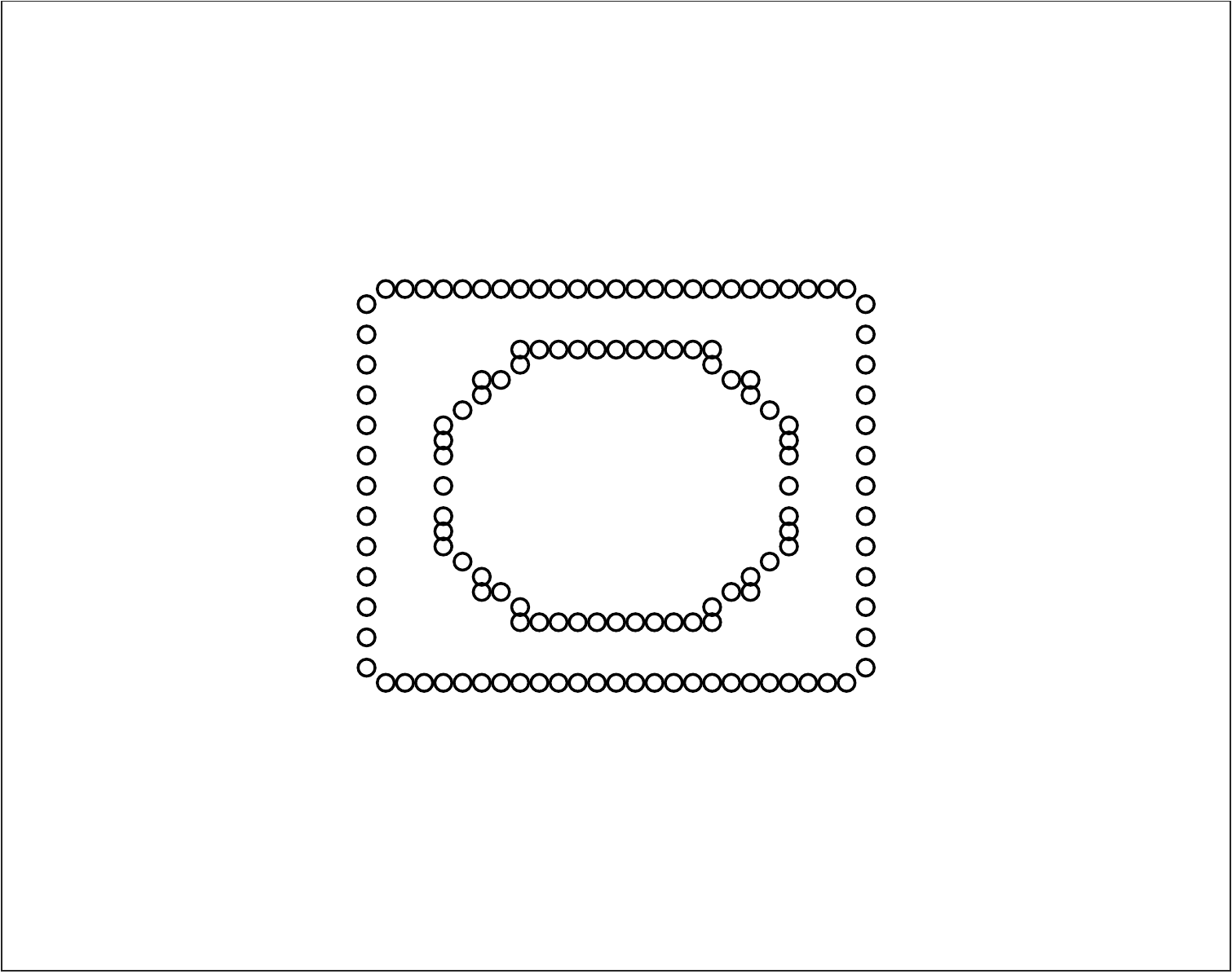}}
\subfigure[DeLENO-3, $k=3$]{\includegraphics[width=0.24\textwidth]{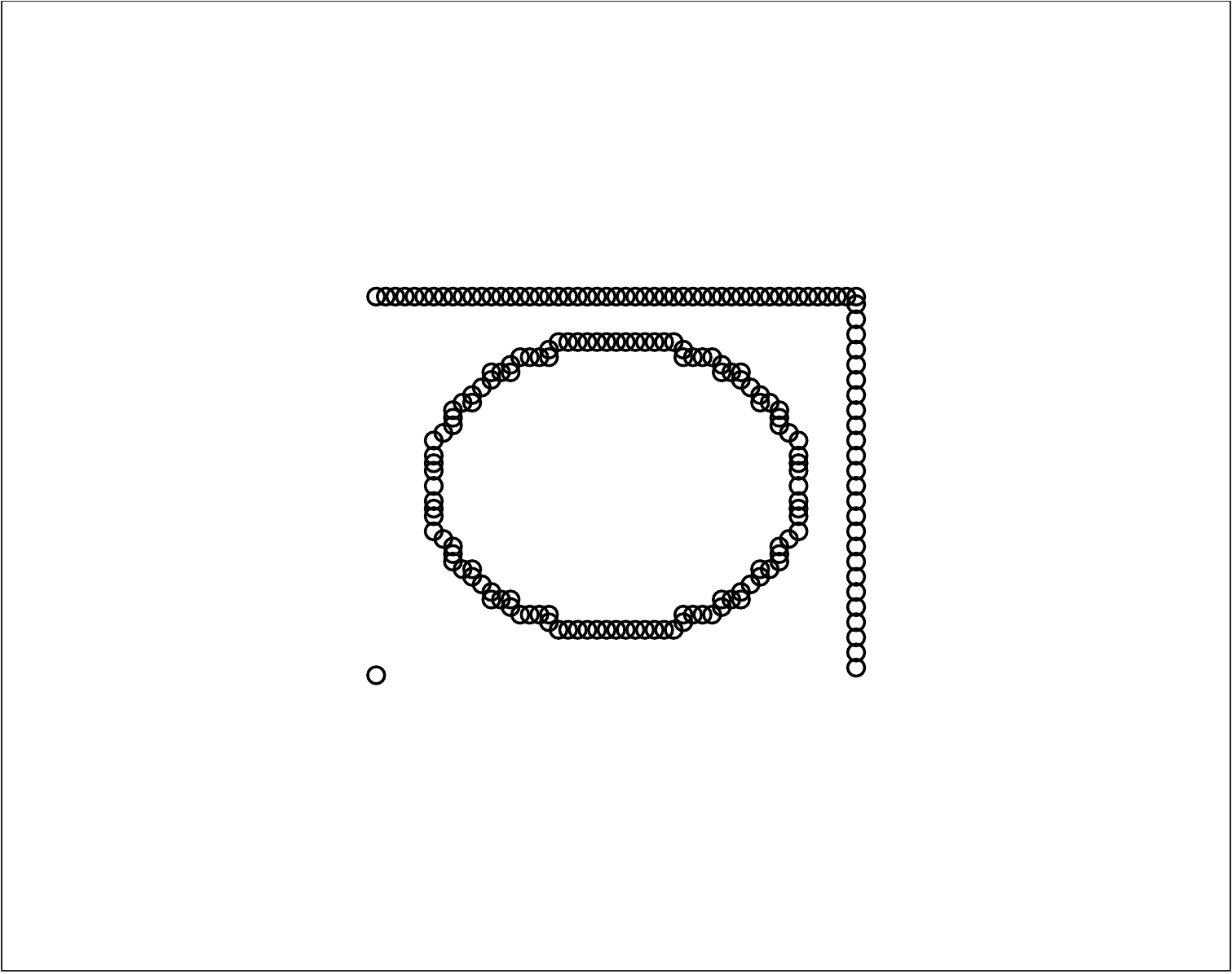}}
\subfigure[DeLENO-3, $k=4$]{\includegraphics[width=0.24\textwidth]{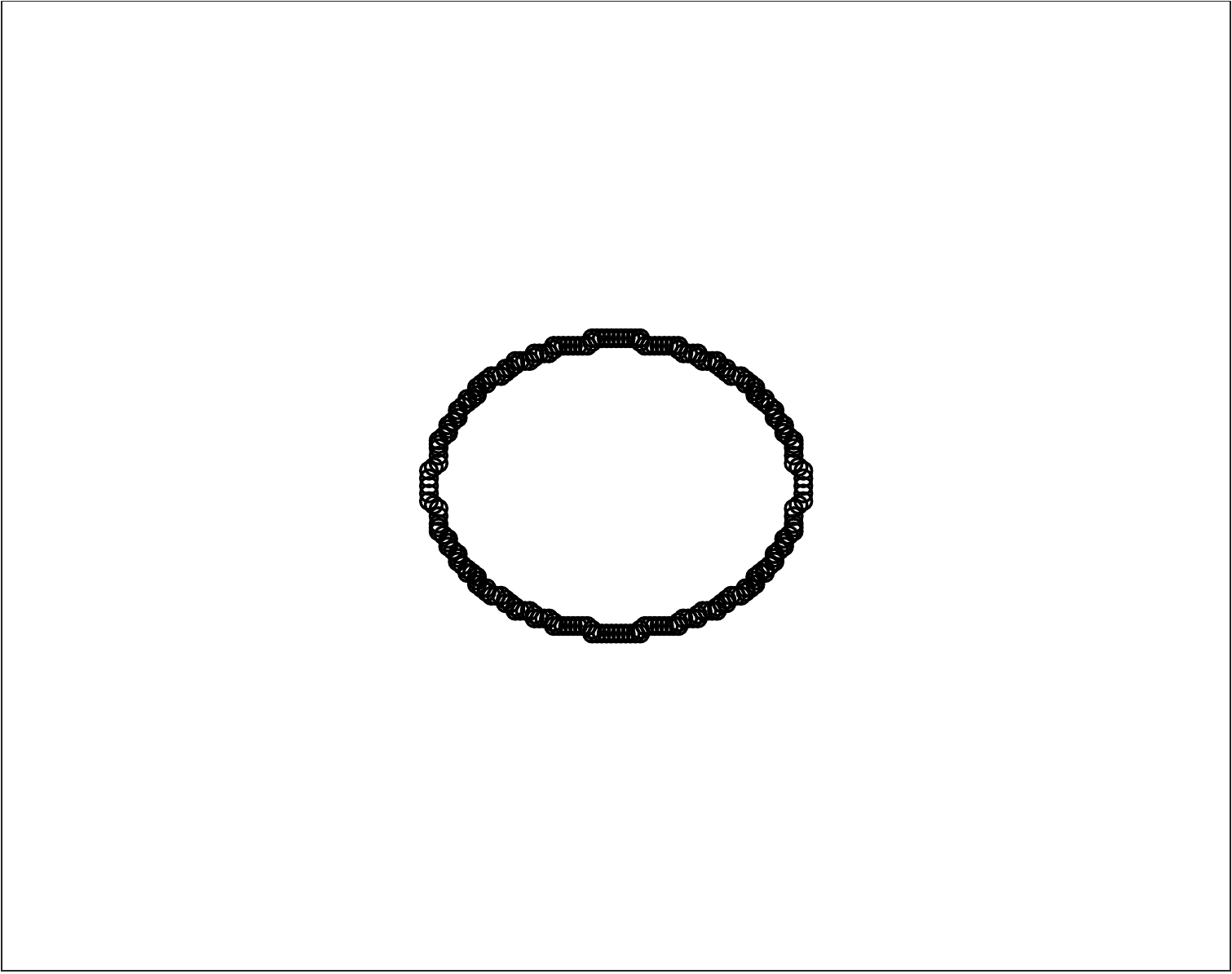}}\\
\subfigure[ENO-4, $k=1$]{\includegraphics[width=0.24\textwidth]{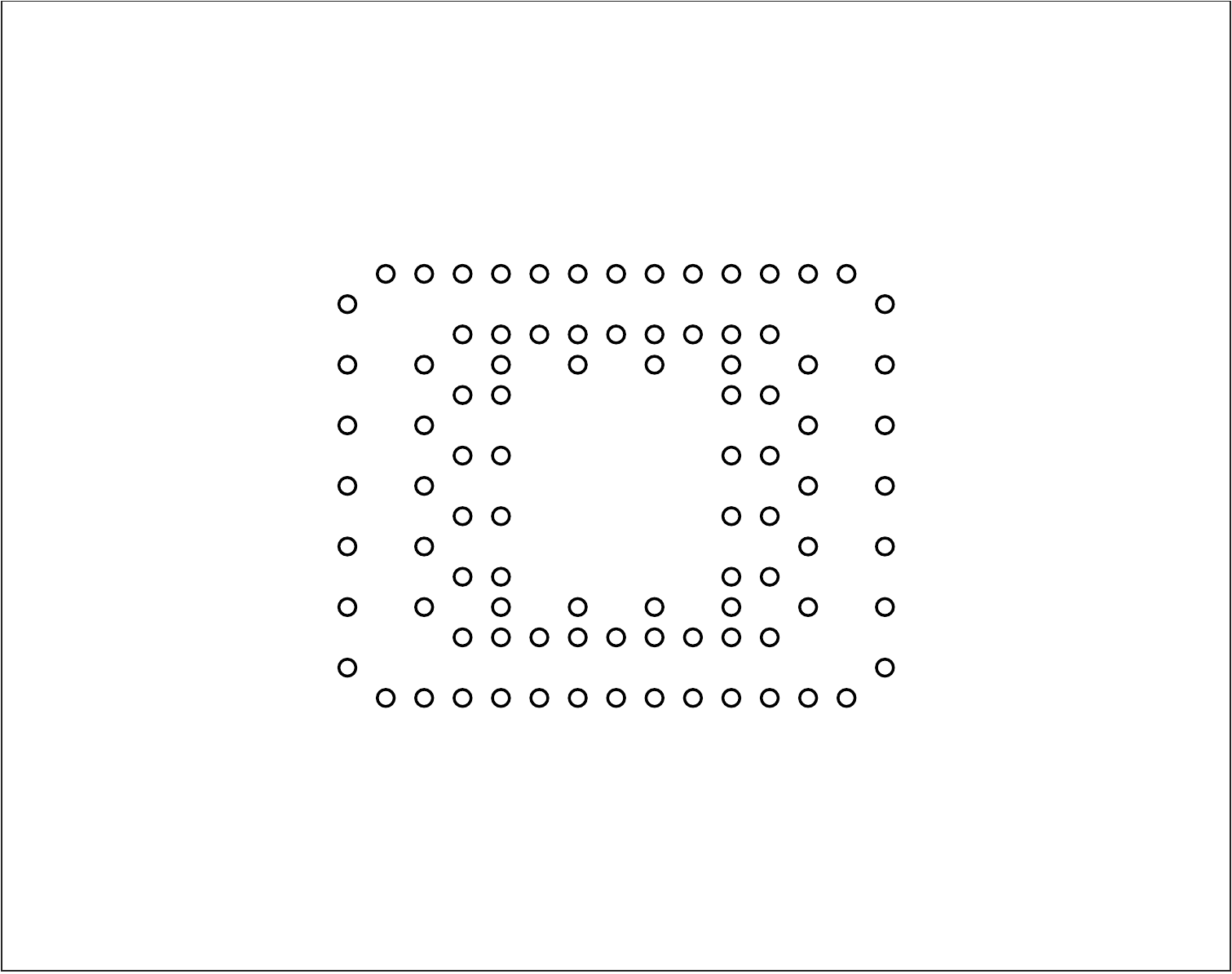}}
\subfigure[ENO-4, $k=2$]{\includegraphics[width=0.24\textwidth]{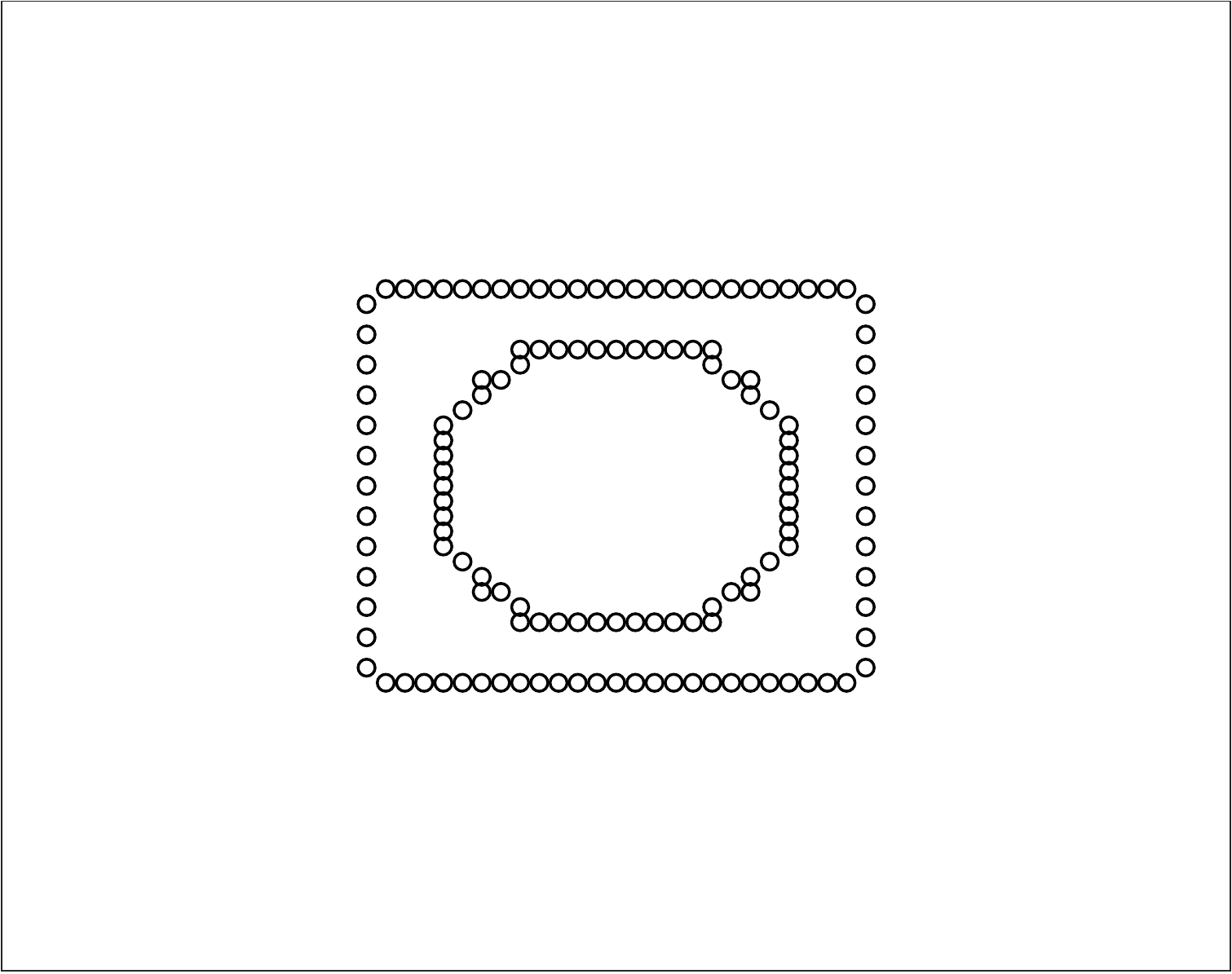}}
\subfigure[ENO-4, $k=3$]{\includegraphics[width=0.24\textwidth]{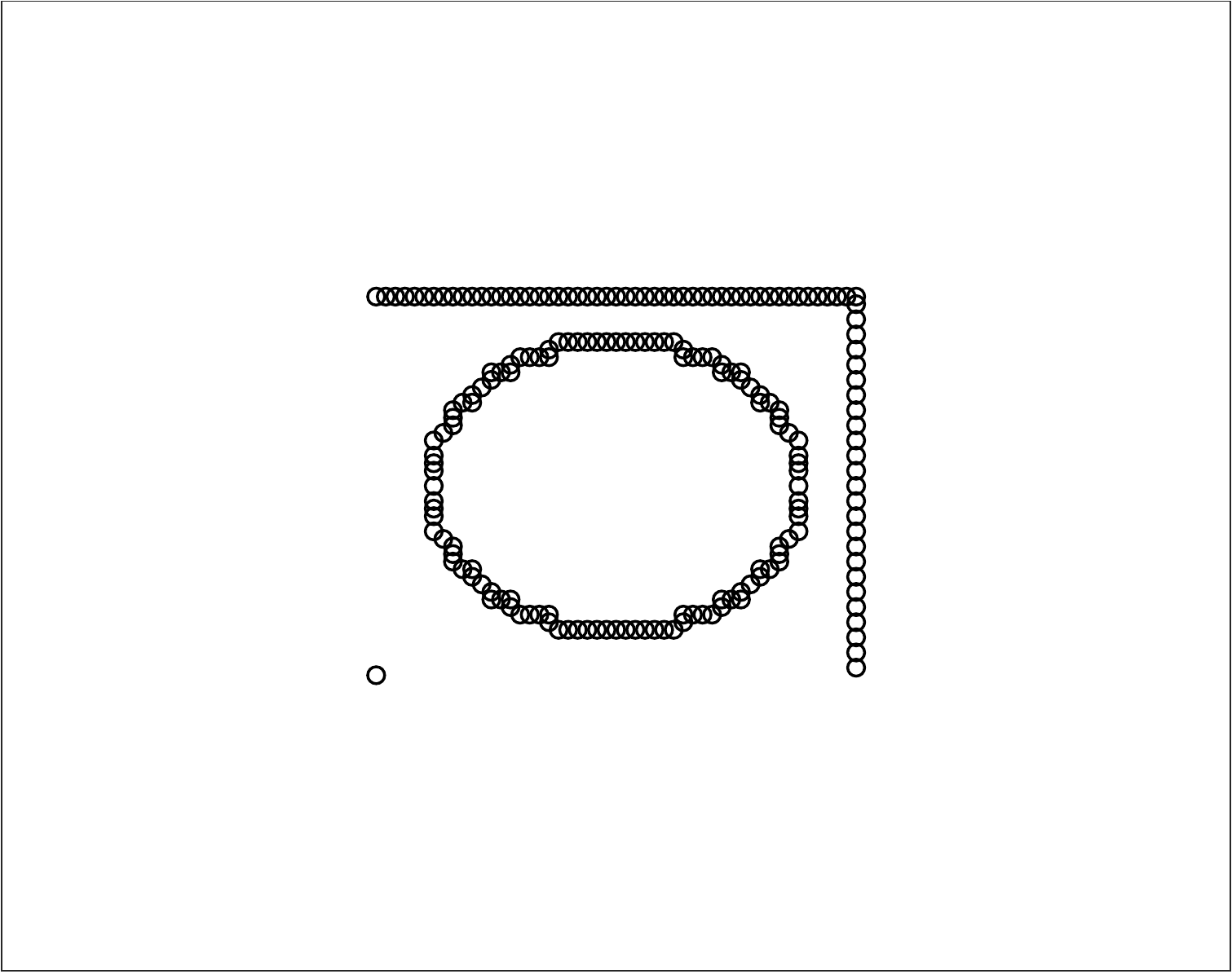}}
\subfigure[ENO-4, $k=4$]{\includegraphics[width=0.24\textwidth]{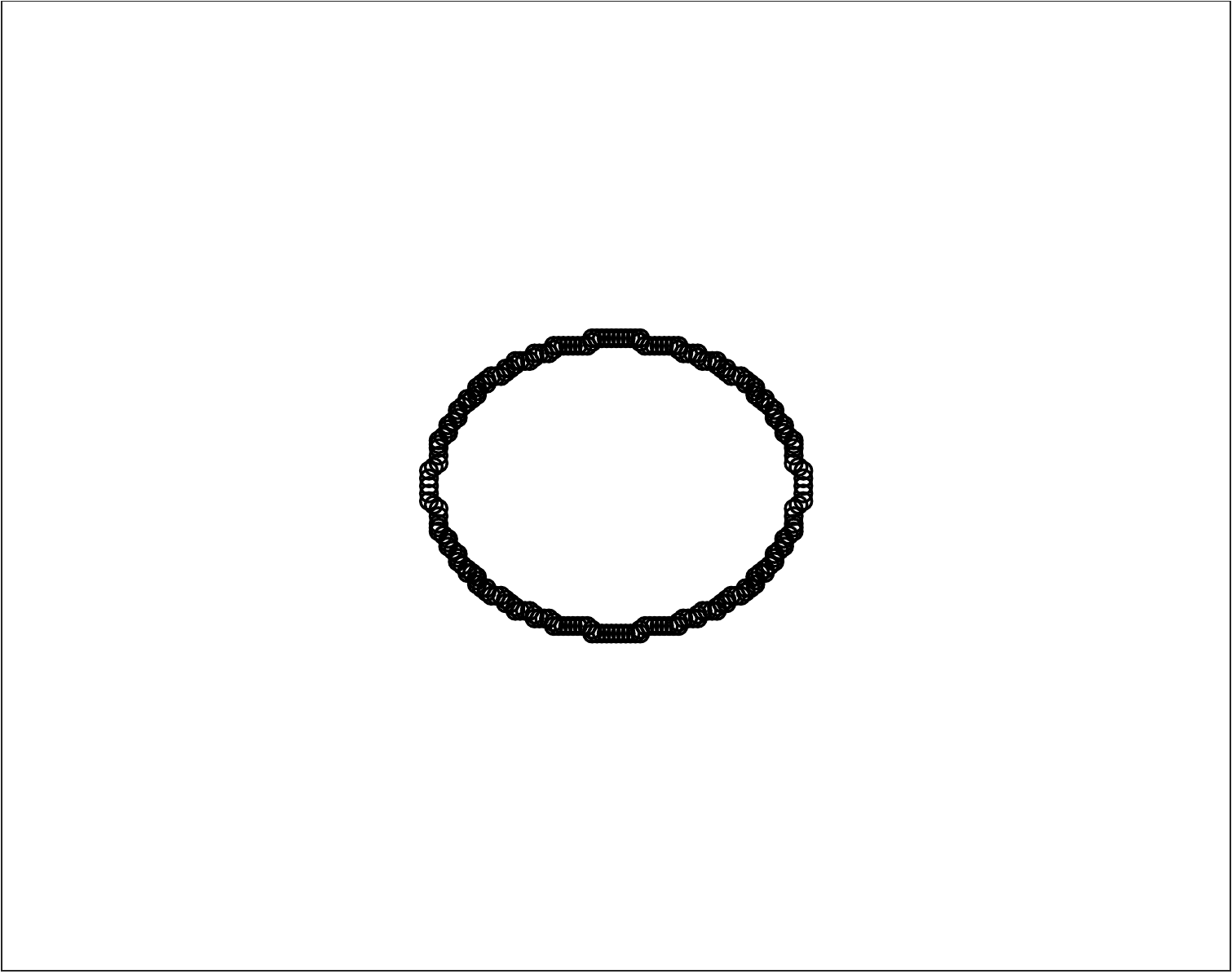}}\\
\subfigure[DeLENO-4, $k=1$]{\includegraphics[width=0.24\textwidth]{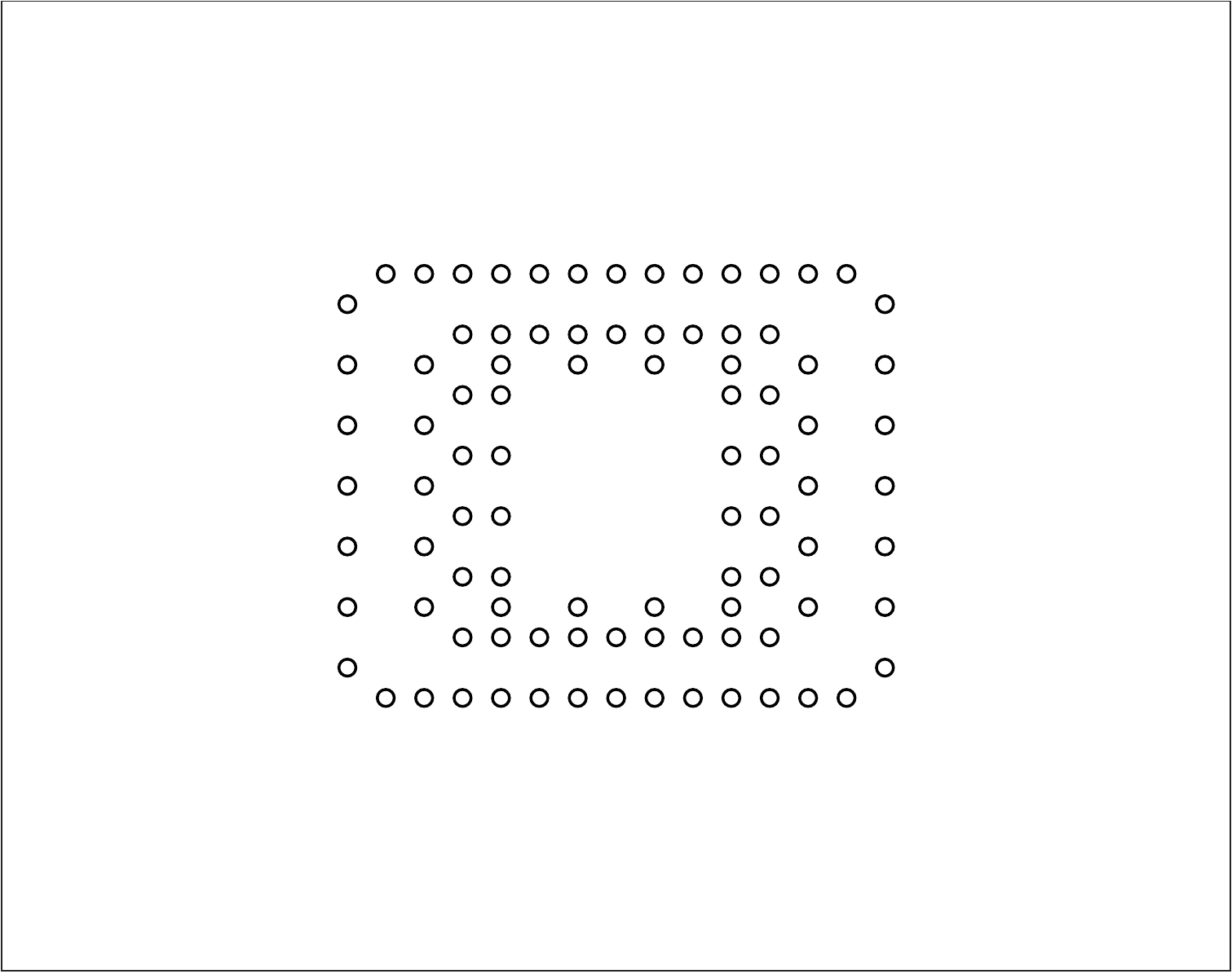}}
\subfigure[DeLENO-4, $k=2$]{\includegraphics[width=0.24\textwidth]{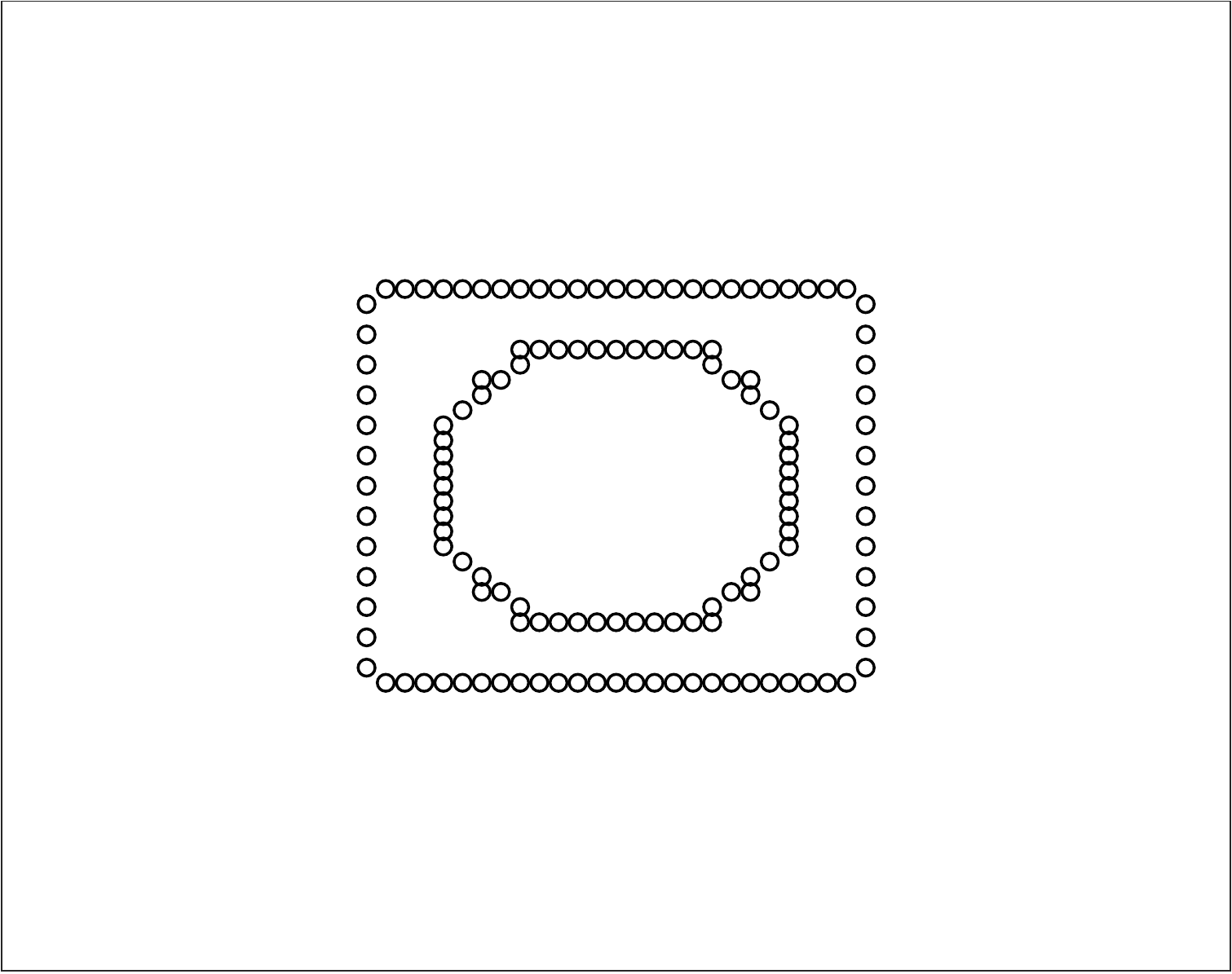}}
\subfigure[DeLENO-4, $k=3$]{\includegraphics[width=0.24\textwidth]{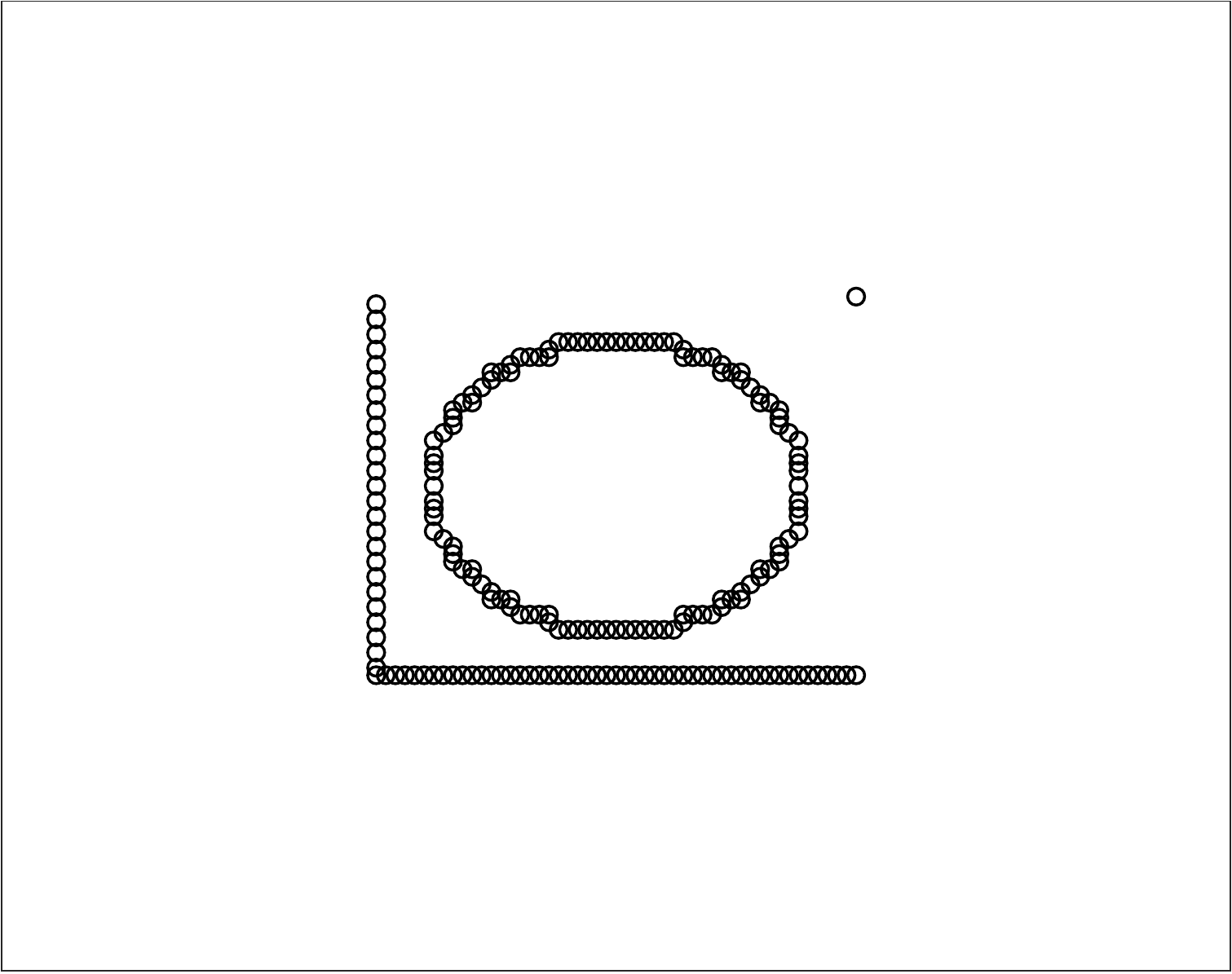}}
\subfigure[DeLENO-4, $k=4$]{\includegraphics[width=0.24\textwidth]{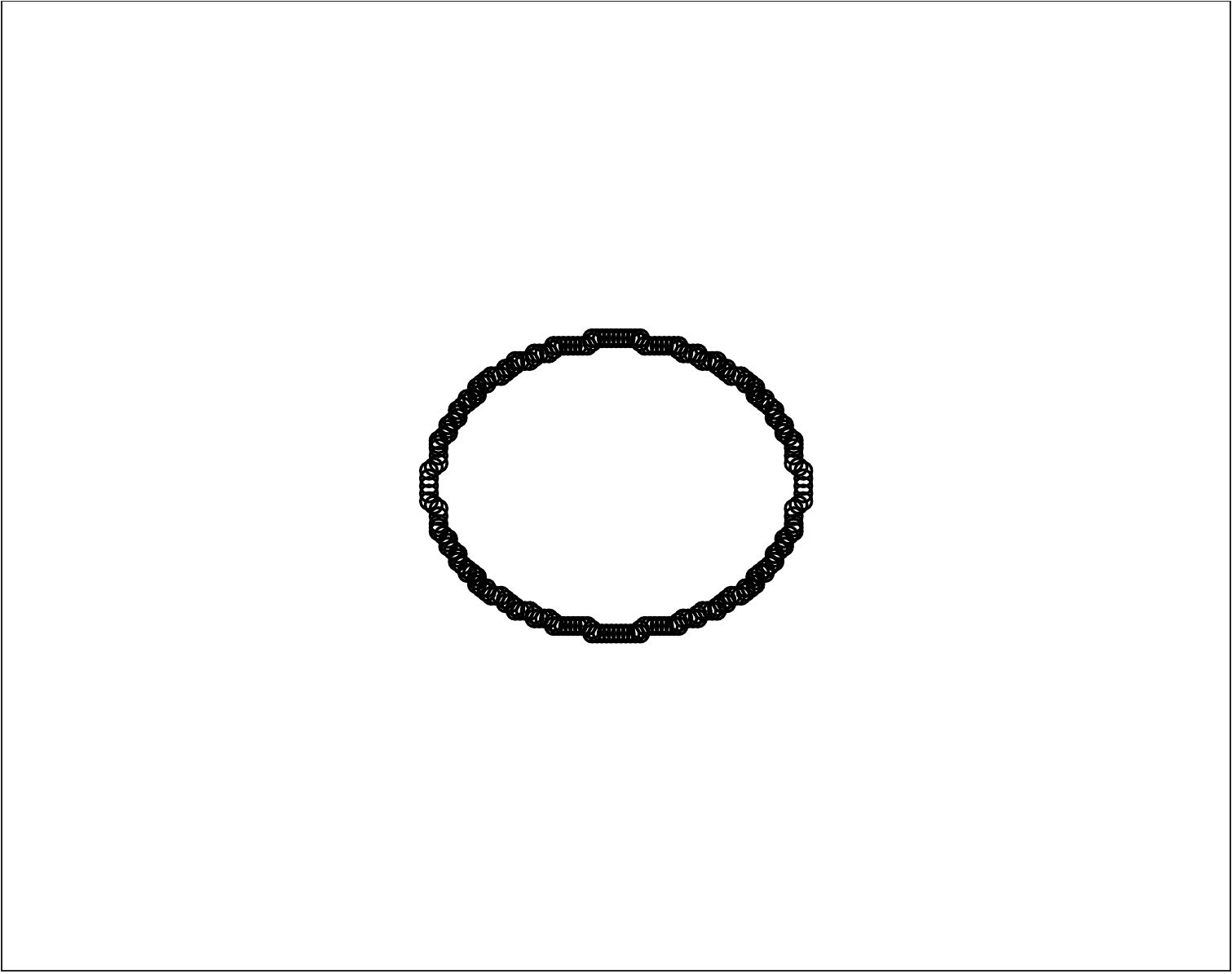}}\\
\caption{Non-zero coefficients $\widehat{d}^k$ for data compression of \eqref{eqn:2d_func} using ENO and DeLENO for mesh level $1\leq k \leq 4$. }
\label{fig:2d_comp_dk}
\end{center}
\end{figure}

\subsubsection{Conservation laws}
We compare the performance of ENO and DeLENO reconstruction, when used to approximate solutions of conservation laws. 
% We use the term DeLENO-$\pdeg$ to indicate the ENO reconstruction algorithm of order $\pdeg$ with Algorithm \ref{alg:eno_fd_select} replaced by the corresponding trained network. 
We work in the framework of  high-order finite difference schemes with flux-splitting and we use a fourth-order Runge-Kutta scheme for the time integration. 

As an example, we consider the system of conservation laws governing compressible flows given by
\begin{equation*}
   \partial_t \begin{pmatrix} \rho \\ v \\ p \end{pmatrix} + \partial_x \begin{pmatrix} \rho v \\ \rho v^2 + p \\ (E +p) v \end{pmatrix} = 0, \qquad  E = \frac{1}{2} \rho v^2 + \frac{p}{\gamma -1},
\end{equation*}
where $\rho, v$ and $p$ denote the fluid density, velocity and pressure, respectively. The quantity $E$ represents the total energy per unit volume 
where $\gamma=c_p/c_v$ is the ratio of specific heats, chosen as $\gamma=1.4$ for our simulations.
We consider the shock-entropy problem \cite{SHU89b}, which describes the interaction of a right moving shock with smooth oscillatory waves. The initial conditions for this test case are prescribed as
\begin{equation*}
    (\rho, \ v, \ p ) = \begin{cases} (3.857143,\ 2.629369,\ 10.33333) & \quad \text{if } x < -4 \\ (1+0.2 \sin(5x),\ 0,\ 1) & \quad \text{if } x > -4 \end{cases},
\end{equation*}
on the domain $[-5,5]$. Due to the generation of high frequency physical waves, we solve the problem on a fine mesh with $N=200$ cells up to $T_f = 1.8$ with $\mathrm{CFL = 0.5}$. A reference solution is obtained with ENO-4 on a mesh with $N=2000$ cells.  As can be seen in Figure \ref{fig:se_soln}, ENO-$\pdeg$ and DeLENO-$\pdeg$ perform equally well depending on the order $\pdeg$.
% We refer to Appendix \ref{app:conservation} for an additional example of how DeLENO can be used to approximate the solutions of conservation laws. 

\begin{figure}[!htbp] %do not use figures for degree 4 (wrong architecture)
\begin{center}
\subfigure[$\pdeg=2$]{\includegraphics[width=0.32\textwidth]{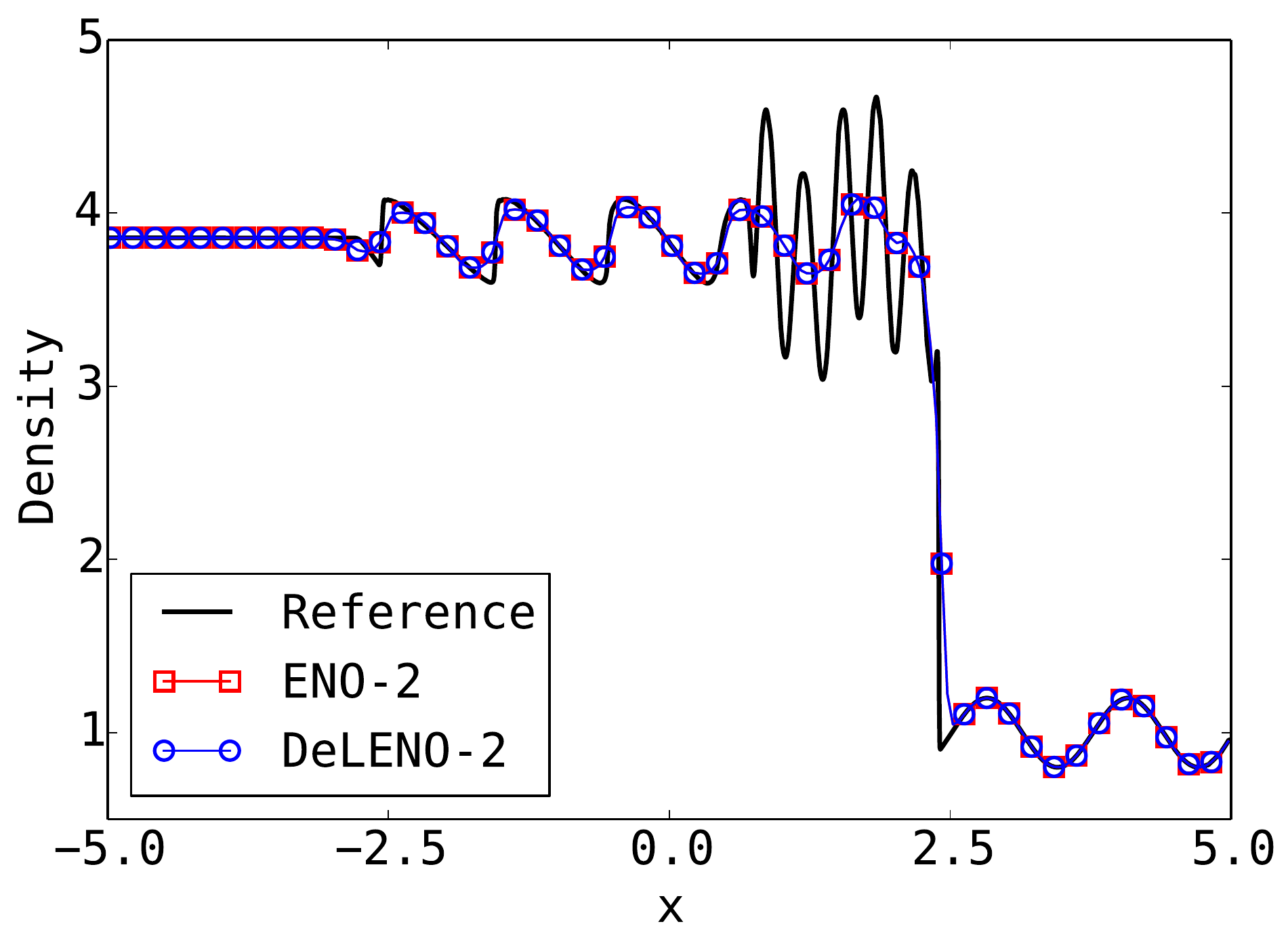}}
\subfigure[$\pdeg=2$]{\includegraphics[width=0.32\textwidth]{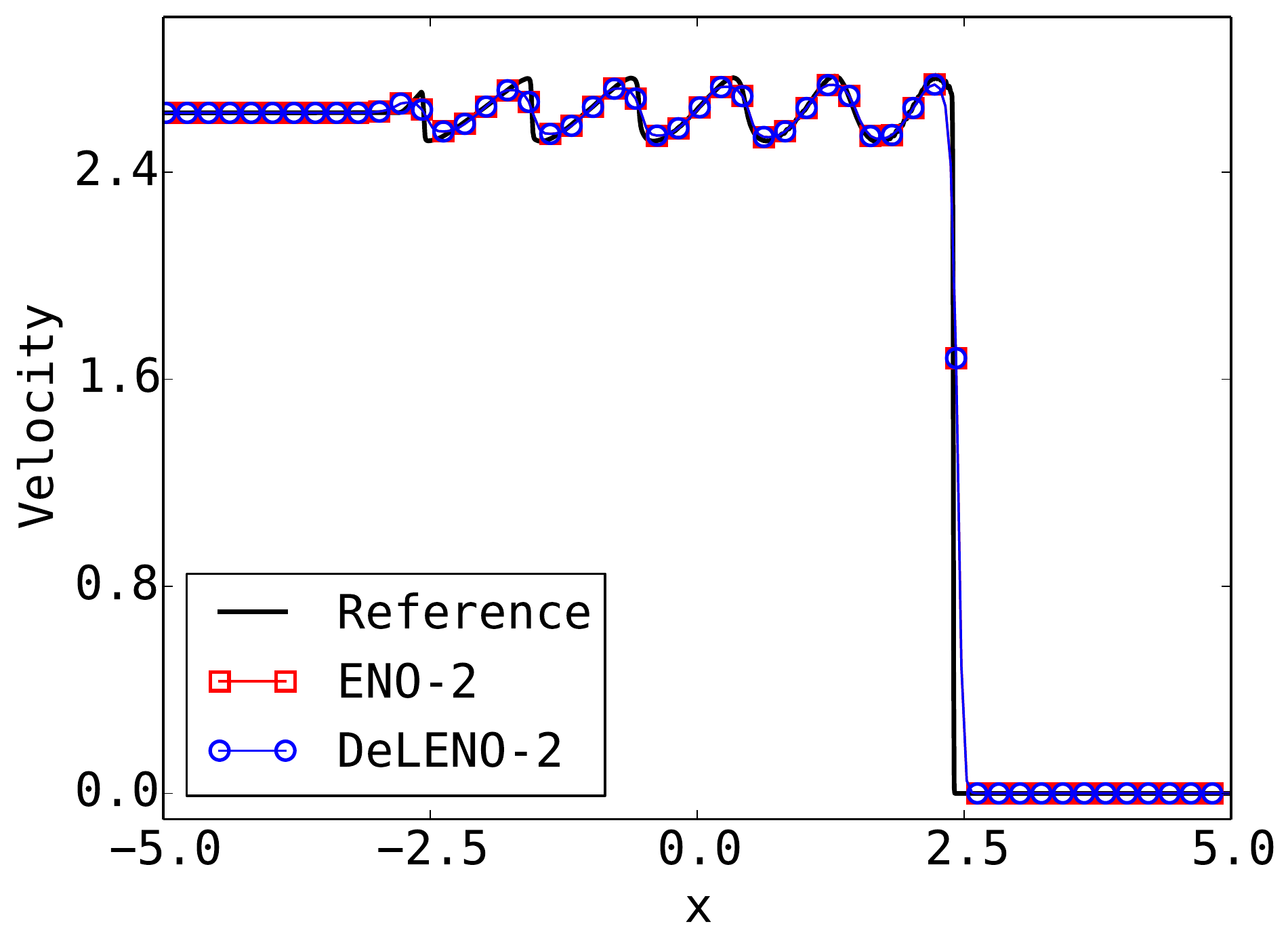}}
\subfigure[$\pdeg=2$]{\includegraphics[width=0.32\textwidth]{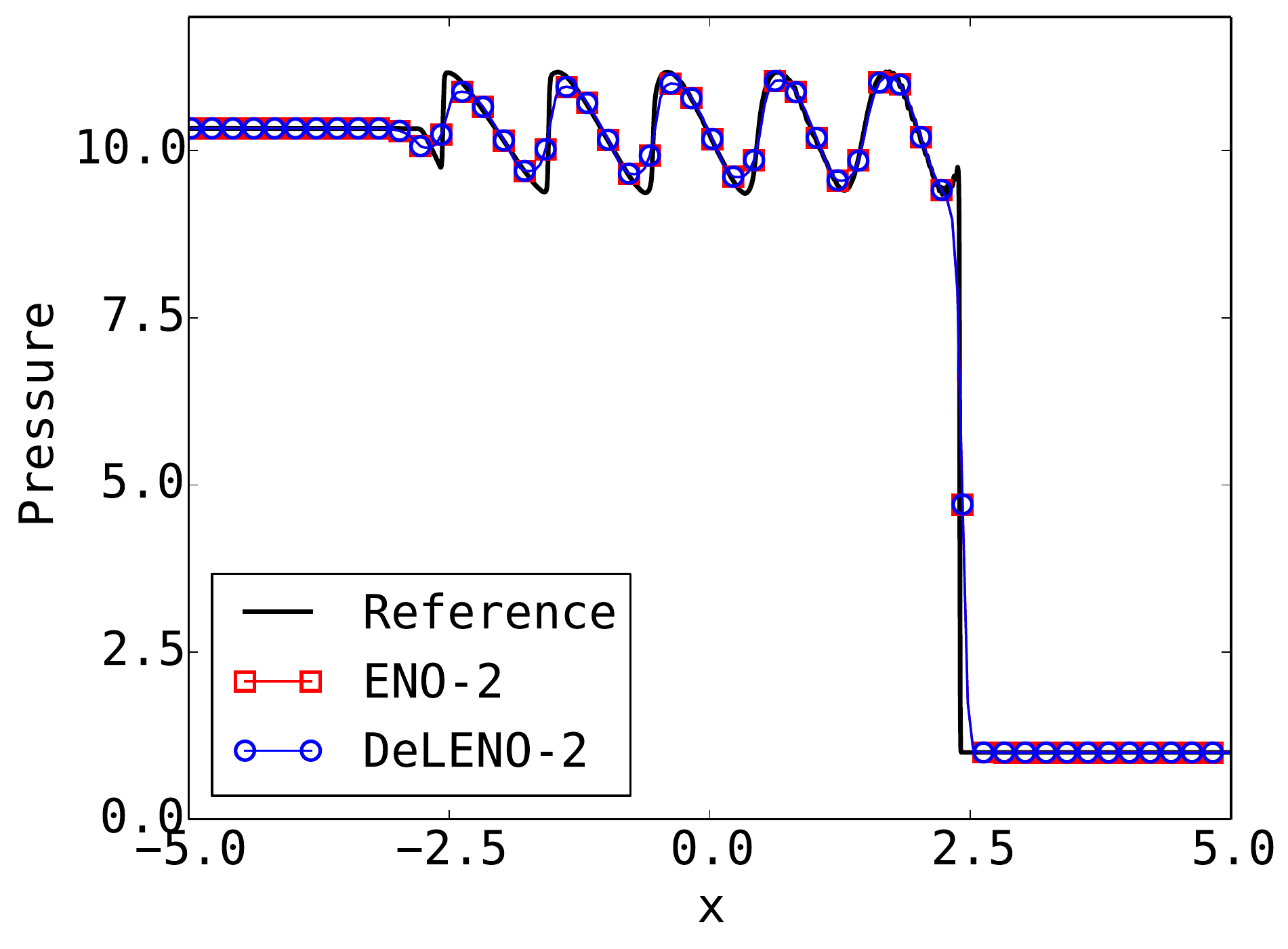}}\\
\subfigure[$\pdeg=3$]{\includegraphics[width=0.32\textwidth]{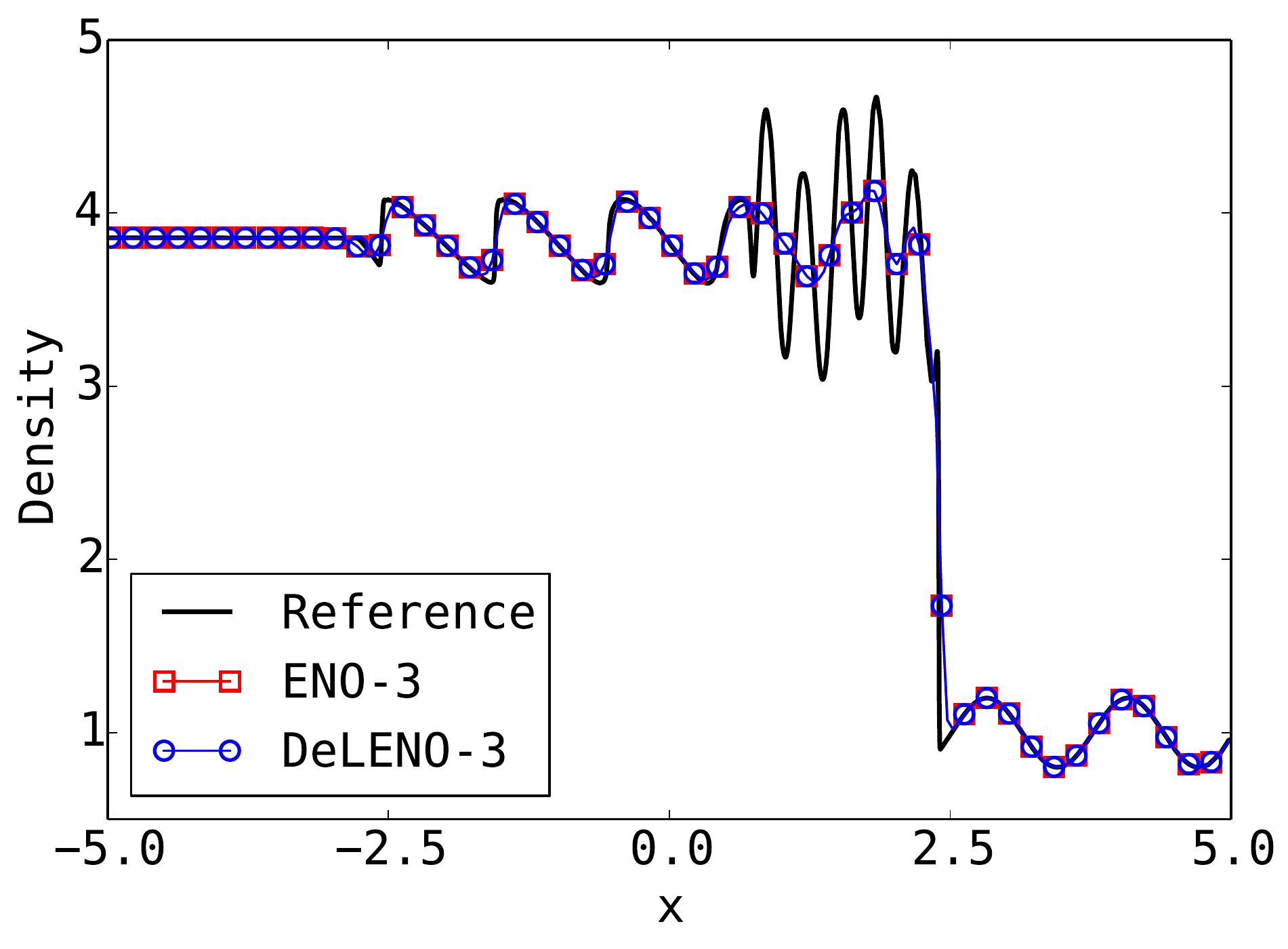}}
%\subfigure[$\pdeg=4$]{\includegraphics[width=0.32\textwidth]{}}\\
\subfigure[$\pdeg=3$]{\includegraphics[width=0.32\textwidth]{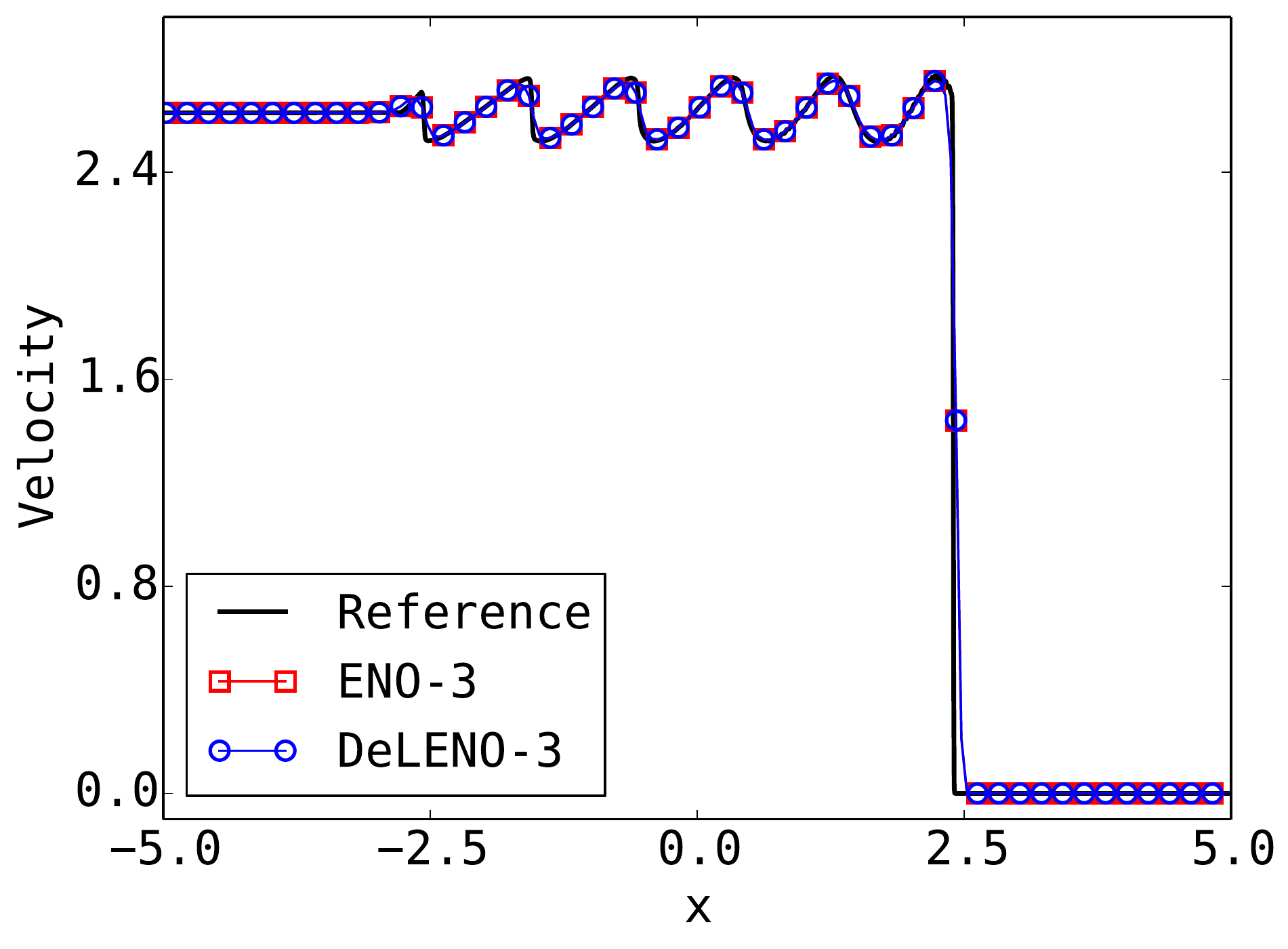}}
%\subfigure[$\pdeg=4$]{\includegraphics[width=0.32\textwidth]{}}\\
\subfigure[$\pdeg=3$]{\includegraphics[width=0.32\textwidth]{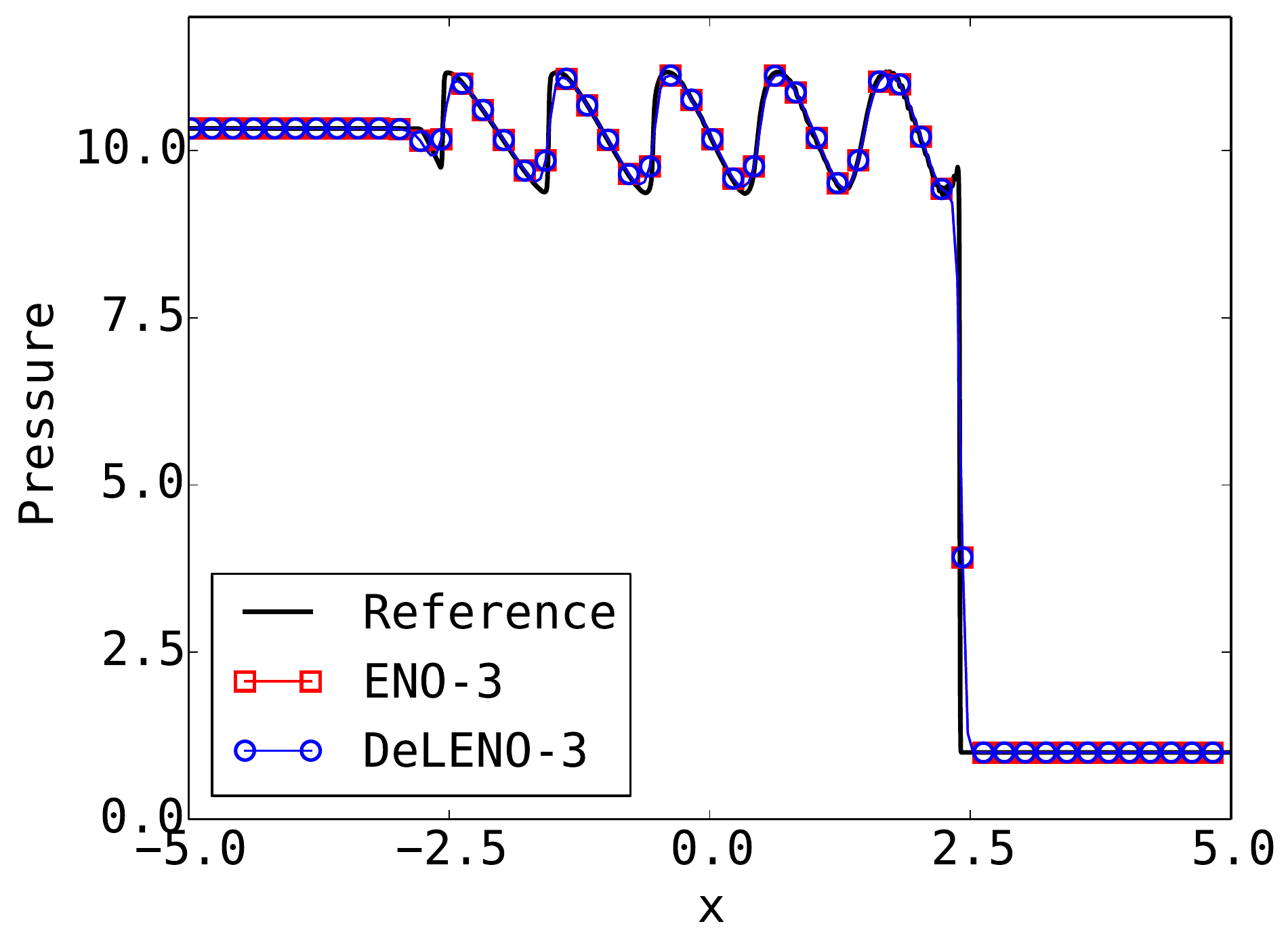}}
%\subfigure[$\pdeg=4$]{\includegraphics[width=0.32\textwidth]{}}\\
\caption{Solution for Euler shock-entropy problem with ENO-$\pdeg$ and DeLENO-$\pdeg$ on a mesh with $N=200$ cells.}
\label{fig:se_soln}
\end{center}
\end{figure}

Next we solve the Sod shock tube problem \cite{SOD78}, whose initial conditions are given by
\[
(\rho, \ v, \ p ) = \begin{cases} (1,\ 0,\ 1) & \quad \text{if } x < 0 \\ (0.125,\ 0,\ 0.1) & \quad \text{if } x > 0 \end{cases},
\]
on the domain $[-5,5]$. The solution consists of a shock wave, a contact discontinuity and a rarefaction. The mesh is descretized with $N=50$ cells and the problem is solved till $T_f = 2$ with a $\mathrm{CFL = 0.5}$. The solutions obtained with ENO-$\pdeg$ and DeLENO-$\pdeg$ are identical, as depicted in Figure \ref{fig:sod_soln}. 

\begin{figure}[!htbp] %do not use figures for degree 4 (wrong architecture)
\begin{center}
\subfigure[$\pdeg=2$]{\includegraphics[width=0.32\textwidth]{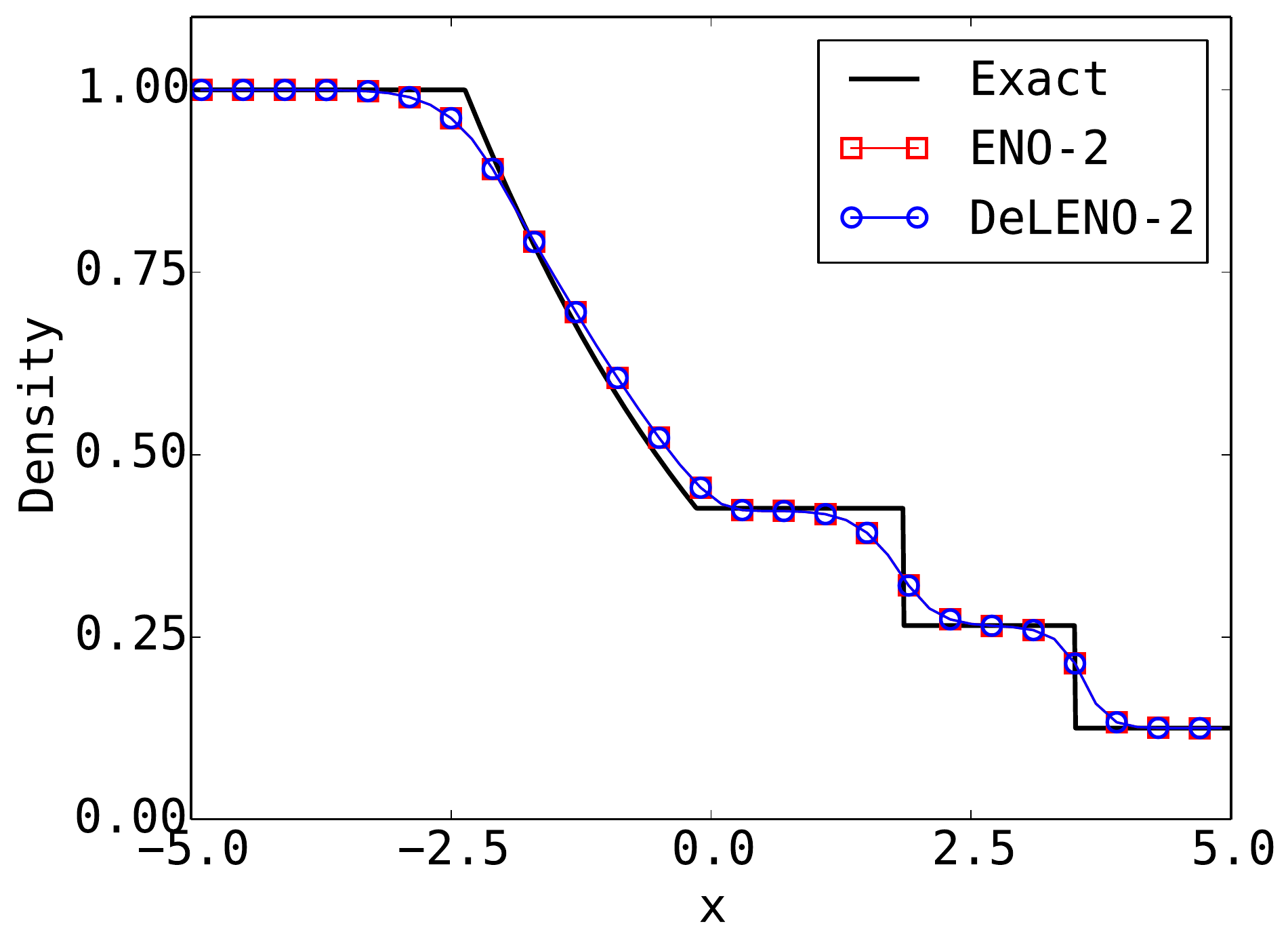}}
\subfigure[$\pdeg=2$]{\includegraphics[width=0.32\textwidth]{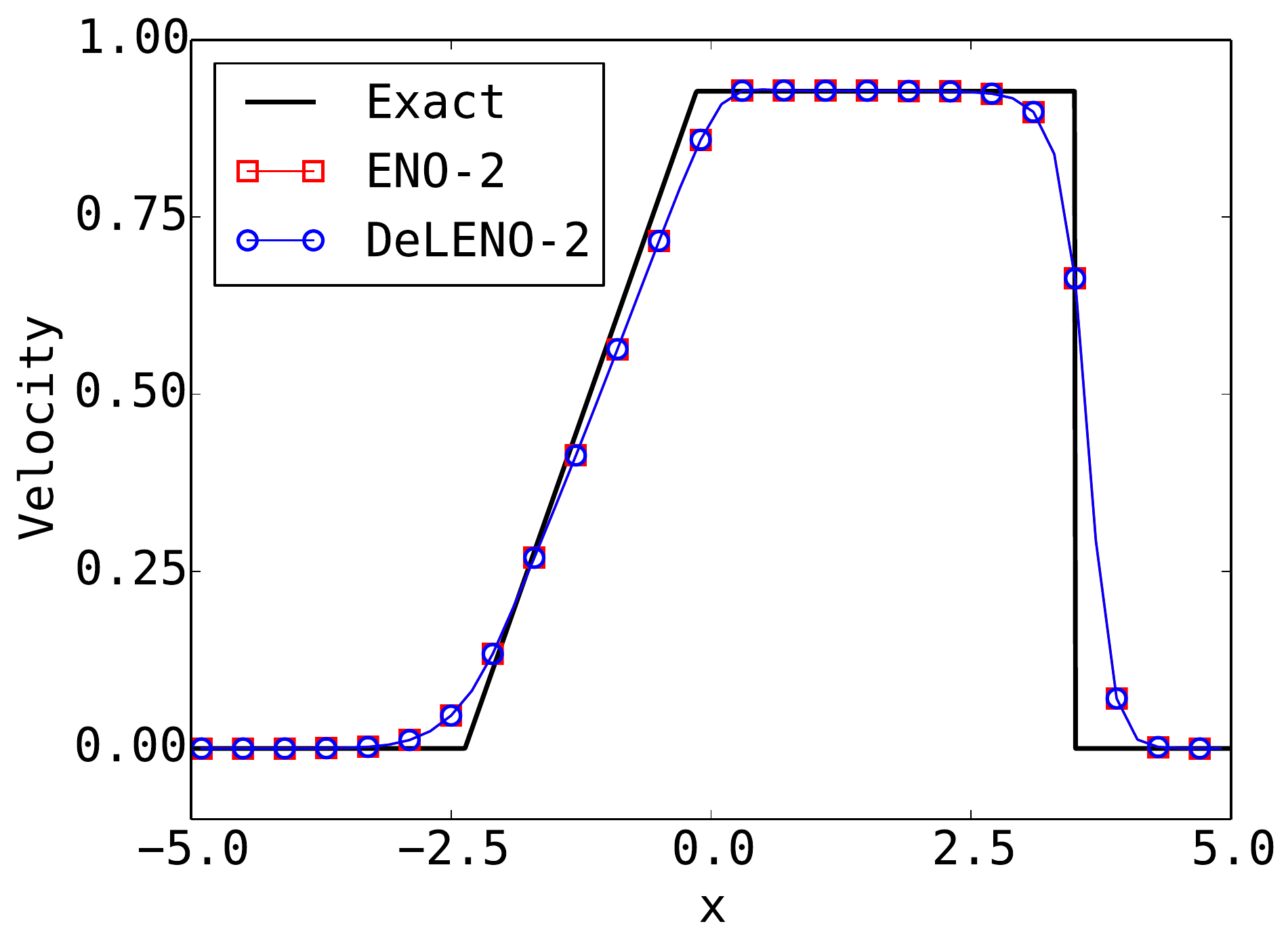}}
\subfigure[$\pdeg=2$]{\includegraphics[width=0.32\textwidth]{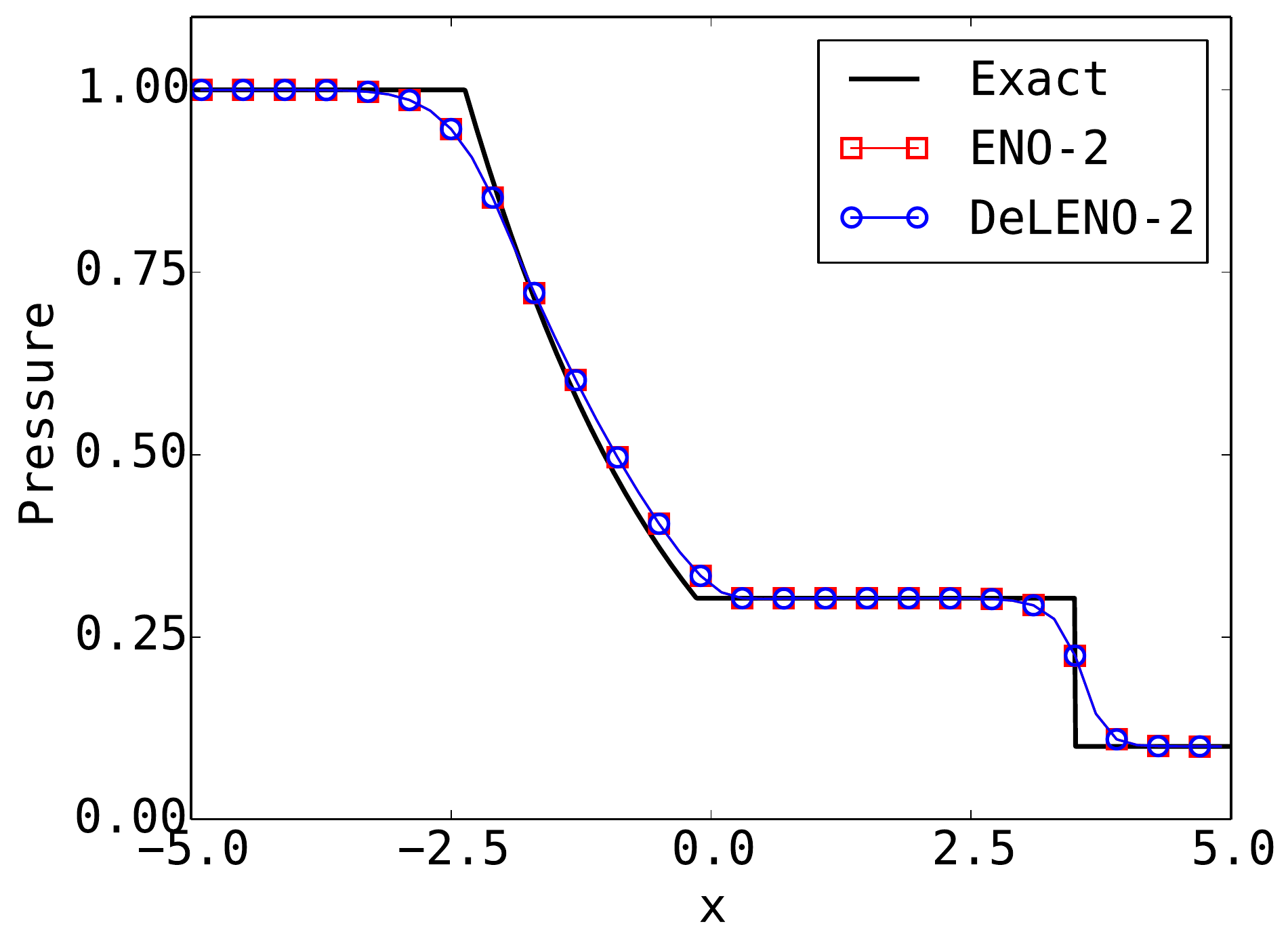}}\\
\subfigure[$\pdeg=3$]{\includegraphics[width=0.32\textwidth]{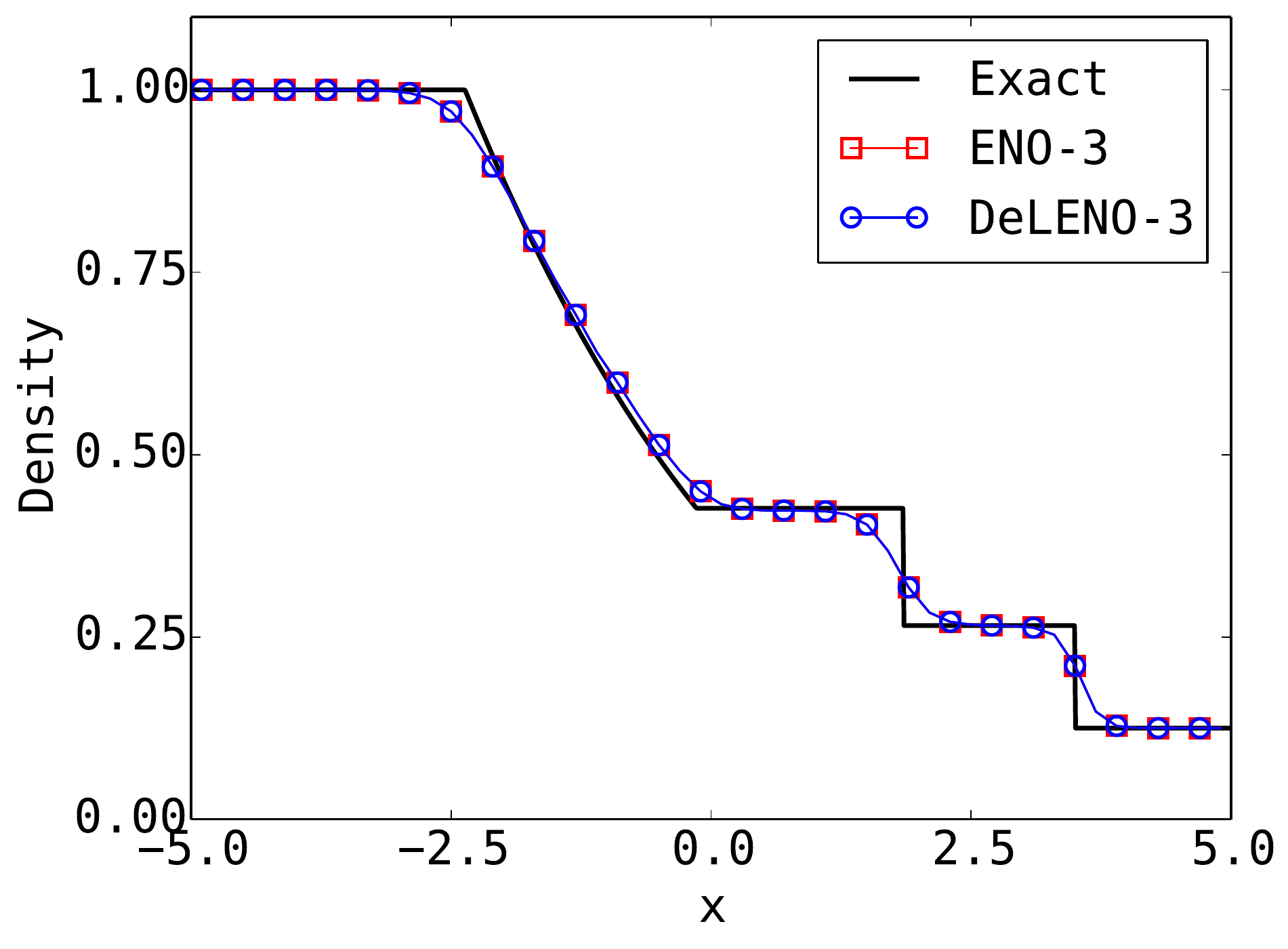}}
%\subfigure[$\pdeg=4$]{\includegraphics[width=0.32\textwidth]{}}\\
\subfigure[$\pdeg=3$]{\includegraphics[width=0.32\textwidth]{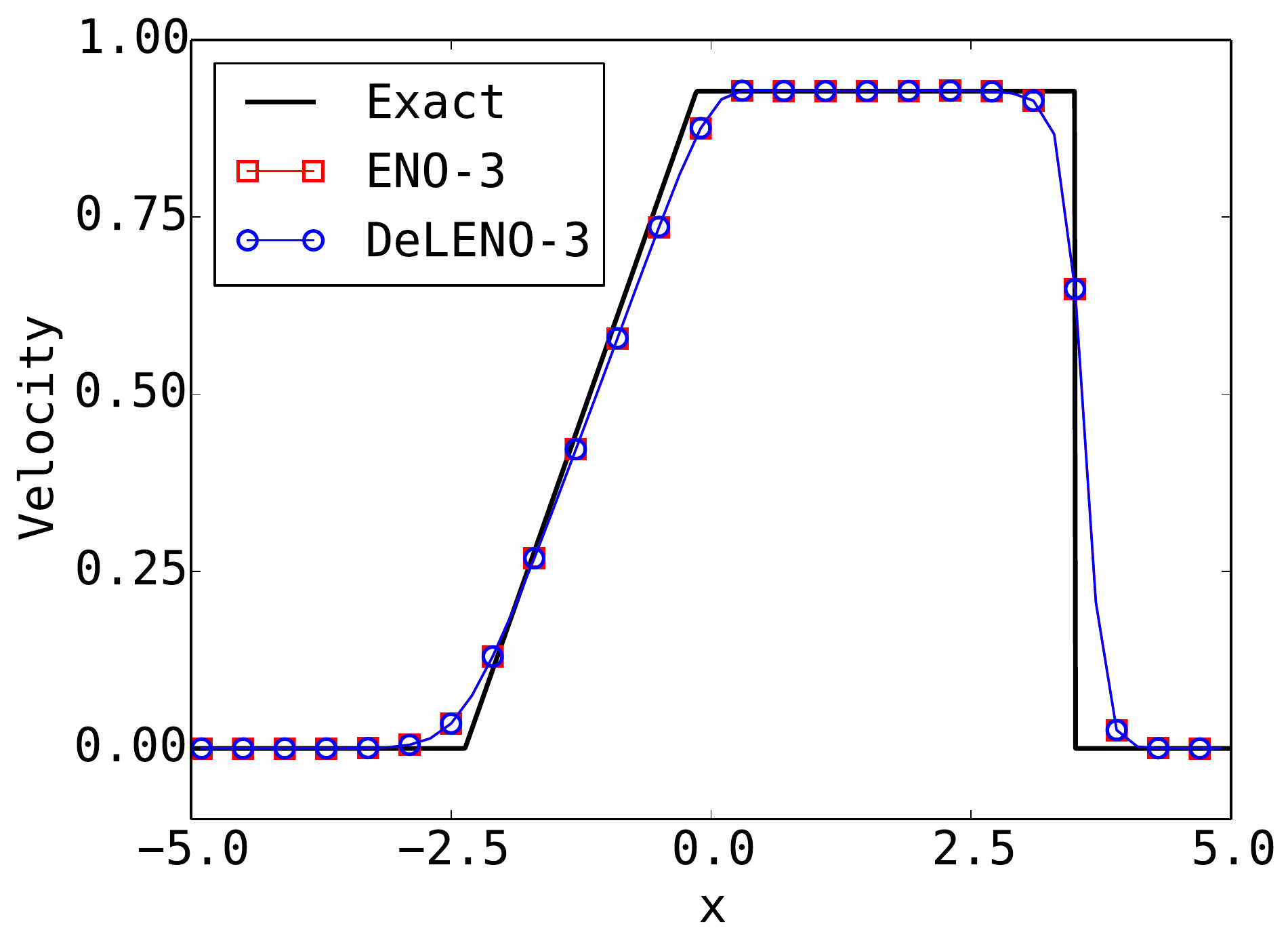}}
%\subfigure[$\pdeg=4$]{\includegraphics[width=0.32\textwidth]{}}\\
\subfigure[$\pdeg=3$]{\includegraphics[width=0.32\textwidth]{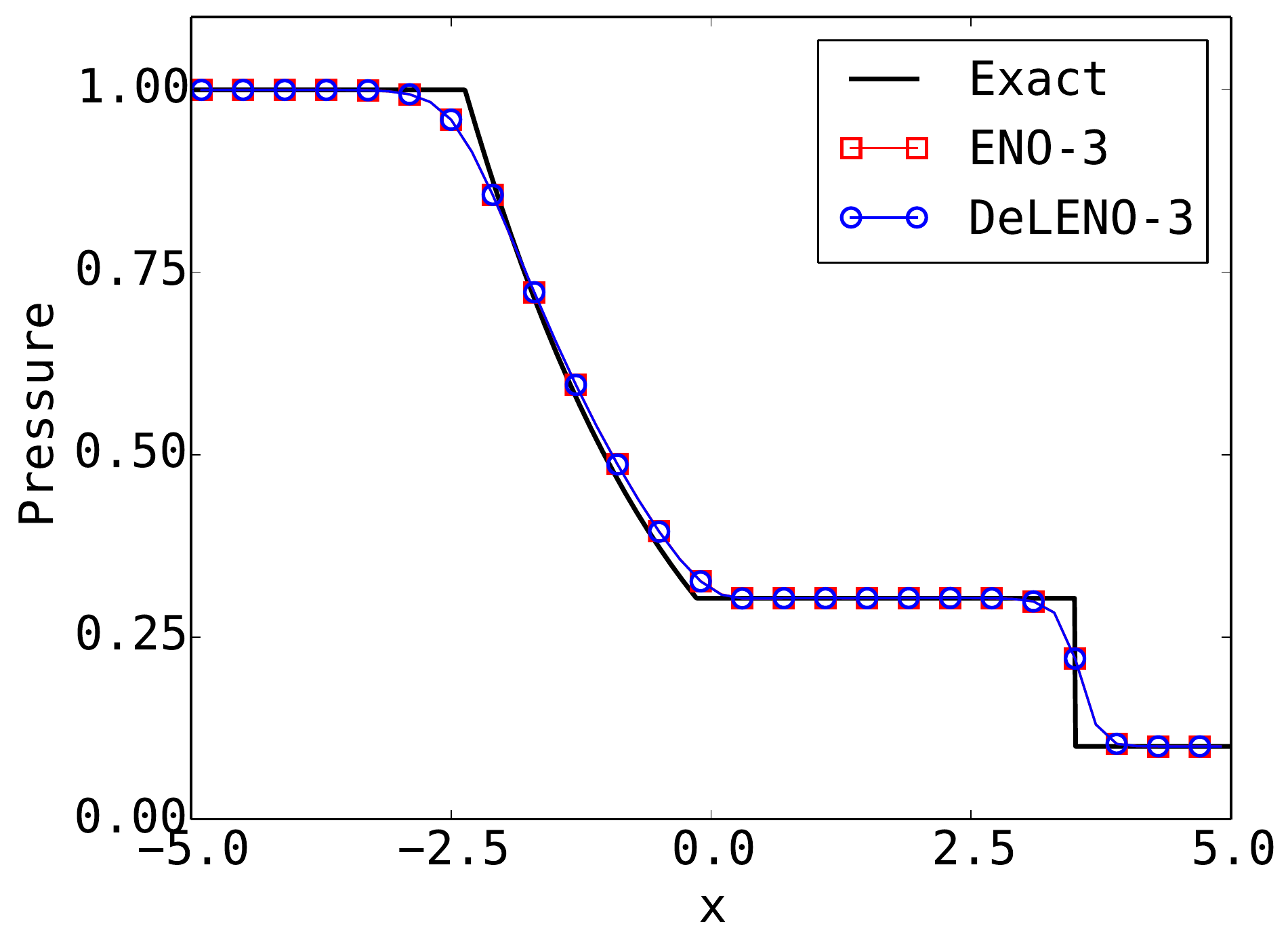}}
%\subfigure[$\pdeg=4$]{\includegraphics[width=0.32\textwidth]{}}\\
\caption{Solution for Euler Sod shock tube problem with ENO-$\pdeg$ and DeLENO-$\pdeg$ on a mesh with $N=50$ cells.}
\label{fig:sod_soln}
\end{center}
\end{figure}

\section{Discussion}
In this paper, we considered efficient interpolation of rough or piecewise smooth functions. A priori, both deep neural networks (on account of universality) and the well-known \emph{data dependent} ENO (and ENO-SR) interpolation procedure are able to interpolate rough functions accurately. We proved here that the ENO interpolation (and the ENO reconstruction) procedure as well as a variant of the second-order ENO-SR procedure can be cast as deep ReLU neural networks, at least for univariate functions. This equivalence provides a different perspective on the ability of neural networks in approximating functions and reveals their enormous expressive power as even a highly non-linear, data-dependent procedure such as ENO can be written as  a ReLU neural network. 

On the other hand, the impressive function approximation results, for instance of \cite{YAROTSKY17,YZ1}, might have limited utility for functions in low dimensions, as the neural network needs to be trained for every function that has to be interpolated. By interpreting ENO (and ENO-SR) as a neural network, we provide a natural framework for recasting the problem of interpolation in terms of pre-trained neural networks such as DeLENO, where the input vector of sample values are transformed by the network into the output vector of interpolated values. Thus, these networks are trained once and do not need to retrained for every new underlying function. This interpretation of ENO as a neural network allows us to possibly extend ENO type interpolations into several space dimensions on unstructured grids.

\bibliographystyle{abbrv}
%\bibliographystyle{unsrt}
%\nocite{*}

\bibliography{ref}

\appendix

\section{ENO reconstruction}\label{sec:ENO-reconstruction}

In this section we present ENO reconstruction, which is very similar to ENO interpolation as presented in Section \ref{sec:ENO}. Let $V$ be a function on $[c,d]$. For the sake of completeness, we repeat the main steps of the algorithm for reconstruction purposes.

% To describe the multi-resolution representation of functions, we use notations and operators similar to those introduced in \cite{ARAND2000}. 
We define a uniform mesh $\Tau$ on $[c,d]$ with $N$ cells, 
\begin{equation}\label{eqn:rec_partitions}
\Tau = \{I_i\}_{i=1}^{N}, \: I_i = [x_{i-\frac{1}{2}}, x_{i+\frac{1}{2}}], \: \left\{x_i = c + (2i-1) \frac{h}{2}\right\}_{i=0}^{N}, \: h = \frac{(d-c)}{N}, 
\end{equation}
where $x_i$ and $x_{i\pm\frac{1}{2}}$ denote the cell center and cell interfaces of the cell $I_i$, respectively. We are given the cell averages
\begin{equation*}
    \overline{V}_i = \frac{1}{h} \int_{x_\imh}^{x_\iph} V(\xi) \ud \xi, \quad 1 \leq i \leq N
\end{equation*}
% We call $\{x_i^l\}_{i=0}^{N_l}$ the nodes of the mesh $\Tau^l$. Clearly, 
% \[
% x_i^{l-1} = x_{2i}^l, \quad 0 \leq i \leq N_{l-1} = \frac{N_l}{2}.
% \]
 and we define $\overline{V}_{-\pdeg+2}, ...,\overline{V}_{0}$ and $\overline{V}_{N+1},...,\overline{V}_{N + \pdeg-1}$ to be ghost  values that need to be suitably prescribed. 
The goal is to find a local interpolation operator $\mathcal{I}^{h}_i$ such that
% $\mathcal{I}^{h_k}f(x)=f(x)$ for $x\in\Tau^k$ and $\norm{\mathcal{I}^{h_k}f-f}_{\infty}=O(h_k^p)$ for $k\to \infty$. 
\begin{equation*}
    \norm{\mathcal{I}^{h}_i V-V}_{\infty,I_i}=O(h^p) \textrm{ for } h\to 0.
\end{equation*}
For this purpose, let $\hat{V}$ be the primitive function of $V$ and note that we have access to the value of $\hat{V}$ at the cell interfaces,
\begin{equation*}
    \hat{V}(x_{i+\frac{1}{2}}) = h\sum_{j=0}^i \overline{V}_j \quad \textrm{where} \quad \hat{V}(x) = \int_c^x V(\xi) \ud \xi.
\end{equation*}
Next let $P_i$ be the unique polynomial of degree $p$ that agrees with $\hat{V}$ on a chosen set of $p+1$ cell interfaces that includes $x_{i-\frac{1}{2}}$ and $x_{i+\frac{1}{2}}$. 
The ENO reconstruction procedure considers the stencil sets 
\begin{equation*}
    \mathcal{S}^r_i = \{x_{i-\frac{1}{2}-r+j}\}_{j=0}^{p}, \quad 0\leq r \leq p-1,
\end{equation*}
where $r$ is called the (left) stencil shift. The smoothest stencil is then selected based on the local smoothness of $f$ using Newton's undivided differences. Algorithm \ref{alg:eno_fd_select} describes how the stencil shift $r_i$ corresponding to this stencil can be obtained. Note that $r_i$ uniquely defines the polynomial $P_i$. We then define $\mathcal{I}^{h}_i V$ to be the first derivative of $P_i$, one can check that this polynomial is indeed a $p$-th order accurate approximation. Note that the interpolants on two adjacent intervals do not need to agree on the mutual cell interface. 

\begin{algorithm}
\KwIn{ENO order $\pdeg$, input array ${\bf{\Delta^0}} = \{ \overline{V}_{i+j}\}_{j=-p+1}^{\pdeg-1}$, for any $1 \leq i \leq N$.}
\KwOut{Stencil shift $r$.}
\textit{Evaluate Newton undivided differences:} %\linebreak

% $\Delta^0 =\avg{\bm{\sfun}}^i $ %\linebreak

\For{$j=1$ \KwTo $\pdeg-1$}{
${\bf{\Delta^j}} = {\bf{\Delta^{j-1}}}[2:\textrm{end}] - {\bf{\Delta^{j-1}}}[1:\textrm{end}-1]$
}
\textit{Find shift:} %\linebreak

$r=0$%\linebreak

\For{$j=1$ \KwTo $\pdeg-1$}{
\If{$|{\bf{\Delta^j}}[\pdeg-1-r] | < |{\bf{\Delta^j}}[\pdeg-r] |$}{ $r = r+1$}
}
\Return $r$
\caption{ENO reconstruction stencil selection}
\label{alg:eno_fd_select}
\end{algorithm}

In order to implement an ENO scheme, one only needs the values of $\mathcal{I}^{h}_i V$ at the cell interfaces $x_{i-\frac{1}{2}}$ and $x_{i+\frac{1}{2}}$. Analogously to equation (\ref{eqn:eno_int}), these can be directly obtained by calculating
\begin{equation}\label{eqn:eno_rec}
    \mathcal{I}^{h}_i V(x_{i+\frac{1}{2}}) = \sum_{j=0}^{p-1} \Tilde{C}_{r_i,j}^p \overline{V}_{i-r_i+j} \quad \textrm{and} \quad  \mathcal{I}^{h}_i V(x_{i-\frac{1}{2}}) = \sum_{j=0}^{p-1} \Tilde{C}_{r_i-1,j}^p \overline{V}_{i-r_i+j},
\end{equation}
where $r_i$ is stencil shift corresponding to the smoothest stencil for interval $I_i$ and with the coefficients $\Tilde{C}_{r,j}^p$ listed in Table \ref{tab:coef_rec} in Appendix \ref{app:ENO-coeff}. 

\section{ENO coefficients}\label{app:ENO-coeff}

Table \ref{tab:coef_int} and Table \ref{tab:coef_rec} respectively list the ENO coefficients used for ENO interpolation (Section \ref{sec:ENO}) and ENO reconstruction (Appendix \ref{sec:ENO-reconstruction}). More information on the calculation of these coefficients can be found in \cite{SHU89}. 

\begin{table}[!htbp]
\centering
\begin{tabular}{|c|c|c|c|c|c|}
\hline
$r$                  & $s$ & $j=0$    & $j=1$   & $j=2$    & $j=3$ \\ \hline
\multirow{2}{*}{3} & 0 & 3/8  & 3/4  & -1/8   &  -      \\ \cline{2-6} 
                   & 1 & -1/8  & 3/4 & 3/8 & -     \\ \hline
\multirow{3}{*}{4} & 0 & 5/16  & 15/16  & -5/16   & 1/16     \\ \cline{2-6} 
                   & 1 & -1/16  & 9/16  & 9/16   & -1/16    \\ \cline{2-6} 
                   & 2 & 1/16   & -5/16 & 15/16  & 5/16   \\ \hline
%\multirow{5}{*}{4} & 0 & 35/128 & 35/32 & -35/64  & 7/32 &  -5/128    \\ \cline{2-7} 
%                   & 1 & -5/128 & 15/32 & 45/64  & -5/32 &  3/128    \\ \cline{2-7} 
%                   & 2 &  3/128 & -5/32  & 45/64 & 15/32 &-5/128   \\ \cline{2-7} 
%                   & 3 &   -5/128 & 7/32 & -35/64 & 35/32 & 35/128    \\ \hline
\end{tabular}\caption{Coefficients for ENO interpolation for $p>2$ used in \eqref{eqn:eno_int}.}
\label{tab:coef_int}
\end{table}

\begin{table}[!htbp]
\centering
\begin{tabular}{|c|c|c|c|c|c|}
\hline
$\pdeg$                & $s$ & $j=0$           & $j=1$            & $j=2$            & $j=3$           \\ \hline
\multirow{3}{*}{2} & -1  & 3/2   & -1/2   &       -           &        -         \\ \cline{2-6} 
                   & 0   & 1/2   & 1/2   &       -            &       -          \\ \cline{2-6} 
                   & 1   & -1/2  & 3/2    &       -           &        -         \\ \hline
\multirow{4}{*}{3} & -1  & 11/6  & -7/6   & 1/3    &       -          \\ \cline{2-6} 
                   & 0   & 1/3  & 5/6    & -1/6   &       -          \\ \cline{2-6} 
                   & 1   & -1/6  &  5/6     & 1/3     &     -            \\ \cline{2-6} 
                   & 2   & 1/3   & -7/6   & 11/6   &        -         \\ \hline
\multirow{5}{*}{4} & -1  & 25/12 & -23/12 & 13/12  & -1/4  \\ \cline{2-6} 
                   & 0   & -1/4  & 13/12  & -5/12  & 1/12  \\ \cline{2-6} 
                   & 1   & -1/12 & 7/12   & 7/12  & -1/12 \\ \cline{2-6} 
                   & 2   & 1/12   & -5/12  & 13/12  & -1/4   \\ \cline{2-6} 
                   & 3   &-1/4  & 13/12  & -23/12 & 25/12 \\ \hline
\end{tabular}
\caption{Coefficients for ENO reconstruction used in \eqref{eqn:eno_rec}.}
\label{tab:coef_rec}
\end{table}

\section{Multi-resolution representation of functions for data compression}\label{app:multi-res}

To describe the multi-resolution representation of functions, we use notations and operators similar to those introduced in \cite{ARAND2000}. We define a sequence of nested uniform meshes $\{\Tau^k\}_{k=0}^K$ on $\Omega = [c,d]$, where
\begin{equation}\label{eqn:partitions}
\Tau^k = \{I_i^k\}_{i=1}^{N_k}, \quad I_i^k = [x^k_{i-1}, x^k_i], \quad \{x_i^k = c + i h_k\}_{i=0}^{N_k}, \quad h_k = \frac{(d-c)}{N_k}, \quad N_k = 2^k N_0, 
\end{equation}
for $0\leq k\leq K$ and where $N_0$ is some positive integer. We call $\{x_i^k\}_{i=0}^{N_k}$ the nodes of the mesh $\Tau^k$. Let $\mathcal{B}_\Omega$ be the set of bounded functions on $\Omega$ and $\mathcal{V}^n$ the space of real-valued finite sequences of length $n$. We define the following operators associated with the various meshes:
\begin{itemize}
\item The {\it discretizer} $D_k: \mathcal{B}_\Omega \mapsto\mathcal{V}^{N_k + 1}$ defined by 
\[
D_k f = \q^k :=  \{q_i^k\}_{i=0}^{N_k} = \{q(x_i^k)\}_{i=0}^{N_k}, \quad  \forall \ q \in \mathcal{B}_\Omega.
\]
\item The {\it reconstructor} $R_k: \mathcal{V}^{N_k + 1} \mapsto \mathcal{B}_\Omega$ satisfying $D_k R_k \q^k = \q^k$ for $\q^k \in \mathcal{V}^{N_k + 1} $. Thus, $(R_k \q^k)(x)$ interpolates the members of $\q^k$ at the nodes of $\Tau^k$.
\item The {\it decimator} $D^{k-1}_k : \mathcal{V}^{N_k + 1}  \mapsto \mathcal{V}^{N_{k-1} + 1} $ defined by $D_k^{k-1} := D_{k-1} R_{k}$. For $q \in \mathcal{B}_\Omega$, we have
\begin{equation}\label{eqn:decimator}
q^{k-1}_i = (D_k^{k-1} \q^k)_i = q_{2i}^k, \quad 0\leq i\leq N_{k-1}.
\end{equation}
In other words, the decimator helps in extracting the function values on a given mesh from a finer one. 
\item The {\it predictor} $P_{k-1}^k : \mathcal{V}^{N_{k-1} + 1}  \mapsto \mathcal{V}^{N_{k} + 1} $ defined by $P^k_{k-1} := D_k R_{k-1}$. The predictor tries to recover the function values $\q^k$ from the coarser data $\q^{k-1}$, for $q \in \mathcal{B}_\Omega$. 
\end{itemize}
The prediction error is given by
\[
e^k_i = q^k_i - (P^k_{k-1} q^{k-1})_i, \quad 0\leq i \leq N_k.
\]
Clearly $e^k_{2i} = 0$ for $0 \leq i \leq N_{k-1} = N_k/2$. Thus, the interpolation error is essentially evaluated at the nodes in $\Tau^k \setminus \Tau^{k-1}$, which we denote as
\begin{equation}\label{eqn:int_err}
d_i^k = e_{2i-1}^k =  q^k_{2i-1} - (P^k_{k-1} \q^{k-1})_{2i-1}, \quad 1 \leq i \leq N_{k-1} .
\end{equation}
Given $\q^{k-1}$ and $\db^k$, we can recover $\q^k$ using \eqref{eqn:decimator} and \eqref{eqn:int_err}. By iteratively applying this procedure, the data $\q^k$ on the finest mesh can be fully encoded using the multi-resolution representation 
\begin{equation}\label{eqn:mr_rep}
\{ \q^0, \db^1, \db^2,...,\db^K\} .
\end{equation}
This multiresolution representation \eqref{eqn:mr_rep} for a function $f \in \mathcal{B}_\Omega$ is convenient to perform data compression. The easiest compression strategy \cite{ARAND2000} corresponds to setting the coefficients $d_i^k$ in \eqref{eqn:int_err} to zero based on a suitable threshold $\epsilon^k \geq 0$:
\[
\widehat{d}_i^k = 
 \mathcal{G}(d_i^k;\epsilon^k) =
\begin{cases} 0 & \quad \text{if } |d_i^k| \leq \epsilon^k\\
   										 d_i^k & \quad \text{otherwise}.
					                  \end{cases}
\]
The compressed representation is then given by
\begin{equation}\label{eqn:cmr_rep}
\{ \f^0, \widehat{\db}^1, \widehat{\db}^2,...,\widehat{\db}^K\} .
\end{equation}
The procedures for compressed encoding and decoding are listed in Algorithms \ref{alg:encode} and \ref{alg:decode}.

\begin{algorithm}
\KwIn{Highest resolution data $\f^K$, number of levels $K$, number of points $N_0$ on coarsest mesh, ENO order $\pdeg$, threshold parameters $\epsilon$ and $t$.}
\KwOut{Multi-resolution representation $\{ \f^0, \widehat{\db}^1, ..., \widehat{\db}^K \}$.}
\For{$k=K$ \KwTo $1$}{
$\f^{k-1} = D^{k-1}_k \f^k$}
$\hat{\f}^0 = \f^0$\\
\For{$k=1$ \KwTo $K$}{
$\widehat{f}^k_0 = f^K_0$\\
Construct $P_{k-1}^k$ using Algorithm \ref{alg:eno_int_select} and equation  \eqref{eqn:eno_int}\\
$\widetilde{\f}^k = P_{k-1}^k \widehat{\f}^{k-1}$\\
$N = N_0 2^{k-1}$\\
\For{$i=1$  \KwTo $N$}{
$d_i^k = f_{2i-1}^k - \widetilde{f}_{2i-1}^k$\\
$\epsilon^k = \epsilon t^{K-k}$\\
$\widehat{d}_i^k = \mathcal{G}(d_i^k; \epsilon^k )$\\
$\widehat{f}^k_{2i-1} = \widetilde{f}_{2i-1}^k + \widehat{d}_i^k$\\
$\widehat{f}^k_{2i}  = \widehat{f}^{k-1}_{i}$}
}
\Return $\{ \f^0, \widehat{\db}^1, ..., \widehat{\db}^K \}$
\caption{Compressed encoding \cite{ARAND2000}}
\label{alg:encode}
\end{algorithm}

\begin{algorithm}
\KwIn{Multi-resolution representation $\{ \f^0, \widehat{\db}^1, ..., \widehat{\db}^K \}$, number of levels $K$, number of cells $N_0$ on coarsest mesh, ENO order $\pdeg$. }
\KwOut{Decoded function $\widehat{f}^K$.}
$\hat{\f}^0 = \f^0$\\
\For{$k=1$ \KwTo $K$}{
Construct $P_{k-1}^k$ using Algorithm \ref{alg:eno_int_select} and equation  \eqref{eqn:eno_int}\\
$\widehat{\f}^k = P_{k-1}^k \widehat{\f}^{k-1} + \widehat{\db}^k$
}
\Return $\widehat{\f}^K$
\caption{Decoding multi-resolution data \cite{ARAND2000}}
\label{alg:decode}
\end{algorithm}

The following result is known on the error bounds for the compressed encoding in the form \eqref{eqn:cmr_rep}, the proof can be found in \cite{ARAND2000}.

\begin{proposition}
Let $\{\Omega^k\}_{l=0}^K$ be a sequence of nested uniform meshes discretizing the interval $[c,d]$ generated according to \eqref{eqn:partitions} for some positive integer $N_0 > 1$. Assume that some $f \in \mathcal{B}[c,d]$ is encoded using thresholds
\begin{equation}\label{eqn:threshold}
\epsilon^k = \epsilon t^{K-k}, \quad 0 < t < 1.
\end{equation}
to give rise to the multi-resolution representation of the form \eqref{eqn:cmr_rep}. If $\widehat{\f}^K$ is the decoded data, then
\begin{equation}\label{eqn:err_estimate}
\|\f^K - \widehat{\f}^K\|_n \leq C_n \epsilon \quad \text{for } n =\infty, 1,2,
\end{equation}
where $C_\infty=(1-t)^{-1}$, $C_1 = (b-a)(1-t)^{-1}$ and $C_2 = \sqrt{(b-a)(1-t^2)^{-1} }$. This estimate is independent of the interpolation procedure used to encode and decode the data.
\end{proposition}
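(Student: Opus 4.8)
The plan is to exploit the error-control structure of the compressed encoding in Algorithm~\ref{alg:encode}: because the details $\widehat{\db}^k$ are formed relative to the already-decoded coarse data, the decoding error at each node of $\Tau^K$ turns out to be a \emph{single} thresholding discrepancy, with no amplification by the predictor. Concretely, write $E^k_i = \f^k_i - \widehat{\f}^k_i$ for the error between the exact subsampled data and the decoded data at level $k$, so that $E^0 = 0$ since $\widehat{\f}^0 = \f^0$. I would first record two elementary facts from the decode step $\widehat{\f}^k = P^k_{k-1}\widehat{\f}^{k-1} + \widehat{\db}^k$ of Algorithm~\ref{alg:decode} together with \eqref{eqn:int_err}: at an even node, $\widehat{\f}^k_{2i} = \widehat{\f}^{k-1}_i$ because the predictor reproduces the coarse values there (the decimator/predictor identity \eqref{eqn:decimator}), whence $E^k_{2i} = E^{k-1}_i$; and at an odd node, $E^k_{2i-1} = d^k_i - \widehat{d}^k_i$, which by the thresholding rule $\mathcal{G}(\cdot;\epsilon^k)$ satisfies $\abs{E^k_{2i-1}} \le \epsilon^k$.

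\textbf{Telescoping to a birth level.} The second step is an induction on $k$ showing that the error at \emph{every} node of $\Tau^K$ equals the thresholding discrepancy incurred at the unique level at which that node first appears. Indeed, a node of $\Tau^K$ either lies in $\Tau^0$ (then, chasing $E^k_{2i}=E^{k-1}_i$ all the way down, its error is $0$) or it is a newly added, odd-indexed node of some $\Tau^k$ with $1\le k\le K$; in the latter case the even-node relation transports its error unchanged up to level $K$, so $\abs{E^K_{\cdot}} = \abs{d^k_i - \widehat{d}^k_i} \le \epsilon^k$. Crucially, only the \emph{interpolatory} property of $P^k_{k-1}$ is used here and nothing about its coefficients or order, which is exactly why the final estimate will be independent of the interpolation procedure.

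\textbf{Counting and summation.} With each node's error pinned to its birth level, all three bounds reduce to explicit geometric sums. There are $N_k - N_{k-1} = 2^{k-1}N_0$ nodes born at level $k$, each carrying error at most $\epsilon^k = \epsilon\, t^{K-k}$. For $n=\infty$ the maximum over levels is attained at the finest level, $\max_k \epsilon^k = \epsilon \le (1-t)^{-1}\epsilon$; for $n=1,2$ I weight the node count by $h_K = (b-a)/N_K$ and sum, so the level-$k$ contribution to $\norm{\f^K-\widehat{\f}^K}_1$ is of order $h_K\, 2^{k-1}N_0\,\epsilon^k$ and to $\norm{\f^K-\widehat{\f}^K}_2^2$ of order $h_K\, 2^{k-1}N_0\,(\epsilon^k)^2$. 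Substituting $\epsilon^k=\epsilon\, t^{K-k}$ turns these into geometric series with ratios $t/2$ and $t^2/2$; collecting the factor $(b-a)=h_K N_k$ and bounding the partial sums by their infinite limits yields $C_1 = (b-a)(1-t)^{-1}$ and $C_2 = \sqrt{(b-a)(1-t^2)^{-1}}$, the $1-t^2$ being a direct consequence of squaring the thresholds.

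\textbf{Main obstacle.} The delicate point is the telescoping induction: it relies on encoder and decoder sharing the \emph{same} predicted values $P^k_{k-1}\widehat{\f}^{k-1}$, so that the encoder's detail $d^k_i$ and the decoder's reconstruction cancel exactly up to thresholding. One must verify this consistency and the reproduction at even nodes carefully, and then fix the normalization of the discrete $\ell^1$ and $\ell^2$ norms (using the hypothesis $N_0>1$) so that the geometric sums reproduce the stated $C_n$ rather than merely comparable constants. I would also flag the alternative route underlying the cited reference: if the details were instead formed against the \emph{exact} coarse data, the errors would propagate as $E^k = P^k_{k-1}E^{k-1} + \eta^k$ with $\norm{\eta^k}_\infty\le\epsilon^k$, and one would then need $\ell^p$-stability $\norm{P^k_{k-1}}\le 1$ together with near-orthogonality of the inter-level contributions to recover the same constants; the error-control formulation above sidesteps both and makes the interpolation-independence transparent.
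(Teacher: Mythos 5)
The paper does not actually prove this proposition itself --- it defers entirely to the cited reference --- but your argument is precisely the standard error-control proof that reference relies on: because Algorithm~\ref{alg:encode} forms each detail $d^k_i$ against the prediction from the \emph{already-thresholded} coarse data $\widehat{\f}^{k-1}$, the decoder reproduces the encoder's $\widehat{\f}^k$ exactly, every node's error is the single thresholding discrepancy $\lvert d^k_i-\widehat{d}^k_i\rvert\le\epsilon^k$ incurred at its birth level (propagated unchanged through even indices), and the three norms follow by geometric summation --- all correct, including your observation that this is what makes the bound independent of the prediction operator. The only remark worth adding is that your sharper count of $2^{k-1}N_0$ nodes born at level $k$ actually produces constants $(b-a)(2-t)^{-1}$ and $\sqrt{(b-a)(2-t^2)^{-1}}$ that are \emph{better} than the stated $C_1,C_2$ (which arise from crudely bounding the number of level-$k$ nodes by $N_K$), so the claimed estimates follow a fortiori.
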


\section{Proof of Theorem \ref{thm:ENOSR2}}\label{app:proof-ENOSR2}

We present some properties of our adaptation of the ENO-SR algorithm. To prove the second-order accuracy, we state some results due to \cite{ACDD2005} in a slightly adapted form. 

\begin{lemma}\label{lem:groupsize-bis}
The groups of adjacent $B$ intervals are at most of size 2. They are separated by groups of adjacent $G$ intervals that are at least of size 2. 
\end{lemma}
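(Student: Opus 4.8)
The statement is purely combinatorial, and my plan is to exploit this. I would write $D_j := \abs{\Delta^2_{h_k}f(x^k_j)}$ and observe that the regularity of $f$ is irrelevant for this lemma (it enters the rest of the proof of Theorem~\ref{thm:ENOSR2}): the labelling depends only on the order relations among the nonnegative numbers $D_j$. So I would prove the two assertions for an arbitrary sequence $(D_j)$, after first rephrasing the two detection rules as three per-interval predicates.

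Concretely, for each interval $I_j^k$ I would record the exact conditions making it $B$. Rule~1 fired at trigger $i=j$ is the predicate $A_j: D_{j-1} > \max\{D_{j-4},D_{j-3},D_{j-2},D_j,D_{j+1},D_{j+2}\}$ and paints $I_{j-1}^k,I_j^k$; Rule~1 fired at trigger $i=j+1$ is $B_j: D_j > \max\{D_{j-3},D_{j-2},D_{j-1},D_{j+1},D_{j+2},D_{j+3}\}$ and paints $I_j^k,I_{j+1}^k$; Rule~2 is $C_j: D_j > \max\{D_{j+1},D_{j+2}\}$ together with $D_{j-1} > \max\{D_{j-2},D_{j-3}\}$ and paints $I_j^k$. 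Thus $I_j^k$ is $B$ exactly when $A_j \lor B_j \lor C_j$ holds. The bookkeeping observation that drives the whole argument is that the two Rule~1 predicates overlap: $B_{j-1}$ is literally the same inequality as $A_j$, and $A_{j+1}$ is the same as $B_j$, which simply reflects that one firing of Rule~1 colours two neighbouring intervals.

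For the first assertion I would assume $I_{j-1}^k,I_j^k,I_{j+1}^k$ are all $B$ and split on how $I_{j-1}^k$ is labelled. If it is labelled via $B_{j-1}$ or $C_{j-1}$, then $D_{j-1} > D_j$ and $D_{j-1} > D_{j+1}$; but every predicate allowing $I_{j+1}^k$ to be $B$ (namely $A_{j+1},B_{j+1},C_{j+1}$) forces either $D_j > D_{j-1}$ or $D_{j+1} > D_{j-1}$, a contradiction. Hence $I_{j-1}^k$ must be labelled via $A_{j-1}$, which gives $D_{j-2} > \max\{D_{j-1},D_j,D_{j+1}\}$; but each of $A_j,B_j,C_j$ demands that one of $D_{j-1},D_j$ strictly exceed $D_{j-2}$, so $I_j^k$ cannot be $B$ after all. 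Either branch yields a contradiction, so no three consecutive intervals are $B$.

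For the second assertion I would assume the forbidden pattern $B,G,B$ on $I_{j-1}^k,I_j^k,I_{j+1}^k$. Since $I_j^k$ is $G$, neither $A_j$ nor $B_j$ holds, and by the overlap identities this excludes $B_{j-1}$ and $A_{j+1}$; hence $I_{j-1}^k$ is $B$ via $A_{j-1}$ or $C_{j-1}$ and $I_{j+1}^k$ is $B$ via $B_{j+1}$ or $C_{j+1}$. Each of the four combinations pits a strict inequality from the left against its exact reverse from the right (for instance $A_{j-1}$ gives $D_{j-2}>D_{j+1}$ while $B_{j+1}$ gives $D_{j+1}>D_{j-2}$), so all four collapse, and good groups cannot have size one. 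I expect the only real work to be this disciplined enumeration; the step most prone to error — and the one I would double-check carefully — is the expansion of the windows in $A_j,B_j,C_j$ together with the identities $B_{j-1}=A_j$ and $A_{j+1}=B_j$, since these are precisely what force every case to reduce to a contradictory pair $D_p>D_q$, $D_q>D_p$.
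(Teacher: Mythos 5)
Your proof is correct: I checked the three predicates $A_j$, $B_j$, $C_j$ against the detection rules (Rule 1 triggered at $i=j$ and at $i=j+1$, plus Rule 2 at $i=j$), the overlap identities $B_{j-1}=A_j$ and $A_{j+1}=B_j$, and all branches of both case analyses — each one does reduce to a contradictory pair of strict inequalities exactly as you claim. However, your route differs from the paper's: the paper does not argue combinatorially at all, but simply observes that the adapted detection algorithm coincides with the one in the cited reference (Ar\`andiga et al.) for the parameter value $m=3$ and invokes their Lemma~1. What your approach buys is a self-contained verification that the labelling rules, as restated in this paper, really do have the claimed structure — which is not a vacuous gain, since the paper's one-line proof silently relies on the reader checking that the two algorithms agree. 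What the citation buys is brevity and the reassurance that the result holds in the more general parametric setting of the reference. One presentational remark: it is worth stating explicitly (as you implicitly do) that an interval $I_j^k$ is $B$ \emph{if and only if} $A_j\lor B_j\lor C_j$ holds, i.e.\ that these three predicates exhaust the ways Rule~1 and Rule~2 can paint $I_j^k$; this exhaustiveness is the only place where a reader could doubt the reduction to pure combinatorics, and near the boundary of $[c,d]$ it tacitly requires the ghost values to be in place so that all the $D_j$ appearing in the windows are defined.
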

\begin{proof}
Note that our detection algorithm is the same as the one in \cite{ACDD2005} for $m=3$. The result then follows from their Lemma 1. 
\end{proof}
\begin{lemma}\label{lem:L2-cohen}
Let $f$ be a globally continuous function with a bounded second derivative on $\mathbb{R}\backslash \{z\}$ and a discontinuity in the first derivative at a point $z$. Define the critical scale
\begin{equation}\label{eqn:hc}
    h_c:=\frac{\abs{[f']}}{4\sup_{x\in\mathbb{R}\backslash \{z\}}\abs{f''(x)}},
\end{equation}
where $[f']$ is the jump of the first derivative $f'$ at the point $z$. Then for $h<h_c$, the interval that contains $z$ is labelled $B$. 
\end{lemma}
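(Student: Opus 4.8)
The plan is to isolate the effect of the kink at $z$ on the second-difference smoothness indicator $\Delta_h^2 f$ and to show that, once $h<h_c$, this effect is large enough at the endpoints of the cell containing $z$ to force one of the two detection rules to fire. First I would write $f(x)=\phi(x)+[f']\,(x-z)_+$ with $(x-z)_+=\max(0,x-z)$, where $\phi$ is globally $C^1$ and satisfies $\phi''=f''$ on $\mathbb{R}\setminus\{z\}$, so that $\sup\abs{\phi''}=\sup_{\mathbb{R}\setminus\{z\}}\abs{f''}=:F$; one checks $\phi'(z-)=\phi'(z+)=f'(z-)$, hence $\phi'$ is continuous. Applying $\Delta_h^2$ to this splitting gives, at any node $x_j$ (nodes of spacing $h$, cells $I_i=[x_{i-1},x_i]$),
\begin{equation*}
\Delta_h^2 f(x_j)=\Delta_h^2\phi(x_j)+[f']\,\max\bigl(0,\,h-\abs{x_j-z}\bigr),
\end{equation*}
since a direct computation shows $\Delta_h^2(\cdot-z)_+$ equals the tent function $\max(0,h-\abs{\cdot-z})$. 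The integral representation $\Delta_h^2\phi(x)=\int_0^h\!\int_{x-h+t}^{x+t}\phi''(s)\,ds\,dt$ then yields the uniform bound $\abs{\Delta_h^2\phi(x)}\le h^2F$, so the smooth part contributes only ``noise'' of size at most $h^2F$.

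Next I would localise the tent contribution. If $z$ lies in the interior of $I_i$, then $\abs{x_j-z}\ge h$ for every node other than the two endpoints $x_{i-1},x_i$, so the tent term vanishes at all nodes except these two; there its values are $\tau_{i-1}=h-(z-x_{i-1})$ and $\tau_i=h-(x_i-z)$, which are nonnegative and satisfy $\tau_{i-1}+\tau_i=h$. Hence at least one endpoint, call it $x_*$ (the other $x_{**}$), carries tent value $\tau_*\ge h/2$, and writing $J:=\abs{[f']}$ I obtain $\abs{\Delta_h^2 f(x_*)}\ge J\tau_*-h^2F\ge Jh/2-h^2F$. The defining inequality $h<h_c=J/(4F)$ is precisely what makes $h^2F<Jh/4$, whence
\begin{equation*}
\abs{\Delta_h^2 f(x_*)}\ge \tfrac{Jh}{2}-h^2F>\tfrac{Jh}{4}>h^2F\ge \abs{\Delta_h^2 f(x_j)}\quad\text{for all }j\notin\{i-1,i\}.
\end{equation*}
Thus the dominant endpoint of $I_i$ strictly beats every node outside $I_i$ by a fixed margin; the factor $4$ in $h_c$ is tuned exactly so that $Jh/2-h^2F>h^2F$.

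Finally I would run the case split against the two labelling rules. If $\abs{\Delta_h^2 f(x_*)}>\abs{\Delta_h^2 f(x_{**})}$, then $x_*$ is a strict local maximum of $\abs{\Delta_h^2 f}$ over its radius-$3$ window (it beats the noise by the margin above and beats $x_{**}$ by assumption), so the first detection rule fires at $x_*$ and labels the two cells sharing the vertex $x_*$; since $x_*$ is an endpoint of $I_i$, the cell $I_i$ is among them. Otherwise $\abs{\Delta_h^2 f(x_{**})}\ge\abs{\Delta_h^2 f(x_*)}>Jh/4>h^2F$, so \emph{both} endpoints of $I_i$ dominate the surrounding noise, and the second detection rule fires for $I_i$ directly. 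Either way $I_i$ is labelled $B$. The step I expect to be the main obstacle is exactly this case analysis: the two large values sit at the two endpoints of one and the same cell, while the rules are asymmetric (the first demands a strict local maximum, the second demands two-sided domination). The delicate regime is $z$ near the cell midpoint, where $\tau_{i-1}\approx\tau_i\approx h/2$ and neither endpoint strictly dominates the other, so the first rule may fail and one must fall back on the second — which is why both rules are genuinely needed.
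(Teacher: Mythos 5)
Your proof is correct. Note that the paper does not actually prove this lemma in-line: its ``proof'' is the single sentence ``See Lemma 2 in \cite{ACDD2005}'' (the detection rules here coincide with the ENO-SR detection mechanism of that reference for $m=3$). What you have supplied is a self-contained version of the standard argument behind that cited lemma: split $f$ into a $C^1$ part $\phi$ with $\abs{\Delta_h^2\phi}\le h^2\sup\abs{f''}$ plus the kink $[f'](x-z)_+$, whose second difference is the tent $\max(0,h-\abs{x-z})$, supported only at the two endpoints of the cell containing $z$ and with values summing to $h$ there; the factor $4$ in $h_c$ is precisely what makes the dominant endpoint exceed the smooth-part ``noise'' $h^2\sup\abs{f''}$ by a strict margin. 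Your closing case analysis is the right one and correctly isolates the only delicate point: if one endpoint strictly beats the other, rule 1 fires at that vertex (its five remaining window neighbours all lie outside $I_i$ and carry only noise) and labels $I_i$; if not, both endpoints strictly exceed every node outside $I_i$ and rule 2 fires. The single loose end is the degenerate case $z=x_{i-1}$ or $z=x_i$, which you exclude by taking $z$ interior to a cell: there the tent places its full weight $h$ on one node, rule 1 fires at that node, and both adjacent cells are labelled $B$, so the conclusion persists. Modulo that remark (and the usual ghost-value convention near the domain boundary, which the paper also glosses over), your argument is complete.
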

\begin{proof}
See Lemma 2 in \cite{ACDD2005}. 
\end{proof}
\begin{lemma}\label{lem:L3-cohen}
There exist constants $C>0$ and $0<K<1$ such that for all continuous $f$ with uniformly bounded second derivative on $\mathbb{R}\backslash \{z\}$ and for $h<Kh_c$ with $h_c$ defined by equation (\ref{eqn:hc}), the following holds: 
\begin{enumerate}
    \item The singularity $z$ is contained in an isolated $B$ interval $I_i^k$ or in a $B$-pair $(I_i^k,I^k_{i+1})$. 
    \item The two polynomials $p^k_{i-2}$ and $p^k_{i+2}$ (or $p^k_{i-1}$ and $p^k_{i+3}$) have only one intersection point $y$ inside $I_i^k$ or $I_i^k\cup I^k_{i+1}$, respectively. 
    \item The distance between $z$ and $y$ is bounded by 
    \begin{equation}\label{eqn:distance-zy}
        \abs{z-y}\leq \frac{C\sup_{x\in\mathbb{R}\backslash \{z\}}\abs{f''(x)}h^2}{\abs{[f']}}.
    \end{equation}
\end{enumerate}
\end{lemma}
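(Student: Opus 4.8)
The plan is to handle the three claims in the order (1), (3), (2), since the quantitative estimate (3) is precisely what drives the localization asserted in (2). Throughout I will write $g_-$ and $g_+$ for the two one-sided $C^2$ extensions of $f$ across $z$, so that $g_\pm(z)=f(z)$ and $g_+'(z)-g_-'(z)=[f']$, and I will abbreviate $S:=\sup_{\mathbb{R}\backslash\{z\}}\abs{f''}$.

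Claim (1) is a bookkeeping consequence of the two preceding lemmas. Since $K<1$ we have $h<h_c$, so Lemma~\ref{lem:L2-cohen} guarantees that the interval containing $z$ is labelled $B$. By Lemma~\ref{lem:groupsize-bis} every maximal group of adjacent $B$ intervals has size at most $2$; hence the interval containing $z$ is either an isolated $B$ interval, which I call $I_i^k$, or it lies in a $B$-pair, which I relabel $(I_i^k,I_{i+1}^k)$.

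For claim (3) I would compare the two linear interpolants selected by the scheme against the one-sided tangent lines $\ell_\pm(x)=f(z)+g_\pm'(z)(x-z)$, which by construction meet exactly at $z$. Consider the isolated case, where the scheme uses $p^k_{i-2}$ and $p^k_{i+2}$; their interpolation nodes lie entirely to the left, respectively right, of $z$ and within $3h$ of it, so each interpolates a genuinely smooth piece. First I would bound slopes: by the mean value theorem the slope of $p^k_{i-2}$ equals $g_-'(\xi)$ for some $\xi$ in its node interval, and $\abs{g_-'(\xi)-g_-'(z)}\le 3hS$, with the analogous statement on the right. Writing $D:=p^k_{i-2}-p^k_{i+2}$, which is affine, its slope $\alpha$ therefore satisfies $\abs{\alpha-(-[f'])}\le ChS$; since $h<Kh_c=K\abs{[f']}/(4S)$, taking $K$ small forces $\abs{\alpha}\ge \abs{[f']}/2$, so the two lines are transversal. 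Next, the standard linear interpolation error bound (using that the nodes are within $3h$ of $z$) gives $\abs{p^k_{i-2}(z)-f(z)}\le Ch^2S$ and the same on the right, so the vertical offset $\beta:=D(z)$ obeys $\abs{\beta}\le Ch^2 S$. Writing $D(x)=\alpha(x-z)+\beta$, the unique root is $y=z-\beta/\alpha$, whence
\[
    \abs{z-y}=\frac{\abs{\beta}}{\abs{\alpha}}\le \frac{C\,S\,h^2}{\abs{[f']}},
\]
which is exactly \eqref{eqn:distance-zy}. The $B$-pair case is identical after replacing $(p^k_{i-2},p^k_{i+2})$ by $(p^k_{i-1},p^k_{i+3})$.

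Claim (2) then follows: transversality $\alpha\neq0$ already yields a single intersection point $y$, and from (3) together with $h<Kh_c$ we get $\abs{z-y}\le \tfrac{CK}{4}h$, so for $K$ small enough $y$ lies within a small fraction of a mesh width of $z$ and hence inside $I_i^k$ (respectively $I_i^k\cup I_{i+1}^k$), which contains $z$. I expect the main obstacle to be exactly this localization step when $z$ sits very close to a cell boundary, where one must use the smallness of $K$ (and the $2h$ width of the bad region in the pair case) to keep $y$ on the correct side; a secondary technical point is verifying that the constants $C$ and $K$ can be chosen uniformly in $f$, which here reduces to the uniformity of the bounds on $\alpha$ and $\beta$ in terms of $S$ and $\abs{[f']}$ alone.
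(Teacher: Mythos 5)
Your argument for parts (1) and (3) is correct and is essentially the content of Lemma~3 of \cite{ACDD2005}, which the paper simply cites rather than reproving: the slope of each of the two lines is a one-sided derivative evaluated at an intermediate point (so the difference line $D$ has slope within $O(hS)$ of $-[f']$, hence bounded away from $0$ for $K$ small), the offset $D(z)$ is $O(h^2S)$ by the linear extrapolation error bound, and $\abs{z-y}=\abs{D(z)}/\abs{D'}$ gives \eqref{eqn:distance-zy}. The transversality also gives uniqueness of $y$ in part (2).

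The genuine gap is the one you flag yourself and then do not close: the localization $y\in I_i^k$ (resp.\ $I_i^k\cup I_{i+1}^k$). Your estimate $\abs{z-y}\leq \tfrac{CK}{4}h$ is relative to the mesh width, not to the distance from $z$ to the boundary of the bad region, so if $z$ sits at distance $\delta\ll Kh$ from an endpoint of $I_i^k$ nothing so far prevents $y$ from landing in the neighbouring interval; taking $K$ smaller does not help. Closing this requires going back to the detection rules: for the piecewise linear model one has $\Delta^2_{h}f(x^k_{i-1})\approx [f'](x^k_i-z)$ and $\Delta^2_{h}f(x^k_{i})\approx [f'](z-x^k_{i-1})$ up to $O(Sh^2)$, so when $z$ is close to a node the first detection rule fires there and produces a $B$-\emph{pair} straddling that node (giving $y$ a margin of order $h$ on both sides), whereas an \emph{isolated} $B$ interval can only occur when $z$ is within $O(Kh)$ of its midpoint, and only then does $\abs{z-y}\leq\tfrac{CK}{4}h$ keep $y$ inside $I_i^k$. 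This case analysis is the substance of the cited proof and is missing from yours. It is worth noting, however, that the paper's own adaptation explicitly enlarges the containing set to $I_{i-1}^k\cup I_i^k\cup I_{i+1}^k$, and that this weaker containment is all that is used in the proof of Theorem~\ref{thm:ENOSR2}; your bound $\abs{z-y}<h$ for small $K$ does deliver that weaker statement directly.
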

\begin{proof}
This is a light adaptation of Lemma 3 in \cite{ACDD2005}. The proof remains the same, after one minor change. We take $I=[b,c]$ to be equal to $I_{-1}\cup I_0\cup I_1$, which does not affect equation (38) in the proof. In fact, all other steps of their proof remain valid. It only must be noted that the constant $C$ in equation (\ref{eqn:distance-zy}) of this paper and equation (37) in \cite{ACDD2005} do not necessarily agree. 
\end{proof}

We now prove Theorem \ref{thm:ENOSR2} of Section \ref{sec:ENO-SR-DNN}, based on the proof of Theorem 1 in \cite{ACDD2005}. 
% We discuss the changes needed to prove our adapted result. 
Let $h_c$ be as in Lemma \ref{lem:L2-cohen} and let $K$ be as in Lemma \ref{lem:L3-cohen}. Note that we can write 
\begin{equation*}
    f=f_- \mathbbm{1}_{(-\infty,z]}+f_+\mathbbm{1}_{(z,+\infty)}
\end{equation*}
where $f_-,f_+$ are $C^2$ on $\mathbb{R}$ such that  
\begin{equation*}
    \sup_{\mathbb{R}\backslash \{z\}}\abs{f''_{\pm}} \leq \sup_{\mathbb{R}\backslash \{z\}}\abs{f''}.
\end{equation*}

Let us consider some interval $I_0=[b,c]=[b,b+h]$. First consider the case $0<h<Kh_c$. Suppose it was labelled as good. Lemma \ref{lem:L2-cohen} then guarantees that $I_0$ does not contain $z$. It then follows directly from the theory of Lagrange interpolation that \begin{equation}\label{eqn:second-order-I0}
    \abs{f(x)-\mathcal{I}^hf(x)}\leq Ch^2 \sup_{\mathbb{R}\backslash \{z\}}\abs{f''}
\end{equation}
for all $x\in I_0$. Now suppose that $I_0$ was labelled bad. As a consequence of Lemma \ref{lem:groupsize-bis}, $I_{-2}$ and $I_2$ are good intervals and therefore do not contain the discontinuity. If in addition $z\not\in I_{-1}\cup I_0 \cup I_1$, then equation (\ref{eqn:second-order-I0}) holds again for all $x\in I_0$ since $\mathcal{I}^hf(x)$ is either equal to $p_{-2}(x)$, $p_0(x)$ or $p_2(x)$. On the other hand, if $z\in I_{-1}\cup I_0 \cup I_1$ then Lemma \ref{lem:L3-cohen} guarantees the existence of a single intersection point $y\in I_{-1}\cup I_0 \cup I_1$ of $p_{-2}$ and $p_2$. Assume now without loss of generality that $z \leq y$. In this case, equation (\ref{eqn:second-order-I0}) holds for all $x\in[b,z]\cup[y,c]$. It thus remains to treat the case $z<x<y$. For such $x$, we have
\begin{equation*}
     \abs{f(x)-\mathcal{I}^hf(x)} = \abs{f_+(x)-p_{-2}(x)} \leq \abs{f_+(x)-f_-(x)}+\abs{f_-(x)-p_{-2}(x)}
\end{equation*}
where the second term is again bounded by $Ch^2 \sup_{\mathbb{R}\backslash \{z\}}\abs{f''}$. We can use a second-order Taylor expansion for the first term to derive
\begin{equation*}
    \abs{f_+(x)-f_-(x)}\leq (y-z)([f']+2h\sup_{\mathbb{R}\backslash \{z\}}\abs{f''})\leq \frac{3}{2}\abs{[f']}(y-z)
\end{equation*}
where in the last inequality we used that $h<h_c$. By invoking the third part of Lemma \ref{lem:L3-cohen}, we find indeed that equation (\ref{eqn:second-order-I0}) holds again. This concludes the proof for the case $h<Kh_c$. 

Now suppose that $h\geq Kh_c$. First define 
\begin{equation*}
    f_2(x) = f(x) - [f'](x-z)_+
\end{equation*}
for $x\in\mathbb{R}$. Furthermore, by the definition of $h_c$ in Lemma \ref{lem:L2-cohen}, we find that for $h \geq Kh_c$,
\begin{equation} \label{eqn:above-hc}
    [f'] = 4h_c\sup_{\mathbb{R}\backslash \{z\}}\abs{f''} \leq C_0h \sup_{\mathbb{R}\backslash \{z\}}\abs{f''},
\end{equation}
where $C_0>0$ does not depend on $f$. We distinguish two cases. 

\textbf{Case 1: } $\mathcal{I}^h(x)=p_0(x)$ for all $x\in I_0$. If $z\not \in I_0$, second-order accuracy as in equation (\ref{eqn:second-order-I0}) is immediate. If not, more work is needed. Define 
\begin{equation*}
    g_1(x) = \frac{[f'](x_0-z)_+}{h}(x-x_{-1})
\end{equation*}
and note that $p_0-g_1$ is the linear interpolation between $(x_{-1},f_2(x_{-1}))$ and $(x_0,f_2(x_0))$. Since $f_2$ is $C^2$ we know that $p_0-g_1$ is a second-order accurate approximation of $f_2$ on $I_0$, such that equation (\ref{eqn:second-order-I0}) holds. We then calculate for $x\in I_0$,
\begin{equation*}
\begin{split}
     \abs{f(x)-p_0(x)} &\leq \abs{f_2(x)-(p_0(x)-g_1(x))} + \abs{[f'](x-z)_+-g_1(x)}\\
     &\leq C_1h^2\sup_{\mathbb{R}\backslash \{z\}}\abs{f''} + [f']\left(\abs{(x-z)_+}+\frac{(x_0-z)_+}{h}\abs{x-x_{-1}}\right)\\
     & \leq C_1h^2\sup_{\mathbb{R}\backslash \{z\}}\abs{f''} + C_0h \sup_{\mathbb{R}\backslash \{z\}}\abs{f''}(h+h)\\
     &= Ch^2\sup_{\mathbb{R}\backslash \{z\}}\abs{f''},
\end{split}
\end{equation*}
where we used inequality (\ref{eqn:above-hc}). 

\textbf{Case 2: } $\mathcal{I}^h(x)=p_{-2}(x)\mathbbm{1}_{(-\infty,y]}(x)+p_{2}(x)\mathbbm{1}_{(y,+\infty)}(x)$ for $x\in I_0$, where $y$ is the intersection point of $p_{-2}$ and $p_{2}$. If $z\not\in \cup_{q=-2}^2 I_{q}$, then inequality (\ref{eqn:second-order-I0}) follows immediately for $x\in I_0$. Consider now the case that $z\in \cup_{q=-2}^2 I_{q}$ and assume without loss of generality $z\leq y$. Let $x\in I_0$ be arbitrary. It follows that equation (\ref{eqn:second-order-I0}) also holds immediately for this $x$ if $y\leq x$ or $x \leq z$. It suffices to find a bound for when $x_{-3}\leq z\leq x \leq y$. Define
\begin{equation*}
    g_2(x) = \frac{[f'](x_{-2}-z)_+}{h}(x-x_{-3}). 
\end{equation*}
Note that $p_{-2}-g_2$ is an affine function through $(x_{-3},f_2(x_{-3}))$ and $(x_{-2},f_2(x_{-2}))$. It follows that 
\begin{equation*}
\begin{split}
     \abs{f(x)-p_{-2}(x)} &\leq \abs{f_2(x)-(p_{-2}-g_2(x))} + \abs{[f'](x-z)_+-g_2(x)}\\
     &\leq C_1h^2\sup_{\mathbb{R}\backslash \{z\}}\abs{f''} + [f']\left(\abs{(x-z)_+}+\frac{(x_{-2}-z)_+}{h}\abs{x-x_{-3}}\right)\\
     & \leq C_1h^2\sup_{\mathbb{R}\backslash \{z\}}\abs{f''} + C_0h \sup_{\mathbb{R}\backslash \{z\}}\abs{f''}(3h+3h)\\
     &= Ch^2\sup_{\mathbb{R}\backslash \{z\}}\abs{f''},
\end{split}
\end{equation*}
where we used again inequality (\ref{eqn:above-hc}) and the bounds $\abs{x-x_{-3}}\leq 3h$ and $x_{-3}\leq z$. This concludes the proof of Theorem \ref{thm:ENOSR2}. 

\section{Proof of Theorem \ref{thm:approx-enosr-bound}}\label{app:acc-approx-enosr}

In what follows, we let $x^*=x^{k+1}_{2i-1}$, $f^*=\mathcal{I}^{h_k}_{i}f(x^{k+1}_{2i-1})$,  $\hat{f}_{\epsilon} = \hat{f}^{k+1}_{2i-1,\epsilon}$ and $X^3_{l} = X^3_{n_{i,l}}$ for $1\leq l \leq 4$ (where $X^3_{n_{i,l}}$ is as in the proof of Theorem \ref{thm:eno-sr-ann}). We assume without loss of generality that $[c,d]\subset[0,\infty)$.  Furthermore we simplify the notation by dropping the index $k$ and setting $i=0$. It follows from the proof of Theorem \ref{thm:ENOSR2} that the results holds for $h\geq Kh_c$, since $\hat{f}_{\epsilon}$ is a convex combination of $p_{-2}(x^*)$, $p_{0}(x^*)$ and $p_{2}(x^*)$ for any value of $X^3$. We therefore assume in the following that $h<Kh_c$. The proof consists of an extensive case study, visualized in Figure \ref{fig:case-study}. 

\begin{figure}[h!]
    \centering
\tikzstyle{block} = [rectangle, draw, %fill=blue!20, 
    text centered, rounded corners, minimum width=6em,minimum height=5em]

\tikzstyle{block2} = [rectangle, draw, %fill=blue!20, 
    text centered, rounded corners,  minimum width=12em,minimum height=5em]

\tikzstyle{block3} = [rectangle, draw, %fill=blue!20, 
    text centered, rounded corners, minimum height=10em]

\tikzstyle{block4} = [rectangle, draw, %fill=blue!20, 
    text centered, rounded corners, minimum height=7em]
    
\tikzstyle{ellipse} = [circle, draw, %fill=blue!20, 
    text centered, rounded corners, minimum height=5em]
\tikzstyle{line} = [draw, -latex']
\centering
\resizebox{\textwidth}{!}{
\begin{tikzpicture}[node distance = 2em, auto]
\node [block,align=center] (layer0a) {$X_1^3=0$};
\node [ellipse,below=of layer0a,align=center] (layer0b) {Case 7};
\node [block,right=of layer0a, align=center] (layer1a) {$0<X_1^3<\epsilon$};
\node [ellipse,below=of layer1a,align=center] (layer1b) {Case 6};
\node [block,right=of layer1a, align=center] (layer2a) {$X_1^3=0$};
\node [ellipse,below=of layer2a,align=center] (layer2b) {Case 5};
\node [block,right=of layer2a, align=center] (layer3a) {$0<X_2^3<\epsilon$};
\node [ellipse,below=of layer3a,align=center] (layer3b) {Case 4};
\node [block,right=of layer3a, align=center] (layer4a) {$X_3^3=0$};
\node [ellipse,below=of layer4a,align=center] (layer4b) {Case 3};
\node [block,right=of layer4a, align=center] (layer5a) {$0<X_3^3<\epsilon$};
\node [ellipse,below=of layer5a,align=center] (layer5b) {Case 2};
\node [ellipse,right =of layer5a,align=center] (layer5c) {Case 1};

\path [line] (layer0a) -- node{no}(layer1a);
\path [line] (layer1a) -- node{no}(layer2a);
\path [line] (layer2a) -- node{no}(layer3a);
\path [line] (layer3a) -- node{no}(layer4a);
\path [line] (layer4a) -- node{no}(layer5a);

\path [line] (layer0a) -- node{yes}(layer0b);
\path [line] (layer1a) -- node{yes}(layer1b);
\path [line] (layer2a) -- node{yes}(layer2b);
\path [line] (layer3a) -- node{yes}(layer3b);
\path [line] (layer4a) -- node{yes}(layer4b);
\path [line] (layer5a) -- node{yes}(layer5b);
\path [line] (layer5a) -- node{no}(layer5c);
\end{tikzpicture}
}
\caption{Overview of the case study done in the proof of Theorem \ref{thm:approx-enosr-bound}.}\label{fig:case-study}
\end{figure}
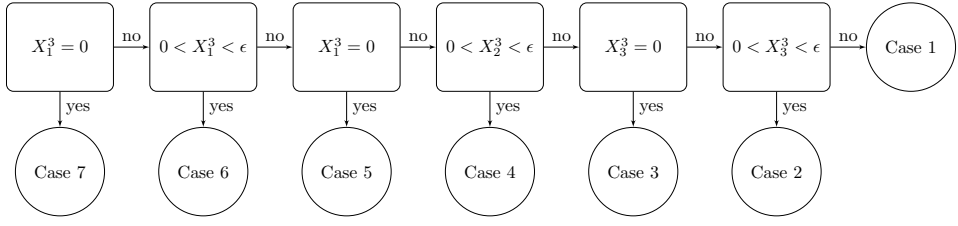

\textbf{Case 1: } In this case $\alpha=1$ and $\beta=1$, therefore $\hat{f}_{\epsilon} = p_{-2}(x^*) = f^*$. 

\textbf{Case 2: } We have that $\alpha=1$ and $0<X^3_3<\epsilon$, therefore $\hat{f}_{\epsilon} = (1-\beta)p_{2}(x^*)+\beta p_{-2}(x^*)$ where $\beta$ can take any value in $(0,1)$. From \eqref{eqn:X^3-bis}, it follows that the condition  $0<X^3_3<\epsilon$ corresponds to
\begin{equation}
    0 < \abs{b_{-2}-b_{2}}-x^*\abs{a_{-2}-a_{2}} < \epsilon,
\end{equation}
where $a_{-2}$ and $a_2$ are the slopes of $p_{-2}$ and $p_2$, respectively. Recall that in \eqref{eqn:X^3-bis} the assumption that $x_i^k=i$ was made. It can however be seen that $X^3_3$ is invariant to grid translations and scaling. Indeed, when the grid size is scaled by $h$ one needs to replace $x^*$ by $hx^*$ and $a_{\pm2}$ by $a_{\pm2}/h$; this leaves $X^3_3$ unchanged. If we now observe that from $\alpha=1$ follows that $\abs{a_{-2}-a_2}\neq0$, then we can define $y$ as the unique intersection point of $p_{-2}$ and $p_2$, and obtain
\begin{equation}
    0<y-x^*<\frac{\epsilon}{\abs{a_{-2}-a_2}}.
\end{equation}
Furthermore note that we can write $p_{\pm2}(x)=a_{\pm2}(x-y)+p_{\pm2}(y)$ where by definition $p_{-2}(y)=p_{2}(y)$. This leads to the estimate
\begin{equation}
    \abs{p_{-2}(x^*)-p_{2}(x^*)} = \abs{a_{-2}-a_2}(y-x^*) < \epsilon. 
\end{equation}

\textbf{Case 3: }  In this case $\alpha=1$ and $\beta=0$, therefore $\hat{f}_{\epsilon} = p_{2}(x^*) = f^*$. 

\textbf{Case 4: } By looking at the definition of $X^3_2$ in \eqref{eqn:X^3-bis}, we see that $0<h\abs{(a_{-2}-a_2)}<\epsilon$ (where the factor $h$ was added to remove the assumption that $x_i^k=i$). Furthermore we have that $X^3_1>0$. If $z\not\in I_{-1}\cup I_0 \cup I_1$ then Lemma \ref{lem:groupsize-bis} and Lemma \ref{lem:L3-cohen} guarantee that $\hat{f}_{\epsilon}$ is a second-order accurate approximation of $f^*$. If $z\in I_{\pm1}$, then $p_0$ is a second-order accurate approximation of $p_{\mp2}$ on $I_0$. In addition, this leads to the bound 
\begin{equation}
\begin{split}
       \abs{p_{\pm2}(x^*)-p_{0}(x^*)} &\leq \abs{p_{\pm2}(x^*)-p_{\mp2}(x^*)}+\abs{p_{\mp2}(x^*)-p_{0}(x^*)}\\
       &= \abs{(a_{2}-a_{-2})(x^*-y)} +\abs{p_{\mp2}(x^*)-p_{0}(x^*)}\\
      &\leq \frac{3}{2}\epsilon + Ch^2\sup\abs{f''}. 
\end{split}
\end{equation}
Finally we treat the case where $z\in I_0$. Define $p^*_0$ as the affine function through $(x_{-1},p_{-2}(x_{-1}))$ and $(x_{0},p_{2}(x_{0}))$. It then follows from $h\abs{(a_{-2}-a_2)}<\epsilon$ that
$h\abs{(a_{\pm2}-a^*_0)}<\epsilon$ where $a^*_0$ is the slope of $p^*_0$. 
We also have that $p^*_0$ is a second-order accurate approximation of $p_0$ on the interval $I_0$. This then leads to 
\begin{equation}
\begin{split}
    \abs{p_{\pm2}(x^*)-p_{0}(x^*)} &\leq \abs{p_{\pm2}(x^*)-p_{0}^*(x^*)}+\abs{p_{0}^*(x^*)-p_{0}(x^*)}\\
       &\leq \abs{(a_{\pm2}-a_{0}^*)\frac{h}{2}} +\abs{p_{0}^*(x^*)-p_{0}(x^*)}\\
      &\leq \frac{\epsilon}{2} + Ch^2\sup\abs{f''}.
      \end{split}
\end{equation}

\textbf{Case 5: } In this case $\alpha=0$ and therefore $\hat{f}_{\epsilon}=p_0(x^*)=f^*$. 

\textbf{Case 6: } In this case we only know that $I_0$ is a bad interval, since $X^3_1>0$. It may or may not contain the discontinuity. If $z\not\in I_{-1}\cup I_0 \cup I_1$ then Lemma \ref{lem:groupsize-bis} and Lemma \ref{lem:L3-cohen} guarantee that $\hat{f}_{\epsilon}$ is a second-order accurate approximation of $f^*$. We therefore assume in the following that $z\in I_{-1}\cup I_0 \cup I_1$. 

In the proof of Theorem \ref{thm:eno-sr-ann}, we introduced the quantity $X^2_q$ as a smoothness indicator for the interval $I_q$. We first investigate how the quantities $X^2_q$ for $f$ are related to the same quantities for the piecewise linear function defined by $g(x) = f(z)+f'(z-){(x-z)}+[f'](x-z)_+$, which we will denote by $\widehat{X}^{2}_q$. Let $\Delta_q=\Delta^2_h f(x_q)$ and $\widehat{\Delta}_q = \Delta^2_h g(x_q)$, as defined in  \eqref{eqn:second-diff}, and define $\Phi_q = \Delta_q - \widehat{\Delta}_q$. Since $g$ is a second-order accurate approximation of $f$, we obtain that 
\begin{equation}
    \abs{\Phi_q}\leq C_q h^2 \sup\abs{f''},
\end{equation}
where the constant $C_q$ is independent of $f$ and $h$. Using the triangle inequality and its reverse, we find that for $m,n\in\mathbb{N}_0$,
% \begin{equation}
%     \abs{\Delta_m}-\abs{\Delta_n} = \abs{\Delta^*_m+\Phi_m}-\abs{\Delta^*_n+\Phi_n} \geq \abs{\Delta^*_m}-\abs{\Delta^*_n}-\abs{\Phi_m}-\abs{\Phi_n},
% \end{equation}
% and therefore
\begin{equation}
\begin{split}
    \abs{\widehat{\Delta}_m}-\abs{\widehat{\Delta}_n} 
    &\leq\abs{\widehat{\Delta}_m+\Phi_m}-\abs{\widehat{\Delta}_n+\Phi_n} +\abs{\Phi_m}+\abs{\Phi_n}\\
    &\leq \abs{\Delta_m}-\abs{\Delta_n} + (C_m+C_n)h^2 \sup\abs{f''}.
\end{split}
\end{equation}
Now assume that $0<X^2_q<\epsilon$ for some index $q$. We can then conclude that \begin{equation}\label{eqn:x-hat-bound}
    0\leq \widehat{X}^{2}_q \leq X^{2}_q + C h^2 \sup\abs{f''} < \epsilon + C h^2 \sup\abs{f''}.
\end{equation}
This allows us to restrict our further calculations to the piecewise linear function $g$. We assume that $z\in I_s$ for some index $s$, and that $[f']>0$. Furthermore we denote by $\overline{x}$ the midpoint of $I_s$. In Table \ref{tab:x-hat} we calculate  $\widehat{X}^2_{s-1},\widehat{X}^2_{s}$ and $\widehat{X}^2_{s+1}$. Note that for $s\not\in\{-1,0,1\}$, second-order accuracy is immediate. 

% \begin{table}[h!]
%     \centering
%     \begin{tabular}{|c|c|c|c|c|c|c|c|c|c|c|} \hline
%      $x_q$ & $x_{-3}$ & \multicolumn{2}{|c|}{$x_{-2}$} &\multicolumn{2}{|c|}{$x_{-1}$} &\multicolumn{2}{|c|}{$x_{0}$} &\multicolumn{2}{|c|}{$x_{1}$} &$x_{2}$ \\\hline
%         $\abs{\Delta_q^*}$ & $0$ & \multicolumn{2}{|c|}{$0$} & \multicolumn{2}{|c|}{$[f'](x_0-z)$} &  \multicolumn{2}{|c|}{$[f'](z-x_{-1})$} & \multicolumn{2}{|c|}{$0$} & $0$ \\\hline
%         $(\abs{\Delta_q^*}-M_q)_+$ & $0$ & \multicolumn{2}{|c|}{$0$} & \multicolumn{2}{|c|}{$2[f'](\overline{x}-z)_+$} & \multicolumn{2}{|c|}{$2[f'](z-\overline{x})_+$} & \multicolumn{2}{|c|}{$0$}& $0$ \\\hline
%         $(\abs{\Delta_q^*}-N_q^+)_+$ & $0$ & \multicolumn{2}{|c|}{$0$} & \multicolumn{2}{|c|}{$2[f'](\overline{x}-z)_+$} & \multicolumn{2}{|c|}{$[f'](z-x_{-1})$} & \multicolumn{2}{|c|}{$0$} &$0$ \\\hline
%         $(\abs{\Delta_q^*}-N_q^-)_+$ & $0$ & \multicolumn{2}{|c|}{$0$} & \multicolumn{2}{|c|}{$[f'](x_0-z)$} & \multicolumn{2}{|c|}{$2[f'](z-\overline{x})_+$} & \multicolumn{2}{|c|}{$0$} & $0$ \\\hline
%         $X^2_q$ &  \multicolumn{2}{|c|}{$0$} & \multicolumn{2}{|c|}{$2[f'](\overline{x}-z)_+$} &\multicolumn{2}{|c|}{$[f'](\abs{2\overline{x}-z}+\min\{x_0-z,z-x_{-1}\}$} &\multicolumn{2}{|c|}{$2[f'](z-\overline{x})_+$} &\multicolumn{2}{|c|}{$0$} \\\hline 
%     \end{tabular}
%     \caption{Caption}
%     \label{tab:my_label}
% \end{table}

\begin{table}[h!]
    \centering
    \begin{tabular}{|c|c|c|c|c|c|c|} \hline
     $x_q$ & $x_{s-2}$ & \multicolumn{2}{c|}{$x_{s-1}$} &\multicolumn{2}{c|}{$x_{s}$} & $x_{s+1}$ \\\hline
        $\abs{\widehat{\Delta}_q}$ & $0$  & \multicolumn{2}{c|}{$\qquad\quad [f'](x_s-z)\qquad\quad$}&  \multicolumn{2}{c|}{$\qquad\quad[f'](z-x_{s-1})\qquad\quad$} &  $0$ \\\hline
        $(\abs{\widehat{\Delta}_q}-M_q)_+$ & $0$ &  \multicolumn{2}{c|}{$2[f'](\overline{x}-z)_+$} & \multicolumn{2}{c|}{$2[f'](z-\overline{x})_+$} &  $0$ \\\hline
        $(\abs{\widehat{\Delta}_q}-N_q^+)_+$ & $0$ &  \multicolumn{2}{c|}{$2[f'](\overline{x}-z)_+$} & \multicolumn{2}{c|}{$[f'](z-x_{s-1})$} & $0$ \\\hline
        $(\abs{\widehat{\Delta}_q}-N_q^-)_+$ & $0$ & \multicolumn{2}{c|}{$[f'](x_s-z)$} & \multicolumn{2}{c|}{$2[f'](z-\overline{x})_+$}  & $0$ \\\hline
        $\widehat{X}^2_{s-1},\widehat{X}^2_{s},\widehat{X}^2_{s+1}$ &  \multicolumn{2}{c|}{$2[f'](\overline{x}-z)_+$} &\multicolumn{2}{c|}{$[f'](2\abs{\overline{x}-z}+\min\{x_s-z,z-x_{s-1}\})$} &\multicolumn{2}{c|}{$2[f'](z-\overline{x})_+$} \\\hline 
    \end{tabular}
    \caption{Calculation of $\widehat{X}^2_{s-1},\widehat{X}^2_{s},$ and $\widehat{X}^2_{s+1}$.}
    \label{tab:x-hat}
\end{table}

First assume that $s=0$, in this case $x^*=\overline{x}$. From the table, it follows that $\widehat{X}^2_0\geq [f']h/2$. Combining this with \eqref{eqn:x-hat-bound} leads to the bound $[f']h/2<\epsilon + C h^2 \sup\abs{f''}$. We define again $p^*_0$ as the affine function through $(x_{-1},p_{-2}(x_{-1}))$ and $(x_{0},p_{2}(x_{0}))$, which is a second-order accurate approximation of $p_0$. We also introduce two new second-order accurate approximations of $p_0$ on $I_0$, 
\begin{equation}
\begin{split}
    p_0^-(x) &= p_{-2}(x)+[f']\frac{(x_0-z)_+}{h}(x-x_{-1}),\\
    p_0^+(x)    &= p_{ 2}(x)+[f']\frac{(z-x_{-1})_+}{h}(x-x_{0}).
\end{split}
\end{equation}
Indeed, $p_0^-(x_{-1}) = p_0^*(x_{-1}) = p_{-2}(x_{-1})$ and $p_0^-(x_{0}) = p_{-2}(x_{0})+[f'](x_0-z)_+$ are both second-order accurate on $I_0$, therefore $p_0^-$ is a second-order accurate approximation on $I_0$ of $p^*_0$ and hence of $p_0$. A similar reasoning holds for $p^+_0$. This then leads to 
\begin{equation}\label{eqn:calc-proof1}
    \begin{split}
        \abs{p_{\pm2}(x^*)-p_{0}(x^*)} &\leq \abs{p_{\pm2}(x^*)-p_{0}^{\pm}(x^*)}+\abs{p_{0}^{\pm}(x^*)-p_{0}(x^*)}\\
       &\leq [f']h/2 +\abs{p_{0}^{\pm}(x^*)-p_{0}(x^*)}\\
      &\leq \epsilon + Ch^2\sup\abs{f''}.
    \end{split}
\end{equation}

Next we assume that $s=1$. Since $X^3_1>0$, we have that $x_{0}\leq z < \overline{x}$. We distinguish two subcases. First, if $x_{0}\leq z \leq \overline{x}/2$ then $X^3_1 = 2[f'](\overline{x}-z)_+\geq [f']h/2$. Equation \eqref{eqn:x-hat-bound} is still valid and a calculation as in \eqref{eqn:calc-proof1} leads to the bound 
\begin{equation}
    \abs{p_{\pm2}(x^*)-p_{0}(x^*)} \leq \frac{5}{4}\epsilon + C h^2 \sup\abs{f''}
\end{equation}
where we used that $(z-x_{-1})_+\leq 5h/4$. Second, if $\overline{x}/2 < z < \overline{x}$ then the intersection point $y$ of $p_{-2}$ and $p_2$ will be located in $I_1$. This result can be deduced from the proof of Lemma 3 in \cite{ACDD2005}, by taking $I=I_s$ in the beginning of the proof. The second-order accuracy then follows from Case 1 or Case 2, depending on the value of $X^3_3$. The case $s=-1$ is completely analogous.

\textbf{Case 7: } In this case $\alpha=0$ and therefore $\hat{f}_{\epsilon}=p_0(x^*)=f^*$. 

We can now summarize all seven cases by the error bound
\begin{equation}
    \abs{\hat{f}_{\epsilon}-f^*} \leq Ch^2\sup\abs{f''}+\frac{3}{2}\epsilon, 
\end{equation}
which concludes the proof. 

\section{Weights of trained DeLENO networks}\label{sec:trained-weights}

We can now compare the weights and biases of the trained networks to the theoretical ones from Section \ref{sec:ENO-DNN}. As the networks do not have an accuracy of 100\%, it comes as no surprise that these do not agree. We list the obtained weights and biases for the trained third-order DeLENO interpolation network. 
\begin{equation}\label{DeLENO3int-trained}
    \begin{split}
            \W^{1} &= \begin{pmatrix}
    1.1951 &   2.0433&  -11.7410&    5.6383\\
    2.9216 &  -2.8703 &  -2.5077 &   2.4624\\
   -2.2775 &   7.6890  & -7.2667  &  2.4914\\
    3.2909 &  -5.8431   &-5.6085   & 3.4171\\
\end{pmatrix}, \quad \bb^{1}  = \begin{pmatrix}    -0.1069\\
   -0.3615\\
    0.0389\\
    0.0605\\
    \end{pmatrix},\\
       W^{2}  &= \begin{pmatrix}
  -11.6122 &   4.2986&   10.7356 &   8.1240\\
   11.5929 &  -4.2767 & -10.7357  & -8.1316\\
\end{pmatrix}, \quad \bb^{2}  = \begin{pmatrix}    -2.5193 \\
    2.4493\end{pmatrix}.
    \end{split}
\end{equation}
The theoretical counterparts can be found in equations \eqref{eqn:DeLENO3int-L1} and \eqref{eqn:DeLENO3int-L2}. Below we list the obtained weights and biases for the trained fourth-order DeLENO interpolation network. 
\begin{equation}\label{DeLENO4int-trained}
    \begin{split}
        \W^{1} &= \begin{pmatrix}
     -0.0559 &  -1.0026&    1.1115&    1.001&1   -1.0569&   -0.0001\\
  -0.3547&    0.3557&   -0.8777&    2.0707&   -1.1983&   -0.0051\\
  -0.0060 &  -0.6155 &   1.6342 &  -1.4526 &   0.4599 &  -0.0370\\
  0  & 0  &  0  &  0  &  0  &  0\\
   0.0011   &-0.1965   & 0.7964   &-1.1817   & 0.7805   &-0.1913\\
  -0.3324&    1.2088&   -1.6946&    1.0632   &-0.2479&   -0.0043\\
   0.1432 &  -0.6459 &   0.9448 &  -0.5434    &0.1271 &  -0.0154\\
  -0.0076  & -0.4239  &  0.8604  & -0.0248&   -0.8755  &  0.4517\\
   0.0196   &-0.1527   & 0.6286   &-0.9194 &   0.6069   &-0.1619\\
  -0.0088   &-0.2571    &1.0627  & -1.6288  &  1.0899  & -0.2714\\
\end{pmatrix},\bb^{1}  = \begin{pmatrix}    
   -0.0005 \\
   0.0029 \\
  -0.0005 \\
  -0.1217 \\
   0.0012 \\
   0.0007 \\
  -0.0010 \\
   0.0010 \\
   0.0027 \\
  -0.0005 \\
    \end{pmatrix},\\
         \W^{2} &= \begin{pmatrix}
    0&    1.6803&    0.2910&   -3.1738 &   0 &   1.5946\\
    0 &  -1.9935 &  -0.7975 &   3.2015  &  0  & -1.9124\\
   0   &-0.3125   & 2.7085   & 0.7264 &  0&   -0.2819\\
    0   & 0 &  0    &0   & 0   &0\\
   0&   -1.3886&    0.6976&    0.6071  & 0 &  -1.3514\\
    0&    2.1097&    0.8511&   -3.6655  &  0&    2.0568\\
    0 &   0.6645 &   0.1485 &  -1.2784   &0  &  0.5052\\
   0   &-0.0061   &-1.4376   &-0.0073 &  0&   -0.0082\\
   0&    0.4580 &  -1.0432&    0.0474  &  0&    0.4542\\
    0&    0.6597 &  -2.6379&   -0.4588  & 0 &   0.7357\\
\end{pmatrix}^T, \quad \bb^{2}  = \begin{pmatrix}   
  -0.0349 \\
   0.0993 \\
   0.0463 \\
   0.0284 \\
  -0.0570 \\
   0.0438 \\
    \end{pmatrix},
    \end{split}
\end{equation}
    \begin{equation}
\begin{split}
             \W^{3} &= \begin{pmatrix}
  0&0&0&0&0&0\\
   0&0&0&0&0&0\\
   0 &  8.0933  &  0.6463 &  -5.7734&    0  &  8.3502 \\
   0  &  2.0780  &-10.0148 &  -0.3565&   0   & 1.8241 \\
\end{pmatrix}, \quad \bb^{2}  = \begin{pmatrix}    
  -0.0316 \\
  -0.0432 \\
  -0.3800 \\
   0.4131 \\
    \end{pmatrix},\\   
             \W^{4} &= \begin{pmatrix}
 0&   0&    2.3377&  -11.5452 \\
  0&   0&    2.2965&   11.1520 \\
  0 &  0 & -12.6485 &  -0.8555 \\
\end{pmatrix}, \quad \bb^{4}  = \begin{pmatrix}    
   1.6841 \\
  -7.5807 \\
   8.2575 \\
    \end{pmatrix}.  
    \end{split}
\end{equation}
Note that these matrices and vectors again differ significantly from the theoretical weights and biases from equations (\ref{eqn:DeLENO4int-L1}-\ref{eqn:DeLENO4int-L3}). This shows that there are multiple neural networks which can approximate the ENO interpolation procedure very well.

\end{document}